\newcommand{\optionaldesc}[2]{%
  \phantomsection
  #1\protected@edef\@currentlabel{#1}\label{#2}%
}
\renewcommand{\eqref}[1]{(\ref{#1})}
\newcommand\constantoverleftarrow[1]{\smash{\cola#1\endcola}}
\def\cola#1#2\endcola{\stackengine{0pt}{}{$\overleftsmallarrow{#1}$}{O}{l}{F}{T}{L}%
  \phantom{#1}#2}
\newcommand\constantoverrightarrow[1]{\smash{\colas#1\endcolas}}
\def\colas#1#2\endcolas{\stackengine{0pt}{}{$\overrightsmallarrow{#1}$}{O}{l}{F}{T}{L}%
  \phantom{#1}#2}
\newcommand{\overleftsmallarrow}{\mathpalette{\overarrowsmall@\leftarrowfill@}}
\newcommand{\overrightsmallarrow}{\mathpalette{\overarrowsmall@\rightarrowfill@}}
\newcommand{\overarrowsmall@}[3]{%
  \vbox{%
    \ialign{%
      ##\crcr
      #1{\smaller@style{#2}}\crcr
      \noalign{\nointerlineskip\vskip0.4pt}%
      $\m@th\hfil#2#3\hfil$\crcr
    }%
  }%
}
\def\smaller@style#1{%
  \ifx#1\displaystyle\scriptstyle\else
    \ifx#1\textstyle\scriptstyle\else
      \scriptscriptstyle
    \fi
  \fi
}
\DeclareFontFamily{U}{mathc}{}
\DeclareFontShape{U}{mathc}{m}{it}%
{<->s*[1.03] mathc10}{}
\DeclareMathAlphabet{\mathcalsmall}{U}{mathc}{m}{it}
\let\originalleft\left
\let\originalright\right
\renewcommand{\left}{\mathopen{}\mathclose\bgroup\originalleft}
\renewcommand{\right}{\aftergroup\egroup\originalright}
\DeclareFontFamily{OT1}{cmrx}{}
\DeclareFontShape{OT1}{cmrx}{m}{n}{<->cmr10}{}
\let\saveLongrightarrow\Longrightarrow
\renewcommand*{\Longrightarrow}{%
    \mathrel{\rlap{\fontfamily{cmrx}\fontencoding{OT1}\selectfont=}%
    \hphantom{\saveLongrightarrow}%
    \llap{$\m@th\Rightarrow$}}}
\newenvironment{proof_header}[1][\proofname]{\par
  \pushQED{\qed}%
  \normalfont\topsep6\p@\@plus6\p@\relax
  \trivlist
  \item[]
  {\itshape #1\@addpunct{.}}\hskip\labelsep\ignorespaces
}{%
  \popQED\endtrivlist\@endpefalse
}
\def\@seccntformat#1{%
  \protect\textup{%
    \protect\@secnumfont
    \expandafter\protect\csname format#1\endcsname % <--- added
    \csname the#1\endcsname
    \protect\@secnumpunct
  }%
}
\newtheorem{thm}{Theorem}[section]
\newtheorem{prop}[thm]{Proposition}
\newtheorem{lem}[thm]{Lemma}
\newtheorem{cor}[thm]{Corollary}
\theoremstyle{definition}
\newtheorem{conj}[thm]{Conjecture}
\newtheorem{rem}[thm]{Remark}
\newtheorem{ex}[thm]{Example}
\newtheorem{defn}[thm]{Definition}
\newtheorem*{defn*}{Definition}
\newtheorem{fact}[thm]{Fact}
\newtheorem*{construction*}{Construction}
\newtheorem*{perspective*}{Perspective}
\newtheorem*{motivation*}{Motivation}
\newtheorem*{summary*}{\normalfont\scshape Summary}
\newtheorem*{references*}{References}
\title{On the dimension of the Fomin-Kirillov algebra and related algebras}
\author{Christoph Bärligea}
\dedicatory{\enquote{Faire de l'Algèbre, c'est essentiellement \emph{calculer}, c'est-à-dire effectuer, sur des éléments d'un ensemble, des « opérations algébriques », dont l'exemple le plus connu est fourni par les « quatre règles » de l'arithmétique élémentaire.} --- \emph{Nicolas Bourbaki\textsuperscript{1}}}
\email{christoph.baerligea@ruhr-uni-bochum.de}
\keywords{Braided differential calculus, Nichols-Woronowicz algebra model for Schubert calculus on finite Coxeter groups, Fomin-Kirillov algebra, integrals for Hopf algebras}
\date{September~11, 2024}
\subjclass[2010]{Primary 20G42; Secondary 16T05, 20F55}
\begin{document}

\begin{abstract}

In 1999, Fomin-Kirillov~\cite{fomin-kirillov} introduced the quadratic algebras $\EuScript{E}_m$ in terms of generators and relations which are the universal quadratic cover of the algebra generated by divided difference operators $\partial_{ij}$ acting on the polynomial ring $\mathbf{k}[x_1,\ldots,x_m]$. These algebras are mostly important due to their relations to Schubert calculus and geometry and to the general framework of quantum groups and Nichols algebras. Fomin and Kirillov asked about the dimension of $\EuScript{E}_m$. In this paper, we prove that $\EuScript{E}_m$ is infinite dimensional for all $m\geq 6$ which was a well-known conjecture. 
The techniques we use rely on braided differential calculus as developed by Liu~\cite{liu,liusubalgebras} and Bazlov~\cite{bazlov1} as well as on the notion of integrals for Hopf algebras as introduced by Sweedler~\cite{sweedler}.

\begin{comment}

Let $\EuScript{E}_m$ be the Fomin-Kirillov algebra, and let $\EuScript{B}_{\mathbb{S}_m}$ be the Nichols-Woronowicz algebra model for Schubert calculus on the symmetric group $\mathbb{S}_m$ which is a quotient of $\EuScript{E}_m$, i.e.\ the Nichols algebra associated to a Yetter-Drinfeld $\mathbb{S}_m$-module defined by the set of reflections of $\mathbb{S}_m$ and a specific one-dimensional representation of a subgroup of $\mathbb{S}_m$. It is a famous open problem to prove that $\EuScript{E}_m$ is infinite dimensional for all $m\geq 6$. In this work, as a step towards a solution of this problem, we introduce a subalgebra of $\EuScript{B}_{\mathbb{S}_m}$, and prove, under the assumption of finite dimensionality of $\EuScript{B}_{\mathbb{S}_m}$, that this subalgebra admits unique integrals in a strong sense, and we relate these integrals to integrals in $\EuScript{B}_{\mathbb{S}_m}$. The techniques we use rely on braided differential calculus as developed in \cite{skew,bazlov1,liu}, and on the notion of integrals for Hopf algebras as introduced in \cite{sweedler}.

\end{comment}

\end{abstract}

\maketitle
\footnotetext[1]{See~\cite[Introduction, page~xi]{bourbaki2007algebre}.}
\stepcounter{footnote}
\setcounter{tocdepth}{1}
\tableofcontents

\section{\for{toc}{{\color{white}0}}Introduction}

We fix once and for all an arbitrary ground field $\mathbf{k}$. We will always work over this field. We denote by $\mathbb{S}_m$ the symmetric group on $m$ letters. We denote by $R$ the root system of type $\mathsf{A}_n$ where $n=m-1$. Fomin-Kirillov introduced in \cite[Definition~2.1]{fomin-kirillov} a quadratic $\mathbf{k}$-algebra $\EuScript{E}_m$, which is now called the Fomin-Kirillov algebra, defined by generators and relations where we have generators $x_\alpha$ for each $\alpha\in R$ and homogeneous relations
\begin{align*}
\begin{aligned}
x_{-\alpha}&\mathrlap{{}=-x_\alpha}
\vphantom{\text{\parbox[c]{2.5in}{for all $\alpha,\gamma\in R$ such that $\alpha$ and $\gamma$ are orthogonal,}}}\\
x_\alpha^2&\mathrlap{{}=0}
\end{aligned}&\hphantom{-x_\gamma x_\alpha=0}&&
\mathllap{\left.\vphantom{\begin{aligned}
x_{-\alpha}&\mathrlap{{}=-x_\alpha}
\vphantom{\text{\parbox[c]{2.5in}{for all $\alpha,\gamma\in R$ such that $\alpha$ and $\gamma$ are orthogonal,}}}\\
x_\alpha^2&\mathrlap{{}=0}
\end{aligned}}\right\}\quad}
\text{for all $\alpha\in R$,}\\
x_\alpha x_\gamma&\mathrlap{{}=x_\gamma x_\alpha}\hphantom{-x_\gamma x_\alpha=0}&&
\text{\parbox[c]{2.5in}{for all $\alpha,\gamma\in R$ such that $\alpha$ and $\gamma$ are orthogonal,}}\\
x_\alpha x_{\alpha+\gamma}+x_{\alpha+\gamma}x_\gamma&-x_\gamma x_\alpha=0&&
\text{\parbox[t]{2.5in}{for all $\alpha,\gamma\in R$ such that $\alpha$ and $\gamma$ span a root subsystem of $R$ of type $\mathsf{A}_2$ with base $\{\alpha,\gamma\}$.}}
\end{align*}

There are plenty of motivations to consider the Fomin-Kirillov algebra. Let us mention at least two of them which are already present in \cite{fomin-kirillov}.
\begin{itemize}
    \item 
    The divided difference operators $\partial_\alpha$ acting on the polynomial ring $\mathbf{k}[x_1,\ldots,x_m]$ 
    %or the coinvariant algebra $S_{\mathbb{S}_m}$ of $\mathbb{S}_m$
    satisfy exactly the above relations in degree less or equal than two where $\partial_\alpha$ plays the role of $x_\alpha$. In other words, the algebra $\EuScript{E}_m$ projects onto the subalgebra of $\operatorname{End}_{\mathbf{k}}\mathbf{k}[x_1,\ldots,x_m]$ 
    %or $S_{\mathbb{S}_m}$ 
    generated by $\partial_\alpha$ where $\alpha$ runs through $R$. The ideas of the Schubert calculus of divided difference operators will be prevalent in this paper, cf.~Section~\ref{sec:braided-leibniz}. For an elementary introduction to them, we refer to \cite{kirillov,knutson,macdonald1991notes}.
    \item
    Recall that over the complex numbers the coinvariant algebra $S_{\mathbb{S}_m}$ is canonically isomorphic to the cohomology ring of the complete flag variety $\mathrm{SL}_m(\mathbb{C})/B$ where $B$ is a Borel subgroup of $\mathrm{SL}_m(\mathbb{C})$ (\cite[Theorem~6.1]{fomin-kirillov}). Because of this geometric interpretation, the coinvariant algebra is sometimes also called Borel's algebra (at least when considered over the complex numbers). In \cite[Theorem~7.1]{fomin-kirillov}, it was proved that $S_{\mathbb{S}_m}$ can be canonically embedded into $\EuScript{E}_m$ via Dunkl elements. In this way, the Fomin-Kirillov algebra can be thought of as a noncommutative model for Schubert calculus, and can be useful to prove something about the latter.
\end{itemize}

In \cite[Problem~2.2]{fomin-kirillov}, the authors ask about the dimension of $\EuScript{E}_m$ considered as a $\mathbf{k}$-vector space. In this paper, we prove the following theorem.

\begin{thm}[Corollary~\ref{cor:main}]
\label{thm:fomin-kirillov-intro}

The Fomin-Kirillov algebra $\EuScript{E}_m$ is infinite dimensional for all $m\geq 6$.

\end{thm}

\begin{rem}
\label{rem:intro}

It is known from \cite[Example~6.4]{milinski-schneider} that $\EuScript{E}_m$ is finite dimensional for all $m\leq 5$.

\end{rem}

\begin{proof}[Setup of the paper]

Let $(W,S)$ be a finite Coxeter system. Let $\EuScript{B}_W$ be the Nichols-Woronowicz algebra model for Schubert calculus on $W$ as it has been studied in \cite{bazlov1}. The algebra $\EuScript{B}_W$ is a Nichols algebra associated to a Yetter-Drinfeld $W$-module defined by the set of reflections $T$ of $W$ and a specific one-dimensional representation of a subgroup of $W$. It is a braided $\mathbb{Z}_{\geq 0}$-graded Hopf algebra in the Yetter-Drinfeld category over $W$. Over the symmetric group $\mathbb{S}_m$, the algebra $\EuScript{B}_{\mathbb{S}_m}$ is known to be a quotient of $\EuScript{E}_m$, and conjecturally isomorphic to $\EuScript{E}_m$ which amounts to say that the Hopf duality pairing between $\EuScript{E}_m$ and itself is nondegenerate which is currently only known when $m\leq 5$.\footnote{The same situation arises for all simply laced Weyl groups, and a more general conjecture can be formulated in this setting as Pierre-Emmanuel Chaput brought to our attention.} In order to prove Theorem~\ref{thm:fomin-kirillov-intro}, it therefore suffices to prove the following conjecture.

\begin{thm}[Theorem~\ref{thm:main}]
\label{thm:inf_Bm-intro}

The algebra $\EuScript{B}_{\mathbb{S}_m}$ is infinite dimensional for all $m\geq 6$.

\end{thm}

\begin{rem}

Since $\EuScript{B}_{\mathbb{S}_m}$ is a quotient of $\EuScript{E}_m$ for all $m$ and since $\EuScript{E}_m$ is finite dimensional for all $m\leq 5$ by Remark~\ref{rem:intro}, we know that $\EuScript{B}_{\mathbb{S}_m}$ is finite dimensional for all $m\leq 5$ as well. 
    
\end{rem}

From now on, we will work with $\EuScript{B}_W$ whenever we can prove our results in this generality, and specialize to $\EuScript{B}_{\mathbb{S}_m}$ whenever needed. This has the advantage over working directly with $\EuScript{E}_m$ that the statements can be formulated in a clear and type independent way. Moreover, the algebras $\EuScript{B}_W$ and in particular $\EuScript{B}_{\mathbb{S}_m}$ enjoy desirable properties which are only conjecturally known for $\EuScript{E}_m$ and which we want to use in our proofs: Most significantly, just as every Nichols algebra associated to a finite dimensional Yetter-Drinfeld module, they admit a nondegenerate Hopf duality pairing.
\end{proof}

\begin{proof}[Strategy of the proof of Theorem~\ref{thm:inf_Bm-intro}]

\textit{For this environment, let us set temporarily $W=\mathbb{S}_m$.} The idea of the proof of Theorem~\ref{thm:inf_Bm-intro} is to introduce a certain subalgebra $\EuScript{B}_W'$ of $\EuScript{B}_W$ which is already apparent in \cite{liusubalgebras} as an explicitification of the class of subalgebras of $\EuScript{B}_W$ defined in \cite[Definition~2.2]{liusubalgebras}. The subalgebra $\EuScript{B}_W'$ also is a tensor factor of $\EuScript{B}_W$ and the corresponding other tensor factor is the nilCoxeter algebra $\EuScript{N}_W$ (-- first introduced in \cite[Section~2]{nilCoxeter} and in this paper studied in Section~\ref{sec:nilcoxeter}). In other words, we have according to Corollary~\ref{cor:tensorproduct} that
\[
\EuScript{B}_W=\EuScript{N}_W\otimes\EuScript{B}_W'=\EuScript{B}_W'\otimes\EuScript{N}_W\,.
\] 
This result is a for our means important explicitification of \cite[Theorem~4.1]{liusubalgebras} in the sense that we identify based on reduction of monomials in Section~\ref{sec:reduction} explicitly the other tensor factor corresponding to $\EuScript{B}_W'$ as $\EuScript{N}_W$.

Now, there are two key facts about this tensor product decomposition: 
\begin{itemize}
    \item 
    If $\EuScript{B}_W$ is finite dimensional, the \enquote{integral} in $\EuScript{B}_W'$ which exists by Theorem~\ref{thm:subalgebra}
    in the sense of \cite[Section~2]{sweedler} is preserved by the antipode of $\EuScript{B}_W$ regarded as a braided Hopf algebra (cf.~Theorem~\ref{thm:hypo-antipode}).
    \item
    The Hopf duality pairing between $\EuScript{B}_W$ and itself restricted to $\EuScript{B}_W'\otimes\EuScript{B}_W'$ stays nondegenerate (cf.~Corollary~\ref{cor:nondegenerate_sub}).  
\end{itemize}
These two facts together with invariance properties (cf.\ Section~\ref{sec:invariance} and Section~\ref{sec:invhypo}) can be used to derive a contradiction to finite dimensionality of $\mathbb{B}_{\mathbb{S}_m}$ if $m\geq 6$. Namely, by studying the known structure of the finite dimensional cases for $m\leq 5$ (cf.~\cite[Proposition~6.15, Corollary~6.20]{bastian}, \cite[Example~6.4]{milinski-schneider}, \cite[Proposition~5.13(2)]{integrale5}), you can observe a pattern in the integrals in $\EuScript{B}_{\mathbb{S}_m}$ 
%(-- integral in the sense of \cite[Section~2]{sweedler}) 
which must be observed by all finite dimensional cases (cf.\ Conjecture~\ref{conj:milinski} and Theorem~\ref{thm:milinski-conj}). The dimensions and the integrals for $m\leq 5$ are rather easy and intuitively to understand and to describe explicitly (cf.~\cite{bastian,milinski-schneider,integrale5}~loc.~cit.). Once the integral in $\EuScript{B}_{\mathbb{S}_6}$ is constructed algebraically in the expected form under the assumption of finite dimensionality, it violates the group structure of $\mathbb{S}_6$ (e.g.\ the center of $\mathbb{S}_6$ is trivial), however, which gives a contradiction.
\end{proof}

\begin{proof}[Intuition of the paper]

Whenever the algebra $\EuScript{B}_W$ is finite dimensional, we expect certain commutativity relations to hold, either on the nose under correspondent additional assumptions, or up to multiplication with a nonzero element in $\EuScript{B}_W$. We make this intuition precise in Lemma~\ref{lem:motiv}:~$\eqref{item:equiv-1}\Rightarrow\eqref{item:equiv-3}$, Theorem~\ref{thm:concrete} and Corollary~\ref{cor:abstr-comm}. In the setup of Theorem~\ref{thm:inf_Bm-intro}, such a commutativity relation is visible because once $\EuScript{B}_{\mathbb{S}_6}$ is assumed to be finite dimensional, the commutativity relations from Lemma~\ref{lem:motiv}:~$\eqref{item:equiv-1}\Rightarrow\eqref{item:equiv-3}$ are satisfied as we see form the proof of Theorem~\ref{thm:mainS6}. In some sense, we can also interpret this in addition to the group theoretic contradiction against the structure of $\mathbb{S}_6$ mentioned in the paragraph \textit{{\enquote{Strategy of the proof of Theorem~\ref{thm:inf_Bm-intro}}}} by saying that finite dimensionality of $\EuScript{B}_{\mathbb{S}_6}$ implies to many commutativity relations as are actually satisfied by definition in the Woronowicz ideal (cf.\ \cite[Subsection~3.3]{bazlov1} or \cite[Subsection~2.3]{skew}).
%every nonzero element $P\in\smash{\EuScript{B}_{\mathbb{S}_m}^{\prime\,\mathrm{top}}}$ gives rise to a nonzero integral $Px_{w_o}=\pm x_{w_o}P$ in $\EuScript{B}_{\mathbb{S}_m}$ by multiplication with $x_{w_o}$ from the left or the right (cf.~Lemma~\ref{lem:invariance-prop}\eqref{item:inv3},\eqref{item:inv4}, Theorem~\ref{thm:hypo-II}\eqref{item:II-two}), where $w_o$ is the longest element of $W$, and where $x_{w_o}$ is the basis element indexed by $w_o$ of the standard basis of the nilCoxeter algebra of $W$ which embeds into $\EuScript{B}_{\mathbb{S}_m}$ by \cite[Theorem~6.1(i),(ii)]{bazlov1}.
\end{proof}

\subsection*{References}

Although it did not become apparent by what we said up to now, the results in this paper rely on braided differential calculus as developed in \cite{bazlov1,liu,liusubalgebras}. Even if many of the original ideas are due to \cite{liu}, we will often refer to \cite{skew} for similar statements because the latter paper is written in a generality and language which is more suitable for us. 

\subsection*{Organization}

In Section~\ref{sec:coxeter}-\ref{sec:nilcoxeter}, we setup common terminology and notation which will be used from thereon. In Section~\ref{sec:inversion}--\ref{sec:invariance}, we deepen some aspects of the braided differential calculus developed in \cite{bazlov1,liu}, in particular with a view towards integrals in braided Hopf algebras and commutativity relations and invariance properties under the assumption of their existence. Section~\ref{sec:motiv}-\ref{sec:coproduct} are not strictly necessary to be read to understand the proof of Theorem~\ref{thm:inf_Bm-intro}, but they serve as a good illustration of the principle of commutativity explained in the paragraph \textit{\enquote{Intuition of the paper}} and, in particular, Section~\ref{sec:motiv} motivates the integral structure which must be observed in the finite dimensional case as mentioned in the paragraph \enquote{\textit{Strategy of the proof of Theorem~\ref{thm:inf_Bm-intro}}}. Section~\ref{sec:reduction}--\ref{sec:consequences} finally contain the proof of Theorem~\ref{thm:inf_Bm-intro}. While we work mostly in the generality of $\EuScript{B}_W$ as explained in the paragraph \textit{\enquote{Setup of the paper}}, it should be noted that some of our results, in particular Theorem~\ref{thm:concrete}, work even in greater generality, i.e.\ for Nichols algebras associated to more general Yetter-Drinfeld modules. We point out the details concerning these generalizations in Subsection~\ref{subsec:general}.

\subsection*{Context}

It is a recurrent theme in the literature to ask which groups admit a finite dimensional Nichols algebra. This question has been studied in particular for symmetric groups, alternating groups and dihedral groups \cite{symmetric,dihedral-irred,symmetric0,dihedral-red}. This paper can be seen in line with these works. Especially, the positive solution of Theorem~\ref{thm:inf_Bm-intro} supplements the classification theorem \cite[Theorem~1.1]{symmetric} of finite dimensional Nichols algebras over symmetric groups. It should be noted that there exists a general theory which allows, among other things, to treat the question of dimensionality of a Nichols algebra whenever it is associated to a \emph{reducible} Yetter-Drinfeld module over a Hopf algebra with invertible antipode, see for example \cite{lyndon,heckenberger1,heckenberger2,heckenberger2',recent,heckenberger2020hopfbook}. This theory is based on combinatorics on Lyndon words over the alphabet given by an index set of the irreducible components of the underlying Yetter-Drinfeld module. However, whenever the underlying Yetter-Drinfeld module is \emph{irreducible}, the theory of Andruskiewitsch, Heckenberger, Schneider et~al.\ gives no information according to the author's understanding. Thus, it cannot be suitable to proof Theorem~\ref{thm:inf_Bm-intro}.  

\subsection*{Acknowledgment}

%The support of the German Research Foundation (DFG) is gratefully acknowledged. The author thanks the Beijing International Center for Mathematical Research and Professor Xiaobo Liu for accepting him as a Boya Postdoctoral Fellow of Peking University.

We want to thank Nicolas Perrin and David Benson for giving a first impression on this paper. We want to thank István Heckenberger for taking a closer look at the results, giving some feedback and pointing out the references~\cite{bastian,integrale5}. Finally, we want to thank \begin{CJK}{UTF8}{gbsn}黄玲茹\end{CJK}, \begin{CJK}{UTF8}{gbsn}常林培\end{CJK}, \begin{CJK}{UTF8}{gbsn}孟娜\end{CJK} and \begin{CJK}{UTF8}{gbsn}丁思雨\end{CJK} for their support since 2019. Last but not least, we want to thank Christl Hedwig for her support since 2012.

\section{\for{toc}{{\color{white}0}}Coxeter groups}
\label{sec:coxeter}

We fix once and for all a finite Coxeter system $(W,S)$. We assume throughout that $W\neq 1$ or equivalently that $S\neq\varnothing$. We denote by $m(s,s')$ the necessarily finite order of $ss'$ in $W$ (cf.~\cite[Proposition~5.3]{humphreys-coxeter}). We denote by $T=\bigcup_{w\in W}wSw^{-1}$ the set of all reflections of $W$. We denote by $\mathfrak{h}$ the geometric representation of $W$ as in \cite[Section~5.3]{humphreys-coxeter}, i.e.\ $\mathfrak{h}$ is a real vector space of dimension $\left|S\right|$ with basis given by the set of simple roots $\Delta$, and each simple root $\beta$ corresponds bijectively to a simple reflection $s_\beta$ in $S$ which acts on $\mathfrak{h}$ via the formula $s_\beta(x)=x-2B(x,\beta)\beta$, where $B$ is the $W$-invariant scalar product on $\mathfrak{h}$ uniquely determined by the assignment
\[
B(\beta,\beta')=-\cos\frac{\pi}{m(s_\beta,s_{\beta'})}
\]
on simple roots $\beta,\beta'$ (cf.~\cite[Chapter~V, \S~4, n\textsuperscript{o}~8, Theorem~2]{bourbaki_roots} and \cite[Proposition~5.3]{humphreys-coxeter}). Recall that the geometric representation $\mathfrak{h}$ of $W$ is faithful by \cite[Corollary~5.4]{humphreys-coxeter}.

To the situation above, we attach a root system $R$ in $\mathfrak{h}$ and a partial order \enquote{$\leq$} on $\mathfrak{h}$ considered as an abelian group as in \cite[Section~5.4]{humphreys-coxeter}. The root system $R$ is a root system in the weak sense of \cite[Section~1.2]{humphreys-coxeter}, i.e.\ it is a finite subset of nonzero vectors in $\mathfrak{h}$ satisfying the axioms
\begin{enumerate}
    \item[\optionaldesc{(R1)}{axiom:R1}]
    $R\cap\mathbb{R}\alpha=\{-\alpha,\alpha\}$ for all $\alpha\in R$,
    \item[(R2)]
    $w(R)=R$ for all $w\in W$,
\end{enumerate}
in particular, it is in general not crystallographic in the sense of \cite[Section~2.9]{humphreys-coxeter}. We refer to Axiom~\ref{axiom:R1} by saying that $R$ is reduced. We define the set of positive roots $R^+$ and the set of negative roots $R^-$ by the equations $R^+=\{\alpha\in R\mid\alpha>0\}$ and $R^-=\{\alpha\in R\mid\alpha<0\}$. Note that we have $R=R^+\cup R^-$ where the union is obviously disjoint (cf.~\cite[Theorem~5.4]{humphreys-coxeter}). For a subset $\Theta$ of $R$, we denote by $-\Theta$ the set of roots $-\Theta=\{-\alpha\mid\alpha\in\Theta\}$. With this notation, we have for example $R^-=-R^+$. Because $R$ is reduced, note that $R^+$ and $R^-$ are in bijection with $T$. Each time, the bijection is given by the assignment $\alpha\mapsto s_\alpha$ where $s_\alpha$ is the reflection associated to $\alpha$ as in \cite[Section~5.7]{humphreys-coxeter}, which acts on $\mathfrak{h}$ via the formula $s_\alpha(x)=x-2B(x,\alpha)\alpha$ analogously as in the case of a simple root $\alpha=\beta$. This formula clearly implies $s_\alpha=s_{-\alpha}$ for all $\alpha\in R$.

We denote by $\ell$ the length function on $W$ as defined in \cite[Section~5.2]{humphreys-coxeter}. We denote by $w_o$ the longest element of $W$, i.e. the unique element in $W$ of maximum length (cf.~\cite[Section~5.6, Exercise~2]{humphreys-coxeter}). We denote by \enquote{$\leq$} the (strong) Bruhat order on $W$ as defined in \cite[Section~5.9]{humphreys-coxeter}.

\begin{rem}

Whenever $W$ is a simply laced Weyl group or equivalently whenever $R$ is a simply laced root system, every root $\alpha$ written as a linear combination of simple roots has integral coefficients, and we define the height of $\alpha$, in formulas $\operatorname{ht}(\alpha)$, as the sum of those coefficients (cf.~\cite[Section~3.20, p.~83]{humphreys-coxeter}). In this situation, we define further a $W$-invariant scalar product on $\mathfrak{h}$ given by $(-,-)=2B(-,-)$ which has the property that $(\alpha,\gamma)\in\{-1,0,1\}$ for all non-proportional roots $\alpha,\gamma$ and that $(\alpha,\alpha)=2$ for all roots $\alpha$.

\end{rem}

\begin{rem}

In this remark, we want to recall some basic facts about the Bruhat order on $W$ which we will use from now on without reference. For a positive root $\alpha$, we have by \cite[Proposition~5.7]{humphreys-coxeter} the following equivalences (to be read from top to bottom):
\begin{align*}
&\begin{aligned}
&&ws_\alpha&<w\\
\Longleftrightarrow&&\ell(ws_\alpha)&<\ell(w)\\
\Longleftrightarrow&&w(\alpha)&<0   
\end{aligned}
&&\pmb{\left.\vphantom{\begin{aligned}
&&ws_\alpha&<w\\
\Longleftrightarrow&&\ell(ws_\alpha)&<\ell(w)\\
\Longleftrightarrow&&w(\alpha)&<0   
\end{aligned}}\right|}&&\begin{aligned}
&&ws_\alpha&>w\\
\Longleftrightarrow&&\ell(ws_\alpha)&>\ell(w)\\
\Longleftrightarrow&&w(\alpha)&>0
\end{aligned}\\
\intertext{If $\alpha=\beta$ is a simple root, it follows from the strong exchange condition \cite[Theorem~5.8]{humphreys-coxeter} that the above equivalences are further equivalent to:}
&\begin{aligned}
\Longleftrightarrow&&\text{\parbox[c]{2.1in}{there exists a reduced expression of $w$ which ends with $s_\beta$}}
\end{aligned}
&&\pmb{\left.\vphantom{\begin{aligned}
\Longleftrightarrow&&\text{\parbox[c]{2.1in}{there exists a reduced expression of $w$ which ends with $s_\beta$}}
\end{aligned}}\right|}&&\begin{aligned}
\Longleftrightarrow&&\text{\parbox[c]{2.1in}{no reduced expression of $w$ ends with $s_\beta$}}
\end{aligned}\\
\intertext{By \cite[Section~5.2, Equation~(L1), Exercise~5.9]{humphreys-coxeter}, for a positive root $\alpha$, there also exist left analogues of the above two lines of equivalences as follows:}
&\begin{aligned}
&&s_\alpha w&<w\\
\Longleftrightarrow&&\ell(s_\alpha w)&<\ell(w)\\
\Longleftrightarrow&&w^{-1}(\alpha)&<0   
\end{aligned}
&&\pmb{\left.\vphantom{\begin{aligned}
&&s_\alpha w&<w\\
\Longleftrightarrow&&\ell(s_\alpha w)&<\ell(w)\\
\Longleftrightarrow&&w^{-1}(\alpha)&<0   
\end{aligned}}\right|}&&\begin{aligned}
&&s_\alpha w&>w\\
\Longleftrightarrow&&\ell(s_\alpha w)&>\ell(w)\\
\Longleftrightarrow&&w^{-1}(\alpha)&>0   
\end{aligned}\\
\intertext{If $\alpha=\beta$ is a simple root, it follows further that the above equivalences are equivalent to:}
&\begin{aligned}
\Longleftrightarrow&&\text{\parbox[c]{2.1in}{there exists a reduced expression of $w$ which starts with $s_\beta$}}
\end{aligned}
&&\pmb{\left.\vphantom{\begin{aligned}
\Longleftrightarrow&&\text{\parbox[c]{2.1in}{there exists a reduced expression of $w$ which starts with $s_\beta$}}
\end{aligned}}\right|}&&\begin{aligned}
\Longleftrightarrow&&\text{\parbox[c]{2.1in}{no reduced expression of $w$ starts with $s_\beta$}}
\end{aligned}
\end{align*}

\end{rem}

\begin{rem}[The center of $W$]
\label{rem:center}

Let $(W,S)=\prod_i(W_i,S_i)$ be the decomposition of $(W,S)$ into irreducible components where $(W_i,S_i)$ is an irreducible finite Coxeter system corresponding to the $i$\textsuperscript{th} connected component of the Coxeter diagram of $(W,S)$ (cf.~\cite[Proposition~6.1]{humphreys-coxeter}). The geometric representation $\mathfrak{h}$ of $W$ decomposes as a direct sum $\bigoplus_i\mathfrak{h}_i$ into irreducible components where $\mathfrak{h}_i$ is the geometric representation of $W_i$ (cf.~\cite[Chapter~V, \S~4, n\textsuperscript{o}~8, Corollary of Theorem~2]{bourbaki_roots}). The center of $W$ decomposes into a product $\prod_i Z_i$ where $Z_i$ is the center of $W_i$. For each $i$, the group $Z_i$ embeds into $\mathbb{Z}/2\mathbb{Z}$ where the $i$\textsuperscript{th} embedding is an isomorphisms if and only if $w_o$ acts as minus the identity on $\mathfrak{h}_i$ (in which case $Z_i=\{1,w_{o,i}\}$ where $w_{o,i}$ is the longest element of $W_i$ and where $Z_i$ is trivial otherwise). This description of the center of $W$ was discussed in \cite[Lemma~4.5]{quasi-II}. The center of $W$ is relevant for this paper because of Lemma~\ref{lem:char}.

\end{rem}

\section{\for{toc}{{\color{white}0}}Disjoint systems in Coxeter groups}
\label{sec:disjoint}

In this section, we introduce the notion of disjoint systems in Coxeter groups. In the form we define it, it is only suitable for finite Coxeter groups. This notion will be present in many of our considerations.

\begin{defn}

Let $w\in W$. We denote by $T_w$ the subset of $R^+$ defined by
\[
T_w=\{\alpha\in R^+\mid s_\alpha\in wSw^{-1}\}\,.
\]

\end{defn}

\begin{fact}
\label{fact:T}

Let $w$ be an element in the centralizer of $w_o$. Then, we have $T_{w_ow}=T_{ww_o}=T_w$.

\end{fact}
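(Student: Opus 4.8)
The plan is to reduce the entire statement to the single structural fact that the longest element $w_o$ normalizes the set $S$ of simple reflections. First I would recall that conjugation by $w_o$ permutes $S$: for each simple root $\beta$ one has $w_o s_\beta w_o^{-1}=s_{w_o(\beta)}$, and since $w_o$ sends the positive system $R^+$ onto $R^-$ it sends the simple system $\Delta$ to $-\Delta$, so that $-w_o$ permutes $\Delta$ (it realizes the diagram automorphism of $(W,S)$). Hence $s_{w_o(\beta)}=s_{-w_o(\beta)}$ is again a simple reflection, which gives $w_o S w_o^{-1}=S$. Because $w_o$ is an involution, this reads $w_o S w_o=S$.

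The next observation is purely formal: by its very definition $T_v=\{\alpha\in R^+\mid s_\alpha\in vSv^{-1}\}$ depends on $v$ only through the set of reflections $vSv^{-1}$. Thus any equality of conjugate sets $vSv^{-1}=v'Sv'^{-1}$ translates at once into $T_v=T_{v'}$, and I never need to track individual roots. With this in hand, the key computation is for $v=ww_o$. Using $(ww_o)^{-1}=w_o w^{-1}$ together with the normalizer property just established,
\[
(ww_o)\,S\,(ww_o)^{-1}=w\,(w_oSw_o)\,w^{-1}=wSw^{-1},
\]
so that $T_{ww_o}=T_w$. I would emphasize that this half of the statement holds for \emph{every} $w\in W$ and does not use the centralizer hypothesis at all.

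Finally I would bring in the assumption that $w$ lies in the centralizer of $w_o$, i.e.\ $w_ow=ww_o$ as elements of $W$. This makes $T_{w_ow}=T_{ww_o}$ hold trivially, since $w_ow$ and $ww_o$ are literally the same group element, and combining this with the previous display yields $T_{w_ow}=T_{ww_o}=T_w$, as required. I do not anticipate a genuine obstacle: the only point demanding care is the justification of $w_oSw_o=S$, which is standard, and the recognition that the centralizer hypothesis is needed \emph{only} to collapse $T_{w_ow}$ onto the unconditionally computed $T_{ww_o}$ (without it, $T_{w_ow}$ would instead equal the $(-w_o)$-image of $T_w$ in $R^+$, which is generally different).
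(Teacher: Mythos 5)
Your proof is correct and follows the same route as the paper: the equality $T_{w_ow}=T_{ww_o}$ is immediate from the centralizer hypothesis, and $T_{ww_o}=T_w$ reduces to $w_o$ being an involution together with $w_oSw_o=S$ (which the paper simply cites from Humphreys, Section~5.6, Exercise~2, and which you justify via the diagram automorphism $-w_o$). Your added remarks --- that the second equality needs no hypothesis on $w$ and that in general $T_{w_ow}$ is the $(-w_o)$-image of $T_w$ --- are accurate but not needed.
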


\begin{proof}

By assumption on $w$, the equality $T_{w_ow}=T_{ww_o}$ is clear. The equality $T_{ww_o}=T_w$ follows because $w_o$ is an involution and because $w_oSw_o=S$ by \cite[Section~5.6, Exercise~2]{humphreys-coxeter}.
\end{proof}

\begin{rem}
\label{rem:T}

Fact~\ref{fact:T} has a converse if $(W,S)$ is irreducible. In this case, if $w,w'\in W$ are such that $T_w=T_{w'}$, it follows that $w=w'$ or $w=w'w_o$. This is clear from \cite[Section~5.6, Exercise~2]{humphreys-coxeter} and the existence of a unique highest root for irreducible Coxeter systems (i.e.\ a root $\theta_1$ such that $B(\theta_1,\beta)\geq 0$ for all $\beta\in\Delta$).

\end{rem}

\begin{defn}

\leavevmode

\begin{itemize}
    \item 
    We say that $D$ is a disjoint system if $D$ is contained in the centralizer of $w_o$ and if the union $\bigcup_{w\in D}wSw^{-1}$ is disjoint.
    \item
    If $D$ is a disjoint system, we call the cardinality of $D$ the order of $D$.
    \item
    We say that a disjoint system $D$ is complete if the order of $D$ is equal to $\smash{\frac{\left|T\right|}{\left|S\right|}}$, in other words, if $D$ is a disjoint system such that $T=\smash{\bigcup_{w\in D}wSw^{-1}}$ where the union is disjoint.
    \item
    We say that a disjoint system $D$ is normalized if $1\in D$.
    \item
    If we want to emphasize the group $W$ with respect to which a disjoint system is defined, we explicitly speak about a disjoint system in $W$.
\end{itemize}

\end{defn}

\begin{rem}

Note that we allow the empty set as disjoint system of order zero in any $W$.

\end{rem}

\begin{rem}
\label{rem:sub-disj}

If $D$ is a disjoint system, then any subset of $D$ is also a disjoint system.

\end{rem}

\begin{rem}
\label{rem:coxeter}

We remark that the integer $\smash{\frac{2\left|T\right|}{\left|S\right|}}\in\mathbb{Z}_{>0}$ is called the Coxeter number of $W$ (cf.~\cite[Proposition~3.18]{humphreys-coxeter}). Note that half the Coxeter number of $W$ appears in the definition of a complete disjoint system. If a complete disjoint system $D$ exists, then the Coxeter number of $W$ is even and half the Coxeter number of $W$ is equal to the order of $D$.

\end{rem}

\begin{rem}

Let $D$ be a disjoint system of order $r$. Let $w_1,\ldots,w_s\in D$. Then, the set 
\[
D'=(D\setminus\{w_1,\ldots,w_s\})\cup\{v_1,\ldots,v_s\}
\]
where $v_i=w_ow_i=w_iw_o$ for all $1\leq i\leq s$ is also a disjoint system of order $r$. This follows from Fact~\ref{fact:T} and because the centralizer of $w_o$ is a subgroup of $W$ which contains the involution $w_o$. The difference between $D$ and disjoint systems $D'$ derived from $D$ in this way will always be irrelevant for our applications in this paper.

\end{rem}

\begin{lem}
\label{lem:transl_D}

Let $D$ be a disjoint system of order $r$. Let $v$ be an element in the centralizer of $w_o$. Then, the set $vD$ is also a disjoint system of order $r$.

\end{lem}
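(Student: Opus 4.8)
The plan is to verify directly the two defining conditions of a disjoint system for the set $vD$, together with the assertion on its order. Throughout I would keep in mind that $v$ lies in the centralizer of $w_o$, which is a subgroup of $W$ containing $D$, so that all the relevant products again commute with $w_o$.

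First I would check that $vD$ is contained in the centralizer of $w_o$. Since $v$ and every $w\in D$ commute with $w_o$, and since the centralizer of $w_o$ is closed under multiplication, each product $vw$ again commutes with $w_o$; hence $vD$ is contained in the centralizer of $w_o$, which is the first condition.

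Next I would check that the union $\bigcup_{u\in vD}uSu^{-1}$ is disjoint. The key observation is that conjugation by $v$, that is the map $c_v\colon x\mapsto vxv^{-1}$, restricts to a bijection of the set of reflections $T$ onto itself, because $T$ is stable under conjugation by any element of $W$. For $w,w'\in D$ set $u=vw$ and $u'=vw'$; then $uSu^{-1}=c_v(wSw^{-1})$ and $u'Su'^{-1}=c_v(w'Sw'^{-1})$. Since $D$ is a disjoint system, the sets $wSw^{-1}$ and $w'Sw'^{-1}$ are disjoint whenever $w\neq w'$, and applying the injective map $c_v$ preserves this disjointness; thus the sets $uSu^{-1}$ are pairwise disjoint, as required.

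Finally, the claim about the order follows because left multiplication by $v$ is a bijection of $W$, so that $\left|vD\right|=\left|D\right|=r$; in particular distinct elements of $D$ yield distinct elements of $vD$, which also legitimizes the indexing used in the previous step. I do not expect any genuine obstacle here: the whole argument reduces to the facts that the centralizer of $w_o$ is a subgroup, that conjugation by a fixed group element permutes $T$ bijectively, and that left translation is injective. The only point deserving a moment's care is to confirm that $c_v$ sends $wSw^{-1}$ to $(vw)S(vw)^{-1}$ and preserves disjointness, which is immediate once one notes that $c_v$ is a bijection of $T$.
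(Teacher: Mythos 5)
Your proof is correct and follows essentially the same route as the paper's: closure of the centralizer of $w_o$ under multiplication gives the first condition, conjugation by $v$ (an injection) transports the pairwise disjointness of the sets $wSw^{-1}$ to the sets $(vw)S(vw)^{-1}$, and injectivity of left translation gives the order. No gaps.
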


\begin{proof}

Let the notation be as in the statement. Let $D'=vD$ for short. It is clear that $D'$ has the same cardinality as $D$, that $D'$ is contained in the centralizer of $w_o$ (because this centralizer is a subgroup of $W$), and that the union $\bigcup_{w\in D}vwSw^{-1}v^{-1}$ is disjoint as a translate of the disjoint union $\bigcup_{w\in D}wSw^{-1}$. By replacing $w$ with $v^{-1}w$ in the translated disjoint union, we find that the union $\bigcup_{w\in D'}wSw^{-1}$ is also disjoint. Thus, we know that $D'$ is a disjoint system of order $r$.
\end{proof}

\begin{cor}[Normalization of disjoint systems]
\label{cor:normal}

Let $D$ be a disjoint system of order $r$. For all $v\in D$, the set $v^{-1}D$ is a normalized disjoint system of order $r$. In particular, if a complete disjoint system exists, then there exists also a normalized complete disjoint system.

\end{cor}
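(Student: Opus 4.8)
The plan is to derive both assertions directly from Lemma~\ref{lem:transl_D}, so that no genuinely new argument is needed; the corollary is purely formal given that lemma. First I would treat the opening sentence. Fix $v\in D$. Since $D$ is contained in the centralizer of $w_o$ and that centralizer is a subgroup of $W$, the inverse $v^{-1}$ again lies in the centralizer of $w_o$. Applying Lemma~\ref{lem:transl_D} with $v$ replaced by $v^{-1}$ then shows that $v^{-1}D$ is a disjoint system of order $r$. To see that it is normalized, I would simply observe that $v\in D$ forces $1=v^{-1}v\in v^{-1}D$, which is exactly the defining condition $1\in v^{-1}D$ of a normalized disjoint system.

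For the \enquote{in particular} clause, suppose a complete disjoint system $D$ exists. I would first note that $D$ is nonempty: completeness means that the order of $D$ equals $\frac{\left|T\right|}{\left|S\right|}$, and since $S\subseteq T$ with $S\neq\varnothing$ by the standing assumption $W\neq 1$, this quotient is at least $1$, so $D\neq\varnothing$. Choosing any $v\in D$, the first part already produces a normalized disjoint system $v^{-1}D$ of the same order $r=\frac{\left|T\right|}{\left|S\right|}$. Because completeness of a disjoint system was defined as the single numerical condition that its order equal $\frac{\left|T\right|}{\left|S\right|}$, the set $v^{-1}D$ is again complete, hence a normalized complete disjoint system, as required.

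I expect no real obstacle here, as the statement is a formal consequence of Lemma~\ref{lem:transl_D} together with the fact that left translation by $v^{-1}$ carries $v$ to the identity. The only point worth flagging is the preservation of completeness under the normalization $D\mapsto v^{-1}D$; this holds precisely because completeness is a condition on the order alone, and the order is left invariant by Lemma~\ref{lem:transl_D}, the quantity $\frac{\left|T\right|}{\left|S\right|}$ being independent of $D$.
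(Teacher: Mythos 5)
Your proof is correct and follows the same route as the paper: apply Lemma~\ref{lem:transl_D} to $v^{-1}$ (which lies in the centralizer of $w_o$ because that centralizer is a subgroup), note $1=v^{-1}v\in v^{-1}D$, and observe that completeness is a condition on the order alone and hence is preserved. Your extra remark on the nonemptiness of a complete disjoint system is a small detail the paper leaves implicit, but the argument is essentially identical.
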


\begin{proof}

The particular case is immediate from the more general statement. Let $D$ be a disjoint system of order $r$. Let $v\in D$ and let $D'=v^{-1}D$. Note that $v$ and $v^{-1}$ are both contained in the centralizer of $w_o$, by assumption, and because this centralizer is a subgroup of $W$. If we apply Lemma~\ref{lem:transl_D} to $D$ and $v^{-1}$, we find that $D'$ is a disjoint system of order $r$. But $D'$ is also normalized by definition.
\end{proof}

\begin{rem}

The most urgent combinatorial question raised by the notion of disjoint systems is as follows: If the Coxeter number of $W$ is even (which is necessary by Remark~\ref{rem:coxeter}), does there always exist a complete disjoint system?

\end{rem}

\begin{rem}

Let $D$ be a normalized disjoint system. Then, every element $w\in D\setminus\{1\}$ satisfies $wSw^{-1}\cap S=\varnothing$. If $D$ is a normalized disjoint system in $\mathbb{S}_m$, elements of $D\setminus\{1\}$ are therefore a special example of so-called permutations without rising or falling successions. These permutations in general are subject to combinatorial studies and enumerations. We refer to \cite{OEIS} for a list of literature.\footnote{We want to thank Richard Stanley for pointing out the reference \cite{OEIS} in a comment to our question on MathOverflow \cite{MO}.}

\end{rem}

\subsection*{Disjoint systems in the symmetric group}

For the symmetric group $\mathbb{S}_m$, we write permutations either in one-line notation or in cycle notation where the difference between the two notations is visible by the absence or presence of parenthesis. Sometimes, we also write permutations in $\mathbb{S}_m$ as bijections of $\{1,\ldots,m\}$. 

\begin{rem}
\label{rem:sym_wo}

Recall that the longest element of $\mathbb{S}_m$ is simply given by $m(m-1)\cdots 1$. Hence, the centralizer of the longest element of $\mathbb{S}_m$ is given by permutations $\sigma$ such that $\sigma(i)+\sigma(j)=m+1$ for all $1\leq i,j\leq m$ where $i+j=m+1$.

\end{rem}

\begin{ex}[Complete disjoint system in $\mathbb{S}_6$]
\label{ex:S6}

For this example, let us assume that $W=\mathbb{S}_6$ and that the notation from Section~\ref{sec:coxeter} is realized for $\mathbb{S}_6$. Let us consider the elements $w_1=241635$ and $w_2=315264$ in $\mathbb{S}_6$. By Remark~\ref{rem:sym_wo}, we see that both $w_1$ and $w_2$ lie in the centralizer of the longest element of $\mathbb{S}_6$. Further, the union
\[
w_1Sw_1^{-1}\cup w_2 Sw_2^{-1}=\{(24),(14),(16),(36),(35)\}\cup\{(13),(15),(25),(26),(46)\}
\]
is disjoint and equal to $T\setminus S$. Hence, we conclude that $\{w_1,w_2,1\}$ is a normalized complete disjoint system of order three in $\mathbb{S}_6$. We have illustrated this disjoint system in Figure~\ref{fig:s6}. Note that the nontrivial elements of this disjoint system satisfy the additional relations
\[
w_1^{-1}=w_2\,,\,w_2^{-1}=w_1\,,\,w_1^{-1}w_2=w_2^2=w_1w_o\,,\,w_2^{-1}w_1=w_1^2=w_2w_o\,,
\]
and that as a consequence the partition of $T=w_1Sw_1^{-1}\cup w_2Sw_2^{-1}\cup S$ is invariant under conjugation with $w_1$, $w_2$. This example shows in particular that a complete disjoint system in $\mathbb{S}_6$ exists.

\end{ex}

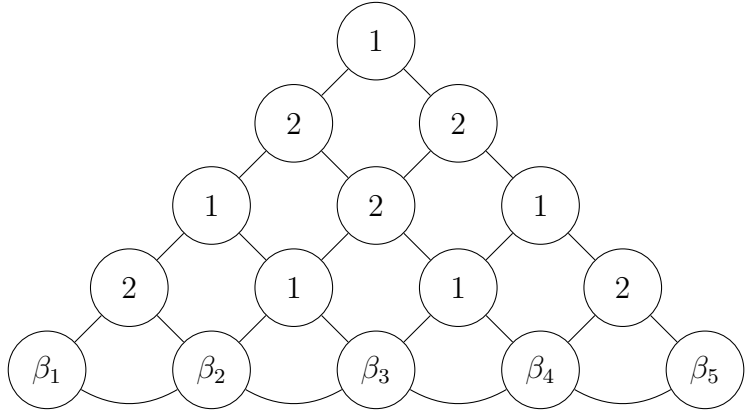
\begin{figure}
\centering
\begin{tikzpicture}[node distance=0.5cm]
   \node[state] (b_1) {$\beta_1$}; 
   \node[state] (c_1) [above right=of b_1] {$2$}; 
   \node[state] (b_2) [below right=of c_1] {$\beta_2$};
   \node[state] (c_2) [above right=of b_2] {$1$};
   \node[state] (b_3) [below right=of c_2] {$\beta_3$};
   \node[state] (c_3) [above right=of b_3] {$1$};    
   \node[state] (b_4) [below right=of c_3] {$\beta_4$};  
   \node[state] (c_4) [above right=of b_4] {$2$}; 
   \node[state] (b_5) [below right=of c_4] {$\beta_5$};
   \node[state] (d_1) [above right=of c_1] {$1$}; 
   \node[state] (d_2) [above right=of c_2] {$2$}; 
   \node[state] (d_3) [above right=of c_3] {$1$};
   \node[state] (e_1) [above right=of d_1] {$2$};
   \node[state] (e_2) [above right=of d_2] {$2$};   
   \node[state] (f_1) [above right=of e_1] {$1$}; 
   \path[-,bend right] 
   (b_1) edge (b_2)
   (b_2) edge (b_3)
   (b_3) edge (b_4)
   (b_4) edge (b_5);
   \path[-]
   (b_1) edge (c_1)
   (b_2) edge (c_2)
   (b_3) edge (c_3)
   (b_4) edge (c_4)
   (c_1) edge (d_1)
   (c_2) edge (d_2)
   (c_3) edge (d_3)
   (d_1) edge (e_1)
   (d_2) edge (e_2)
   (e_1) edge (f_1)
   (b_2) edge (c_1)
   (b_3) edge (c_2)
   (b_4) edge (c_3)
   (b_5) edge (c_4)
   (c_2) edge (d_1)
   (c_3) edge (d_2)
   (c_4) edge (d_3)
   (d_2) edge (e_1)
   (d_3) edge (e_2)
   (e_2) edge (f_1);
\end{tikzpicture}
\caption{For this figure, we assume that $W=\mathbb{S}_6$ and that the notation from Section~\ref{sec:coxeter} is realized for $\mathbb{S}_6$. Let $w_1$ and $w_2$ be the elements of $\mathbb{S}_6$ as defined in Example~\ref{ex:S6}. The barycentric graph illustrates all positive roots in $R^+$ where $\beta_1,\beta_2,\beta_3,\beta_4,\beta_5$ denote all simple roots in $\Delta$ with the labeling as in \cite[Plate~I]{bourbaki_roots}. The positive roots labeled with $1$ and $2$ correspond each up to multiplication with $w_o$ from the right to a permutation in $\mathbb{S}_6$ (in this case to $w_1,w_1w_o$ and $w_2,w_2w_o$, cf.~Remark~\ref{rem:T}) because they form a diagram with straight lines isomorphic to the Dynkin diagram of type $\mathsf{A}_5$ where we possibly allow reflection along the horizontal bottom line below the graph. They correspond to elements in the centralizer of the longest element of $\mathbb{S}_6$ because their diagrams are symmetric with respect to the vertical line in the middle of the graph. They correspond to permutations without rising or falling succession because none of them is simple. If we denote by $T_1,T_2$ the set of positive roots labeled with $1,2$, respectively, then we have $T_1=T_{w_1}$ and $T_2=T_{w_2}$.}
\label{fig:s6}
\end{figure}

\section{\for{toc}{{\color{white}0}}Nichols algebras and braided differential calculus}

In this section, we recall some basic facts about Nichols algebras and braided differential calculus. Braided differential calculus is the calculus of braided partial derivatives acting on a Nichols algebra associated to a finite dimensional Yetter-Drinfeld module, and defined in terms of a nondegenerate Hopf duality pairing. For more details concerning the general theory presented in this section, we refer to \cite{integrals-braided,andruskiewitsch,skew,bazlov1,bazlov2,kassel,liu,majid,heckenberger2020hopfbook}. More specifically, we refer for
\begin{itemize}
    \item
    $\mathrlap{\text{Subsection~\ref{subsec:YDHopf} to }}\hphantom{\text{Subsection~\ref{subsec:nichols} to }}$\cite[Subsection~1.1--1.3]{andruskiewitsch}, \cite[Subsection~2.1]{bazlov1}, \cite[Subsection~5.1,~5.2]{bazlov2},
    \item
    Subsection~\ref{subsec:nichols} to \cite[Subsection~2.1]{andruskiewitsch}, \cite[Section~2,~3]{bazlov1}, \cite[Sibsection~5.3--5.8]{bazlov2},
    \item
    Subsection~\ref{subsec:YDGamma} to \cite[Section~3]{skew}, \cite[Subsection~2.3]{liu},
    \item
    Subsection~\ref{subsection:model-Schubert} to \cite[Subsection~4.1,~4.2]{skew}, \cite[Section~4]{bazlov1}.
\end{itemize}

\setcounter{tocdepth}{2}
\subsection{Yetter-Drinfeld category over a Hopf algebra}
\label{subsec:YDHopf}

We fix once and for all a Hopf algebra $H$ over $\mathbf{k}$ with invertible antipode. We denote by ${}_H^H\EuScript{YD}$ the Yetter-Drinfeld category over $H$. Recall that the objects in ${}_H^H\EuScript{YD}$ are Yetter-Drinfeld $H$-modules, i.e.\ $\mathbf{k}$-vector spaces $V$ which are simultaneously left $H$-modules and left $H$-comodules which satisfy the compatibility condition
\[
(hv)_{(-1)}\otimes(hv)_{(0)}=h_{(1)}v_{(-1)}\EuScript{S}(h_{(3)})\otimes h_{(2)}v_{(0)}
\]
for all $h\in H$ and all $v\in V$. Here and everywhere else where it is suitable, we use sumless Sweedler notation for coproducts and coactions. In Section~\ref{sec:coproduct}, we will be however obliged to choose specific decompositions of coproducts in which case we renounce to use (sumless) Sweedler notation. By \cite{compatibility}, the above compatibility condition for a Yetter-Drinfeld $H$-module $V$ can be equivalently expressed as
\[
h_{(1)}v_{(-1)}\otimes h_{(2)}v_{(0)}=(h_{(1)}v)_{(-1)}h_{(2)}\otimes (h_{(1)}v)_{(0)}
\]
for all $h\in H$ and all $v\in V$. The morphisms in ${}_H^H\EuScript{YD}$ are the obvious ones, i.e.\ those which preserve the $H$-module and $H$-comodule structure. The Yetter-Drinfeld category over $H$ is a braided monoidal category where the braiding $\Psi$ of two Yetter-Drinfeld $H$-modules $V$ and $W$ is given by $\Psi(v\otimes w)=v_{(-1)}w\otimes v_{(0)}$ with inverse $\Psi^{-1}(w\otimes v)=v_{(0)}\otimes\EuScript{S}^{-1}(v_{(-1)})w$ for all $v\in V$ and all $w\in W$.\footnote{In this subsection, we do not need Coxeter groups. We will use temporarily $W$ with another meaning, and continue with the original setup from the previous sections in Subsection~\ref{subsection:model-Schubert}.} Just as in the previous sentence, we usually suppress the components of the braiding $\Psi$. Note that every finite dimensional Yetter-Drinfeld $H$-module $V$ is rigid in the sense of \cite[Subsection~1.1(d)]{andruskiewitsch} or \cite[Definition~9.3.1]{majid}, i.e.\ it admits a left dual, one isomorphic copy of which we denote by $V^*$, namely the linear dual of $V$ equipped with the structure of a Yetter-Drinfeld $H$-module uniquely determined by the formulas
\begin{align*}
(hf)(v)&=f(\EuScript{S}(h)v)\,,\\
f_{(-1)}f_{(0)}(v)&=\EuScript{S}^{-1}(v_{(-1)})f(v_{(0)})
\end{align*}
for all $h\in H$, $v\in V$, $f\in V^*$.

Whenever $A$ is a braided $\mathbb{Z}_{\geq 0}$-graded Hopf algebra in ${}_H^H\EuScript{YD}$, we denote by $A^m$ the component of $A$ of $\mathbb{Z}_{\geq 0}$-degree $m$, i.e.\ we always use superscripts to indicate $\mathbb{Z}_{\geq 0}$-graded components, and we follow this convention even on the level of elements in $A$. For two braided Hopf algebras $A$ and $B$ in ${}_H^H\EuScript{YD}$, we say according to \cite[Subsection~5.3]{bazlov2} or \cite[Definition~1.4.3]{majid} that a morphism $\left<-,-\right>\colon A\otimes B\to\mathbf{k}$ in the Yetter-Drinfeld category over $H$ is a Hopf duality pairing between $A$ and $B$ if it satisfies the axioms
\begin{align*}
\left<\phi\psi,x\right>&=\left<\phi,x_{(2)}\right>\left<\psi,x_{(1)}\right>\,,\\
\left<\phi,xy\right>&=\left<\phi_{(2)},x\right>\left<\phi_{(1)},y\right>
\end{align*}
which entail the properties
\[
\left<1,x\right>=\epsilon(x)\,,\quad
\left<\phi,1\right>=\epsilon(\phi)\,,\quad
\left<\EuScript{S}(\phi),x\right>=\left<\phi,\EuScript{S}(x)\right>
\]
by \cite{axioms}, \cite[Section~III.9]{kassel}, \cite[Proposition~1.3.1]{majid}, and where everywhere in the two previous displayed equations $\phi,\psi\in A$ and $x,y\in B$. For a braided Hopf algebra $A$ in ${}_H^H\EuScript{YD}$, we call a Hopf duality pairing between $A$ and $A$ simply a Hopf duality pairing between $A$ and itself. For two braided $\mathbb{Z}_{\geq 0}$-graded Hopf algebras $A$ and $B$ in ${}_H^H\EuScript{YD}$, we say that a Hopf duality pairing $\left<-,-\right>$ between $A$ and $B$ respects the $\mathbb{Z}_{\geq 0}$-grading if $\left<-,-\right>$ restricted to $A^m\otimes B^{m'}$ is zero unless $m=m'$.

\begin{rem}

Let $\left<-,-\right>$ be a Hopf duality pairing between $A$ and $B$ where $A$ and $B$ are two braided $\mathbb{Z}_{\geq 0}$-graded Hopf algebras. Because $\left<-,-\right>$ is a morphism in the Yetter-Drinfeld category over $H$, we have the following invariance properties
\[
\left<hx,y\right>=\langle x,\EuScript{S}^{-1}(h)y\rangle\quad\text{and}\quad\left<x,hy\right>=\langle\EuScript{S}(h)x,y\rangle
\]
where $h\in H$, $x\in A$, $y\in B$.

\end{rem}

\begin{rem}
\label{rem:ident-into-end}

Every Yetter-Drinfeld $H$-module $V$ also carries a natural structure of a right $H$-module given by $vh=\EuScript{S}^{-1}(h)v$ for all $h\in H$ and all $v\in V$. This structure is natural as it gives rise to an equivalence of categories as discussed in \cite[Proposition~2.2.1, Item~(1):~$(\mathrm{i})\Leftrightarrow(\mathrm{iii})$]{integrals-braided}. We will equip from now on every $V$ with this additional structure, and consider every $h\in H$ either as plain element in $H$ or as homothety in $\operatorname{End}_{\mathbf{k}}V\,$\footnote{Here and in what follows, $\operatorname{End}_{\mathbf{k}}$ and the word \enquote{endomorphism} without further specification refer to endomorphisms of vector spaces.} acting from the left or the right on $V$, where the direction will be clear from context or will be indicated by evaluation on elements in $V$ or placeholders $(-)$. 

Whenever $A$ is a braided Hopf algebra in ${}_H^H\EuScript{YD}$, we consider elements in $A$ sometimes as endomorphisms of $A$ given by multiplication from the left or the right and acting correspondingly. It is finally convenient to consider the antipode of such an $A$ (and its inverse if existent) as endomorphisms of $A$ acting either on the left or the right. Each time, for elements in $A$ considered as endomorphisms as well as for the antipode of $A$ (and its inverse if existent), the acting direction will be clear from context or will be indicated by evaluation on elements in $A$ or placeholders $(-)$. 

Let $A$ be a braided Hopf algebra in ${}_H^H\EuScript{YD}$. With the conventions in the two previous paragraphs in mind, we have
\[
(-)zh=(-)h_{(2)}(zh_{(1)})\quad\text{and}\quad hz(-)=(h_{(1)}z)h_{(2)}(-)
\]
for all $h\in H$ and all $z\in A$. We will use the first of the two formulas above in the proof of Theorem~\ref{thm:leibniz}.

\end{rem}

\subsection{Nichols algebras and braided differential calculus}
\label{subsec:nichols}

Let $V$ be a Yetter-Drinfeld $H$-module. We denote by $\EuScript{B}(V)$ the Nichols-Woronowicz algebra of $V$. We call it simply the Nichols algebra of $V$ from now on, or the Nichols algebra associated to $V$. By \cite[Definition~2.1, Proposition~2.2]{andruskiewitsch}, this is a braided $\mathbb{Z}_{\geq 0}$-graded Hopf algebra in ${}_H^H\EuScript{YD}$ uniquely determined up to isomorphism by the axioms:
\begin{enumerate}
    \item[(N1)]
    $\EuScript{B}(V)$ is connected, i.e.\ $\EuScript{B}(V)^0=\mathbf{k}1$,
    \item[(N2)]
    $\EuScript{B}(V)$ is generated as an algebra in $\mathbb{Z}_{\geq 0}$-degree one,
    \item[(N3)]
    the Yetter-Drinfeld $H$-module consisting of all primitive elements in $\EuScript{B}(V)$ equals $\EuScript{B}(V)^1$ which is in turn equal to $V$.
\end{enumerate}
The explicit construction of $\EuScript{B}(V)$ is discussed in \cite[Subsection~2.1]{andruskiewitsch}, \cite[Subsection~3.1]{bazlov1}, \cite[Subsection~5.7]{bazlov2}. We assume for the rest of this subsection that $V$ is finite dimensional. Recall the important property that $\EuScript{B}(V)$ comes equipped with a nondegenerate Hopf duality pairing $\left<-,-\right>$ between $\EuScript{B}(V^*)$ and $\EuScript{B}(V)$ which respects the $\mathbb{Z}_{\geq 0}$-grading and which is uniquely determined by the property that its restriction to $V^*\otimes V$ equals the evaluation pairing of $V$. Using this pairing, one defines a right action $y\mapsto\constantoverleftarrow{D_y}$ of the algebra $\EuScript{B}(V)$ on $\EuScript{B}(V^*)$ via braided right partial derivatives $\constantoverleftarrow{D_x}$ and a left action $y^*\mapsto\constantoverrightarrow{D_{y^*}}$ of the algebra $\EuScript{B}(V^*)$ on $\EuScript{B}(V)$ via braided left partial derivatives $\constantoverrightarrow{D_{y^*}}$, where
\begin{align*}
(z^*)\constantoverleftarrow{D_y}&=z_{(1)}^*\langle z_{(2)}^*,y\rangle\,,\\
\constantoverrightarrow{D_{y^*}}(z)&=\langle y^*,z_{(1)}\rangle z_{(2)}\\
\intertext{for $y,z\in\EuScript{B}(V)$ and $y^*,z^*\in\EuScript{B}(V^*)$. We will be mostly concerned with braided right partial derivatives in this work. If it is clear from context whether left or right is meant or if it does not matter, we simply speak about braided partial derivatives. We record the formulas}
\left<x^*,yx\right>&=\big<(x^*)\constantoverleftarrow{D_y},x\big>\,,\\
\left<x^*y^*,x\right>&=\big<x^*,\constantoverrightarrow{D_{y^*}}(x)\big>\,,
\end{align*}
where $x,y\in\EuScript{B}(V)$ and $x^*,y^*\in\EuScript{B}(V^*)$, which will be used in several of our considerations without further reference, e.g.\ in the proof of Proposition~\ref{prop:Nicholsint}\eqref{item:Nicholsint2}. Having the conventions in Remark~\ref{rem:ident-into-end} in mind, braided partial derivatives are uniquely determined by the multiplicativity of the actions they define and the formulas
\begin{empheq}[box={\fboxsep=6pt\fbox}]{gather*}
\vphantom{D^{\EuScript{S}(z^{(-1)})}}z^*\constantoverleftarrow{D_v}=(z^*)\constantoverleftarrow{D_v}+\constantoverleftarrow{D_{v_{(0)}}}\big(\EuScript{S}^{-1}(v_{(-1)})z^*\big)\,,\\
\constantoverrightarrow{D_{v^*}}z=\constantoverrightarrow{D_{v^*}}(z)+z_{(0)}\constantoverrightarrow{D_{\EuScript{S}^{-1}(z_{(-1)})v^*}}\,,
\end{empheq}
where $v\in V$, $v^*\in V^*$, $z\in\EuScript{B}(V)$, $z^*\in\EuScript{B}(V^*)$. The first of these formulas is called the braided Leibniz rule and we refer to it under this name from now. The braided Leibniz rule is of utmost importance for this work and will be often in use. The second of these formulas is used only in Remark~\ref{rem:tensor-square} which follows and in the proof of Lemma~\ref{lem:basic-rev}\eqref{item:basic4'}, and is referenced also as braided Leibniz rule. Let us finally mention the following formulas
\[
\constantoverleftarrow{D_{hy}}=\EuScript{S}^2(h_{(1)})\constantoverleftarrow{D_y}\EuScript{S}^3(h_{(2)})\quad\text{and}\quad\constantoverrightarrow{D_{hy^*}}=\EuScript{S}^{-2}(h_{(1)})\constantoverrightarrow{D_{y^*}}\EuScript{S}^{-1}(h_{(2)})
\]
where $h\in H$, $y\in\EuScript{B}(V)$, $y^*\in\EuScript{B}(V^*)$, which are a straight forward generalization of \cite[Remark~3.16]{skew}.

\begin{rem}[{Embedding the tensor square into endomorphisms \cite[Lemma~1]{kirillov-maeno}}]
\label{rem:tensor-square}

Let $V$ be a finite dimensional Yetter-Drinfeld $H$-module. As a consequence of the nondegeneracy of the Hopf duality pairing between $\EuScript{B}(V^*)$ and $\EuScript{B}(V)$, we have according to \cite[Lemma~1]{kirillov-maeno} two embeddings of vector spaces
\begin{align*}
\EuScript{B}(V)\otimes\EuScript{B}(V^*)&\xhookrightarrow{\hphantom{\longrightarrow}}\operatorname{End}_{\mathbf{k}}\EuScript{B}(V^*)\,,\\
\EuScript{B}(V)\otimes\EuScript{B}(V^*)&\xhookrightarrow{\hphantom{\longrightarrow}}\operatorname{End}_{\mathbf{k}}\EuScript{B}(V)
\intertext{defined on pure tensors by the assignments}
y\otimes\xi&\longmapsto\constantoverleftarrow{D_y}\xi\,,\\
y\otimes\xi&\longmapsto y\constantoverrightarrow{D_\xi}
\end{align*}
and extended linearly. It follows from the braided Leibniz rule that the images of these embeddings inherit the structures of a $\mathbb{Z}$-graded algebra in ${}_H^H\EuScript{YD}$ whenever the antipode on $H$ is an involution, where the algebra structure is inherited from $\operatorname{End}_{\mathbf{k}}\EuScript{B}(V^*)$ and $\operatorname{End}_{\mathbf{k}}\EuScript{B}(V)$, where the structure of a Yetter-Drinfeld $H$-module is inherited from $\EuScript{B}(V)\otimes\EuScript{B}(V^*)$, and where the $\mathbb{Z}$-grading is given by the way the operators in the image manipulate the $\mathbb{Z}_{\geq 0}$-degree when applied to elements in $\EuScript{B}(V^*)$ and $\EuScript{B}(V)$, cf.~\cite[Remark~2.16]{skew}, i.e.\ the $\mathbb{Z}$-degree of $\constantoverleftarrow{D_y}\xi$ is given by $m-m'$ and that of  $y\constantoverrightarrow{D_\xi}$ by $m'-m$ whenever $\xi$ and $y$ are homogeneous of $\mathbb{Z}_{\geq 0}$-degree $m$ and $m'$.

The original context of the embeddings of the tensor square into endomorphisms is cohomology and quantum cohomology of $G/B$ where $G$ is a reductive linear algebraic group and $B$ is a Borel subgroup of $G$, cf.~\cite[Thoerem~5.4]{bazlov1}, \cite[Theorem~1]{kirillov-maeno}, \cite[Lemma~1.3]{mare}. We remark that there is a formal analogy between this construction and similar construction which evolve from generalizing Drinfeld's quantum double, cf. \cite[Chapter~IX]{kassel}, \cite[Chapter~6,~7]{majid}.

\end{rem}

\subsection{Yetter-Drinfeld category over a group}
\label{subsec:YDGamma}

We fix once and for all a group $\Gamma$. We denote the Yetter-Drinfeld category over the group algebra $\mathbf{k}\Gamma$ by ${}_\Gamma^\Gamma\EuScript{YD}$, and we call its objects Yetter-Drinfeld $\Gamma$-modules. Let $V$ be a Yetter-Drinfeld $\Gamma$-module. We always denote by $V_g$ the component of $V$ of $\Gamma$-degree $g$. We assume for the rest of this subsection that $V$ is finite dimensional and that its support in the sense of \cite[Section~4, p.~8]{milinski-schneider}, i.e.\ the set of all $g\in\Gamma$ such that $V_g\neq 0$, consists of involutions. Let $\mathcalsmall{b}$ be a homogeneous basis of $V$ with respect to the $\Gamma$-grading, and let $\mathcalsmall{b}^*$ be its dual basis of $V^*$, which is of course again homogeneous with respect to the $\Gamma$-grading. By sending a member of $\mathcalsmall{b}$ to its corresponding dual member in $\mathcalsmall{b}^*$ and extending linearly, we define an isomorphism $V\cong V^*$ of Yetter-Drinfeld $\Gamma$-modules, which in turn induces an isomorphism $\EuScript{B}(V)\cong\EuScript{B}(V^*)$ of braided $\mathbb{Z}_{\geq 0}$-graded Hopf algebras. Upon identification of $V^*$ with $V$ and $\EuScript{B}(V^*)$ with $\EuScript{B}(V)$ along these isomorphisms induced by $\mathcalsmall{b}$, the Hopf duality pairing between $\EuScript{B}(V^*)$ and $\EuScript{B}(V)$ becomes a symmetric Hopf duality pairing between $\EuScript{B}(V)$ and itself whose restriction to $V\otimes V$ has identical representation matrix when represented with respect to $\mathcalsmall{b}$. In the current situation, one introduces according to  \cite[Section~3]{skew}, \cite[Subsection~2.3]{liu} two $\mathbb{Z}_{\geq 0}$-graded endomorphisms $\rho,\bar{\EuScript{S}}$ of $\EuScript{B}(V)$ which are uniquely determined by the requirement that
\begin{enumerate}
    \item[(S1)]
    $\rho$ is the identity in $\mathbb{Z}_{\geq 0}$-degree zero and one,
    \item[(S2)]
    $\rho$ is an anti-algebra homomorphism, i.e.\ we have $\rho(xy)=\rho(y)\rho(x)$ for all $x,y\in\EuScript{B}(V)$,
    \item[(S3)]
    $\bar{\EuScript{S}}$ restricted to $\EuScript{B}(V)^m$ is given by $(-1)^m\rho\EuScript{S}$.
\end{enumerate}
As in Remark~\ref{rem:ident-into-end}, it is convenient to consider the maps $\rho$ and $\bar{\EuScript{S}}$ (as well as the antipode of $\EuScript{B}(V)$ and its inverse) as endomorphisms of $\EuScript{B}(V)$ acting either on the left or the right, where, each time, the acting direction will be clear from context or will be indicated by evaluation on elements in $\EuScript{B}(V)$ or placeholders $(-)$.

\subsection{Nichols-Woronowicz algebra model for Schubert calculus on~\texorpdfstring{$W$}{W}}
\label{subsection:model-Schubert}

Let $V_W$ be the Yetter-Drinfeld $W$-module defined as the quotient of the free vector space with basis $([\alpha])_{\alpha\in R}$ by its vector subspace $\operatorname{span}_{\mathbf{k}}\{[\alpha]+[-\alpha]\mid\alpha\in R\}$, equipped with the $W$-action $wx_\alpha=x_{w(\alpha)}$ for all $w\in W$ and all $\alpha\in R$, where $x_\alpha$ always denotes the image of $[\alpha]$ in $V_W$, and equipped with the $W$-grading given by assigning the $W$-degree $s_\alpha$ to $x_\alpha$ for all $\alpha\in R$. The Yetter-Drinfeld $W$-module $V_W$ has support $T$ and is of dimension $\left|R^+\right|$ where a canonical homogeneous basis of $V_W$ with respect to the $W$-grading is given by $(x_\alpha)_{\alpha\in R^+}$. Hence, all the assumptions of Subsection~\ref{subsec:YDGamma} are satisfied for $V_W$. As a consequence of Subsection~\ref{subsec:YDGamma}, we can and will from now on identify $V_W^*$ with $V_W$ and $\EuScript{B}(V_W^*)$ with $\EuScript{B}(V_W)$ along the isomorphisms $V_W\cong V_W^*$, $\EuScript{B}(V_W)\cong\EuScript{B}(V_W^*)$ induced by $(x_\alpha)_{\alpha\in R^+}$ as it was proposed in \cite[Subsection~4.4]{bazlov1}. Following \cite{bazlov1}, we set $\EuScript{B}_W=\EuScript{B}(V_W)=\EuScript{B}(V_W^*)$ for short and call $\EuScript{B}_W$ the Nichols-Woronowicz algebra model for Schubert calculus on $W$.

\begin{rem}
\label{rem:shortcuts}

From now on, we work mostly with the Nichols algebra $\EuScript{B}_W$, or even with the special case $\EuScript{B}_{\mathbb{S}_m}$ whenever indicated, while we point out some generalizations in the next subsection. In this situation, we use shortcuts for braided partial derivatives, namely, we write $\constantoverleftarrow{D_\alpha}=\constantoverleftarrow{D_{x_\alpha}}$ and $\constantoverrightarrow{D_\alpha}=\constantoverrightarrow{D_{x_\alpha}}$ for all $\alpha\in R$.

\end{rem}

\subsection{Generalizations}
\label{subsec:general}

We want to point out some generalizations which might be relevant for further work on the dimension of Nichols algebras in general:

\begin{itemize}
    \item
    Lemma~\ref{lem:basic-rev}\eqref{item:basic1}--\eqref{item:basic4'} works for any $\EuScript{B}(V)$ where $V$ is a finite dimensional Yetter-Drinfeld $\Gamma$-module whose support consists of involutions once we choose a homogeneous basis of $V$ with respect to the $\Gamma$-grading and do the identifications as in Subsection~\ref{subsec:YDGamma}. Lemma~\ref{lem:basic-rev}\eqref{item:basic0},\eqref{item:basic0'}, however, make use of the nilpotent relation in $\EuScript{B}_W$ which is not evident for arbitrary $\EuScript{B}(V)$ as in the previous sentence.
    \item
    Section~\ref{sec:inversion} without its unnumbered subsection, i.e.\ Proposition~\ref{prop:rhoD} and Corollary~\ref{cor:vanish_rho}, work for any $\EuScript{B}(V)$ where $V$ is a finite dimensional Yetter-Drinfeld $\Gamma$-module whose support consists of involutions once we choose a homogeneous basis of $V$ with respect to the $\Gamma$-grading and do the identifications as in Subsection~\ref{subsec:YDGamma}. 
    \item
    Corollary~\ref{cor:BWintegral}\eqref{item:sign},\eqref{item:sign_elab} except the part \enquote{$x\sim gx$} works for any finite dimensional $\EuScript{B}(V)$ where $V$ is a Yetter-Drinfeld $\Gamma$-module whose support consists of involutions. We do not have to make the exception \enquote{$x\sim gx$} if $V$ is in addition link-indecomposable in the sense of \cite[Section~4,  p.~8]{milinski-schneider}, i.e.\ if in addition the support of $V$ generates $\Gamma$.
    \item
    Corollary~\ref{cor:BWintegral}\eqref{item:Aint2_elab},\eqref{item:cor:M} works for any finite dimensional connected braided $\mathbb{Z}_{\geq 0}$-graded Hopf algebra $A$ in ${}_H^H\EuScript{YD}$ which is generated in $\mathbb{Z}_{\geq 0}$-degree one once we choose a basis of $A^1$.
    \item
    The principle of concrete commutativity in Theorem~\ref{thm:concrete} works for arbitrary finite dimensional Nicholas algebras $\EuScript{B}(V)$ associated to a Yetter-Drinfeld $\Gamma$-module $V$ upon choice of a basis of $V$.
\end{itemize}

\setcounter{tocdepth}{1}
\section{\for{toc}{{\color{white}0}}Terminology concerning monomials}

In this section, we want to setup a common language concerning monomials to speak about certain situations which will arise throughout the paper. All statements in this section are either trivial or immediate consequences of the braided Leibniz rule.

\begin{defn}
\label{def:mon}

Let $V$ be a Yetter-Drinfeld $H$-module. Let $(x_\alpha)_{\alpha\in I}$ be a basis of $V$. Let $\gamma\in I$. Let $\Theta\subseteq I$.

\begin{itemize}
    \item 
    We say that $M\in\EuScript{B}(V)$ is a monomial if there exist $\lambda\in\mathbf{k}$ and $\alpha_1,\ldots,\alpha_m\in I$ such that $M=\lambda x_{\alpha_1}\cdots x_{\alpha_m}$.
    \item
    We say that a monomial $M\in\EuScript{B}(V)$ starts with $\gamma$ if there exist $\lambda\in\mathbf{k}$ and $\alpha_1,\ldots,\alpha_m\in I$ such that $M=\lambda x_{\alpha_1}\cdots x_{\alpha_m}$ and such that $\alpha_1=\gamma$.
    \item
    We say that a monomial $M\in\EuScript{B}(V)$ ends with $\gamma$ if there exist $\lambda\in\mathbf{k}$ and $\alpha_1,\ldots,\alpha_m\in I$ such that $M=\lambda x_{\alpha_1}\cdots x_{\alpha_m}$ and such that $\alpha_m=\gamma$.
    \item
    We say that a monomial $M\in\EuScript{B}(V)$ starts with $\Theta$ if it is a monomial which starts with $\delta$ for some $\delta\in\Theta$.
    \item
    We say that a monomial $M\in\EuScript{B}(V)$ ends with $\Theta$ if it is a monomial which ends with $\delta$ for some $\delta\in\Theta$.
    \item
    We say that a monomial $M\in\EuScript{B}(V)$ does only involve $\Theta$ if there exist $\lambda\in\mathbf{k}$ and $\alpha_1,\ldots,\alpha_m\in\Theta$ such that $M=\lambda x_{\alpha_1}\cdots x_{\alpha_m}$.
    \item
    We say that an element $z\in\EuScript{B}(V)$ starts with $\gamma$ if it can be written as a sum of monomials which start with $\gamma$.
    \item
    We say that an element $z\in\EuScript{B}(V)$ ends with $\gamma$ if it can be written as a sum of monomials which end with $\gamma$.
    \item
    We say that an element $z\in\EuScript{B}(V)$ starts with $\Theta$ if it can be written as a sum of monomials which start with $\Theta$.
    \item
    We say that an element $z\in\EuScript{B}(V)$ ends with $\Theta$ if it can be written as a sum of monomials which end with $\Theta$.
    \item
    We say that an element $z\in\EuScript{B}(V)$ does only involve $\Theta$ if it can be written as a sum of monomials which do only involve $\Theta$.
\end{itemize}

\end{defn}

\begin{rem}

Let the notation be as in Definition~\ref{def:mon}. Note that we allow every scalar as a monomial in Definition~\ref{def:mon}. Consequently, every scalar is also a monomial in $\EuScript{B}(V)$ which does only involve $\Theta$ for any $\Theta\subseteq I$. Conversely, every element in $\EuScript{B}(V)$ which does only involve $\varnothing$ must be a scalar.

\end{rem}

\begin{rem}

Let the notation be as in Definition~\ref{def:mon}. Note that a monomial in $\EuScript{B}(V)$ which starts with $\gamma$ / ends with $\gamma$ / starts with $\Theta$ / ends with $\Theta$ / does only involve $\Theta$ is in particular an element in $\EuScript{B}(V)$ which starts with $\gamma$ / ends with $\gamma$ / starts with $\Theta$ / ends with $\Theta$ / does only involve $\Theta$. In that sense, Definition~\ref{def:mon} is consistent.

\end{rem}

\begin{rem}

Let the notation be as in Definition~\ref{def:mon}. Note that a monomial or element in $\EuScript{B}(V)$ starts with $\gamma$ / ends with $\gamma$ if and only if it starts with $\{\gamma\}$ / ends with $\{\gamma\}$.

\end{rem}

\begin{rem}
\label{rem:starts-ends}

Let the notation be as in Definition~\ref{def:mon}. Note that an element $z\in\EuScript{B}(V)$ starts with $\gamma$ if and only if it can be written as $x_\gamma z'$ for some $z'\in\EuScript{B}(V)$. Similarly, an element $z\in\EuScript{B}(V)$ ends with $\gamma$ if and only if it can be written as $z'x_\gamma$ for some $z'\in\EuScript{B}(V)$.

\end{rem}

\begin{rem}
\label{rem:rho}

Let the notation be as in Definition~\ref{def:mon}. Note that a monomial $M\in\EuScript{B}_W$ starts with $\gamma$ / ends with $\Theta$ / does only involve $\Theta$ if and only if $\rho(M)$ is a monomial in $\EuScript{B}_W$ which ends with $\gamma$ / ends with $\Theta$ / does only involve $\Theta$. Consequently, an element $z\in\EuScript{B}_W$ starts with $\gamma$ / ends with $\Theta$ / does only involve $\Theta$ if and only if $\rho(z)$ is an element in $\EuScript{B}_W$ which ends with $\gamma$ / ends with $\Theta$ / does only involve $\Theta$. This is clear from \cite[Proposition~3.7(2),(6)]{skew} and the definition of $\rho$.

\end{rem}

\begin{rem}

Le the notation be as in Definition~\ref{def:mon}. Note that every monomial in $\EuScript{B}(V)$ is homogeneous with respect to the $\mathbb{Z}_{\geq 0}$-grading. More specifically, let $V$ be a Yetter-Drinfeld $\Gamma$-module and let $(x_\alpha)_{\alpha\in I}$ be a basis of $V$ homogeneous with respect to the $\Gamma$-grading. In the situation of the previous sentence, every monomial in $\EuScript{B}(V)$ is even homogeneous with respect to the $\mathbb{Z}_{\geq 0}$-grading \emph{and} the $\Gamma$-grading.

\end{rem}

\begin{rem}

Whenever we work with $\EuScript{B}_W$ as introduced in and subject to the identifications in Subsection~\ref{subsection:model-Schubert}, we consider the canonical homogeneous basis of $V_W$ with the respect to the $W$-grading given by $(x_\alpha)_{\alpha\in R^+}$, and the terminology in Definition~\ref{def:mon} will always refer to this basis and the index set $I=R^+$.

\end{rem}

\begin{lem}
\label{lem:basic-rev}

Let $\Theta\subseteq R^+$. 

\begin{enumerate}
    \item\label{item:basic0}
    Let $z\in\EuScript{B}_W$ be an element which starts with $\gamma$ for all $\gamma\in\Theta$. Let $\xi\in\EuScript{B}_W$ be an element which ends with $\Theta$. Then, we have $\xi z=0$.
    \item\label{item:basic0'}
    Let $z\in\EuScript{B}_W$ be an element which ends with $\gamma$ for all $\gamma\in\Theta$. Let $\xi\in\EuScript{B}_W$ be an element which starts with $\Theta$. Then, we have $z\xi=0$.
    \item\label{item:basic1}
    Let $z\in\EuScript{B}_W$. We have $(z)\constantoverleftarrow{D_\alpha}=0$ for all $\alpha\in\Theta$ if and only if $(z)\constantoverleftarrow{D_\xi}=0$ for all $\xi\in\EuScript{B}_W$ which start with $\Theta$.
    \item\label{item:basic2}
    Let $z\in\EuScript{B}_W$. We have $\constantoverrightarrow{D_\alpha}(z)=0$ for all $\alpha\in\Theta$ if and only if $\constantoverrightarrow{D_\xi}(z)=0$ for all $\xi\in\EuScript{B}_W$ which end with $\Theta$.
    \item\label{item:basic2-3}
    Let $z_1,\ldots,z_m\in\EuScript{B}_W$ be such that $(z_1)\constantoverleftarrow{D_\alpha}=\cdots=(z_m)\constantoverleftarrow{D_\alpha}=0$ for all $\alpha\in\Theta$. Then, we have $(z_1\cdots z_2)\constantoverleftarrow{D_\xi}=0$ for all $\xi\in\EuScript{B}_W$ which start with $\Theta$.   
    \item\label{item:basic3}
    Let $z\in\EuScript{B}_W$ be an element which does only involve $R^+\setminus\Theta$. Let $\xi\in\EuScript{B}_W$ be an element which starts with $\Theta$. Then, we have $(z)\constantoverleftarrow{D_\xi}=0$.
    \item\label{item:basic4}
    Let $z_1,z_2\in\EuScript{B}_W$ be such that $(z_1)\constantoverleftarrow{D_\alpha}=0$ for all $\alpha\in\Theta$. Then, we have $(z_1z_2)\constantoverleftarrow{D_\xi}=z_1(z_2)\constantoverleftarrow{D_\xi}$ for all $\xi\in\EuScript{B}_W$ which do only involve $\Theta$.
    \item\label{item:basic4'}
    Let $z_1,z_2\in\EuScript{B}_W$ be such that $\constantoverrightarrow{D_\alpha}(z_1)=0$ for all $\alpha\in\Theta$ and such that $z_1$ is homogeneous of $W$-degree $g$. Then, we have $\constantoverrightarrow{D_\xi}(z_1z_2)=z_1\constantoverrightarrow{D_{g^{-1}\xi}}(z_2)$ for all $\xi\in\EuScript{B}_W$ which do only involve $\Theta$.
\end{enumerate}

\end{lem}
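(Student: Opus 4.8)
The plan is to treat the parts in roughly the stated order, extracting from the boxed braided Leibniz rule a single two-factor product rule that then drives the later items. The first two items stand apart: for \eqref{item:basic0} and \eqref{item:basic0'} I would use only Remark~\ref{rem:starts-ends} together with the nilpotent relation $x_\alpha^2=0$ in $\EuScript{B}_W$, and these are in fact the only two places where nilpotency is needed. For \eqref{item:basic0}, since $\xi$ ends with $\Theta$ write $\xi=\sum_{\delta\in\Theta}\xi_\delta x_\delta$, and since $z$ starts with every $\delta\in\Theta$ write $z=x_\delta z_\delta'$; then every summand of $\xi z$ contains a factor $x_\delta^2=0$, so $\xi z=0$. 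Item \eqref{item:basic0'} is the mirror statement, with $z=z_\delta'x_\delta$ and $\xi=\sum_{\delta\in\Theta}x_\delta\xi_\delta$.

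For \eqref{item:basic1} and \eqref{item:basic2} I would use only Remark~\ref{rem:starts-ends} and the multiplicativity of the right (resp.\ left) action $y\mapsto\constantoverleftarrow{D_y}$. The direction ``$\Leftarrow$'' follows by specializing $\xi=x_\alpha$. For ``$\Rightarrow$'', an element starting with $\Theta$ can be written $\xi=\sum_{\alpha\in\Theta}x_\alpha\xi_\alpha$, and multiplicativity gives $(z)\constantoverleftarrow{D_{x_\alpha\xi_\alpha}}=\big((z)\constantoverleftarrow{D_\alpha}\big)\constantoverleftarrow{D_{\xi_\alpha}}=0$ term by term; item \eqref{item:basic2} is identical with $\xi=\sum_{\alpha\in\Theta}\xi_\alpha x_\alpha$ and $\constantoverrightarrow{D_{\xi_\alpha x_\alpha}}(z)=\constantoverrightarrow{D_{\xi_\alpha}}\big(\constantoverrightarrow{D_\alpha}(z)\big)=0$.

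The technical core is a two-factor form of the braided Leibniz rule. Specializing it to a single generator $x_\gamma$ with $\gamma\in\Theta$ (so that $(x_\gamma)_{(-1)}=s_\gamma$, $(x_\gamma)_{(0)}=x_\gamma$ and $\EuScript{S}^{-1}(s_\gamma)=s_\gamma$) I would record
\[
(z_1z_2)\constantoverleftarrow{D_\gamma}=z_1\big((z_2)\constantoverleftarrow{D_\gamma}\big)+\big((z_1)\constantoverleftarrow{D_\gamma}\big)(s_\gamma z_2),\qquad\constantoverrightarrow{D_\gamma}(z_1z_2)=\big(\constantoverrightarrow{D_\gamma}(z_1)\big)z_2+z_1\constantoverrightarrow{D_{g^{-1}x_\gamma}}(z_2),
\]
the second identity under the assumption that $z_1$ is homogeneous of $W$-degree $g$; note the clean, untwisted derivative sits on the right factor in the first rule and on the left factor in the second. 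For \eqref{item:basic2-3} I would then reduce via \eqref{item:basic1} to proving $(z_1\cdots z_m)\constantoverleftarrow{D_\gamma}=0$ for $\gamma\in\Theta$, and induct on $m$: applying the first rule to $z_1\cdot(z_2\cdots z_m)$, the first term vanishes by the inductive hypothesis and the second because $(z_1)\constantoverleftarrow{D_\gamma}=0$. Item \eqref{item:basic3} is then immediate: a generator $x_\beta$ with $\beta\in R^+\setminus\Theta$ satisfies $(x_\beta)\constantoverleftarrow{D_\alpha}=\langle x_\beta,x_\alpha\rangle=0$ for $\alpha\in\Theta$, since $\beta\neq\alpha$ and the restriction of the pairing to $V_W\otimes V_W$ is the identity matrix in the basis $(x_\alpha)_{\alpha\in R^+}$; hence any element only involving $R^+\setminus\Theta$ is a sum of products of such generators, and \eqref{item:basic2-3} applies.

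Finally \eqref{item:basic4} and \eqref{item:basic4'} follow by iterating the two product rules along the generators of $\xi$ and using multiplicativity of the actions. In \eqref{item:basic4}, because $(z_1)\constantoverleftarrow{D_\gamma}=0$ for every $\gamma\in\Theta$ and $z_1$ is never altered, each generator-derivative lands entirely on the right factor, giving $(z_1z_2)\constantoverleftarrow{D_\xi}=z_1\big((z_2)\constantoverleftarrow{D_\xi}\big)$. In \eqref{item:basic4'} the same iteration applies, but one must carry the twist: each step turns $\constantoverrightarrow{D_\gamma}$ into $\constantoverrightarrow{D_{g^{-1}x_\gamma}}$ while fixing $z_1$, and since the $W$-action is by algebra automorphisms, multiplicativity of the left action collects the twisted generator-derivatives into $\constantoverrightarrow{D_{g^{-1}\xi}}$. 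I expect the only genuine obstacle to be exactly this twist bookkeeping: one must use homogeneity of $z_1$ to ensure $(z_1)_{(-1)}$ is a single group element $g$, and check that the accumulated twists assemble cleanly into $g^{-1}\xi$. Once the two-factor rules are correctly extracted from the braided Leibniz rule, everything else is formal.
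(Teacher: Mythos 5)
Your proposal is correct and follows essentially the same route as the paper's proof: items \eqref{item:basic0},\eqref{item:basic0'} via the nilpotent relation after reducing to monomials, items \eqref{item:basic1},\eqref{item:basic2} via multiplicativity of the actions, and items \eqref{item:basic2-3}--\eqref{item:basic4'} via the degree-one specialization of the braided Leibniz rule combined with induction. The only cosmetic difference is that the paper handles \eqref{item:basic0'} and \eqref{item:basic2} alternatively by applying $\rho$ and $\bar{\EuScript{S}}$ (while noting the direct mirror argument you give also works), and your explicit two-factor product rules and the twist bookkeeping in \eqref{item:basic4'} are correct, just spelled out in more detail than the paper's terse appeal to the braided Leibniz rule.
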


\begin{proof}[Proof of Item~\eqref{item:basic0}]

For the proof of the desired vanishing, we can assume that $\xi$ is a monomial which ends with $\Theta$. In that case, we may write $\xi$ as $\xi'x_\alpha$ for some $\alpha\in\Theta$ and some monomial $\xi'\in\EuScript{B}_W$. By assumption, the element $z\in\EuScript{B}_W$ starts with $\alpha$. Hence, it can be written as $x_\alpha z'$ for some $z'\in\EuScript{B}_W$ by Remark~\ref{rem:starts-ends}. We conclude that $\xi z=\xi'x_\alpha^2 z'$ is zero since $x_\alpha^2=0$ by \cite[Example~4.4]{skew}.
\end{proof}

\begin{proof}[Proof of Item~\eqref{item:basic0'}]

For the proof this item, one can argue analogously as in the proof of Item~\eqref{item:basic0}. Alternaftively, one can apply Item~\eqref{item:basic0} to $\rho(z)$ and $\rho(\xi)$ and apply Remark~\ref{rem:rho} and the definition of $\rho$.
\end{proof}

\begin{proof}[Proof of Item~\eqref{item:basic1}]

The implication from right to left is obvious because every $x_\alpha$ where $\alpha\in\Theta$ is a monomial which starts with $\Theta$. For the other implication, it suffices to prove the vanishing if $\xi$ is an arbitrary but fixed monomial which starts with $\Theta$. In that case, we may write $\xi$ as $x_\alpha\xi'$ for some $\alpha\in\Theta$ and some monomial $\xi'\in\EuScript{B}_W$. It follows that
\begin{equation*}
(z)\constantoverleftarrow{D_\xi}=(z)\constantoverleftarrow{D_\alpha}\constantoverleftarrow{D_{\xi'}}=0
\end{equation*}
because $(z)\constantoverleftarrow{D_\alpha}$ vanishes by assumption.
\end{proof}

\begin{proof}[Proof of Item~\eqref{item:basic2}]

For the proof this item, one can argue analogously as in the proof of Item~\eqref{item:basic1}. Alternatively, one can apply Item~\eqref{item:basic1} to $\bar{\EuScript{S}}(z)$ and $\rho(\xi)$. The result follows this way using \cite[Proposition~3.7(6), Proposition~4.2, Remark~4.3]{skew} and Remark~\ref{rem:rho}.
\end{proof}

\begin{proof}[Proof of Item~\eqref{item:basic2-3}]

By Item~\eqref{item:basic1}, we may assume that $\xi=x_\alpha$ for some $\alpha\in\Theta$. The vanishing of $(z_1\cdots z_m)\constantoverleftarrow{D_\alpha}$ then follows from the braided Leibniz rule.
\end{proof}

\begin{proof}[Proof of Item~\eqref{item:basic3}]

For the proof of the desired vanishing, we may assume that $z$ is a monomial which does only involve $R^+\setminus\Theta$ and further, by Item~\eqref{item:basic1}, that $\xi=x_\alpha$ for some $\alpha\in\Theta$. By this assumption, there exist $\lambda\in\mathbf{k}$ and $\alpha_1,\ldots,\alpha_m\in R^+\setminus\Theta$ such that $z=\lambda x_{\alpha_1}\cdots x_{\alpha_m}$. If we apply Item~\eqref{item:basic2-3} to $z_i=x_{\alpha_i}$ for all $1\leq i\leq m$ and to $\xi$, the result follows.
\end{proof}

\begin{proof}[Proof of Item~\eqref{item:basic4},\eqref{item:basic4'}]

For the proof of these items, we may assume that $\xi$ is a monomial which does only involve $\Theta$. By induction on the $\mathbb{Z}_{\geq 0}$-degree of the monomial $\xi$, we can further assume that its $\mathbb{Z}_{\geq 0}$-degree is one. But in that case, the desired equations follow again from the braided Leibniz rule.
\end{proof}

\begin{lem}
\label{lem:zdeg-length}

Let $\xi\in\EuScript{B}_W$ be a nonzero homogeneous element of some $\mathbb{Z}_{\geq 0}$-degree $m$ and some $W$-degree $w$. Then, the parity of $\ell(w)$ equals the parity of $m$.

\end{lem}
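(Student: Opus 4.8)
The plan is to reduce the statement to the multiplicativity of the sign character of $W$ together with the fact that reflections have odd length. The key structural input is that $\EuScript{B}_W$ carries two compatible algebra gradings: the $\mathbb{Z}_{\geq 0}$-grading, and the $W$-grading inherited from the $W$-comodule structure of $V_W$. Consequently, for a monomial $x_{\alpha_1}\cdots x_{\alpha_m}$ with $\alpha_1,\ldots,\alpha_m\in R^+$, the $\mathbb{Z}_{\geq 0}$-degree equals $m$ and the $W$-degree equals the product $s_{\alpha_1}\cdots s_{\alpha_m}$, since each generator $x_\alpha$ sits in $\mathbb{Z}_{\geq 0}$-degree one and in $W$-degree $s_\alpha$.

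First I would introduce the sign character $\operatorname{sgn}\colon W\to\{\pm 1\}$, $w\mapsto(-1)^{\ell(w)}$, which is the unique group homomorphism sending every simple reflection to $-1$ and which agrees with the determinant of the geometric representation $\mathfrak{h}$. Because every reflection $s_\alpha$ is conjugate in $W$ to a simple reflection and the target $\{\pm 1\}$ is abelian, one obtains $\operatorname{sgn}(s_\alpha)=-1$ for every $\alpha\in R$; equivalently, every reflection has odd length.

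Next I would reduce to a single monomial. Since $\EuScript{B}_W$ is generated in $\mathbb{Z}_{\geq 0}$-degree one by the $x_\alpha$ with $\alpha\in R^+$ by Axiom~(N2), the component $\EuScript{B}_W^m$ is spanned by monomials $x_{\alpha_1}\cdots x_{\alpha_m}$, each of which is homogeneous of $W$-degree $s_{\alpha_1}\cdots s_{\alpha_m}$. As $\xi\neq 0$ is homogeneous of $\mathbb{Z}_{\geq 0}$-degree $m$ and of $W$-degree $w$, it lies in the $W$-degree-$w$ part of $\EuScript{B}_W^m$, so at least one such monomial occurs with $s_{\alpha_1}\cdots s_{\alpha_m}=w$. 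Applying $\operatorname{sgn}$ and using its multiplicativity then yields
\[
(-1)^{\ell(w)}=\operatorname{sgn}(w)=\prod_{i=1}^m\operatorname{sgn}(s_{\alpha_i})=(-1)^m,
\]
so the parities of $\ell(w)$ and $m$ coincide, as claimed.

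The argument is short; the only points deserving care are that the $W$-grading is genuinely an algebra grading, so that the $W$-degrees of factors multiply, which follows from the comodule structure on the Nichols algebra, and the oddness of reflection lengths. I expect the latter to be the most substantive ingredient, but it is standard Coxeter theory, so I anticipate no serious obstacle.
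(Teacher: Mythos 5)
Your proof is correct and follows essentially the same route as the paper's: reduce to a single monomial $x_{\alpha_1}\cdots x_{\alpha_m}$ of $W$-degree $w=s_{\alpha_1}\cdots s_{\alpha_m}$, and use that every reflection has odd length. The only cosmetic difference is that you package the final parity step via the sign character $w\mapsto(-1)^{\ell(w)}$, whereas the paper invokes the deletion condition directly; these are equivalent formulations of the same fact.
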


\begin{proof}

To prove the lemma, we can clearly assume that $\xi$ is a nonzero monomial of some $\mathbb{Z}_{\geq 0}$-degree $m$ and some $W$-degree $w$. If we assume this from now on, we can find $\lambda\in\mathbf{k}^\times$ and $\alpha_1,\ldots,\alpha_m\in R^+$ such that $\xi=\lambda x_{\alpha_1}\cdots x_{\alpha_m}$. Then, we necessarily have $w=s_{\alpha_1}\cdots s_{\alpha_m}$. Every reflection has odd length. Thus, by the deletion condition \cite[Corollary~5.8]{humphreys-coxeter}, we see that $\ell(w)$ has the same parity as $m$. 
\end{proof}

\section{\for{toc}{{\color{white}0}}NilCoxeter algebra}
\label{sec:nilcoxeter}

The nilCoxeter algebra was first introduced in \cite{nilCoxeter} and studied in the context of the symmetric group. Schubert polynomials and Stanley symmetric functions are treated in \cite{nilCoxeter} as coefficients of certain expressions in the nilCoxeter algebra. Independently from this, we adopt here the point of view developed in \cite{bazlov1} where the nilCoxeter algebra of $W$ is canonically embedded into $\EuScript{B}_W$ for any $W$.

Let $\EuScript{N}_W$ be the subalgebra of $\EuScript{B}_W$ generated by $x_\beta$ where $\beta$ runs through $\Delta$. As it was proved in \cite[Theorem~6.3(i),(ii)]{bazlov1}, these generators of $\EuScript{N}_W$ satisfy the so-called nilCoxeter relations, i.e.\ the homogeneous relations $x_\beta^2=0$ for all $\beta\in\Delta$ and
\[
\underset{m(s_\beta,s_{\beta'})\text{ factors on each side}}{\underbrace{x_\beta x_{\beta'}x_\beta\cdots}=\underbrace{x_{\beta'}x_\beta x_{\beta'}\cdots}}
\]
for all $\beta,\beta'\in\Delta$, and all other relations between them are generated from those. Therefore, the algebra $\EuScript{N}_W$ is called the nilCoxeter algebra of $W$ in the sense of \cite[Section~2]{nilCoxeter}. Let $s_{\beta_1}\cdots s_{\beta_m}$ be a reduced expression of some $w\in W$. Then, we define $x_w=x_{\beta_1}\cdots x_{\beta_m}$. This element is well-defined, i.e.\ independent of the choice of the reduced expression of $w$, because of the nilCoxeter relations and because any two reduced expressions of $w$ can be connected by a sequence of braid moves by the Word Property \cite[Theorem~3.3.1(ii)]{bjorner}. With this definition, the family $(x_w)_w$ becomes a basis of $\EuScript{N}_W$. We call this basis the standard basis of $\EuScript{N}_W$. We finally remark that by definition $x_1=1$ is a member of the standard basis of $\EuScript{N}_W$.

Following Liu's coproduct approach \cite[Proposition~2.7]{liu}, see also \cite[Definition~5.1]{skew}, we introduce for all $v,w\in W$ uniquely determined elements $x_{w/v}\in\EuScript{B}_W$ such that $\Delta(x_w)=\sum_vx_{w/v}\otimes x_v$. By definition, we then have $x_{w/v}=0$ for all $v\not\leq w$, and further $x_{w/1}=x_w$ and $x_{w/w}=1$ for all $w\in W$. In line with Remark~\ref{rem:shortcuts}, we use further shortcuts, namely, we write $\constantoverleftarrow{D_w}=\constantoverleftarrow{D_{x_w}}$ and $\constantoverleftarrow{D_{w/v}}=\constantoverleftarrow{D_{x_{w/v}}}$ for all $v,w\in W$.

\begin{fact}
\label{fact:rho_amel}

Let $u,v,w\in W$. Let $y=wx_v$.

\begin{enumerate}
    \item\label{item:deg_y}
    The element $y$ is nonzero and homogeneous of $\mathbb{Z}_{\geq 0}$-degree $\ell(v)$ and of $W$-degree $wvw^{-1}$. In particular, if $v=w_o$ and if $w$ is an element in the centralizer of $w_o$, then the element $y$ is nonzero and homogeneous of $\mathbb{Z}_{\geq 0}$-degree $\ell(w_o)$ and of $W$-degree $w_o$.
    \item\label{item:mult_wo}
    We have $w_ox_v=(-1)^{\ell(v)}x_{w_ovw_o}$. If $w$ is an element in the centralizer of $w_o$, this means that $w_oy=(-1)^{\ell(v)}wx_{w_ovw_o}$. If $v$ and $w$ are both elements in the centralizer of $w_o$, we see in particular that $w_oy=(-1)^{\ell(v)}y$. If $v=w_o$ and if $w$ is an element in the centralizer of $w_o$, we see further that $w_oy=(-1)^{\ell(w_o)}y$.
    \item\label{item:nil_pair}
    We have $\left<x_u,x_v\right>=\delta_{u,v^{-1}}$. If $v$ is an involution, e.g.\ if $v=w_o$, we have in particular that $\left<y,y\right>=1$.
    \item\label{item:rho_invert}
    We have $\rho(x_v)=x_{v^{-1}}$. If $v$ is an involution, e.g.\ if $v=w_o$, it follows in particular that $\rho(y)=y$.
\end{enumerate}

\end{fact}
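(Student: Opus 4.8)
The plan is to handle the four items in the order \eqref{item:deg_y}, \eqref{item:rho_invert}, \eqref{item:mult_wo}, \eqref{item:nil_pair}, since the first three are essentially formal and only \eqref{item:nil_pair} requires a genuine computation with the pairing. Throughout I would fix a reduced expression $s_{\beta_1}\cdots s_{\beta_m}$ of $v$, so that $x_v=x_{\beta_1}\cdots x_{\beta_m}$ with $m=\ell(v)$. For \eqref{item:deg_y}: each $x_{\beta_i}$ is homogeneous of $\mathbb{Z}_{\geq 0}$-degree $1$ and of $W$-degree $s_{\beta_i}$, and since both gradings are algebra gradings, $x_v$ is homogeneous of $\mathbb{Z}_{\geq 0}$-degree $\ell(v)$ and of $W$-degree $v$. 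The element $x_v$ is nonzero because $(x_w)_w$ is a basis of $\EuScript{N}_W$, and $w$ acts on $\EuScript{B}_W$ as a linear automorphism (inverse given by the action of $w^{-1}$), so $y=wx_v\neq 0$. As the $W$-action preserves the $\mathbb{Z}_{\geq 0}$-grading and shifts the $W$-grading by conjugation, $y$ is homogeneous of $\mathbb{Z}_{\geq 0}$-degree $\ell(v)$ and of $W$-degree $wvw^{-1}$; the particular case is then immediate from $ww_ow^{-1}=w_o$. For \eqref{item:rho_invert}, I would use that $\rho$ is the unique graded anti-algebra endomorphism of $\EuScript{B}_W$ acting as the identity on $\EuScript{B}_W^1$ (properties (S1), (S2)): applying $\rho$ reverses the product, and since $s_{\beta_m}\cdots s_{\beta_1}$ is a reduced expression of $v^{-1}$, this gives $\rho(x_v)=x_{\beta_m}\cdots x_{\beta_1}=x_{v^{-1}}$. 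To pass to $\rho(y)$ I would first note that $\rho$ is $W$-equivariant: for $w\in W$ the map $w^{-1}\rho w$ is again a graded anti-algebra endomorphism fixing $\EuScript{B}_W^1$ pointwise (because $w$ preserves degree one and $\rho$ fixes it), hence equals $\rho$ by uniqueness; thus $\rho w=w\rho$ and $\rho(y)=w\rho(x_v)=wx_{v^{-1}}$, which equals $y$ when $v$ is an involution.

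For \eqref{item:mult_wo}, the geometric input is that $w_o$ maps $\Delta$ to $-\Delta$, i.e.\ $w_o(\beta)=-\beta^*$ with $\beta^*:=-w_o(\beta)\in\Delta$ (see \cite[Section~5.6]{humphreys-coxeter}). Since $w_o$ is grouplike and hence acts as an algebra automorphism, and using $x_{-\alpha}=-x_\alpha$, I would compute
\[
w_ox_v=\prod_{i=1}^m w_ox_{\beta_i}=\prod_{i=1}^m x_{-\beta_i^*}=(-1)^{\ell(v)}x_{\beta_1^*}\cdots x_{\beta_m^*}.
\]
From $s_{\beta_i^*}=w_os_{\beta_i}w_o$ one gets $s_{\beta_1^*}\cdots s_{\beta_m^*}=w_ovw_o$, and as $\ell(w_ovw_o)=\ell(v)$ this word is reduced, so $x_{\beta_1^*}\cdots x_{\beta_m^*}=x_{w_ovw_o}$ and $w_ox_v=(-1)^{\ell(v)}x_{w_ovw_o}$. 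The three consequences follow by inserting $w_oy=ww_ox_v$ (valid when $w$ centralizes $w_o$) and, for the last two, the identity $w_ovw_o=v$ which holds exactly when $v$ centralizes $w_o$ (in particular when $v=w_o$).

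The main work, and what I expect to be the principal obstacle, is \eqref{item:nil_pair}. Since the pairing respects the $\mathbb{Z}_{\geq 0}$-grading, both sides vanish unless $\ell(u)=\ell(v)$, and I would induct on $\ell(v)$, the base case $v=1$ being $\langle x_u,1\rangle=\epsilon(x_u)=\delta_{u,1}$. For the step, write $x_v=x_\beta x_{v'}$ with $v=s_\beta v'$ and $\ell(v')=\ell(v)-1$. The pairing axiom $\langle\phi,xy\rangle=\langle\phi_{(2)},x\rangle\langle\phi_{(1)},y\rangle$ together with $\Delta(x_u)=\sum_t x_{u/t}\otimes x_t$ gives $\langle x_u,x_\beta x_{v'}\rangle=\sum_t\langle x_t,x_\beta\rangle\langle x_{u/t},x_{v'}\rangle$; only $t$ with $\ell(t)=1$ contribute, and $\langle x_{s_\gamma},x_\beta\rangle=\delta_{\gamma,\beta}$ by the degree-one pairing, so the sum collapses to $\langle x_{u/s_\beta},x_{v'}\rangle$. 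The crucial combinatorial identity is that $x_{u/s_\beta}=x_{us_\beta}$ when $us_\beta<u$ and $x_{u/s_\beta}=0$ otherwise, which I would extract from Liu's coproduct formula for the standard basis \cite[Proposition~2.7]{liu}. Granting it, when $us_\beta<u$ the inductive hypothesis gives $\langle x_{us_\beta},x_{v'}\rangle=\delta_{us_\beta,(v')^{-1}}=\delta_{u,v^{-1}}$ (using $(v')^{-1}=v^{-1}s_\beta$), while when $us_\beta>u$ the expression vanishes and one checks $u=v^{-1}$ would force $us_\beta<u$, so $\delta_{u,v^{-1}}=0$ too. Finally, for the involution case I would combine $v=v^{-1}$ with the invariance $\langle hx,y\rangle=\langle x,\EuScript{S}^{-1}(h)y\rangle$ applied to the grouplike $h=w$, obtaining $\langle y,y\rangle=\langle wx_v,wx_v\rangle=\langle x_v,w^{-1}wx_v\rangle=\langle x_v,x_v\rangle=\delta_{v,v^{-1}}=1$.
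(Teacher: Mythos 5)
Items \eqref{item:deg_y}, \eqref{item:mult_wo} and \eqref{item:rho_invert} of your proposal are correct and essentially the paper's arguments: the paper declares \eqref{item:deg_y} plain, proves \eqref{item:mult_wo} by exactly your sign count on a reduced word using $w_o(\Delta)=-\Delta$ and $\ell(w_ovw_o)=\ell(v)$, and proves \eqref{item:rho_invert} by reversing a reduced word (your uniqueness argument for the $W$-equivariance of $\rho$ is a legitimate substitute for the paper's citation of \cite[Proposition~3.7(1)]{skew}). The problem is Item~\eqref{item:nil_pair}, which the paper does not reprove but simply imports from \cite[Proposition~6.4]{skew}, and which is precisely where your self-contained induction breaks.

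Your reduction $\left<x_u,x_\beta x_{v'}\right>=\left<x_{u/s_\beta},x_{v'}\right>$ is fine, but the ``crucial combinatorial identity'' you then invoke --- $x_{u/s_\beta}=x_{us_\beta}$ if $us_\beta<u$ and $x_{u/s_\beta}=0$ otherwise --- is false. By the paper's definition, $x_{u/v}$ vanishes only for $v\not\leq u$ in Bruhat order, and $s_\beta\leq u$ whenever $s_\beta$ lies in the support of $u$, regardless of whether $us_\beta<u$. Concretely, in $\EuScript{B}_{\mathbb{S}_3}$ with simple roots $\alpha_1,\alpha_2$ one computes
\[
\Delta(x_{s_1s_2})=x_{s_1s_2}\otimes 1+x_{\alpha_1}\otimes x_{\alpha_2}+x_{\alpha_1+\alpha_2}\otimes x_{\alpha_1}+1\otimes x_{s_1s_2}\,,
\]
so $x_{(s_1s_2)/s_1}=x_{\alpha_1+\alpha_2}\neq 0$ even though $s_1s_2s_1>s_1s_2$; and for $u=w_o=s_1s_2s_1$ one finds $x_{w_o/s_1}=x_{\alpha_1}x_{\alpha_2}-x_{\alpha_1+\alpha_2}x_{\alpha_1}=x_{\alpha_2}x_{\alpha_1+\alpha_2}$, which differs from $x_{w_os_1}=x_{\alpha_1}x_{\alpha_2}$ by the nonzero element $x_{\alpha_1+\alpha_2}x_{\alpha_1}$, so the identity also fails in the case $us_\beta<u$. (The elements $x_{u/v}$ are the analogues of \emph{skew} divided difference operators; they generally leave $\EuScript{N}_W$.) As a consequence your induction does not close: after one step the left slot of the pairing is $x_{u/s_\beta}$, which need not be a nilCoxeter basis element, so the inductive hypothesis $\left<x_t,x_{v'}\right>=\delta_{t,(v')^{-1}}$ no longer applies. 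A correct self-contained route would peel off the simple factor on the other side (or, equivalently, use that the \emph{left} derivatives satisfy $\constantoverrightarrow{D_\beta}(x_u)=x_{s_\beta u}$ if $s_\beta u<u$ and $0$ otherwise, as in \cite[Theorem~6.1]{bazlov1}); as written, the argument for \eqref{item:nil_pair} has a genuine gap. Your deduction of $\left<y,y\right>=1$ from the $W$-invariance of the pairing is correct and is exactly the paper's remark.
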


\begin{rem}

The proof of Item~\eqref{item:deg_y} and its content are plain. We will use Item~\eqref{item:deg_y} without reference from now on.

\end{rem}

\begin{rem}

Note that it follows from Item~\eqref{item:nil_pair} that the family $(x_w)_w$ is a basis of $\EuScript{N}_W$.

\end{rem}

\begin{rem}

If $v=w_o$ and if $w$ is an element in the centralizer of $w_o$, then Item~\eqref{item:nil_pair} says in particular that $\left<y,y\right>=1$ which is precisely the meaning of Corollary~\ref{cor:bracket} applied to a disjoint system of order one. In this sense, there is some overlap between Item~\eqref{item:nil_pair} and Corollary~\ref{cor:bracket}.

\end{rem}

\begin{proof}[Proof of Item~\eqref{item:mult_wo}]

In the proof of this item, we repeatedly use the fact that $w_o$ is an involution. We only have to prove the first formula in the statement of Item~\eqref{item:mult_wo}. The special cases are immediate from it. To this end, recall that $\ell(v)=\ell(w_ovw_o)$ because of \cite[Section~1.8, Equation~(2), Section~5.2, Equation~(L1)]{humphreys-coxeter}, so that $s_{\beta_1}\cdots s_{\beta_m}$ is a reduced expression of $v$ if and only if $s_{w_o(\beta_1)}\cdots s_{w_o(\beta_m)}$ is a reduced expression of $w_ovw_o$ by \cite[Section~5.6, Exercise~2]{humphreys-coxeter}. In the transition from $w_ox_v$ to $x_{w_ovw_o}$, we therefore pick up a sign for each of the $\ell(v)$ factors of $x_v$ because $x_{-\alpha}=-x_\alpha$ for all $\alpha\in R$. These observations prove the desired formula.
\end{proof}

\begin{proof}[Proof of Item~\eqref{item:nil_pair}]

We only have to prove the first formula in the statement of Item~\eqref{item:nil_pair}. The special case follows because $\left<-,-\right>$ is a morphism in the Yetter-Drinfeld category over $W$. But the first formula was already proved in \cite[Proposition~6.4]{skew}.
\end{proof}

\begin{proof}[Proof of Item~\eqref{item:rho_invert}]

It is clear that $s_{\beta_1}\cdots s_{\beta_m}$ is a reduced expression of $v$ if and only if $s_{\beta_m}\cdots s_{\beta_1}$ is a reduced expression of $v^{-1}$. The formula $\rho(x_v)=x_{v^{-1}}$ follows from this. The special case follows then by \cite[Proposition~3.7(1)]{skew}.
\end{proof}

\begin{lem}
\label{lem:y_van_amel}

Let $w\in W$. Let $y=wx_{w_o}$. The element $y\in\EuScript{B}_W$ is a monomial which starts with $\gamma$ for all $\gamma\in T_w$. The element $y\in\EuScript{B}_W$ is also a monomial which ends with $\gamma$ for all $\gamma\in T_w$ and a monomial which does only involve $T_w$.

\end{lem}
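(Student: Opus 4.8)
The plan is to realize $y$ explicitly as the image of $x_{w_o}$ under the $W$-action and then to exploit the freedom in choosing a reduced expression of $w_o$. Since $\EuScript{B}_W$ is an algebra in the Yetter-Drinfeld category over $W$ and every $w\in W$ is group-like in $\mathbf{k}W$, the element $w$ acts on $\EuScript{B}_W$ by an algebra automorphism extending $wx_\alpha=x_{w(\alpha)}$. Hence, fixing any reduced expression $w_o=s_{\beta_1}\cdots s_{\beta_N}$ with $N=\ell(w_o)$, we have $x_{w_o}=x_{\beta_1}\cdots x_{\beta_N}$ and therefore $y=wx_{w_o}=x_{w(\beta_1)}\cdots x_{w(\beta_N)}$. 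For each $i$, let $\gamma_i$ denote the positive root among $\pm w(\beta_i)$; since $x_{-\alpha}=-x_\alpha$, we obtain $y=\pm x_{\gamma_1}\cdots x_{\gamma_N}$, so that $y$ is a monomial.

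For the assertion that $y$ does only involve $T_w$, I would observe that each $\beta_i$ is simple, so $s_{\gamma_i}=s_{w(\beta_i)}=ws_{\beta_i}w^{-1}\in wSw^{-1}$, whence $\gamma_i\in T_w$ by the definition of $T_w$. As this holds for every factor of $y$ and for any choice of reduced expression, $y$ does only involve $T_w$. The very same computation shows that $\beta\mapsto\gamma$, where $\gamma$ is the positive root among $\pm w(\beta)$, is a bijection from $\Delta$ onto $T_w$.

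For the ``starts with'' claim, fix $\gamma\in T_w$ and let $\beta\in\Delta$ be the unique simple root with $\gamma$ the positive root among $\pm w(\beta)$. Because $w_o$ is the longest element, $w_o(\beta)<0$, and hence also $w_o^{-1}(\beta)=w_o(\beta)<0$; by the Bruhat-order equivalences recalled in Section~\ref{sec:coxeter}, there exists a reduced expression of $w_o$ starting with $s_\beta$. Feeding this expression into the identity $y=x_{w(\beta_1)}\cdots x_{w(\beta_N)}$ makes the first factor equal to $x_{w(\beta)}=\pm x_\gamma$, so $y=x_\gamma z'$ for some $z'\in\EuScript{B}_W$, i.e.\ $y$ starts with $\gamma$ by Remark~\ref{rem:starts-ends}. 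Since $\gamma\in T_w$ was arbitrary, $y$ starts with every $\gamma\in T_w$.

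For the ``ends with'' claim I would argue symmetrically: since $w_o(\beta)<0$, there is a reduced expression of $w_o$ ending with $s_\beta$, and the same reasoning shows $y$ ends with $\gamma$. Alternatively, one deduces it at once from Fact~\ref{fact:rho_amel}\eqref{item:rho_invert}, which gives $\rho(y)=y$, together with Remark~\ref{rem:rho}: $y$ starts with $\gamma$ if and only if $\rho(y)=y$ ends with $\gamma$. No step here is genuinely difficult; the only points that require care are invoking the $W$-equivariance of the multiplication to identify $y$ with $x_{w(\beta_1)}\cdots x_{w(\beta_N)}$, and keeping track of the sign $x_{-\alpha}=-x_\alpha$, which is harmless since the definition of a monomial permits a scalar factor.
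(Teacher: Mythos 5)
Your proposal is correct and takes essentially the same route as the paper: the paper also proves the \enquote{starts with} claim by choosing a reduced expression of $w_o$ beginning with the $s_\beta$ for which $w(\beta)=\pm\gamma$ (written there as $x_{w_o}=x_\beta x_{s_\beta w_o}$, giving $y=\epsilon x_\gamma(wx_{s_\beta w_o})$), and deduces the remaining claims via $\rho(y)=y$ together with Remark~\ref{rem:rho}. Your factor-by-factor verification that each $\gamma_i$ lies in $T_w$ is, if anything, a slightly more explicit justification of the \enquote{does only involve $T_w$} part than the paper's terse reduction to the \enquote{starts with} statement.
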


\begin{proof}

Let the notation be as in the statement. The statement in the last sentence is clear from the statement in the second last sentence because of Remark~\ref{rem:rho} and Fact~\ref{fact:rho_amel}\eqref{item:rho_invert}. We prove the statement in the second last sentence now. Let $\gamma\in T_w$ be fixed but arbitrary. Then, there exists a unique $\beta\in\Delta$ and a unique sign $\epsilon$ such that $w(\beta)=\epsilon\gamma$. We may write $x_{w_o}=x_\beta x_{s_\beta w_o}$ and thus $y=\epsilon x_\gamma(wx_{s_\beta w_o})$. In this form, the monomial $y$ is visibly a monomial which starts with $\gamma$.
\end{proof}

\begin{cor}
\label{cor:y_van_amel}

Let $w\in W$. Let $y=wx_{w_o}$. Then, we have $zy=0$ for every element $z\in\EuScript{B}_W$ which ends with $T_w$ and $yz=0$ for every element $z\in\EuScript{B}_W$ which starts with $T_w$.

\end{cor}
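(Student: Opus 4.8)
The plan is to obtain the corollary as a direct consequence of Lemma~\ref{lem:y_van_amel} together with Lemma~\ref{lem:basic-rev}\eqref{item:basic0},\eqref{item:basic0'}, taking $\Theta=T_w$ throughout. The only work is to match the roles of the elements in those two results correctly; there is no genuine obstacle, since the underlying vanishing mechanism (the relation $x_\alpha^2=0$ in $\EuScript{B}_W$) has already been packaged into Lemma~\ref{lem:basic-rev}.

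First I would treat the claim $zy=0$ for every $z\in\EuScript{B}_W$ which ends with $T_w$. By Lemma~\ref{lem:y_van_amel}, the monomial $y=wx_{w_o}$ starts with $\gamma$ for all $\gamma\in T_w$. Hence I would invoke Lemma~\ref{lem:basic-rev}\eqref{item:basic0} with $\Theta=T_w$, using $y$ in the role of the element which starts with every $\gamma\in\Theta$ and using $z$ in the role of the element which ends with $\Theta$. The conclusion $\xi z=0$ of that item then reads precisely $zy=0$.

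Symmetrically, for the claim $yz=0$ for every $z\in\EuScript{B}_W$ which starts with $T_w$, I would use the other half of Lemma~\ref{lem:y_van_amel}, namely that $y$ ends with $\gamma$ for all $\gamma\in T_w$. Applying Lemma~\ref{lem:basic-rev}\eqref{item:basic0'} with $\Theta=T_w$, with $y$ playing the role of the element ending with every $\gamma\in\Theta$ and $z$ the element starting with $\Theta$, its conclusion $z\xi=0$ becomes $yz=0$.

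The main thing requiring attention is simply the left/right bookkeeping in matching the hypotheses of the two items of Lemma~\ref{lem:basic-rev}; no new input is needed. As a sanity check, the second half also follows from the first by applying $\rho$: since $\rho$ is an anti-algebra homomorphism with $\rho(y)=y$ by Fact~\ref{fact:rho_amel}\eqref{item:rho_invert}, and since $\rho$ turns an element starting with $T_w$ into one ending with $T_w$ by Remark~\ref{rem:rho}, one gets $\rho(yz)=\rho(z)\rho(y)=\rho(z)y=0$, whence $yz=0$ by bijectivity of $\rho$. This mirrors the way Item~\eqref{item:basic0'} of Lemma~\ref{lem:basic-rev} was deduced from Item~\eqref{item:basic0}.
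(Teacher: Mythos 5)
Your proposal is correct and matches the paper's own proof, which likewise derives the corollary directly from Lemma~\ref{lem:basic-rev}\eqref{item:basic0},\eqref{item:basic0'} combined with Lemma~\ref{lem:y_van_amel}; your left/right role-matching is exactly right.
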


\begin{proof}

This is clear from Lemma~\ref{lem:basic-rev}\eqref{item:basic0},\eqref{item:basic0'} and Lemma~\ref{lem:y_van_amel}.
\end{proof}

\begin{lem}
\label{lem:Sstartswith}

Let $v,w\in W$ be such that $v\neq w_o$, then $wx_{w_o/v}$ is a monomial which starts with $T_w$.

\end{lem}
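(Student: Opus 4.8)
The plan is to first identify $x_{w_o/v}$ explicitly as a single standard basis element of the nilCoxeter algebra $\EuScript{N}_W$, and then transport it by the $W$-action. Starting from the defining relation $\Delta(x_{w_o})=\sum_v x_{w_o/v}\otimes x_v$ and the formula $(z)\constantoverleftarrow{D_y}=z_{(1)}\langle z_{(2)},y\rangle$, together with the orthonormality $\langle x_u,x_{v^{-1}}\rangle=\delta_{u,v}$ from Fact~\ref{fact:rho_amel}\eqref{item:nil_pair}, I obtain $x_{w_o/v}=(x_{w_o})\constantoverleftarrow{D_{v^{-1}}}$. Writing a reduced expression $v^{-1}=s_{\beta_1}\cdots s_{\beta_k}$ and using that $y\mapsto\constantoverleftarrow{D_y}$ is a right action of $\EuScript{B}_W$, this becomes $x_{w_o/v}=(x_{w_o})\constantoverleftarrow{D_{\beta_1}}\cdots\constantoverleftarrow{D_{\beta_k}}$. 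For a simple root $\beta$ the braided right partial derivative acts on $\EuScript{N}_W$ as the divided difference operator, i.e.\ $(x_w)\constantoverleftarrow{D_\beta}=x_{ws_\beta}$ whenever $ws_\beta<w$ and $=0$ otherwise. Since $w_o$ is the longest element, $\ell(w_o s_{\beta_1}\cdots s_{\beta_j})=\ell(w_o)-j$ strictly decreases at every step, so all $k$ derivatives succeed and I conclude $x_{w_o/v}=x_{w_o v^{-1}}$, a single standard basis element of $\EuScript{N}_W$ of $\mathbb{Z}_{\geq 0}$-degree $\ell(w_o)-\ell(v)$.

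Because $v\neq w_o$, the element $u:=w_o v^{-1}$ is nontrivial, so $\ell(u)\geq 1$; fixing a reduced expression $u=s_{\beta_1'}\cdots s_{\beta_p'}$ with $p\geq 1$ gives the monomial $x_u=x_{\beta_1'}\cdots x_{\beta_p'}$. Since $W$ acts on $\EuScript{B}_W$ by algebra automorphisms with $wx_\alpha=x_{w(\alpha)}$, I get $wx_{w_o/v}=x_{w(\beta_1')}\cdots x_{w(\beta_p')}$, which is visibly a monomial (each factor is $\pm x_\gamma$ for the positive root $\gamma=\lvert w(\beta_i')\rvert$). Its leading factor is $x_{w(\beta_1')}$, and $s_{w(\beta_1')}=ws_{\beta_1'}w^{-1}\in wSw^{-1}$, so the leading positive root lies in $T_w$. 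By Remark~\ref{rem:starts-ends} this says precisely that $wx_{w_o/v}$ starts with $T_w$. (Alternatively, one can bypass the last computation by noting $x_{w_o}=x_{w_o v^{-1}}x_v$ is length-additive, whence $wx_{w_o}=(wx_{w_o/v})(wx_v)$ exhibits $wx_{w_o/v}$ as a nonempty left factor of the monomial $wx_{w_o}$ studied in Lemma~\ref{lem:y_van_amel}, which already starts with $T_w$.)

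The main obstacle is the identification step, specifically justifying that the coproduct component $x_{w_o/v}$ lies in $\EuScript{N}_W$ and equals $x_{w_o v^{-1}}$; this rests on the identification of braided right partial derivatives with divided difference operators on the nilCoxeter algebra (Bazlov's calculus), the orthonormality of the standard basis, and the maximality of $w_o$ which forces every successive simple right-derivative to lower the length. The role of the hypothesis $v\neq w_o$ is exactly to guarantee $p\geq 1$: by the $\mathbb{Z}_{\geq 0}$-grading a nonzero scalar does not start with any root, so the conclusion genuinely fails for the excluded value $x_{w_o/w_o}=1$.
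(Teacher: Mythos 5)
The decisive step of your argument is false. You assert that for $\beta\in\Delta$ the braided right partial derivative acts on the nilCoxeter algebra by $(x_u)\constantoverleftarrow{D_\beta}=x_{us_\beta}$ when $us_\beta<u$, and hence that $x_{w_o/v}=x_{w_ov^{-1}}\in\EuScript{N}_W$. But $\EuScript{N}_W$ is not stable under $\constantoverleftarrow{D_\beta}$: by definition $(x_u)\constantoverleftarrow{D_\beta}=x_{u/s_\beta}$, and Liu's coproduct components $x_{u/v}$ generically involve non-simple roots. Concretely, in type $\mathsf{A}_2$ with $\Delta=\{\beta_1,\beta_2\}$ the braided coproduct gives
\[
\Delta(x_{\beta_1}x_{\beta_2})=x_{\beta_1}x_{\beta_2}\otimes 1+x_{\beta_1}\otimes x_{\beta_2}+x_{\beta_1+\beta_2}\otimes x_{\beta_1}+1\otimes x_{\beta_1}x_{\beta_2}\,,
\]
so that $(x_{s_{\beta_1}s_{\beta_2}})\constantoverleftarrow{D_{\beta_1}}=x_{\beta_1+\beta_2}\notin\EuScript{N}_W$, and for $w_o=s_{\beta_1}s_{\beta_2}s_{\beta_1}$ one computes
\[
x_{w_o/s_{\beta_1}}=(x_{w_o})\constantoverleftarrow{D_{\beta_1}}=x_{\beta_1}x_{\beta_2}-x_{\beta_1+\beta_2}x_{\beta_1}=x_{\beta_2}x_{\beta_1+\beta_2}
\]
(the last equality by the Fomin--Kirillov relation). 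This is indeed a monomial, but it is \emph{not} the nilCoxeter element $x_{w_os_{\beta_1}}=x_{\beta_1}x_{\beta_2}$: the difference $x_{\beta_1+\beta_2}x_{\beta_1}$ is nonzero, since it pairs to $1$ with $x_{\beta_1}x_{\beta_1+\beta_2}$. Your parenthetical alternative, $wx_{w_o}=(wx_{w_o/v})(wx_v)$, rests on the same false identification. The example also shows that the leading \emph{simple} root of $x_{w_o/v}$ is governed by reduced expressions of $vw_o$ (here $x_{w_o/s_{\beta_1}}$ starts with $\beta_2$, the first letter of $s_{\beta_1}w_o=s_{\beta_2}s_{\beta_1}$), not of $w_ov^{-1}=s_{\beta_1}s_{\beta_2}$ as your computation would predict.

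What does survive is your final step: once one knows that $x_{w_o/v}$ is a monomial of positive $\mathbb{Z}_{\geq 0}$-degree whose leading root is simple, applying $w$ produces a leading factor $\pm x_\gamma$ with $s_\gamma=ws_\beta w^{-1}\in wSw^{-1}$, i.e.\ $\gamma\in T_w$, and $v\neq w_o$ is exactly what guarantees positive degree. The paper supplies the missing input by a different identification: $x_{w_o/v}=\bar{\EuScript{S}}(x_{vw_o})$ (from \cite{skew}, Example~5.4 and Theorem~6.11), combined with the twisted factorization $w\bar{\EuScript{S}}(x_{vw_o})=\epsilon x_\gamma\,\bigl(s_\gamma w\bar{\EuScript{S}}(x_{s_\beta vw_o})\bigr)$ for any $\beta\in\Delta$ with $s_\beta vw_o<vw_o$, where $w(\beta)=\epsilon\gamma$. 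To repair your proof you must replace the nilCoxeter identification by this one; as written, the argument does not go through.
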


\begin{proof}

Let $v,w\in W$ be such that $v\neq w_o$. By \cite[Example~5.4, Theorem~6.11]{skew} and because $w_o$ is an involution, we know that $wx_{w_o/v}=w\bar{\EuScript{S}}(x_{vw_o})$. Since $v\neq w_o$, we can find $\beta\in\Delta$ such that $s_\beta vw_o<vw_o$. Further, there exists a unique $\gamma\in T_w$ and a unique sign $\epsilon$ such that $w(\beta)=\epsilon\gamma$. By \cite[Proposition~3.7(4)]{skew} and by what was said, we see that $wx_{w_o/v}=\epsilon x_\gamma (s_\gamma w\bar{\EuScript{S}}(x_{s_\beta vw_o}))$. From this equality and \cite[Remark~3.8]{skew}, we see that $wx_{w_o/v}$ is indeed a monomial which starts with $\gamma$, and this $\gamma$ lies in $T_w$ by definition.
\end{proof}

\section{\for{toc}{{\color{white}0}}Inversion of braided partial derivatives}
\label{sec:inversion}

By inversion of braided partial derivatives, we mean the third formula in Proposition~\ref{prop:rhoD} which shows how $\rho$ and $\constantoverleftarrow{D_\xi}$ where $\xi\in\EuScript{B}_W$ commute when considered as endomorphisms of $\EuScript{B}_W$ acting from the right. Such a formula has relevance for the vanishing in Corollary~\ref{cor:vanish_rho} which is used in the proof of Lemma~\ref{lem:left-to-right-hypo}. Further, in this section, we work out a variant of the Nichols-Zoeller theorem for $\EuScript{B}_W$, namely Corollary~\ref{cor:nichols-zoeller2}, based on Corollary~\ref{cor:nichols-zoeller1}, where the latter result is used in the proof of Corollary~\ref{cor:BWintegral}\eqref{item:sign_elab2}.

\begin{prop}
\label{prop:rhoD}

Let $\xi\in\EuScript{B}_W$ be homogeneous of $W$-degree $g$. Then, we have
\begin{align*}
\EuScript{S}\constantoverleftarrow{D}_\xi&=g^{-1}\EuScript{S}\constantoverrightarrow{D_{\EuScript{S}(\xi)}}\,,\\
\constantoverrightarrow{D_\xi}\EuScript{S}^{-1}&=\constantoverleftarrow{D_{\EuScript{S}^{-1}(\xi)}}\EuScript{S}^{-1}g^{-1}\,,\\
\rho\constantoverleftarrow{D_\xi}&=\constantoverleftarrow{D_{\bar{\EuScript{S}}(\xi)}}\rho g\,.
\end{align*}

\end{prop}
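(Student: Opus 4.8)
The three displayed identities are equalities of endomorphisms of $\EuScript{B}_W$, and the cleanest way to prove such identities is to pair both sides against an arbitrary element and appeal to the nondegeneracy of the Hopf duality pairing $\langle-,-\rangle$ on $\EuScript{B}_W$. The plan is therefore to assemble, once and for all, the adjunction dictionary relating the four basic operators: by the formulas $\langle x^*,yx\rangle=\langle(x^*)\constantoverleftarrow{D_y},x\rangle$ and $\langle x^*y^*,x\rangle=\langle x^*,\constantoverrightarrow{D_{y^*}}(x)\rangle$, the right derivative $\constantoverleftarrow{D_\xi}$ is adjoint to left multiplication by $\xi$ and the left derivative $\constantoverrightarrow{D_\xi}$ is adjoint to right multiplication by $\xi$; the antipode is self-adjoint in the sense $\langle\EuScript{S}(\phi),x\rangle=\langle\phi,\EuScript{S}(x)\rangle$; and the homothety $g$ has adjoint $\EuScript{S}^{-1}(g)=g^{-1}$ by the invariance property of the pairing. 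Here I use that $H=\mathbf{k}W$ has involutive antipode, so that $\EuScript{S}^2=\mathrm{id}$ on $H$ and the stated conjugation formulas $\constantoverleftarrow{D_{hy}}=\EuScript{S}^2(h_{(1)})\constantoverleftarrow{D_y}\EuScript{S}^3(h_{(2)})$ collapse, for a grouplike $g$, to $\constantoverleftarrow{D_{gy}}=g\constantoverleftarrow{D_y}g^{-1}$ and likewise for $\constantoverrightarrow{D}$.

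For the first identity I would start from the fact that, in the braided setting, the antipode is not an ordinary but a \emph{braided} anti-algebra homomorphism: for $\xi$ homogeneous of $W$-degree $g$ one has $\EuScript{S}(\xi y)=(g\cdot\EuScript{S}(y))\,\EuScript{S}(\xi)$, where the factor $g\cdot(-)$ is exactly the contribution of the braiding $\Psi$ (this is where the grading element enters, and it is the whole source of the $g^{\pm1}$ appearing in the statement). Rewriting this as an identity between the multiplication operators, the homothety and $\EuScript{S}$, and then taking adjoints with respect to $\langle-,-\rangle$ using the dictionary above, turns left and right multiplication into $\constantoverleftarrow{D_\xi}$ and $\constantoverrightarrow{D_{\EuScript{S}(\xi)}}$ and the homothety into its inverse, producing the first formula after rearranging; nondegeneracy then upgrades the pairing identity to the claimed operator identity.

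The second formula is the $\EuScript{S}^{-1}$-analogue of the first, and I would obtain it either by running the same adjunction argument starting from the braided anti-homomorphism property of $\EuScript{S}^{-1}$, or, more economically, by substituting $\EuScript{S}^{-1}(\xi)$ for $\xi$ in the first formula (note that $\EuScript{S}$, being a morphism in the Yetter-Drinfeld category over $W$, preserves the $W$-degree, so $\EuScript{S}^{-1}(\xi)$ is again homogeneous of degree $g$) and solving for $\constantoverrightarrow{D_\xi}$. Finally, for the third formula --- the one actually needed later --- I would feed the first two into the defining relation (S3), namely $\bar{\EuScript{S}}|_{\EuScript{B}_W^m}=(-1)^m\rho\EuScript{S}$, together with the intertwining of $\rho$ with left and right derivatives recorded in \cite[Proposition~4.2, Remark~4.3]{skew} (the same input used to deduce Lemma~\ref{lem:basic-rev}\eqref{item:basic2} from \eqref{item:basic1}): composing $\rho$ with a right derivative turns it, via $\EuScript{S}$, into a derivative with argument $\bar{\EuScript{S}}(\xi)$, the sign $(-1)^m$ coming from (S3) cancelling against the sign hidden in the grading shift of the derivative, and the leftover homothety assembling into the single factor $g$ on the right-hand side.

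The main obstacle is one of bookkeeping rather than of ideas: every operator here can act on the left or on the right, and one must pin down, for each juxtaposition, which side is meant and hence in which order the operators compose, while simultaneously tracking the grading element $g$ produced by the braiding in the antipode's anti-homomorphism property and by the conjugation formulas. Getting the placement of $g$ versus $g^{-1}$ and of $\EuScript{S}$ versus the derivative exactly right --- so that the three formulas come out in precisely the displayed form --- is the delicate part; the algebra itself is essentially forced once the adjunction dictionary and the braided antipode property are in hand.
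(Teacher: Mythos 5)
Your proposal is correct, and for the first two identities it takes a route that is dual to, rather than identical with, the one in the paper. The paper works entirely on the coalgebra side: it writes out $\Delta\EuScript{S}=(\EuScript{S}\otimes\EuScript{S})\Psi\Delta$ and $\Psi^{-1}(\EuScript{S}^{-1}\otimes\EuScript{S}^{-1})\Delta=\Delta\EuScript{S}^{-1}$ in Sweedler notation, substitutes these into the defining formulas $(z^*)\constantoverleftarrow{D_y}=z^*_{(1)}\langle z^*_{(2)},y\rangle$ and $\constantoverrightarrow{D_{y^*}}(z)=\langle y^*,z_{(1)}\rangle z_{(2)}$, and reads off the operator identities directly, with no appeal to nondegeneracy. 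You instead use the dual statement $\EuScript{S}(\xi x)=(g\,\EuScript{S}(x))\,\EuScript{S}(\xi)$ on the algebra side and transport it through the pairing; this works, but it costs you two extra inputs the paper's computation does not need: nondegeneracy of $\left<-,-\right>$ (harmless here, but note that the direct computation is what lets the paper claim in Subsection~\ref{subsec:general} that the proposition holds for any $\EuScript{B}(V)$ in the stated generality), and the symmetry of the identified pairing, which you need in order to move $\constantoverrightarrow{D_{\EuScript{S}(\xi)}}$ between the two slots when unwinding the right-hand side — you should make that step explicit, since the two adjunction formulas in your dictionary act on opposite slots. Your derivation of the second formula by substituting $\EuScript{S}^{-1}(\xi)$ into the first and solving is legitimate, but the solving step silently uses that a right-acting $g^{-1}$ equals a left-acting $g$ (via $vh=\EuScript{S}^{-1}(h)v$) and that $\EuScript{S}^{-1}$ commutes with the $W$-action; this is exactly the bookkeeping you flag, and it does close. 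For the third formula your plan — (S3), the second identity, and the $\rho$/$\bar{\EuScript{S}}$ intertwining from \cite[Proposition~4.2, Remark~4.3]{skew} — coincides with the paper's chain $\rho\constantoverleftarrow{D_\xi}=\EuScript{S}^{-1}\bar{\EuScript{S}}\constantoverleftarrow{D_\xi}\epsilon=\epsilon\bar{\EuScript{S}}\constantoverrightarrow{D_{\rho(\xi)}}\EuScript{S}^{-1}=\constantoverleftarrow{D_{\bar{\EuScript{S}}(\xi)}}\rho g$; the one point to be careful about, which the paper handles by fixing a graded component $\EuScript{B}_W^m$ throughout, is that the sign $\epsilon=(-1)^m$ depends on the degree of the component each copy of $\bar{\EuScript{S}}$ acts on, and the derivatives shift that degree.
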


\begin{proof}

Let us recall the equivalent formulas
\begin{gather*}
\Delta\EuScript{S}=(\EuScript{S}\otimes\EuScript{S})\Psi\Delta\,,\\
\Psi^{-1}(\EuScript{S}^{-1}\otimes\EuScript{S}^{-1})\Delta=\Delta\EuScript{S}^{-1}
\end{gather*}
from \cite[Equation~(9.39) on page~477]{majid} where we suppress the components of the braiding $\Psi$. The first of those formulas can be put in words by saying that the antipode in a braided Hopf algebra is a braided anti coalgebra homomorphism. If we use suggestive Sweedler notation and plug in the braiding of ${}_W^W\EuScript{YD}$, we can equally well write these formulas as
\begin{align*}
\EuScript{S}(z)_{(1)}\otimes\EuScript{S}(z)_{(2)}&=g_{(1)}\EuScript{S}(z_{(2)})\otimes\EuScript{S}(z_{(1)})\,,\\
\EuScript{S}^{-1}(z)_{(1)}\otimes\EuScript{S}^{-1}(z)_{(2)}&=\EuScript{S}^{-1}(z_{(2)})\otimes g_{(2)}^{-1}\EuScript{S}^{-1}(z_{(1)})
\end{align*}
where $z$ is an arbitrary element in $\EuScript{B}_W$ and where $g_{(1)},g_{(2)}$ denote the $W$-degree of $z_{(1)},z_{(2)}$, respectively.

With the help of the previous two formulas and the basic properties of $\left<-,-\right>$, we compute
\begin{align*}
(\EuScript{S}(z))\constantoverleftarrow{D_\xi}=g_{(1)}\EuScript{S}(z_{(2)})\left<\EuScript{S}(z_{(1)}),\xi\right>&=\left<\EuScript{S}(\xi),z_{(1)}\right>g^{-1}\EuScript{S}(z_{(2)})=g^{-1}\EuScript{S}\constantoverrightarrow{D_{\EuScript{S}(\xi)}}(z)\,,\\
\constantoverrightarrow{D_\xi}\EuScript{S}^{-1}(z)=\left<\xi,\EuScript{S}^{-1}(z_{(2)})\right>g_{(2)}^{-1}\EuScript{S}^{-1}(z_{(1)})&=g\EuScript{S}^{-1}(z_{(1)})\left<z_{(2)},\EuScript{S}^{-1}(\xi)\right>=(z)\constantoverleftarrow{D_{\EuScript{S}^{-1}(\xi)}}\EuScript{S}^{-1}g^{-1}\,.
\end{align*}
This proves the first two formulas in the statement.

To prove the third formula, we can restrict the operators in the formula to a graded component $\EuScript{B}_W^m$ of some $\mathbb{Z}_{\geq 0}$-degree $m$. Once the equality is proved for arbitrary but fixed $m$, it will be valid everywhere on $\EuScript{B}_W$. Let $\epsilon=(-1)^m$. Because $\rho$ and $\bar{\EuScript{S}}$ are involutions by \cite[Proposition~3.7(6)]{skew}, we see from its definition that $\bar{\EuScript{S}}$ restricted to $\EuScript{B}_W^m$ is given by $\epsilon\rho\EuScript{S}=\epsilon\EuScript{S}^{-1}\rho$. With the help of \cite[Proposition~3.7(1), Proposition~4.2, Remark~2.16, Remark~4.3]{skew}, this last formula for $\bar{\EuScript{S}}$ restricted to $\EuScript{B}_W^m$ and the second formula in the statement which was already justified, we compute
\[
\rho\constantoverleftarrow{D_\xi}=\EuScript{S}^{-1}\bar{\EuScript{S}}\constantoverleftarrow{D_\xi}\epsilon=\epsilon\bar{\EuScript{S}}\constantoverrightarrow{D_{\rho(\xi)}}\EuScript{S}^{-1}=\constantoverleftarrow{D_{\EuScript{S}^{-1}\rho(\xi)}}\EuScript{S}^{-1}\bar{\EuScript{S}}g\epsilon=\constantoverleftarrow{D_{\bar{\EuScript{S}}(\xi)}}\rho g
\]
where this computation takes place restricted to $\EuScript{B}_W^m$. By what said before, this completes the proof.
\end{proof}

\begin{cor}
\label{cor:vanish_rho}

Let $b\in\EuScript{B_W}$. For all $\alpha\in R^+$, we have
\[
(b)\constantoverleftarrow{D_\alpha}=0\quad\text{if and only if}\quad(\rho(b))\constantoverleftarrow{D_\alpha}=0\,.
\]
\end{cor}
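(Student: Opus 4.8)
The plan is to obtain the corollary as a direct specialization of the third formula of Proposition~\ref{prop:rhoD}, namely $\rho\constantoverleftarrow{D_\xi}=\constantoverleftarrow{D_{\bar{\EuScript{S}}(\xi)}}\rho g$ for $\xi$ homogeneous of $W$-degree $g$, applied to $\xi=x_\alpha$ with $\alpha\in R^+$. Here $x_\alpha$ has $W$-degree $g=s_\alpha$, so the only preliminary computation needed is the value of $\bar{\EuScript{S}}$ on the degree-one generator $x_\alpha$.

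First I would record that $\bar{\EuScript{S}}(x_\alpha)=x_\alpha$. Indeed, since $x_\alpha\in\EuScript{B}_W^1$, axiom~(S3) gives $\bar{\EuScript{S}}(x_\alpha)=(-1)^1\rho\EuScript{S}(x_\alpha)=-\rho\EuScript{S}(x_\alpha)$; as the elements of $\EuScript{B}_W^1$ are primitive we have $\EuScript{S}(x_\alpha)=-x_\alpha$, and since $\rho$ is the identity in $\mathbb{Z}_{\geq 0}$-degree one by (S1), this yields $\bar{\EuScript{S}}(x_\alpha)=x_\alpha$. Substituting $\xi=x_\alpha$ into the third formula of Proposition~\ref{prop:rhoD} and using the shortcut $\constantoverleftarrow{D_\alpha}=\constantoverleftarrow{D_{x_\alpha}}$ from Remark~\ref{rem:shortcuts}, I then read off, with the conventions of Remark~\ref{rem:ident-into-end}, the evaluated identity
\[
(\rho(b))\constantoverleftarrow{D_\alpha}=s_\alpha\,\rho\big((b)\constantoverleftarrow{D_\alpha}\big)
\]
for every $b\in\EuScript{B}_W$.

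Finally, the group element $s_\alpha$ acts on $\EuScript{B}_W$ as an invertible linear map, and $\rho$ is a bijection, being an involution by \cite[Proposition~3.7(6)]{skew}. Hence the right-hand side of the displayed identity vanishes if and only if $(b)\constantoverleftarrow{D_\alpha}=0$, which is exactly the asserted equivalence. This argument is short because the substantive work already resides in Proposition~\ref{prop:rhoD}; the only points requiring care are the evaluation of $\bar{\EuScript{S}}$ on the generators $x_\alpha$ and the correct bookkeeping of the left/right acting conventions when passing from the operator identity to its evaluation on $b$. Since, however, only the bijectivity of $s_\alpha$ and of $\rho$ enters the conclusion, the precise placement of these two factors in the displayed identity is immaterial, and no real obstacle arises.
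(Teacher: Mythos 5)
Your proposal is correct and follows essentially the same route as the paper: specialize the third formula of Proposition~\ref{prop:rhoD} to $\xi=x_\alpha$, observe that $\bar{\EuScript{S}}$ is the identity in $\mathbb{Z}_{\geq 0}$-degree one (which you verify carefully via (S1), (S3) and primitivity, where the paper simply asserts it), and conclude from the invertibility of $\rho$ and of the action of $s_\alpha$. The only cosmetic difference is that you write out the evaluated identity on $b$ explicitly, which is harmless and, as you note, immaterial to the equivalence.
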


\begin{proof}
The third formula in Proposition~\ref{prop:rhoD} applied to $\xi=x_\alpha$ for some $\alpha\in R^+$ becomes
\[
\rho\constantoverleftarrow{D_\alpha}=\constantoverleftarrow{D_\alpha}\rho s_\alpha
\]
because $\bar{\EuScript{S}}$ is the identity in $\mathbb{Z}_{\geq 0}$-degree one by definition. The claimed equivalence is immediate from this formula and the fact that the operator $\rho$ is invertible by \cite[Proposition~3.7(6)]{skew}.
\end{proof}

\subsection*{A variant of the Nichols-Zoeller theorem}

\begin{thm}
\label{thm:nichols-zoeller}

Let $\xi\in\EuScript{B}_W$ be a homogeneous element of $W$-degree $g$. Then, we have $\EuScript{S}^{-1}(\xi)=(-1)^{\ell(g)}g^{-1}\EuScript{S}(\xi)$.

\end{thm}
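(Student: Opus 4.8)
The plan is to prove the equivalent multiplicative reformulation
\[
\EuScript{S}^2(\xi)=(-1)^{\ell(g)}\,g\cdot\xi
\]
for every homogeneous $\xi$ of $W$-degree $g$, where $g\cdot$ denotes the homothety of Remark~\ref{rem:ident-into-end}. This is equivalent to the stated formula: since $\EuScript{S}$ is a morphism in ${}_W^W\EuScript{YD}$, it preserves the $W$-degree and commutes with the $W$-action, so $\EuScript{S}^{-1}(g\cdot\xi)=g\cdot\EuScript{S}^{-1}(\xi)$; applying $\EuScript{S}^{-1}$ to the displayed identity and rearranging (using $(-1)^{\ell(g)}=\pm 1$) yields $\EuScript{S}^{-1}(\xi)=(-1)^{\ell(g)}g^{-1}\EuScript{S}(\xi)$. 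Moreover, by Lemma~\ref{lem:zdeg-length} the parity of $\ell(g)$ equals the parity of the $\mathbb{Z}_{\geq 0}$-degree $m$ of $\xi$, so I may and will replace $(-1)^{\ell(g)}$ by $(-1)^m$ throughout, which is what makes an induction on $m$ tractable.

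The key structural input is the braided anti-multiplicativity of the antipode, $\EuScript{S}\circ\mathrm{mult}=\mathrm{mult}\circ\Psi\circ(\EuScript{S}\otimes\EuScript{S})$, which for a homogeneous $x$ of $W$-degree $h$ specializes, via the braiding $\Psi(a\otimes b)=a_{(-1)}b\otimes a_{(0)}$ of ${}_W^W\EuScript{YD}$, to $\EuScript{S}(xy)=\bigl(h\cdot\EuScript{S}(y)\bigr)\EuScript{S}(x)$ (cf.~\cite{majid,kassel}). I then argue by induction on $m$. The cases $m=0$ and $m=1$ are immediate from $\EuScript{S}(1)=1$ and from $\EuScript{S}(x_\alpha)=-x_\alpha$ together with $s_\alpha\cdot x_\alpha=x_{s_\alpha(\alpha)}=x_{-\alpha}=-x_\alpha$, which gives $\EuScript{S}^2(x_\alpha)=x_\alpha=(-1)^1 s_\alpha\cdot x_\alpha$. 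For the inductive step, since $\EuScript{B}_W$ is generated in degree one it suffices to treat $\xi=x_\alpha\eta$ with $\eta$ homogeneous of $\mathbb{Z}_{\geq 0}$-degree $m-1$ and $W$-degree $h$, so that $g=s_\alpha h$. Applying the anti-multiplicativity formula twice gives
\[
\EuScript{S}(x_\alpha\eta)=-\bigl(s_\alpha\cdot\EuScript{S}(\eta)\bigr)x_\alpha,\qquad
\EuScript{S}^2(x_\alpha\eta)=\bigl(s_\alpha h s_\alpha\cdot x_\alpha\bigr)\,\EuScript{S}\bigl(s_\alpha\cdot\EuScript{S}(\eta)\bigr),
\]
where I use that $s_\alpha\cdot\EuScript{S}(\eta)$ has $W$-degree $s_\alpha h s_\alpha$ (the $W$-action conjugates the $W$-grading). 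Using that $\EuScript{S}$ commutes with the $W$-action, the inductive hypothesis $\EuScript{S}^2(\eta)=(-1)^{m-1}h\cdot\eta$, the identity $s_\alpha\cdot x_\alpha=-x_\alpha$, and that $W$ acts by algebra automorphisms so that $\bigl((s_\alpha h)\cdot x_\alpha\bigr)\bigl((s_\alpha h)\cdot\eta\bigr)=(s_\alpha h)\cdot(x_\alpha\eta)$, all the factors collect to $(-1)^m (s_\alpha h)\cdot(x_\alpha\eta)=(-1)^m g\cdot\xi$, completing the induction.

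The only real obstacle is bookkeeping: one must track the $W$-degrees as they are conjugated by the $W$-action through each application of $\EuScript{S}$, and keep the signs coming from $\EuScript{S}(x_\alpha)=-x_\alpha$ and from $s_\alpha\cdot x_\alpha=-x_\alpha$ consistent. The conceptual content is entirely in the anti-multiplicativity of the braided antipode and in the fact that $\EuScript{S}$ is a morphism of Yetter--Drinfeld modules; once these are in hand the computation is forced. Finally, I translate the $\EuScript{S}^2$-identity back into the asserted form $\EuScript{S}^{-1}(\xi)=(-1)^{\ell(g)}g^{-1}\EuScript{S}(\xi)$ as indicated in the first paragraph, which concludes the proof.
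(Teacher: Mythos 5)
Your proof is correct and follows essentially the same route as the paper: an induction on the $\mathbb{Z}_{\geq 0}$-degree of a monomial $\xi=x_\alpha\eta$, driven by the braided anti-multiplicativity of the antipode together with the signs from $\EuScript{S}(x_\alpha)=-x_\alpha$ and $s_\alpha x_\alpha=-x_\alpha$. The only (harmless) difference is one of packaging: you prove the $\EuScript{S}^2$-form first (the paper's Corollary~\ref{cor:nichols-zoeller1}) and deduce the stated identity, whereas the paper computes directly with $\EuScript{S}^{-1}$ versus $\EuScript{S}$ and obtains the $\EuScript{S}^2$-form as a corollary.
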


\begin{proof}

For the proof of the desired formula, we may assume that $\xi$ is a monomial of $W$-degree $g$. We assume this from now on and proceed by induction on the $\mathbb{Z}_{\geq 0}$-degree of $\xi$. The case of $\mathbb{Z}_{\geq 0}$-degree equal to zero, i.e.\ $\xi$ being a scalar multiple of $1$, being obvious, we assume further that the $\mathbb{Z}_{\geq 0}$-degree of $\xi$ is $>0$. In that case, we may write $\xi$ as $x_\alpha\xi'$ for some $\alpha\in R^+$ and some monomial $\xi'\in\EuScript{B}_W$. With the help of \cite[Equation~(9.39) on page~477]{majid} and the induction hypothesis, we compute
\begin{align*}
\EuScript{S}^{-1}(\xi)&=\EuScript{S}^{-1}(x_\alpha\xi')\\
&=\EuScript{S}^{-1}(\xi')(g^{-1}s_\alpha\EuScript{S}^{-1}(x_\alpha))\\
&=(-1)^{\ell(g)+1}(g^{-1}s_\alpha\EuScript{S}(\xi'))(g^{-1}s_\alpha\EuScript{S}(x_\alpha))\\
&=(-1)^{\ell(g)}g^{-1}((s_\alpha\EuScript{S}(\xi'))\EuScript{S}(x_\alpha)\\
&=(-1)^{\ell(g)}g^{-1}\EuScript{S}(x_\alpha\xi')\\
&=(-1)^{\ell(g)}g^{-1}\EuScript{S}(\xi)\qedhere
\end{align*}
\end{proof}

\begin{cor}
\label{cor:nichols-zoeller1}

Let $\xi\in\EuScript{B}_W$ be a homogeneous element of $W$-degree $g$. Then, we have $\EuScript{S}^2(\xi)=(-1)^{\ell(g)}g\xi$.

\end{cor}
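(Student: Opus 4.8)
The plan is to deduce this Corollary directly from Theorem~\ref{thm:nichols-zoeller} by feeding $\EuScript{S}(\xi)$ back into that identity. The one structural fact I would want to record first is that the antipode preserves the $W$-grading: being the antipode of a braided Hopf algebra, $\EuScript{S}$ is a morphism in ${}_W^W\EuScript{YD}$, hence in particular a morphism of $W$-comodules, so $\EuScript{S}(\xi)$ is again homogeneous of $W$-degree $g$ whenever $\xi$ is. This is exactly what will let me reapply Theorem~\ref{thm:nichols-zoeller} with the \emph{same} $g$ (and the same sign $(-1)^{\ell(g)}$).

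Next I would rearrange the identity of Theorem~\ref{thm:nichols-zoeller}. Reading $g$ as an invertible homothety on $\EuScript{B}_W$ in the sense of Remark~\ref{rem:ident-into-end}, the equation $\EuScript{S}^{-1}(\xi)=(-1)^{\ell(g)}g^{-1}\EuScript{S}(\xi)$ is equivalent, after multiplying on the left by $(-1)^{\ell(g)}g$ and using $(-1)^{2\ell(g)}=1$ and $gg^{-1}=1$, to
\[
\EuScript{S}(\xi)=(-1)^{\ell(g)}g\,\EuScript{S}^{-1}(\xi)\,.
\]
Applying this rearranged identity to the homogeneous element $\EuScript{S}(\xi)$, which has $W$-degree $g$ by the previous paragraph, yields $\EuScript{S}^2(\xi)=(-1)^{\ell(g)}g\,\EuScript{S}^{-1}(\EuScript{S}(\xi))$, and since $\EuScript{S}^{-1}\EuScript{S}=\mathrm{id}$ this is precisely $\EuScript{S}^2(\xi)=(-1)^{\ell(g)}g\xi$, as claimed.

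I do not anticipate a genuine obstacle: all the content sits in Theorem~\ref{thm:nichols-zoeller}, and the Corollary is a formal consequence. The only point deserving a moment of care is the grading-preservation of $\EuScript{S}$, since that is what licenses reusing the theorem with the same exponent and group element on $\EuScript{S}(\xi)$. As an alternative route avoiding any rearrangement, I could instead apply Theorem~\ref{thm:nichols-zoeller} verbatim to $\EuScript{S}(\xi)$ to get $\xi=(-1)^{\ell(g)}g^{-1}\EuScript{S}^2(\xi)$ and then solve for $\EuScript{S}^2(\xi)$ by multiplying through by $(-1)^{\ell(g)}g$; invoking $\ell(g^{-1})=\ell(g)$ where needed, this gives the same conclusion.
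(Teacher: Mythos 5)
Your proposal is correct and matches the paper, which simply declares the corollary an immediate consequence of Theorem~\ref{thm:nichols-zoeller}; your derivation (apply the theorem to $\EuScript{S}(\xi)$, using that $\EuScript{S}$ is a morphism of $W$-comodules and hence preserves the $W$-degree, then solve for $\EuScript{S}^2(\xi)$) is exactly the intended one-line argument spelled out. The care you take over the grading-preservation of $\EuScript{S}$ is the right point to flag, and both of your routes are valid.
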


\begin{proof}

This is an immediate corollary of Theorem~\ref{thm:nichols-zoeller}.
\end{proof}

\makeatletter
\xpatchcmd{\@thm}{\thm@headpunct{.}}{\thm@headpunct{}}{}{}
\makeatother
\begin{cor}
\label{cor:nichols-zoeller2}

{\normalfont (A variant of the Nichols-Zoeller theorem \cite[Theorem~10.5.6, Corollary 10.5.7(a)]{radford})\textbf{.}} Let $e$ be the exponent of $W$, i.e.\ the least common multiple of the orders of all elements of $W$ as in \cite{exponent}. Then, we have $\EuScript{S}^{2e}=1$.

\end{cor}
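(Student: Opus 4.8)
The plan is to iterate Corollary~\ref{cor:nichols-zoeller1}. That corollary computes $\EuScript{S}^2$ on the $W$-degree $g$ component of $\EuScript{B}_W$ as the homothety by $g$ rescaled by the sign $(-1)^{\ell(g)}$; applying it repeatedly should yield a closed formula for $\EuScript{S}^{2k}$ on homogeneous elements, after which specializing $k$ to the exponent $e$ kills both the group element and the sign.

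First I would record the elementary but crucial grading observation: if $\xi\in\EuScript{B}_W$ is homogeneous of $W$-degree $g$, then the homothety $g^k\xi$ is again homogeneous of $W$-degree $g$ for every $k\geq 0$. Indeed, the Yetter-Drinfeld compatibility over the group $W$ sends a homogeneous element of $W$-degree $h$ acted on by $g$ to one of $W$-degree $ghg^{-1}$, and here $g\cdot g\cdot g^{-1}=g$. This invariance is exactly what allows Corollary~\ref{cor:nichols-zoeller1} to be reapplied to $g^k\xi$ in the inductive step.

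Next I would prove by induction on $k\geq 0$ that $\EuScript{S}^{2k}(\xi)=(-1)^{k\ell(g)}g^k\xi$ for every homogeneous $\xi$ of $W$-degree $g$. The case $k=0$ is trivial and $k=1$ is precisely Corollary~\ref{cor:nichols-zoeller1}. For the inductive step one writes $\EuScript{S}^{2(k+1)}(\xi)=\EuScript{S}^2\big(\EuScript{S}^{2k}(\xi)\big)=(-1)^{k\ell(g)}\EuScript{S}^2(g^k\xi)$ and applies Corollary~\ref{cor:nichols-zoeller1} to the degree-$g$ element $g^k\xi$, obtaining $(-1)^{k\ell(g)}(-1)^{\ell(g)}\,g\cdot g^k\xi=(-1)^{(k+1)\ell(g)}g^{k+1}\xi$, as desired.

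Finally I would specialize to $k=e$. Since $e$ is divisible by the order of $g$ we have $g^e=1$ in $W$, so the homothety $g^e\xi$ equals $\xi$. For the sign, I would observe that $e$ is even: as $W\neq 1$ it contains a reflection, i.e.\ an element of order $2$, so $2\mid e$ and hence $(-1)^{e\ell(g)}=1$. Thus $\EuScript{S}^{2e}(\xi)=\xi$ for every homogeneous $\xi$, and because the homogeneous elements span $\EuScript{B}_W$ this gives $\EuScript{S}^{2e}=1$. I do not expect a serious obstacle here; the only two points that genuinely require care are the grading invariance of $g^k\xi$ (needed to legitimately iterate the formula) and the parity of $e$ (needed to discard the sign).
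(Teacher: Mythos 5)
Your proposal is correct and follows essentially the same route as the paper: iterate Corollary~\ref{cor:nichols-zoeller1} to get $\EuScript{S}^{2e}(\xi)=(-1)^{e\cdot\ell(g)}g^e\xi$ on homogeneous elements, then use that $g^e=1$ and that $e$ is even (the paper asserts the evenness directly from the definition of $e$, relying as you do on $W\neq 1$ containing an involution). Your explicit check that $g^k\xi$ retains $W$-degree $g$ is a welcome detail the paper leaves implicit.
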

\makeatletter
\xpatchcmd{\@thm}{\thm@headpunct{}}{\thm@headpunct{.}}{}{}
\makeatother

\begin{proof}

Indeed, it follows from Corollary~\ref{cor:nichols-zoeller1} that $\EuScript{S}^{2e}(\xi)=(-1)^{e\cdot\ell(g)}g^e\xi$ for all homogeneous elements $\xi\in\EuScript{B}_W$ of $W$-degree $g$. By definition of $e$, we know that $e$ is even, and further that $e$ is the smallest positive integer $N$ such that $g^N=1$ for all $g\in W$. It follows that $(-1)^{e\cdot\ell(g)}g^e=1$ for all $g\in W$, and consequently $\EuScript{S}^{2e}=1$ -- as claimed.
\end{proof}

\begin{rem}

In~\cite{exponent},  the conjecture was put forward that the exponent of $W$, i.e.\ the least common multiple of the orders of all elements of $W$, as it appears in the statement of Corollary~\ref{cor:nichols-zoeller2}, equals the least common multiple of the degrees of $W$ (-- degrees of $W$ in the sense of \cite[Section~3.7]{humphreys-coxeter}). This conjecture can be easily verified in type $\mathsf{A}$ using the cycle decomposition of permutations. However, directly generalizing this proof using the generalized cycle decomposition as in~\cite[Thoerem~1.3]{gobetpub} fails as was shown in~\cite{gobet}. It would be interesting to see a type independent proof of the conjecture mentioned in this remark for all finite Coxeter groups. 

\end{rem}

\section{\for{toc}{{\color{white}0}}Consequences of the generalized braided Leibniz rule}
\label{sec:braided-leibniz}

The general braided Leibniz rule appears in the context of the symmetric group, divided difference operators and skew divided difference operators in the papers \cite{kirillov,liu,macdonald1991notes}, for example. We adopt here the point of view developed in \cite{skew} where a generalized braided Leibniz rule was introduced for braided partial derivatives acting on $\EuScript{B}_W$. We give a selective list of consequences of this rule.

\makeatletter
\xpatchcmd{\@thm}{\thm@headpunct{.}}{\thm@headpunct{}}{}{}
\makeatother
\begin{thm}
\label{thm:leibniz}

{\normalfont (Generalized braided Leibniz rule \cite[Theorem~5.14]{skew}, \cite[Proposition~3(ii)]{kirillov}, \cite[Chapter~2, Theorem~2.18]{macdonald1991notes})\textbf{.}} Let $v,w,w'\in W$. Let $z\in\EuScript{B}_W$. Then, we have
\[
z\constantoverleftarrow{D_{w'x_{w/v}}}=\sum_{v\leq u\leq w}\constantoverleftarrow{D_{w'x_{u/v}}}\big((z)\constantoverleftarrow{D_{w'x_{w/u}}}w'uv^{-1}w'^{-1}\big)
\]
where we can equally well take the sum over all $u\in W$.

\end{thm}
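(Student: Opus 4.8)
The plan is to read the asserted formula as an identity of endomorphisms of $\EuScript{B}_W$: both sides are built from left multiplications by elements of $\EuScript{B}_W$, right homotheties by group elements, and braided partial derivatives, composed in the sense of Remark~\ref{rem:ident-into-end}. I would prove it by induction after two reductions. The first reduction disposes of the group element $w'$. Since $w'\in W$ is group-like, the conjugation formula $\constantoverleftarrow{D_{hy}}=\EuScript{S}^2(h_{(1)})\constantoverleftarrow{D_y}\EuScript{S}^3(h_{(2)})$ specializes to $\constantoverleftarrow{D_{w'y}}=w'\constantoverleftarrow{D_y}w'^{-1}$, while the $W$-action is by algebra automorphisms, so $w'\,(z\cdot(-))\,w'^{-1}=(w'z)\cdot(-)$ on left multiplications and right homotheties transform covariantly as well. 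Hence conjugation by $w'$ is an automorphism of $\operatorname{End}_{\mathbf{k}}\EuScript{B}_W$ carrying the $w'=1$ instance (with $z$ replaced by $w'^{-1}z$) exactly to the general one, the factor $uv^{-1}$ turning into $w'uv^{-1}w'^{-1}$ as required. So it suffices to treat $w'=1$.

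For $w'=1$ I would induct on $\ell(w)-\ell(v)$, the $\mathbb{Z}_{\geq 0}$-degree of $x_{w/v}$. The base case $w=v$ is immediate: $x_{v/v}=1$, $\constantoverleftarrow{D_1}=\operatorname{id}$, the sum collapses to $u=v=w$, and $vv^{-1}=1$. For the inductive step I would peel a simple generator off the bottom of $x_w$: choosing $\beta\in\Delta$ with $ws_\beta<w$ gives the reduced factorization $x_w=x_{ws_\beta}x_\beta$, hence by multiplicativity of the right action $\constantoverleftarrow{D_{x_w}}=\constantoverleftarrow{D_{x_{ws_\beta}}}\constantoverleftarrow{D_{x_\beta}}$, with $\constantoverleftarrow{D_{x_{ws_\beta}}}$ applied first. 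Applying the induction hypothesis to $\constantoverleftarrow{D_{x_{ws_\beta}}}$ and then the single outer derivative $\constantoverleftarrow{D_{x_\beta}}$, and using the nilCoxeter product rule $x_{u'}x_\beta=x_{u's_\beta}$ (or $0$ when $u's_\beta<u'$) to recombine the outer derivatives $\constantoverleftarrow{D_{x_{u'}}}\constantoverleftarrow{D_{x_\beta}}=\constantoverleftarrow{D_{x_{u's_\beta}}}$, reindexes the sum by $u=u's_\beta$. The passage from the skew elements $x_{ws_\beta/u'}$ to the desired $x_{w/u}$ is governed by the recursion for $x_{w/-}$ under right multiplication by a generator, obtained by expanding $\Delta(x_w)=\Delta(x_{ws_\beta})\Delta(x_\beta)$ with $\Delta(x_\beta)=x_\beta\otimes 1+1\otimes x_\beta$ in the braided tensor square; the basic braided Leibniz rule is precisely what produces the matching braiding factors. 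For the extension from $v=1$ to general $v$ I would run the analogous induction, splitting the derivative by means of the coassociativity identity $\Delta(x_{w/v})=\sum_{v\le u\le w}x_{w/u}\otimes x_{u/v}$, itself a direct consequence of applying coassociativity to $\Delta(x_w)=\sum_v x_{w/v}\otimes x_v$.

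Throughout, the flagged identity $(-)zh=(-)h_{(2)}(zh_{(1)})$, specialized to a group element $h=u$ so that it reads $(-)zu=(-)u(zu)$, is the tool that lets me commute the right homothety by $u$ past the left multiplication by $z$ and collect each summand into the exact shape $\constantoverleftarrow{D_{x_{u/v}}}\big((z)\constantoverleftarrow{D_{x_{w/u}}}\,uv^{-1}\big)$.

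I expect the main obstacle to be purely this bookkeeping: verifying that the $W$-grading elements produced by the basic Leibniz rule and by the skew-element recursion assemble into precisely the homothety $uv^{-1}$, and that the reindexing $u=u's_\beta$ is a clean bijection onto the Bruhat interval, with every boundary term where $u's_\beta<u'$ killed by a vanishing nilCoxeter product and no index double counted. The freedom to sum over all $u\in W$ is then automatic, since $x_{w/u}$ vanishes for $u\not\le w$ and $x_{u/v}$ for $v\not\le u$. Once the grading factors are pinned down, the remaining algebra is forced.
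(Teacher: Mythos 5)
Your argument is correct in outline, but the core step goes by a genuinely different route than the paper's. The reduction to $w'=1$ via $\constantoverleftarrow{D_{w'y}}=w'\constantoverleftarrow{D_y}w'^{-1}$ is exactly the paper's first move. From there the paper does \emph{not} reprove the $v=1$ case at all: it imports it from \cite[Theorem~5.14]{skew} and bootstraps to general $v$ on the \emph{algebra} side, by evaluating $((xy)z)\constantoverleftarrow{D_w}$ in two ways via associativity, specializing $x=x_{v^{-1}}$ so that $(x_{v^{-1}})\constantoverleftarrow{D_{v'}}=x_{v^{-1}/v'^{-1}}$, and running an induction on $\ell(v)$ in which the terms with $v'<v$ cancel by the induction hypothesis. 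You instead work on the \emph{coalgebra} side: the identity $\Delta(x_{w/v})=\sum_{v\le u\le w}x_{w/u}\otimes x_{u/v}$ (correct, by coassociativity applied to $\Delta(x_w)=\sum_v x_{w/v}\otimes x_v$ and linear independence of the $x_v$), combined with the pairing axiom $\left<\phi\psi,x\right>=\left<\phi,x_{(2)}\right>\left<\psi,x_{(1)}\right>$ and the fact that $x_{u/v}$ is homogeneous of $W$-degree $uv^{-1}$ so that the pairing selects the component of matching degree, yields the general-$v$ statement for $w'=1$ in a single computation --- no induction on $\ell(v)$ is needed, and in fact your separate induction on $\ell(w)$ peeling generators off $x_w$ to re-establish the $v=1$ case becomes redundant once you allow yourself the coassociativity route (it is the standard but bookkeeping-heavy proof of the cited base case). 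What each approach buys: the paper's stays entirely within operator identities already established in \cite{skew} and never touches the coproduct of a skew element, at the cost of a cancellation-style induction; yours is more self-contained and structurally transparent, at the cost of having to verify from scratch that the braiding and $W$-grading factors assemble into the homothety $uv^{-1}$ --- which they do, precisely by the mechanism you flag.
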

\makeatletter
\xpatchcmd{\@thm}{\thm@headpunct{}}{\thm@headpunct{.}}{}{}
\makeatother

\begin{proof}

Note that $x_{w/v}=0$ for all $v\not\leq w$ by definition. This shows that we can equally well sum over all $u\in W$ in the formula in the statement and everywhere else (in this proof) where similar situations arise. Let $v,w,w'\in W$ be arbitrary. Suppose that the claimed formula is proved for $w'=1$, then we find for arbitrary $w'$ that
\begin{alignat*}{99}
&\textstyle{\sum_u}\constantoverleftarrow{D_{w'x_{u/v}}}\big((z)\constantoverleftarrow{D_{w'x_{w/u}}}w'uv^{-1}w'^{-1}\big)&&{}={}\textstyle{\sum_u}\constantoverleftarrow{D_{w'x_{u/v}}}\big((zw')\constantoverleftarrow{D_{x_{w/u}}}uv^{-1}w'^{-1}\big)\\
{}={}&\textstyle{\sum_u} w'\constantoverleftarrow{D_{x_{u/v}}}\big((zw')\constantoverleftarrow{D_{x_{w/u}}}uv^{-1}\big)w'^{-1}&&{}={}w'(zw')\constantoverleftarrow{D_{x_{w/v}}}w'^{-1}=z\constantoverleftarrow{D_{w'x_{w/v}}}\raisebox{.5ex}{\footnotemark}
\end{alignat*}\footnotetext{The expression $(zw')\constantoverleftarrow{D_{x_{w/v}}}$ before the last equality in this equation has to be understood as the composition of two endomorphisms of $\EuScript{B}_W$ acting from the right, namely the composition of multiplication from the right with $zw'=w'^{-1}z$ and $\constantoverleftarrow{D_{x_{w/v}}}$, and should not be confused with the endomorphism given by right multiplication with $(zw')\constantoverleftarrow{D_{x_{w/v}}}$. Except maybe in the proof of Corollary~\ref{cor:bcommuteswithskew} where we explicitly make it clear in writing, this is the only instance where the conventions in Remark~\ref{rem:ident-into-end} lead to ambiguity. Everywhere else the absence or presence of parenthesis and arrows makes the meaning clear.}\noindent where we used Remark~\ref{rem:ident-into-end}, \cite[Remark~3.16]{skew} and the assumption for $w'=1$. This means that it suffices to prove the claimed formula for $w'=1$. We now do so.

Let $x,y\in\EuScript{B}_W$. Evaluating once $(x(yz))\constantoverleftarrow{D_w}$ and twice $((xy)z)\constantoverleftarrow{D_w}$ with the help of \cite[Theorem~5.14]{skew}, we find the equality
\[
\sum_{v'}(x)\constantoverleftarrow{D_{v'}}\big((yz)\constantoverleftarrow{D_{w/v'}}v'\big)=
\sum_{u,v'}(x)\constantoverleftarrow{D_{v'}}\big((y)\constantoverleftarrow{D_{u/v'}}v'\big)\big((z)\constantoverleftarrow{D_{w/u}}u\big)\,.
\]
If we plug $x=x_{v^{-1}}$ into this equality, it becomes
\begin{equation}
\label{eq:induction-skew}
\sum_{v'}x_{v^{-1}/v'^{-1}}\big((yz)\constantoverleftarrow{D_{w/v'}}v'\big)=
\sum_{u,v'}x_{v^{-1}/v'^{-1}}\big((y)\constantoverleftarrow{D_{u/v'}}v'\big)\big((z)\constantoverleftarrow{D_{w/u}}u\big)\,.
\end{equation}
in view of \cite[Proposition~8.1]{skew}. We now prove by induction on $\ell(v)$ that
\[
(yz)\constantoverleftarrow{D_{w/v}}v=\sum_u((y)\constantoverleftarrow{D_{u/v}}v\big)\big((z)\constantoverleftarrow{D_{w/u}}u\big)
\]
which suffices to finish the proof of the theorem because we can multiply with $v^{-1}$ from the right. If $v=1$, the desired formula is simply \cite[Theorem~5.14]{skew} by \cite[Example~5.4]{skew}. Suppose that $\ell(v)>0$ and that the induction hypothesis is satisfied for all $v'$ of length $<\ell(v)$. With the help of Equation~\eqref{eq:induction-skew} and \cite[Example~5.4]{skew} we find that
\begin{multline*}
(yz)\constantoverleftarrow{D_{w/v}}v+\sum_{v'<v}x_{v^{-1}/v'^{-1}}\big((yz)\constantoverleftarrow{D_{w/v'}}v'\big)=\\
\sum_u\big((y)\constantoverleftarrow{D_{u/v}}v\big)\big((z)\constantoverleftarrow{D_{w/u}}u\big)+\sum_{\substack{v'\leq u\leq w\\v'<v}}x_{v^{-1}/v'^{-1}}\big((y)\constantoverleftarrow{D_{u/v'}}v'\big)\big((z)\constantoverleftarrow{D_{w/u}}u\big)
\end{multline*}
where we of course use the definitional vanishing mentioned in the first sentence of this proof. The second sums after the plus on each side of the previous equation are equal by induction hypothesis. If we subtract them, we are left with the desired formula for $v$. This completes the induction step and the proof of the theorem.
\end{proof}

\begin{cor}
\label{cor:tow_inv}

Let $v,w\in W$. Let $y=wx_{w_o}$. Then, we have
\[
\constantoverleftarrow{D_y}z\constantoverleftarrow{D_{wx_v}}=\constantoverleftarrow{D_y}\big((z)\constantoverleftarrow{D_{wx_v}}\big)
\]
for all $z\in\EuScript{B}_W$.

\end{cor}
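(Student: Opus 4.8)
The plan is to read the claimed identity as an equation of endomorphisms of $\EuScript{B}_W$ acting from the right, evaluate both sides on an arbitrary $b\in\EuScript{B}_W$, and then recognize what remains as a single instance of the product rule in Lemma~\ref{lem:basic-rev}\eqref{item:basic4}. Following the conventions of Remark~\ref{rem:ident-into-end}, the middle factor $z$ on the left-hand side is right multiplication, so the left-hand side sends $b$ to $\big(((b)\constantoverleftarrow{D_y})\,z\big)\constantoverleftarrow{D_{wx_v}}$, while the right-hand side is the composition of $\constantoverleftarrow{D_y}$ with right multiplication by the element $(z)\constantoverleftarrow{D_{wx_v}}$, hence sends $b$ to $\big((b)\constantoverleftarrow{D_y}\big)\big((z)\constantoverleftarrow{D_{wx_v}}\big)$. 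Setting $a=(b)\constantoverleftarrow{D_y}$, the corollary reduces to proving $(az)\constantoverleftarrow{D_{wx_v}}=a\big((z)\constantoverleftarrow{D_{wx_v}}\big)$ for every such $a$ and every $z$.

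I would then apply Lemma~\ref{lem:basic-rev}\eqref{item:basic4} with $z_1=a$, $z_2=z$, $\Theta=T_w$ and $\xi=wx_v$, checking its two hypotheses. First, $wx_v$ does only involve $T_w$: since $W$ acts on $\EuScript{B}_W$ by algebra automorphisms with $w\cdot x_\beta=x_{w(\beta)}$, expanding along a reduced expression $v=s_{\beta_1}\cdots s_{\beta_k}$ gives $wx_v=\pm x_{\gamma_1}\cdots x_{\gamma_k}$, where each $\gamma_i\in R^+$ satisfies $s_{\gamma_i}=ws_{\beta_i}w^{-1}\in wSw^{-1}$, so $\gamma_i\in T_w$ by definition of $T_w$. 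Second, $(a)\constantoverleftarrow{D_\alpha}=0$ for all $\alpha\in T_w$: because $y\mapsto\constantoverleftarrow{D_y}$ is a right action we have $\constantoverleftarrow{D_y}\constantoverleftarrow{D_\alpha}=\constantoverleftarrow{D_{yx_\alpha}}$, whence $(a)\constantoverleftarrow{D_\alpha}=(b)\constantoverleftarrow{D_{yx_\alpha}}$, and $yx_\alpha=0$ since $y=wx_{w_o}$ ends with $\alpha$ for every $\alpha\in T_w$ by Lemma~\ref{lem:y_van_amel} while $x_\alpha^2=0$ in $\EuScript{B}_W$.

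With both hypotheses verified, Lemma~\ref{lem:basic-rev}\eqref{item:basic4} gives $(az)\constantoverleftarrow{D_{wx_v}}=a\big((z)\constantoverleftarrow{D_{wx_v}}\big)$, which is exactly the evaluation on $b$ of the asserted endomorphism identity; as $b$ is arbitrary, this proves the corollary. The step I would treat most carefully is not a genuine obstacle but rather the bookkeeping of the right-action conventions of Remark~\ref{rem:ident-into-end}: one must correctly read the sandwiched $z$ as right multiplication and isolate $a=(b)\constantoverleftarrow{D_y}$ as the first factor to which the product rule is applied. Once this is set up, everything else is a direct appeal to Lemma~\ref{lem:y_van_amel}, the nilpotency relation, and Lemma~\ref{lem:basic-rev}\eqref{item:basic4}.
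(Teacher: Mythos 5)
Your proof is correct, but it takes a genuinely different route from the paper's. The paper obtains the identity by applying the generalized braided Leibniz rule (Theorem~\ref{thm:leibniz}) to the factor $z\constantoverleftarrow{D_{wx_v}}$, which yields $\constantoverleftarrow{D_y}z\constantoverleftarrow{D_{wx_v}}=\sum_u\constantoverleftarrow{D_{y(wx_u)}}\big((z)\constantoverleftarrow{D_{wx_{v/u}}}wuw^{-1}\big)$, and then kills every term with $u\neq 1$ via Corollary~\ref{cor:y_van_amel}, since $wx_u$ is a monomial which starts with $T_w$ whenever $u\neq 1$. You instead evaluate the operator identity on an arbitrary $b$, isolate $a=(b)\constantoverleftarrow{D_y}$, note that $(a)\constantoverleftarrow{D_\alpha}=(b)\constantoverleftarrow{D_{yx_\alpha}}=0$ for all $\alpha\in T_w$ because $y$ ends with every such $\alpha$ and $x_\alpha^2=0$, and then invoke the product rule of Lemma~\ref{lem:basic-rev}\eqref{item:basic4} with $\Theta=T_w$ and $\xi=wx_v$, which does only involve $T_w$ by the same translation argument that underlies Lemma~\ref{lem:y_van_amel}. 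Both hypothesis checks are sound, and your reading of the right-action conventions of Remark~\ref{rem:ident-into-end} matches the paper's (cf.\ the footnote in the proof of Theorem~\ref{thm:leibniz}). What your approach buys is economy and transparency: it avoids Theorem~\ref{thm:leibniz} and the skew elements $x_{v/u}$ entirely, relying only on the elementary consequences of the braided Leibniz rule already packaged in Lemma~\ref{lem:basic-rev}, and it isolates the real mechanism, namely that the image of $\constantoverleftarrow{D_y}$ lies in the common kernel of the $\constantoverleftarrow{D_\alpha}$ for $\alpha\in T_w$, so that differentiation along anything built from $T_w$ degenerates to a product rule there. The paper's route stays at the level of operator identities and runs in parallel with the proof of the neighbouring Corollary~\ref{cor:bcommuteswithskew}, which genuinely requires the full sum over the Bruhat interval.
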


\begin{proof}

Let the notation be as in the statement. Let $z\in\EuScript{B}_W$ be fixed but arbitrary. By Theorem~\ref{thm:leibniz}, we have
\[
\constantoverleftarrow{D_y}z\constantoverleftarrow{D_{wx_v}}=\sum_u\constantoverleftarrow{D_{y(wx_u)}}\big((z)\constantoverleftarrow{D_{wx_{v/u}}}wuw^{-1}\big)\,.
\]
Since $wx_u$ is a monomial which starts with $T_w$ whenever $u\neq 1$, we see from Corollary~\ref{cor:y_van_amel} that $y(wx_u)$ vanishes for all $u\neq 1$. In view of \cite[Example~5.4]{skew}, the above sum therefore reduces to the right side of the claimed formula in the statement of the corollary.
\end{proof}

\begin{cor}
\label{cor:bcommuteswithskew}

Let $v,w\in W$. Let $b\in\EuScript{B}_W$ be such that $(b)\constantoverleftarrow{D_\alpha}=0$ for all $\alpha\in T_w$. Then, we have
\[
b\constantoverleftarrow{D_{wx_{w_o/v}}}=\constantoverleftarrow{D_{wx_{w_o/v}}}(wvw_ow^{-1}b)\,.
\]

\end{cor}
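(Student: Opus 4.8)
The plan is to read the generalized braided Leibniz rule (Theorem~\ref{thm:leibniz}) as a \emph{product rule} for the right braided derivative and to feed the product $(-)\,b$ into it, so that the hypothesis on $b$ kills all but one summand. I first fix the meaning of the asserted operator identity: it sends an arbitrary $z\in\EuScript{B}_W$ to $(zb)\constantoverleftarrow{D_{wx_{w_o/v}}}$ on the left-hand side and to $\big((z)\constantoverleftarrow{D_{wx_{w_o/v}}}\big)(wvw_ow^{-1}b)$ on the right-hand side, where, following the conventions of Remark~\ref{rem:ident-into-end}, the factor $wvw_ow^{-1}b$ is $b$ acted on by the group element $wvw_ow^{-1}$ and then right-multiplied.

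First I would specialize Theorem~\ref{thm:leibniz} to $w'=w$, to its ``$w$'' being $w_o$, to its ``$v$'' being $v$, and to $z=b$, exactly as the theorem was already applied in the proof of Corollary~\ref{cor:tow_inv}. This yields, for every $z\in\EuScript{B}_W$, the product expansion
\[
(zb)\constantoverleftarrow{D_{wx_{w_o/v}}}=\sum_{v\leq u\leq w_o}\big((z)\constantoverleftarrow{D_{wx_{u/v}}}\big)\big((b)\constantoverleftarrow{D_{wx_{w_o/u}}}\,wuv^{-1}w^{-1}\big),
\]
in which the left factor $z$ is differentiated by the lower skew piece $\constantoverleftarrow{D_{wx_{u/v}}}$ and the right factor $b$ by the upper skew piece $\constantoverleftarrow{D_{wx_{w_o/u}}}$, the latter carrying the $W$-degree twist $wuv^{-1}w^{-1}$.

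The decisive step is that almost every summand vanishes. By Lemma~\ref{lem:Sstartswith}, for each $u\neq w_o$ the monomial $wx_{w_o/u}$ starts with $T_w$; and from the hypothesis $(b)\constantoverleftarrow{D_\alpha}=0$ for all $\alpha\in T_w$ together with Lemma~\ref{lem:basic-rev}\eqref{item:basic1}, any $\constantoverleftarrow{D_\xi}$ with $\xi$ starting with $T_w$ annihilates $b$. Hence $(b)\constantoverleftarrow{D_{wx_{w_o/u}}}=0$ for every $u\neq w_o$, so only the term $u=w_o$ survives. In that term $x_{w_o/w_o}=1$, whence $\constantoverleftarrow{D_{wx_{w_o/w_o}}}=\constantoverleftarrow{D_1}$ is the identity and leaves $b$ undifferentiated, while the residual twist becomes $ww_ov^{-1}w^{-1}$.

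It then remains to match this residual twist with the group element displayed in the statement. Since the twist enters as a right action and since $w_o$ is an involution, right multiplication by $ww_ov^{-1}w^{-1}$ coincides with the left action of its inverse $wvw_o^{-1}w^{-1}=wvw_ow^{-1}$, so the surviving summand equals $\big((z)\constantoverleftarrow{D_{wx_{w_o/v}}}\big)(wvw_ow^{-1}b)$, which is the right-hand side. I expect the main obstacle to be notational rather than conceptual: correctly reading Theorem~\ref{thm:leibniz} as a product rule and tracking the conjugating $W$-degree factor through to the single surviving term, and in particular converting the right-action twist $ww_ov^{-1}w^{-1}$ into the left action $wvw_ow^{-1}$ via the left/right conventions of Remark~\ref{rem:ident-into-end}. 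The vanishing itself, which is the heart of the argument, is immediate from Lemmas~\ref{lem:Sstartswith} and \ref{lem:basic-rev}\eqref{item:basic1}.
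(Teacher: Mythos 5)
Your proposal is correct and follows essentially the same route as the paper: expand $b\constantoverleftarrow{D_{wx_{w_o/v}}}$ via the generalized braided Leibniz rule (Theorem~\ref{thm:leibniz}), kill all terms with $u\neq w_o$ using Lemma~\ref{lem:Sstartswith} together with Lemma~\ref{lem:basic-rev}\eqref{item:basic1}, and convert the surviving right-action twist $b\,ww_ov^{-1}w^{-1}$ into the left action $wvw_ow^{-1}b$ using that $w_o$ is an involution. The identification of the single surviving summand and the handling of the left/right conventions from Remark~\ref{rem:ident-into-end} match the paper's argument exactly.
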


\begin{proof}

Let the notation be as in the statement. By Lemma~\ref{lem:basic-rev}\eqref{item:basic1}, Lemma~\ref{lem:Sstartswith} and \cite[Example~5.4]{skew}, we see that $(b)\constantoverleftarrow{D_{wx_{w_o/u}}}$ vanishes for all $u\neq w_o$ and evaluates as $b$ for $u=w_o$. This means that if we apply Theorem~\ref{thm:leibniz} to the left side of the claimed formula in the statement of the corollary, in the corresponding sum over $u$ only one term associated to $u=w_o$ survives and this term is equal to the right side of the claimed formula in the statement of the corollary. Note also that $bww_ov^{-1}w^{-1}=wvw_ow^{-1}b$ considered as equality of elements in $\EuScript{B}_W$ because $w_o$ is an involution. This completes the proof.
\end{proof}

\begin{cor}
\label{cor:ofbskew}

Let $D$ be a disjoint system of order two. Let $w_1,w_2$ be some ordering of the elements of $D$. Let $y_1=w_1x_{w_o}$ and let $y_2=w_2x_{w_o}$. Then, we have $y_1\constantoverleftarrow{D_{y_2}}=\constantoverleftarrow{D_{y_2}}y_1(-1)^{\ell(w_o)}$.

\end{cor}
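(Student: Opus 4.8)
The plan is to recognize the stated identity as the special case $w=w_2$, $v=1$ of Corollary~\ref{cor:bcommuteswithskew}, once the element playing the role of $b$ is checked to satisfy that corollary's hypothesis and once the scalar is pinned down via Fact~\ref{fact:rho_amel}. Indeed, with $v=1$ we have $x_{w_o/1}=x_{w_o}$, so $w_2x_{w_o/1}=w_2x_{w_o}=y_2$, and the conjugation factor $wvw_ow^{-1}$ of Corollary~\ref{cor:bcommuteswithskew} becomes $w_2w_ow_2^{-1}=w_o$, the last equality holding because $w_2$ lies in the centralizer of $w_o$ by the definition of a disjoint system. Thus Corollary~\ref{cor:bcommuteswithskew} will read
\[
b\constantoverleftarrow{D_{y_2}}=\constantoverleftarrow{D_{y_2}}(w_ob)
\]
for every $b\in\EuScript{B}_W$ with $(b)\constantoverleftarrow{D_\alpha}=0$ for all $\alpha\in T_{w_2}$, and the goal will be to apply this to $b=y_1$.

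First I would check that $b=y_1$ satisfies the hypothesis. By Lemma~\ref{lem:y_van_amel}, $y_1=w_1x_{w_o}$ is a monomial which does only involve $T_{w_1}$. Since $D=\{w_1,w_2\}$ is a disjoint system, the sets $w_1Sw_1^{-1}$ and $w_2Sw_2^{-1}$ are disjoint, so $T_{w_1}\cap T_{w_2}=\varnothing$ and hence $y_1$ does only involve $R^+\setminus T_{w_2}$. Applying Lemma~\ref{lem:basic-rev}\eqref{item:basic3} with $\Theta=T_{w_2}$ and with $\xi=x_\alpha$ for $\alpha\in T_{w_2}$ (which is a monomial that starts with $T_{w_2}$) then gives $(y_1)\constantoverleftarrow{D_\alpha}=0$ for all $\alpha\in T_{w_2}$, as required.

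With the hypothesis verified, applying the specialized corollary to $b=y_1$ yields $y_1\constantoverleftarrow{D_{y_2}}=\constantoverleftarrow{D_{y_2}}(w_oy_1)$. It remains to evaluate the element $w_oy_1$: since $v=w_o$ and $w_1$ lies in the centralizer of $w_o$, Fact~\ref{fact:rho_amel}\eqref{item:mult_wo} gives $w_oy_1=(-1)^{\ell(w_o)}y_1$. Substituting and pulling the scalar $(-1)^{\ell(w_o)}$ out of the operator $\constantoverleftarrow{D_{y_2}}$ gives exactly $y_1\constantoverleftarrow{D_{y_2}}=\constantoverleftarrow{D_{y_2}}y_1(-1)^{\ell(w_o)}$, which is the claim.

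I do not anticipate a genuine difficulty, as the combinatorial heart of the argument --- the collapse of the generalized braided Leibniz rule to a single surviving term --- is already absorbed into Corollary~\ref{cor:bcommuteswithskew}. The only point demanding care is the operator bookkeeping of Remark~\ref{rem:ident-into-end}: throughout, both sides are endomorphisms of $\EuScript{B}_W$ acting from the right, so $\constantoverleftarrow{D_{y_2}}(w_oy_1)$ must be read as the derivative $\constantoverleftarrow{D_{y_2}}$ composed with multiplication by the element $w_oy_1$, and one must confirm that $w_oy_1$ denotes the left $W$-action of the group element $w_o$ on $y_1$ (matching the left-action convention of Fact~\ref{fact:rho_amel}\eqref{item:mult_wo}) before replacing it by $(-1)^{\ell(w_o)}y_1$.
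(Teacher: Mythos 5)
Your proposal is correct and follows essentially the same route as the paper: Lemma~\ref{lem:y_van_amel} plus disjointness to get $(y_1)\constantoverleftarrow{D_\alpha}=0$ for $\alpha\in T_{w_2}$ via Lemma~\ref{lem:basic-rev}\eqref{item:basic3}, then Corollary~\ref{cor:bcommuteswithskew} with $v=1$, $w=w_2$, $b=y_1$, and Fact~\ref{fact:rho_amel}\eqref{item:mult_wo} for the sign. Your additional care about the conjugation factor collapsing to $w_o$ and the operator-versus-element reading of $\constantoverleftarrow{D_{y_2}}(w_oy_1)$ is exactly the bookkeeping the paper leaves implicit.
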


\begin{proof} 

Let the notation be as in the statement. By Lemma~\ref{lem:y_van_amel}, the monomial $y_1$ does only involve $T_{w_1}\subseteq R^+\setminus T_{w_2}$. Thus, by Lemma~\ref{lem:basic-rev}\eqref{item:basic3}, we know that $(y_1)\constantoverleftarrow{D_\alpha}=0$ for all $\alpha\in T_{w_2}$. Therefore, Corollary~\ref{cor:bcommuteswithskew} applies to $v=1,w_2,b=y_1$ and we obtain the claimed formula because of Fact~\ref{fact:rho_amel}\eqref{item:mult_wo}.
\end{proof}

\begin{cor}
\label{cor:bracket}

Let $D$ be a disjoint system of order $r$. Let $w_1,\ldots,w_r$ be some ordering of the elements of $D$. Let $y_i=w_ix_{w_o}$ for all $1\leq i\leq r$. Then, we have
\[
\left<y_{\pi(1)}\cdots y_{\pi(r)},y_{\sigma(1)}\cdots y_{\sigma(r)}\right>=(-1)^{\left(\frac{(r-1)r}{2}+\ell(\sigma\pi^{-1})\right)\cdot\ell(w_o)}
\]
for all permutations $\sigma,\pi\in\mathbb{S}_r$. In particular, it follows that $y_{\sigma(1)}\cdots y_{\sigma(r)}$ is nonzero for all permutations $\sigma\in\mathbb{S}_r$.

\end{cor}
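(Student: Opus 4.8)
The plan is to prove the identity by induction on the order $r$ and to read off the nonvanishing statement at the very end by specializing $\pi=\sigma$. Throughout, observe that since $D$ lies in the centralizer of $w_o$, every $y_i=w_ix_{w_o}$ is homogeneous of $W$-degree $w_o$ and $\mathbb{Z}_{\geq 0}$-degree $\ell(w_o)$ by Fact~\ref{fact:rho_amel}\eqref{item:deg_y}, and $\langle y_i,y_i\rangle=1$ by Fact~\ref{fact:rho_amel}\eqref{item:nil_pair}. The base case $r=1$ is then immediate, the exponent $\tfrac{0\cdot 1}{2}+\ell(\mathrm{id})$ being zero. For the inductive step I would peel off the first factor of the right-hand word by means of the adjunction $\langle x^*,yx\rangle=\langle(x^*)\constantoverleftarrow{D_y},x\rangle$ from Subsection~\ref{subsec:nichols}, which yields
\[
\langle y_{\pi(1)}\cdots y_{\pi(r)},y_{\sigma(1)}\cdots y_{\sigma(r)}\rangle=\big\langle(y_{\pi(1)}\cdots y_{\pi(r)})\constantoverleftarrow{D_{y_{\sigma(1)}}},\,y_{\sigma(2)}\cdots y_{\sigma(r)}\big\rangle .
\]

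The heart of the matter is the evaluation of this derivative. Let $k=\pi^{-1}\sigma(1)$, so that $y_{\pi(k)}=y_{\sigma(1)}$ is the unique factor carrying the letter $\sigma(1)$. By Lemma~\ref{lem:y_van_amel} each $y_{\pi(j)}$ only involves $T_{w_{\pi(j)}}$, and these sets are pairwise disjoint; hence Lemma~\ref{lem:basic-rev}\eqref{item:basic3} gives $(y_{\pi(j)})\constantoverleftarrow{D_\alpha}=0$ for all $\alpha\in T_{w_{\sigma(1)}}$ whenever $j\neq k$. I would first factor out the left part $y_{\pi(1)}\cdots y_{\pi(k-1)}$ using Lemma~\ref{lem:basic-rev}\eqref{item:basic2-3},\eqref{item:basic4}, which contributes no sign, and then, since any two distinct elements of $D$ form a disjoint system of order two by Remark~\ref{rem:sub-disj}, use Corollary~\ref{cor:ofbskew} to commute $\constantoverleftarrow{D_{y_{\sigma(1)}}}$ past each of the $r-k$ tail factors $y_{\pi(k+1)},\ldots,y_{\pi(r)}$, picking up a sign $(-1)^{\ell(w_o)}$ each time, while $(y_{\sigma(1)})\constantoverleftarrow{D_{y_{\sigma(1)}}}=\langle y_{\sigma(1)},y_{\sigma(1)}\rangle=1$. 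This gives
\[
(y_{\pi(1)}\cdots y_{\pi(r)})\constantoverleftarrow{D_{y_{\sigma(1)}}}=(-1)^{(r-k)\ell(w_o)}\,y_{\pi(1)}\cdots y_{\pi(k-1)}y_{\pi(k+1)}\cdots y_{\pi(r)},
\]
so the pairing equals $(-1)^{(r-k)\ell(w_o)}$ times a pairing of two words over the disjoint system $D'=D\setminus\{w_{\sigma(1)}\}$ of order $r-1$, to which the induction hypothesis applies with the induced orderings $\bar\pi,\bar\sigma\in\mathbb{S}_{r-1}$.

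It then remains to reconcile the exponents, and here I would exploit that the value is a power of $-1$, so only parities matter: it suffices to show that $(r-k)+\tfrac{(r-2)(r-1)}{2}+\ell(\bar\sigma\bar\pi^{-1})$ and $\tfrac{(r-1)r}{2}+\ell(\sigma\pi^{-1})$ are congruent modulo $2$, since both final exponents are the same integer multiples of $\ell(w_o)$. As $\tfrac{(r-1)r}{2}-\tfrac{(r-2)(r-1)}{2}=r-1$, this reduces to the purely combinatorial congruence $\ell(\sigma\pi^{-1})\equiv\ell(\bar\sigma\bar\pi^{-1})+(k-1)\pmod 2$. I would establish it via sign multiplicativity: using $(-1)^{\ell(g)}=\mathrm{sgn}(g)$ and $\mathrm{sgn}(\sigma\pi^{-1})=\mathrm{sgn}(\sigma)\mathrm{sgn}(\pi)$ (and likewise for the bars), one counts the inversions destroyed when deleting the letter $\sigma(1)$ from the first position of the word $\sigma$ (exactly $\sigma(1)-1$) and from position $k$ of the word $\pi$ (congruent to $k+\sigma(1)$ modulo $2$); their sum is $\equiv k-1\pmod 2$, which is precisely the required congruence.

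Finally, taking $\pi=\sigma$ yields $\langle y_{\sigma(1)}\cdots y_{\sigma(r)},y_{\sigma(1)}\cdots y_{\sigma(r)}\rangle=(-1)^{\frac{(r-1)r}{2}\ell(w_o)}=\pm 1\neq 0$, so bilinearity of $\langle-,-\rangle$ forces $y_{\sigma(1)}\cdots y_{\sigma(r)}\neq 0$, giving the final assertion. I expect the main obstacle to be the sign bookkeeping: correctly attaching the per-factor sign $(-1)^{\ell(w_o)}$ from Corollary~\ref{cor:ofbskew} to exactly the $r-k$ tail factors (and \emph{no} sign to the left factors, which is the asymmetry inherent in the right-handed braided Leibniz rule), and then matching this against the inversion count. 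Everything else is a direct assembly of the braided differential calculus already in place.
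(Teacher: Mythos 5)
Your proposal is correct and follows essentially the same route as the paper's proof: the paper likewise peels off the factors of the right-hand word one at a time, locates the matching factor in the left-hand word, commutes the derivative past the tail factors via Corollary~\ref{cor:ofbskew} (one sign $(-1)^{\ell(w_o)}$ per factor) and through the head factors without sign via Lemma~\ref{lem:basic-rev}\eqref{item:basic3},\eqref{item:basic4}, and evaluates the surviving term with Fact~\ref{fact:rho_amel}\eqref{item:nil_pair}. The only difference is organizational --- the paper first normalizes to $\pi=1$ and tracks the accumulated sign $\sum_j n_j$ explicitly, whereas you keep $\pi$ general, induct on $r$, and close the sign bookkeeping with the inversion-count congruence, which is an equivalent computation.
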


\begin{proof}

Let the notation be as in the statement. The particular case is obvious from the claimed equation. We prove this equation. Let $\sigma,\pi\in\mathbb{S}_r$ be arbitrary. If we change the indices of the ordering of $D$ from $1,\ldots,r$ to $\pi(1),\ldots,\pi(r)$ and replace $\sigma$ by $\sigma\pi^{-1}$, we see that we can assume directly in the beginning that $\pi=1$. We will assume $\pi=1$ from now on.

Let us define indices $i_1^j,\ldots,i_{r-j}^j$ such that
\[
\{i_1^j<\cdots<i_{r-j}^j\}=\{1,\ldots,r\}\setminus\{\sigma(1),\ldots,\sigma(j)\}
\]
for all $0\leq j\leq r-1$. In order to prove the corollary, it suffices to prove the formula
\begin{equation}
\label{eq:bracket}
\big<y_{i_1^j}\cdots y_{i_{r-j}^j},y_{\sigma(j+1)}\cdots y_{\sigma(r)}\big>=(-1)^{n_{j+1}\cdot\ell(w_o)}
\big<y_{i_1^{j+1}}\cdots y_{i_{r-j-1}^{j+1}},y_{\sigma(j+2)}\cdots y_{\sigma(r)}\big>
\end{equation}
where
\[
n_{j+1}=r-\sigma(j+1)-\#\{1\leq i<j+1\mid\sigma(i)>\sigma(j+1)\}
\]
and where $0\leq j\leq r-1$. Indeed, once Equation~\eqref{eq:bracket} is established, we apply it iteratively for all $0\leq j\leq r-1$ and we find that the bracket in the statement of the corollary is equal to minus one to the power of $\smash{\sum_{j=1}^r n_j}$ times $\ell(w_o)$. But the parity of the previous sum is precisely the parity of $\smash{\frac{(r-1)r}{2}}+\ell(\sigma)$. In this way, we see that it indeed suffices to prove Equation~\eqref{eq:bracket}.

To prove Equation~\eqref{eq:bracket}, let us fix some arbitrary $0\leq j\leq r-1$. Let $1\leq k\leq r-j$ be such that $i_k^j=\sigma(j+1)$. By definition, we know that $n_{j+1}=r-j-k$. From this and repeated application of Corollary~\ref{cor:ofbskew} to suitable subsets of $D$ with two elements (cf.~Remark~\ref{rem:sub-disj}), we see that
\[
\big(y_{i_1^j}\cdots y_{i_{r-j}^j}\big)\constantoverleftarrow{D_{y_{\sigma(j+1)}}}=
(-1)^{n_{j+1}\cdot\ell(w_o)}\big(y_{i_1^j}\cdots y_{i_k^j}\big)\constantoverleftarrow{D_{y_{\sigma(j+1)}}}y_{i_{k+1}^j}\cdots y_{i_{r-j}^j}\,.
\]
If we use additionally Lemma~\ref{lem:basic-rev}\eqref{item:basic3},\eqref{item:basic4},~\ref{lem:y_van_amel} and Fact~\ref{fact:rho_amel}\eqref{item:nil_pair}, we see that the above quantity equals
\[
(-1)^{n_{j+1}\cdot\ell(w_o)}y_{i_1^{j+1}}\cdots y_{i_{r-j-1}^{j+1}}\,.
\]
But this shows everything we have to show in order to justify Equation~\eqref{eq:bracket} and completes the proof of the corollary.
\end{proof}

\begin{cor}
\label{cor:prep-abstr-comm}

Let $w\in W$. Let $y=wx_{w_o}$. Let $x_1,x_2,x_3\in\EuScript{B}_W$ be such that 
\[
(x_1)\constantoverleftarrow{D_\alpha}=(x_2)\constantoverleftarrow{D_\alpha}=(ww_ow^{-1}x_2)\constantoverleftarrow{D_\alpha}=(x_3)\constantoverleftarrow{D_\alpha}=0
\]
for all $\alpha\in T_w$. Then, we have
\[
(x_1yx_2x_3)\constantoverleftarrow{D_y}=(x_1(ww_ow^{-1}x_2)yx_3)\constantoverleftarrow{D_y}=x_1(ww_ow^{-1}(x_2x_3))\,.
\]

\end{cor}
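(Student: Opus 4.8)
The plan is to reduce the whole statement to the single scalar computation $(y)\constantoverleftarrow{D_y}=1$, combined with two passing principles that are already in place: Lemma~\ref{lem:basic-rev}\eqref{item:basic4}, which lets a braided right derivative whose index \emph{does only involve} $T_w$ pass an annihilated \emph{left} factor untouched, and Corollary~\ref{cor:bcommuteswithskew} with $v=1$, which lets $\constantoverleftarrow{D_y}$ pass an annihilated \emph{right} factor at the cost of twisting it by the $W$-degree of $y$. Throughout I would write $g=ww_ow^{-1}$ for the $W$-degree of $y=wx_{w_o}$ (Fact~\ref{fact:rho_amel}\eqref{item:deg_y}); since $g$ acts on $\EuScript{B}_W$ by an algebra automorphism, one has $g(x_2x_3)=(gx_2)(gx_3)$, and this is what will absorb the braiding twist at the end.

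First I would record three auxiliary facts. By Lemma~\ref{lem:y_van_amel} the monomial $y$ does only involve $T_w$. Setting $x^*=y$ and $x=1$ in the identity $\left<x^*,yx\right>=\big<(x^*)\constantoverleftarrow{D_y},x\big>$ and using that $(y)\constantoverleftarrow{D_y}$ is homogeneous of $\mathbb{Z}_{\geq 0}$-degree $0$, hence a scalar, yields $(y)\constantoverleftarrow{D_y}=\left<y,y\right>=1$ by Fact~\ref{fact:rho_amel}\eqref{item:nil_pair}. Finally, from the hypotheses and Lemma~\ref{lem:basic-rev}\eqref{item:basic2-3} applied with $\Theta=T_w$, both products $x_2x_3$ and $x_1(ww_ow^{-1}x_2)$ are again annihilated by $\constantoverleftarrow{D_\alpha}$ for every $\alpha\in T_w$; this is exactly where the third hypothesis $(ww_ow^{-1}x_2)\constantoverleftarrow{D_\alpha}=0$ is consumed.

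For the left-hand expression I would group $x_1yx_2x_3=(x_1y)(x_2x_3)$ and apply Corollary~\ref{cor:bcommuteswithskew} with $v=1$, left factor $x_1y$ and annihilated right factor $x_2x_3$, obtaining
\[
(x_1yx_2x_3)\constantoverleftarrow{D_y}=\big((x_1y)\constantoverleftarrow{D_y}\big)\big(g(x_2x_3)\big).
\]
Because $(x_1)\constantoverleftarrow{D_\alpha}=0$ on $T_w$ and $y$ only involves $T_w$, Lemma~\ref{lem:basic-rev}\eqref{item:basic4} gives $(x_1y)\constantoverleftarrow{D_y}=x_1(y)\constantoverleftarrow{D_y}=x_1$, so the left-hand side equals $x_1\big(g(x_2x_3)\big)$. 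For the middle expression I would instead group $x_1(gx_2)yx_3=\big(x_1(gx_2)y\big)x_3$, apply Corollary~\ref{cor:bcommuteswithskew} with $v=1$ and annihilated right factor $x_3$, and then use Lemma~\ref{lem:basic-rev}\eqref{item:basic4} with annihilated left factor $x_1(gx_2)$ to get $\big(x_1(gx_2)y\big)\constantoverleftarrow{D_y}=x_1(gx_2)(y)\constantoverleftarrow{D_y}=x_1(gx_2)$; hence the middle expression equals $x_1(gx_2)(gx_3)=x_1\big(g(x_2x_3)\big)$ as well. Comparing, all three terms coincide with $x_1\big(ww_ow^{-1}(x_2x_3)\big)$.

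The computation is short once the tools are lined up, so the step I would be most careful about is the bookkeeping of the braiding twist: passing $\constantoverleftarrow{D_y}$ across an annihilated right factor $b$ in Corollary~\ref{cor:bcommuteswithskew} produces $ww_ow^{-1}b$ rather than $b$, and the argument only closes because this twist is precisely compensated by $g$ being an algebra automorphism of $\EuScript{B}_W$. I would also make sure to use Corollary~\ref{cor:bcommuteswithskew} in its form for an \emph{arbitrary} left factor (here $x_1y$ and $x_1(gx_2)y$ respectively), rather than conflating it with the plain element identity, and to verify that the stability of the annihilation condition under multiplication in Lemma~\ref{lem:basic-rev}\eqref{item:basic2-3} genuinely invokes all four hypotheses of the statement.
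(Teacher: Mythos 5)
Your proof is correct and follows essentially the same route as the paper, whose own (very terse) proof cites exactly the tools you deploy: Fact~\ref{fact:rho_amel}, Lemma~\ref{lem:basic-rev}\eqref{item:basic2-3},\eqref{item:basic4} and Corollary~\ref{cor:bcommuteswithskew}. Your bookkeeping of the twist $g=ww_ow^{-1}$ and your use of all four annihilation hypotheses (in particular the one on $ww_ow^{-1}x_2$ for the middle term) match the intended argument, so there is nothing to add.
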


\begin{proof}

The claimed equalities follow directly from Fact~\ref{fact:rho_amel}, Lemma~\ref{lem:basic-rev}\eqref{item:basic2-3},\eqref{item:basic4}, Corollary~\ref{cor:bcommuteswithskew} and the assumptions.
\end{proof}

\begin{cor}
\label{cor:prep-abstr-comm2}

Let $w$ be an element in the centralizer of $w_o$. Let $y=wx_{w_o}$. Let $x_1,x_2,x_3\in\EuScript{B}_W$ be such that $(x_1)\constantoverleftarrow{D_\alpha}=(x_2)\constantoverleftarrow{D_\alpha}=(x_3)\constantoverleftarrow{D_\alpha}=0$ for all $\alpha\in T_w$. Then, we have
\[
(x_1yx_2x_3)\constantoverleftarrow{D_y}=(x_1(w_ox_2)yx_3)\constantoverleftarrow{D_y}=x_1(w_o(x_2x_3))\,.
\]

\end{cor}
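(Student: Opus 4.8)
The plan is to deduce the corollary directly from Corollary~\ref{cor:prep-abstr-comm}. Since $w$ lies in the centralizer of $w_o$, we have $ww_ow^{-1}=w_o$, so the displayed chain of equalities in the present statement is verbatim the one in Corollary~\ref{cor:prep-abstr-comm} read with $ww_ow^{-1}$ replaced by $w_o$. Hence the whole content of the proof is to check that the hypotheses of Corollary~\ref{cor:prep-abstr-comm} are satisfied here. Three of the four required vanishings, namely those for $x_1$, $x_2$ and $x_3$, are assumed outright; the only genuine task is to show that the extra condition $(ww_ow^{-1}x_2)\constantoverleftarrow{D_\alpha}=(w_ox_2)\constantoverleftarrow{D_\alpha}=0$ for all $\alpha\in T_w$ follows automatically from the assumption $(x_2)\constantoverleftarrow{D_\alpha}=0$ for all $\alpha\in T_w$.

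First I would record the combinatorial fact that $w_o$ stabilizes $T_w$ up to sign, i.e.\ that $w_o(\alpha)\in\pm T_w$ for every $\alpha\in T_w$. To see this, note that the reflection attached to $w_o(\alpha)$ is $s_{w_o(\alpha)}=w_os_\alpha w_o$, using that $w_o$ is an involution. Writing $s_\alpha=ws_\beta w^{-1}$ with $s_\beta\in S$ and using that $w$ commutes with $w_o$, we obtain $s_{w_o(\alpha)}=w(w_os_\beta w_o)w^{-1}$, and $w_os_\beta w_o\in w_oSw_o=S$ by \cite[Section~5.6, Exercise~2]{humphreys-coxeter}. Therefore $s_{w_o(\alpha)}\in wSw^{-1}$, and since $R$ is reduced this means precisely that the positive root proportional to $w_o(\alpha)$ lies in $T_w$.

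Next I would transform the braided partial derivative under the $W$-action. As $w_o$ is group-like, the coproduct satisfies $\Delta(w_oz)=w_oz_{(1)}\otimes w_oz_{(2)}$; combining this with the invariance $\langle w_ox,y\rangle=\langle x,\EuScript{S}^{-1}(w_o)y\rangle=\langle x,w_oy\rangle$ of the pairing and with the defining formula $(z)\constantoverleftarrow{D_\alpha}=z_{(1)}\langle z_{(2)},x_\alpha\rangle$, one obtains
\[
(w_oz)\constantoverleftarrow{D_\alpha}=w_o\bigl((z)\constantoverleftarrow{D_{w_o(\alpha)}}\bigr)
\]
for every $z\in\EuScript{B}_W$ and every $\alpha$, where $w_o(\alpha)$ is read off from $w_ox_\alpha=x_{w_o(\alpha)}$. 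Specializing $z=x_2$ and fixing $\alpha\in T_w$, the previous paragraph gives $w_o(\alpha)=\pm\gamma$ for some $\gamma\in T_w$, so that $\constantoverleftarrow{D_{w_o(\alpha)}}=\pm\constantoverleftarrow{D_\gamma}$ and the right-hand side vanishes by hypothesis. This yields $(w_ox_2)\constantoverleftarrow{D_\alpha}=0$ for all $\alpha\in T_w$, and an application of Corollary~\ref{cor:prep-abstr-comm} finishes the proof.

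The substance of the argument is concentrated in the reduction to Corollary~\ref{cor:prep-abstr-comm}; the single nontrivial ingredient is the $w_o$-stability of $T_w$, and this is exactly the step where the hypothesis that $w$ lies in the centralizer of $w_o$ is indispensable. The transformation of $\constantoverleftarrow{D_\alpha}$ under $w_o$ is routine once one observes that $\EuScript{S}^{-1}(w_o)=w_o$, so I do not expect any real obstacle beyond this sign bookkeeping.
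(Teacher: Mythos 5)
Your proposal is correct and follows essentially the same route as the paper: reduce to Corollary~\ref{cor:prep-abstr-comm} via $ww_ow^{-1}=w_o$ and verify the extra hypothesis $(w_ox_2)\protect\constantoverleftarrow{D_\alpha}=0$ for $\alpha\in T_w$. The only difference is presentational: where the paper cites Fact~\ref{fact:T} and the equivariance formula $\protect\constantoverleftarrow{D_{hy}}=\EuScript{S}^2(h_{(1)})\protect\constantoverleftarrow{D_y}\EuScript{S}^3(h_{(2)})$ (the generalization of \cite[Remark~3.16]{skew}), you re-derive both from scratch, correctly.
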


\begin{proof}

Let the notation be as in the statement. The result follows from application of Corollary~\ref{cor:prep-abstr-comm}. We simply have to verify that $(w_ox_2)\constantoverleftarrow{D_\alpha}$ vanishes for all $\alpha\in T_w$. By Fact~\ref{fact:T} and \cite[Remark~3.16]{skew} this vanishing is equivalent to the vanishing $(x_2)\constantoverleftarrow{D_\alpha}=0$ for all $\alpha\in T_w$ which is part of the assumption.
\end{proof}

\subsection*{An analogue of Corollary~\ref{cor:ofbskew} for braided left partial derivatives}

\begin{lem}
\label{lem:leftbraided}

Let $D$ be a disjoint system of order two. Let $w_1,w_2$ be some ordering of the elements of $D$. Let $y=w_1x_{w_o}$. Then, we have $\constantoverrightarrow{D_\alpha}(y)=0$ for all $\alpha\in T_{w_2}$.
    
\end{lem}

\begin{proof}

Indeed, let the notation be as in the statement. By the very definition of the objects involved, we have $\constantoverrightarrow{D_\alpha}(y)=\sum_{u\in W}\left<x_\alpha,w_1x_{w_o/u}\right>w_1x_u$ where the bracket can only be nonzero for $\mathbb{Z}_{\geq 0}$-degree reasons whenever $\ell(w_o)-\ell(u)=1$, i.e.\ whenever $u=s_\beta w_o$ for a simple root $\beta$. In this case, by \cite[Theorem~7.1]{skew}, we have $x_{w_o/u}=x_\beta$ and the corresponding bracket still vanishes because $T_{w_1}$ and $T_{w_2}$ are disjoint by assumption. This proves the lemma.  
\end{proof}

\begin{cor}
\label{cor:leftbraided}

Let $D$ be a disjoint system of order two. Let $w_1,w_2$ be some ordering of the elements of $D$. Let $y_1=w_1x_{w_o}$ and let $y_2=w_2x_{w_o}$. Then, we have $\constantoverrightarrow{D_{y_2}}y_1=(-1)^{\ell(w_o)}y_1\constantoverrightarrow{D_{y_2}}$.
    
\end{cor}

\begin{proof}
This is now immediate form Lemma~\ref{lem:basic-rev}\eqref{item:basic4'}, Lemma~\ref{lem:leftbraided} and Fact~\ref{fact:rho_amel}\eqref{item:mult_wo}.
\end{proof}

\section{\for{toc}{{\color{white}0}}Integrals for Hopf algebras}
\label{sec:integrals}

In this section, we study integrals for Hopf algebras with a view towards our applications to Nichols algebras. For background material on integrals, we refer to the foundational works \cite{larson-sweedler,sullivan,sweedler}, to the more recent reference \cite{integrals-braided} and to the pedagogical text book \cite{majid}. For the relation between integrals and Frobenius algebras, we refer to \cite{gen_frob,larson-sweedler}.

\begin{defn}[{\cite[Equation~(2.3.6)]{integrals-braided}, \cite[Section~2]{sweedler}}]
\label{def:int}

Let $A$ be a braided Hopf algebra in ${}_H^H\EuScript{YD}$. Let $x\in A$.
\begin{itemize}
    \item 
    We say that $x$ is a left integral in $A$ if $zx=\epsilon(z)x$ for all $z\in A$. We say that $x$ is a nonzero left integral in $A$ if $x$ is nonzero and if $x$ is a left integral in $A$.
    \item
    We say that $x$ is a right integral in $A$ if $xz=\epsilon(z)x$ for all $z\in A$. We say that $x$ is a nonzero right integral in $A$ if $x$ is nonzero and if $x$ is a right integral in $A$.    
    \item
    We say that $x$ is an integral in $A$ if $x$ is a left and right integral in $A$. We say that $x$ is a nonzero integral in $A$ if $x$ is nonzero and if $x$ is an integral in $A$.
\end{itemize}

\end{defn}

\begin{rem}

Let $A$ be a braided Hopf algebra in ${}_H^H\EuScript{YD}$. The presence of a nonzero left integral / right integral / integral in $A$ gives rise to a one-dimensional left ideal / right ideal / two-sided ideal in $A$.

\end{rem}

\begin{rem}
\label{rem:int-conv}

If $A$ is a classical Hopf algebra with trivial braiding, then there exists a nonzero left integral in $A$ if and only if there exists a nonzero right integral in $A$ if and only if $A$ is finite dimensional. This theorem is proved in \cite[Corollary~2.7, Equivalence~$(2.11)\Leftrightarrow(2.13)$]{sweedler}. In general, it is known that for a braided Hopf algebra in ${}_H^H\EuScript{YD}$ of finite dimension the two-sided ideal of all left integrals as well as the two-sided ideal of all right integrals are one-dimensional (cf.~\cite[Section~2.3]{integrals-braided}). By using the Radford biproduct, one can reduce the braided situation to the classical situation and show that a braided Hopf algebra in ${}_H^H\EuScript{YD}$ is finite dimensional if it admits a nonzero left integral or a nonzero right integral, see \cite{braidedintegral} for details. 

%However, we do not know the literature well enough to decide whether the presence of a nonzero left or right integral in a braided Hopf algebra in ${}_H^H\EuScript{YD}$ implies finite dimensionality (just as it is known in the classical case by \cite[loc.~cit.]{sweedler}). Therefore, we will from now on follow the following convention: If we speak about a nonzero left integral / right integral / integral in a braided Hopf algebra $A$ in ${}_H^H\EuScript{YD}$, we implicitly assume that $A$ is finite dimensional. Since we certainly use finite dimensionality of $A$ in the presence of any kind of nonzero integrals in $A$, we do not lose anything from this assumption.

\end{rem}

\begin{rem}

To handle infinite dimensional braided Hopf algebras $A$ in ${}_H^H\EuScript{YD}$, and to introduce sensible notions of integrals for them which extend the notions in Definition~\ref{def:int}, people often consider left integrals / right integrals / integrals in $A^*$ instead of $A$ where $A^*$ is the algebra dual of $A$ considered as a coalgebra, cf.~\cite[Section~1.7]{majid} and \cite{sullivan}. We mention it but we will not need this approach in this work.

\end{rem}

\begin{prop}
\label{prop:Aint}

Let $A$ be a connected braided $\mathbb{Z}_{\geq 0}$-graded Hopf algebra in ${}_H^H\EuScript{YD}$. Let $x$ be a nonzero left or right integral in $A$.

\begin{enumerate}
    \item\label{item:Aint1}
    The element $x$ is homogeneous of some $\mathbb{Z}_{\geq 0}$-degree $m$. The integer $m$ satisfies $A^m=\mathbf{k}x$ and $A^{m'}=0$ for all $m'>m$. In particular, every left or right integral in $A$ is an integral in $A$.
    \item\label{item:Aint2}
    An element $x'$ is an integral in $A$ if and only if $zx'=0$ for all homogeneous elements $z\in A$ of $\mathbb{Z}_{\geq 0}$-degree $>0$ if and only if $x'z=0$ for all homogeneous elements $z\in A$ of $\mathbb{Z}_{\geq 0}$-degree $>0$. If for this sentence, in addition, the algebra $A$ is generated in $\mathbb{Z}_{\geq 0}$-degree one, then an element $x'$ is an integral in $A$ if and only if $zx'=0$ for all homogeneous elements $z\in A$ of $\mathbb{Z}_{\geq 0}$-degree one if and only if $x'z=0$ or all homogeneous elements $z\in A$ of $\mathbb{Z}_{\geq 0}$-degree one.
    \item\label{item:Aint3}
    For a nonzero element $x'\in A$, there exist homogeneous elements $y,y'\in A$ with respect to the $\mathbb{Z}_{\geq 0}$-grading such that $yx'$ and $x'y'$ are nonzero integrals in $A$.
\end{enumerate}

\end{prop}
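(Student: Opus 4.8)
The plan is to prove the three items in order, obtaining \eqref{item:Aint2} and \eqref{item:Aint3} as consequences of \eqref{item:Aint1}, and to rely on only three elementary inputs: that $\epsilon$ kills every positive-degree element (by connectedness $A^0=\mathbf{k}1$), that the top graded component of $A$ is automatically annihilated by $A^{>0}$ from either side, and that for finite dimensional $A$ the ideal of left integrals and the ideal of right integrals are each one-dimensional, which is available by Remark~\ref{rem:int-conv} and \cite[Section~2.3]{integrals-braided} and which applies here since the hypothesis of a nonzero one-sided integral forces $A$ to be finite dimensional under our convention.

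For \eqref{item:Aint1} I would first check that every $\mathbb{Z}_{\geq 0}$-homogeneous component of a left integral is again a left integral: writing $x=\sum_m x_m$ and testing the identity $zx=\epsilon(z)x$ on a homogeneous $z$ of degree $d$, the case $d>0$ gives $\epsilon(z)=0$, hence $\sum_m zx_m=0$, and a comparison of degrees yields $zx_m=0$ for every $m$, while the case $d=0$ is immediate. Thus the space $I_\ell$ of left integrals is graded, and being one-dimensional it sits in a single degree, so $x$ is homogeneous. Letting $N$ be the top degree of $A$, one has $A^N\subseteq I_\ell$ because $zw\in A^{N+d}=0$ for homogeneous $z$ of degree $d\geq 1$ and $w\in A^N$; since $0\neq A^N\subseteq I_\ell$ and $\dim I_\ell=1$, it follows that $I_\ell=A^N=\mathbf{k}x$, so $x$ is homogeneous of degree $m=N$, $A^m=\mathbf{k}x$, and $A^{m'}=0$ for $m'>m$. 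Running the same argument for right integrals gives $I_r=A^N$, whence $I_\ell=I_r$ and every one-sided integral is a two-sided integral; the case of a nonzero right integral is symmetric.

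Item \eqref{item:Aint2} is then formal. If $x'$ is an integral then $zx'=\epsilon(z)x'=0$ for homogeneous $z$ of positive degree; conversely, writing an arbitrary $z$ as $\epsilon(z)1+z^+$ with $z^+\in A^{>0}$ shows that the vanishing $zx'=0$ for all positive-degree homogeneous $z$ already makes $x'$ a left integral, hence an integral by \eqref{item:Aint1}, and symmetrically on the right. When $A$ is generated in degree one I would pass from $A^{>0}$ to $A^1$ by induction on degree: assuming $zx'=0$ for all $z\in A^1$ and given $a\in A^m$ with $m\geq 2$, one writes $a=\sum_j b_jc_j$ with $b_j\in A^1$ and $c_j\in A^{m-1}$, so that $ax'=\sum_j b_j(c_jx')=0$ by the inductive hypothesis applied to the $c_j$.

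For \eqref{item:Aint3} the device is a degree-pushing argument that uses \eqref{item:Aint1} and \eqref{item:Aint2}. Given a nonzero $x'$, let $d$ be the least degree with $x'_d\neq 0$ and consider the graded left ideal $Ax'_d$, whose degree-$k$ part is $A^{k-d}x'_d$; picking a nonzero $u$ in the top nonzero degree $k$ of this ideal, any positive-degree homogeneous $v$ sends $u$ into $(Ax'_d)^{>k}=0$, so $vu=0$ and $u$ is an integral by \eqref{item:Aint2}, hence $u\in A^N$ by \eqref{item:Aint1} and $k=N$. Therefore $A^{N-d}x'_d\neq 0$ and I may choose a homogeneous $y\in A^{N-d}$ with $yx'_d\neq 0$; since $yx'_m\in A^{N-d+m}$ vanishes for every $m>d$, one gets $yx'=yx'_d$, a nonzero integral. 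The element $y'$ is obtained identically from the right ideal $x'_dA$ together with the right-hand characterisation in \eqref{item:Aint2}. The only nontrivial ingredient is the one-dimensionality invoked in \eqref{item:Aint1}; granting it, the idea of multiplying the \emph{lowest} component of $x'$ by an element of complementary degree is exactly what lands a nonzero element in the one-dimensional top $A^N$ while killing all higher components, so that no separate appeal to the Frobenius property is needed.
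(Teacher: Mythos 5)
Your proof is correct and follows essentially the same route as the paper: item \eqref{item:Aint1} rests on the observation that the top graded component $A^N$ consists of integrals together with the one-dimensionality of the space of left (or right) integrals cited from the literature, and items \eqref{item:Aint2} and \eqref{item:Aint3} are the same elementary degree arguments. The only organizational difference is in \eqref{item:Aint3}, where you locate $y$ in one step as a homogeneous element of $A^{N-d}$ by examining the top nonzero degree of the graded ideal $Ax'_d$, whereas the paper builds $y$ by induction on $N-d$ as a product of positive-degree factors supplied by \eqref{item:Aint2}; the two devices are interchangeable here.
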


\begin{rem}

Recall that a braided $\mathbb{Z}_{\geq 0}$-graded Hopf algebra $A$ in ${}_H^H\EuScript{YD}$ as in the statement of Proposition~\ref{prop:Aint} is connected if and only if $A^0=\mathbf{k}1$.

\end{rem}

\begin{proof}[Proof of Proposition~\ref{prop:Aint}]

Let the notation be as in the statement. Since $A$ is finite dimensional, there exists an integer $m$ such that $A^m\neq 0$ and such that $A^{m'}=0$ for all $m'>m$. By the assumptions on $A$, every element in $A^m$ is an integral in $A$. By the uniqueness of left or right integrals in $A$ up to scalar or the uniqueness of right integrals in $A$ up to scalar (cf.~\cite[Section~2.3 or Proposition~3.2.2]{integrals-braided} or \cite[Lemma~1.12]{andruskiewitsch}), we conclude that $A^m$ is the one-dimensional two-sided ideal of all integrals in $A$. Item~\eqref{item:Aint1} follows from this. Item~\eqref{item:Aint2} follows from Item~\eqref{item:Aint1} and the assumptions on $A$.

Let us prove Item~\eqref{item:Aint3}. Let $x'$ be a nonzero element in $A$. Let $m'$ be the smallest integer such that in the decomposition $\sum_{m''}x^{\prime m''}$ of $x'$ as a sum of homogeneous elements $x^{\prime m''}$ of $\mathbb{Z}_{\geq 0}$-degree $m''$ the element $x^{\prime m'}$ is nonzero. By Item~\eqref{item:Aint1}, it suffices to prove the statement of Item~\eqref{item:Aint3} for $x^{\prime m'}$ instead of $x'$. Hence, we may and will assume right in the beginning that $x'$ is homogeneous of some $\mathbb{Z}_{\geq 0}$-degree $m'$. We now perform an induction on $m-m'$. If $m=m'$, then $x'$ is a nonzero integral in $A$ by Item~\eqref{item:Aint1} and we can set $y=y'=1$. If $m>m'$, then $x'$ is a nonzero element in $A$ which is not a nonzero integral in $A$. By Item~\eqref{item:Aint2}, we can find homogeneous elements $y_1,y_1'\in A$ of $\mathbb{Z}_{\geq 0}$-degree $>0$ such that $y_1x'$ and $x'y_1'$ are both nonzero. By our assumption on $x'$ and our choice of $y_1,y_1'$, we know that $y_1x'$ and $x'y_1'$ are both nonzero homogeneous elements of $\mathbb{Z}_{\geq 0}$-degree $>m'$. By the induction hypothesis, there exists homogeneous elements $y_2,y_2'\in A$ with respect to the $\mathbb{Z}_{\geq 0}$-grading such that $y_2y_1x'$ and $x'y_1'y_2'$ are nonzero integrals in $A$. The elements $y=y_2y_1$ and $y'=y_1'y_2'$ are homogeneous elements of $A$ with respect to the $\mathbb{Z}_{\geq 0}$-grading as required.
\end{proof}

\begin{rem}
\label{rem:bona-fide}

In the rest of this work, we will only be concerned with connected braided $\mathbb{Z}_{\geq 0}$-graded Hopf algebras in ${}_H^H\EuScript{YD}$. As we see from Proposition~\ref{prop:Aint}\eqref{item:Aint1}, for a connected braided $\mathbb{Z}_{\geq 0}$-graded Hopf algebra $A$ in ${}_H^H\EuScript{YD}$, we do not need to make a distinction between left or right integrals in $A$ and integrals in $A$. Therefore, we will not further make the distinction in our statements. From now on, we will state our results only for bona fide integrals.

\end{rem}

\begin{rem}

Let $A$ be a a connected braided $\mathbb{Z}_{\geq 0}$-graded Hopf algebra in ${}_H^H\EuScript{YD}$. In view of Remark~\ref{rem:int-conv} and Proposition~\ref{prop:Aint}\eqref{item:Aint1}, we have the equivalence that
\[
\textit{$A$ is finite dimensional if and only if there exists a nonzero integral in $A$}
\]
which we will use from now on without reference.

\end{rem}

\begin{cor}
\label{cor:ABint}

Let $A$ and $B$ be braided $\mathbb{Z}_{\geq 0}$-graded Hopf algebras in ${}_H^H\EuScript{YD}$. Assume that $A$ or $B$ is connected and that there exists a nondegenerate Hopf duality pairing $\left<-,-\right>$ between $A$ and $B$ which respects the $\mathbb{Z}_{\geq 0}$-grading. Then, both Hopf algebras $A$ and $B$ are connected. Moreover, there exists a nonzero integral in $A$ if and only if there exists a nonzero integral in $B$. Let $x^*$ be a nonzero integral in $A$ and let $x$ be a nonzero integral in $B$. 

\begin{enumerate}
    \item\label{item:pairing}
    We have $\left<x^*,x\right>\neq 0$.
    \item\label{item:pairing2}
    Let $x'\in B$ be a homogeneous element with respect to the $\mathbb{Z}_{\geq 0}$-grading such that $\left<x^*,x\right>=\left<x^*,x'\right>$. Then, it follows that $x=x'$. Let $x^{*\prime}\in A$ be a homogeneous element with respect to the $\mathbb{Z}_{\geq 0}$-grading such that $\left<x^*,x\right>=\left<x^{*\prime},x\right>$. Then, it follows that $x^*=x^{*\prime}$.
\end{enumerate}

\end{cor}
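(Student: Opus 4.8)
The plan is to reduce every assertion to one elementary fact: because $\left<-,-\right>$ is nondegenerate and respects the $\mathbb{Z}_{\geq 0}$-grading, its restriction to $A^m\otimes B^m$ is nondegenerate for each $m$. First I would observe that for $a\in A^m$ the grading condition yields $\left<a,b\right>=\left<a,b^m\right>$ for every $b\in B$, where $b^m$ denotes the degree-$m$ component; hence if $\left<a,-\right>$ vanishes on $B^m$ it vanishes on all of $B$, and global nondegeneracy gives $a=0$. The same argument in the other slot shows the degreewise pairings are nondegenerate, so in particular $a\mapsto\left<a,-\right>$ and $b\mapsto\left<-,b\right>$ are injections $A\hookrightarrow B^*$ and $B\hookrightarrow A^*$.

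For connectedness I would assume without loss of generality that $A$ is connected, i.e.\ $A^0=\mathbf{k}1$, the other case being symmetric. Given $b^0\in B^0$ and arbitrary $a\in A$, the grading condition reduces $\left<a,b^0\right>$ to $\left<a^0,b^0\right>$ with $a^0\in A^0=\mathbf{k}1$, so that $\left<a,b^0\right>$ is a scalar multiple of $\left<1,b^0\right>=\epsilon(b^0)$. Thus $\epsilon(b^0)=0$ forces $\left<a,b^0\right>=0$ for all $a$, hence $b^0=0$ by nondegeneracy; this means $\epsilon$ is injective on $B^0$, and since $1\in B^0$ with $\epsilon(1)=1$ we conclude $B^0=\mathbf{k}1$, i.e.\ $B$ is connected.

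Next, the injections $A\hookrightarrow B^*$ and $B\hookrightarrow A^*$ show that $A$ is finite dimensional if and only if $B$ is; combined with the equivalence between finite dimensionality and existence of a nonzero integral for connected braided $\mathbb{Z}_{\geq 0}$-graded Hopf algebras (Proposition~\ref{prop:Aint}\eqref{item:Aint1} together with Remark~\ref{rem:int-conv}), this gives the asserted equivalence of integrals. Now let $x^*$ and $x$ be nonzero integrals in $A$ and $B$. By Proposition~\ref{prop:Aint}\eqref{item:Aint1} they are homogeneous, say of degrees $m$ and $n$, with $A^m=\mathbf{k}x^*$, $B^n=\mathbf{k}x$, and all strictly higher components vanishing. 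Since the degreewise pairings are nondegenerate and both algebras are finite dimensional, $\dim A^k=\dim B^k$ for all $k$, and comparing the top nonvanishing degrees forces $m=n$. For item~\eqref{item:pairing}, the degree-$m$ pairing between the one-dimensional spaces $A^m=\mathbf{k}x^*$ and $B^m=\mathbf{k}x$ is nondegenerate, whence $\left<x^*,x\right>\neq 0$. For item~\eqref{item:pairing2}, if $x'\in B$ is homogeneous with $\left<x^*,x'\right>=\left<x^*,x\right>\neq 0$, then the grading condition forces $x'\in B^m=\mathbf{k}x$, say $x'=\lambda x$, and $\lambda\left<x^*,x\right>=\left<x^*,x\right>$ gives $\lambda=1$, so $x'=x$; the claim for $x^{*\prime}$ is symmetric.

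The proof is short once Proposition~\ref{prop:Aint} is in hand, so the only step that needs care---and the one I would regard as the crux---is the very first: promoting the global nondegeneracy of $\left<-,-\right>$ to nondegeneracy in each $\mathbb{Z}_{\geq 0}$-degree. Everything afterwards (connectedness, the dimension count, and both uniqueness statements) follows formally from the resulting perfect degreewise pairing and the structure of integrals recorded in Proposition~\ref{prop:Aint}\eqref{item:Aint1}.
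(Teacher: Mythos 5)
Your proof is correct and follows essentially the same route as the paper's (very terse) argument: the paper likewise deduces $A^0\cong B^0$, the finite-dimensionality equivalence, and both items from the nondegeneracy of the pairing in each $\mathbb{Z}_{\geq 0}$-degree together with Proposition~\ref{prop:Aint}\eqref{item:Aint1}. You have merely made explicit the step the paper leaves implicit, namely that global nondegeneracy plus respect for the grading yields a perfect pairing $A^m\otimes B^m\to\mathbf{k}$ in each degree.
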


\begin{proof}

Let the notation be as in the statement. By assumption, we have $A^0\cong B^0$, so that $A$ is connected if and only if $B$ is. Because of the presence of a nondegenerate pairing between $A$ and $B$, we know that $A$ is finite dimensional if and only if $B$ is. From this and Proposition~\ref{prop:Aint}\eqref{item:Aint1}, it follows that there exists a nonzero integral in $A$ if and only if there exists a nonzero integral in $B$.

Item~\eqref{item:pairing} follows from Proposition~\ref{prop:Aint}\eqref{item:Aint1} and the assumptions on $\left<-,-\right>$. Item~\eqref{item:pairing2} follows from Item~\eqref{item:pairing} and again from Proposition~\ref{prop:Aint}\eqref{item:Aint1} and the assumptions on $\left<-,-\right>$.
\end{proof}

\begin{prop}
\label{prop:Nicholsint}

Let $V$ be a finite dimensional Yetter-Drinfeld $H$-module. Let $x$ be a nonzero integral in $\EuScript{B}(V)$ and let $x^*$ be a nonzero integral in $\EuScript{B}(V^*)$.

\begin{enumerate}
    \item\label{item:Nicholsint2}
    Every nonzero element $y\in\EuScript{B}(V)$ satisfies $(x^*)\constantoverleftarrow{D_y}\neq 0$ and every nonzero element $y^*\in\EuScript{B}(V^*)$ satisfies $\constantoverrightarrow{D_{y^*}}(x)\neq 0$.
    \item\label{item:Nicholsint3}
    Let $x'\in\EuScript{B}(V)^m$ and $y^*\in\EuScript{B}(V^*)^{m^*}$ for some $m,m^*\in\mathbb{Z}_{\geq 0}$ such that $y^*$ is nonzero and such that $\constantoverrightarrow{D_{y^*}}(x)=\constantoverrightarrow{D_{y^*}}(x')$. Then, we have $x=x'$.
    Let $x^{*\prime}\in\EuScript{B}(V^*)^{m^*}$ and $y\in\EuScript{B}(V)^m$ for some $m,m^*\in\mathbb{Z}_{\geq 0}$ such that $y$ is nonzero and such that $(x^*)\constantoverleftarrow{D_y}=(x^{*\prime})\constantoverleftarrow{D_y}$. Then, we have $x^*=x^{*\prime}$.
\end{enumerate}

\end{prop}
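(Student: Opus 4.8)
The plan is to reduce everything to two already-established pillars of the integral theory: Proposition~\ref{prop:Aint}, which tells us that a nonzero integral in a connected braided $\mathbb{Z}_{\geq 0}$-graded Hopf algebra spans the one-dimensional top graded component (\ref{item:Aint1}) and that every nonzero element becomes a nonzero integral after a suitable homogeneous multiplication (\ref{item:Aint3}); and Corollary~\ref{cor:ABint}\eqref{item:pairing}, which (since $\EuScript{B}(V)$ and $\EuScript{B}(V^*)$ are connected by axiom (N1) and carry a nondegenerate grading-respecting pairing) guarantees $\langle x^*,x\rangle\neq 0$. The bridge between partial derivatives and the pairing is furnished by the two recorded adjunction formulas $\langle x^*,yx\rangle=\big<(x^*)\constantoverleftarrow{D_y},x\big>$ and $\langle x^*y^*,x\rangle=\big<x^*,\constantoverrightarrow{D_{y^*}}(x)\big>$.

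For Item~\eqref{item:Nicholsint2} I would take a nonzero $y\in\EuScript{B}(V)$ and, by Proposition~\ref{prop:Aint}\eqref{item:Aint3}, choose a homogeneous $w\in\EuScript{B}(V)$ so that $yw$ is a nonzero integral; by Proposition~\ref{prop:Aint}\eqref{item:Aint1} the integrals form a one-dimensional space, so $yw=\lambda x$ with $\lambda\neq 0$. Feeding $w$ into the first adjunction formula gives $\big<(x^*)\constantoverleftarrow{D_y},w\big>=\langle x^*,yw\rangle=\lambda\langle x^*,x\rangle\neq 0$, whence $(x^*)\constantoverleftarrow{D_y}\neq 0$. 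The statement for $\constantoverrightarrow{D_{y^*}}(x)$ is symmetric: complete $y^*$ to a nonzero integral $w^*y^*=\mu x^*$ in $\EuScript{B}(V^*)$ and use the second adjunction formula to obtain $\big<w^*,\constantoverrightarrow{D_{y^*}}(x)\big>=\mu\langle x^*,x\rangle\neq 0$.

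For Item~\eqref{item:Nicholsint3} I would argue by degree. Writing $N$ for the top $\mathbb{Z}_{\geq 0}$-degree in which $x$ lives, the operator $\constantoverrightarrow{D_{y^*}}$ lowers degree by $m^*=\deg y^*$, so $\constantoverrightarrow{D_{y^*}}(x)$ is homogeneous of degree $N-m^*$ while $\constantoverrightarrow{D_{y^*}}(x')$ is homogeneous of degree $m-m^*$. By Item~\eqref{item:Nicholsint2} the former is nonzero, and the hypothesis equates the two; this forces $m=N$ and hence $x'\in\EuScript{B}(V)^N=\mathbf{k}x$ by Proposition~\ref{prop:Aint}\eqref{item:Aint1}. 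Writing $x'=cx$ and applying $\constantoverrightarrow{D_{y^*}}$ once more yields $c=1$, i.e.\ $x=x'$. The second assertion follows the same template with $\constantoverleftarrow{D_y}$ acting on $\EuScript{B}(V^*)$ and $x^*$ in the top degree there.

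I do not expect a genuine obstacle here; the argument is conceptual rather than computational. The one point demanding care is the grading bookkeeping in Item~\eqref{item:Nicholsint3}: one must recognize that the nonvanishing supplied by Item~\eqref{item:Nicholsint2} is precisely what pins $x'$ into the top degree, after which one-dimensionality of the integral space does the rest. A secondary thing to watch is matching the side of multiplication in Proposition~\ref{prop:Aint}\eqref{item:Aint3} to the side of the derivative in the adjunction formula (right multiplication $yw$ for $\constantoverleftarrow{D_y}$, left multiplication $w^*y^*$ for $\constantoverrightarrow{D_{y^*}}$), so that the relevant product appears inside the pairing in the correct order.
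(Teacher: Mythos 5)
Your proposal is correct and follows essentially the same route as the paper: Item~\eqref{item:Nicholsint2} is obtained by completing $y$ (resp.\ $y^*$) to a nonzero integral via Proposition~\ref{prop:Aint}\eqref{item:Aint3}, invoking Corollary~\ref{cor:ABint}\eqref{item:pairing}, and transporting through the adjunction formulas, while Item~\eqref{item:Nicholsint3} uses the same degree bookkeeping to force $x'$ into the one-dimensional top component and then cancels the scalar with the nonvanishing from Item~\eqref{item:Nicholsint2}. The only cosmetic difference is that you normalize the completed integral to a multiple of $x$ before pairing, whereas the paper pairs the completed integral directly against $x^*$; the content is identical.
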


\begin{proof}

Let $x$ and $x^*$ be as in the statement. Let us prove Item~\eqref{item:Nicholsint2}. Let $y$ be a nonzero element in $\EuScript{B}(V)$ and let $y^*$ be a nonzero element in $\EuScript{B}(V^*)$. By Proposition~\ref{prop:Aint}\eqref{item:Aint3}, there exist homogeneous elements $x'\in\EuScript{B}(V)$ and $x^{*\prime}\in\EuScript{B}(V^*)$ with respect to the $\mathbb{Z}_{\geq 0}$-grading such that $yx'$ is a nonzero integral in $\EuScript{B}(V)$ and such that $x^{*\prime}y^*$ is a nonzero integral in $\EuScript{B}(V^*)$. From Corollary~\ref{cor:ABint}\eqref{item:pairing}, it follows that
\begin{align*}
0\neq\,\left<x^*,yx'\right>\,&=\,\big<(x^*)\constantoverleftarrow{D_y},x'\big>\mspace{5.5mu},\\
0\neq\left<x^{*\prime}y^*,x\right>&=\big<x^{*\prime},\constantoverrightarrow{D_{y^*}}(x)\big>\,.
\end{align*}
The result in Item~\eqref{item:Nicholsint2} is clear from these equations.

Let us prove Item~\eqref{item:Nicholsint3}. Let the notation be as in the first sentence of the statement of Item~\eqref{item:Nicholsint3}. By Proposition~\ref{prop:Aint}\eqref{item:Aint1}, \cite[Remark~2.16]{skew} and Item~\eqref{item:Nicholsint2} of this proposition, we note that $x'$ is nonzero and that $m$ is the $\mathbb{Z}_{\geq 0}$-degree of $x$. Again, by Proposition~\ref{prop:Aint}\eqref{item:Aint1}, we see that $x'$ is a nonzero integral in $\EuScript{B}(V)$ and that there exists a nonzero scalar $\lambda$ such that $x'=\lambda x$. It follows that $\constantoverrightarrow{D_{y^*}}(x)=\lambda\constantoverrightarrow{D_{y^*}}(x)$. By Item~\eqref{item:Nicholsint2}, we conclude that $\lambda=1$ and thus $x=x'$. The rest of the statement of Item~\eqref{item:Nicholsint3} can be proved analogously. In the case of a finite dimensional Yetter-Drinfeld $\Gamma$-module whose support consists of involutions, the rest of the statement of Item~\eqref{item:Nicholsint3} follows equally well from the first sentence of the statement of Item~\eqref{item:Nicholsint3} by application of $\bar{\EuScript{S}}$, Proposition~\ref{prop:Aint}\eqref{item:Aint1} and \cite[Proposition~3.7(6), Proposition~4.2, Remark~4.3]{skew}.
\end{proof}

\begin{lem}
\label{lem:char}

Let $A$ be a connected braided $\mathbb{Z}_{\geq 0}$-graded Hopf algebra in ${}_\Gamma^\Gamma\EuScript{YD}$. Let $x$ be a nonzero integral in $A$. Then, the element $x$ is homogeneous of some $\Gamma$-degree which is central in $\Gamma$ and there exists a unique character $\Gamma\to\mathbf{k}^\times,g\mapsto\lambda_g$ such that $gx=\lambda_gx$ for all $g\in\Gamma$. If for this sentence, in addition, $\Gamma$ is generated by involutions, then the character as in the previous sentence satisfies $\lambda_g\in\{-1,1\}$ for all $g\in\Gamma$.

\end{lem}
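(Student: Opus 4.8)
The plan is to exploit the interplay between two gradings on $A$: the $\mathbb{Z}_{\geq 0}$-grading, whose top component is forced to be one-dimensional by the integral property, and the $\Gamma$-grading, on which the $\Gamma$-action acts by conjugation through the Yetter-Drinfeld compatibility over $\mathbf{k}\Gamma$.

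First I would record that, since $A$ is a $\mathbb{Z}_{\geq 0}$-graded object in ${}_\Gamma^\Gamma\EuScript{YD}$, every component $A^m$ is a Yetter-Drinfeld submodule, hence simultaneously stable under the $\Gamma$-action and graded by the $\Gamma$-grading. By Proposition~\ref{prop:Aint}\eqref{item:Aint1} the integral $x$ is $\mathbb{Z}_{\geq 0}$-homogeneous of some degree $m$ with $A^m=\mathbf{k}x$. A one-dimensional $\Gamma$-graded space is concentrated in a single $\Gamma$-degree, so $x$ is $\Gamma$-homogeneous, say of $\Gamma$-degree $g_0$.

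Next I would analyze the action. For $h\in\Gamma$ the element $hx$ again lies in $A^m=\mathbf{k}x$ (the action preserves $\mathbb{Z}_{\geq 0}$-degree) and is nonzero (a group element acts invertibly), so $hx=\lambda_h x$ for a unique $\lambda_h\in\mathbf{k}^\times$. The decisive point is the Yetter-Drinfeld compatibility recorded in Subsection~\ref{subsec:YDHopf}: specialized to $\mathbf{k}\Gamma$, where every group element is grouplike and $\EuScript{S}(h)=h^{-1}$, it gives $h\cdot A_g\subseteq A_{hgh^{-1}}$. Thus $hx$ carries $\Gamma$-degree $hg_0h^{-1}$, while as a scalar multiple of $x$ it carries $\Gamma$-degree $g_0$; since $x\neq 0$ this forces $hg_0h^{-1}=g_0$ for all $h$, i.e.\ $g_0$ is central in $\Gamma$. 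That $h\mapsto\lambda_h$ is a character is immediate from $(h_1h_2)x=h_1(h_2x)=\lambda_{h_1}\lambda_{h_2}x$, and uniqueness is clear because $x\neq 0$.

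For the last assertion I would evaluate the character on an involution $t$: from $t^2=1$ and $\lambda_1=1$ we get $\lambda_t^2=1$, hence $\lambda_t\in\{-1,1\}$; multiplicativity then propagates this to any product of involutions, so $\lambda_g\in\{-1,1\}$ for all $g\in\Gamma$ whenever $\Gamma$ is generated by involutions. The proof involves essentially no computation; the only delicate point is the bookkeeping between the two gradings, and the conceptual heart is that one-dimensionality of $A^m$ (from $x$ being an integral) together with the conjugation action on $\Gamma$-degrees leaves no room for $g_0$ to be noncentral.
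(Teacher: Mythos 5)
Your proof is correct and follows essentially the same route as the paper: invoke Proposition~\ref{prop:Aint}\eqref{item:Aint1} to pin $x$ down as spanning the one-dimensional top $\mathbb{Z}_{\geq 0}$-component (hence $\Gamma$-homogeneous), observe that $gx$ is again a nonzero element of that line so $gx=\lambda_g x$, compare $\Gamma$-degrees via the conjugation action to get centrality, and read off the character and the sign condition on involutions. Your write-up merely makes explicit a couple of bookkeeping steps (why $A^m$ is $\Gamma$-graded, why $gx$ has degree $gg_0g^{-1}$) that the paper leaves implicit.
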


\begin{proof}

Let the notation be as in the statement. We know from Proposition~\ref{prop:Aint}\eqref{item:Aint1} that $x$ is homogeneous of some $\Gamma$-degree $h$. For every $g\in\Gamma$, the element $gx$ is a nonzero integral in $A$ by Proposition~\ref{prop:Aint}\eqref{item:Aint1}. Hence, for every $g\in\Gamma$, there exists a unique nonzero scalar $\lambda_g$ such that $gx=\lambda_g x$. By comparing $\Gamma$-degrees in the previous equation, we see that $ghg^{-1}=h$ for all $g\in\Gamma$. In other words, the element $h$ is central in $\Gamma$. It is clear that $g\mapsto\lambda_g$ defines a character $\Gamma\to\mathbf{k}^\times$ which is uniquely determined by the property that $gx=\lambda_gx$ for all $g\in\Gamma$. If in addition $\Gamma$ is generated by involutions, we must have $\lambda_g\in\{-1,1\}$ for all $g\in\Gamma$ because $g\mapsto\lambda_g$ is multiplicative and because $\lambda_g^2=1$ for every involution $g\in\Gamma$.
\end{proof}

\begin{defn}

For a vector space, we define an equivalence relation $\sim$ by declaring two vectors $x$ and $x'$ to be equivalent, in formulas $x\sim x'$, if there exists a sign $\epsilon$ such that $x=\epsilon x'$. We use the same symbol $\sim$ for this equivalence relation regardless on which vector space it is defined.

\end{defn}

\begin{cor}
\label{cor:BWintegral}

Let $x$ be a nonzero integral in $\EuScript{B}_W$.

\begin{enumerate}
    \item\label{item:parity}
    The element $x$ is homogeneous of some $\mathbb{Z}_{\geq 0}$-degree $m$ and some $W$-degree $w$. The parity of $\ell(w)$ equals the parity of $m$.
    \item\label{item:sign}
    For all $g\in W$, we have $x\sim gx$. Moreover, we have $x\sim\rho(x)\sim\EuScript{S}(x)\sim\bar{\EuScript{S}}(x)$. 
    \item\label{item:sign_elab}
    In the sense of Item~\eqref{item:sign}, we define signs $\epsilon_\rho$, $\epsilon_{\EuScript{S}}$, $\epsilon_{\bar{\EuScript{S}}}$ such that $x=\epsilon_\rho\rho(x)$, $x=\epsilon_{\EuScript{S}}\EuScript{S}(x)$, $x=\epsilon_{\bar{\EuScript{S}}}\bar{\EuScript{S}}(x)$. Then, we have $\epsilon_\rho=\epsilon_{\bar{\EuScript{S}}}$ and $\epsilon_{\EuScript{S}}=(-1)^m$ where $m$ is defined as in Item~\eqref{item:parity}.
    \item\label{item:sign_elab2}
    With $w$ defined as in Item~\eqref{item:parity}, we further have $wx=(-1)^{\ell(w)}x$.
    \item\label{item:Aint2_elab}
    An element $x'$ is an integral in $\EuScript{B}_W$ if and only if $x_\alpha x'=0$ for all $\alpha\in R^+$ if and only if $x'x_\alpha=0$ for all $\alpha\in R^+$.
    \item\label{item:cor:M}
    Let $x'$ be a nonzero element of $\EuScript{B}_W$. Then, there exist monomials $M,M'\in\EuScript{B}_W$ such that $Mx'$ and $x'M'$ are nonzero integrals in $\EuScript{B}_W$. In particular, there exists a monomial which is a nonzero integral in $\EuScript{B}_W$.
\end{enumerate}

\end{cor}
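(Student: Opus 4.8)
The plan is to derive the whole statement from the three structural inputs Lemma~\ref{lem:char}, Proposition~\ref{prop:Aint} and Lemma~\ref{lem:zdeg-length}, settling homogeneity and the ideal characterizations first and isolating the determination of the antipode sign for the very end. Throughout I use that $\EuScript{B}_W$ is a connected braided $\mathbb{Z}_{\geq 0}$-graded Hopf algebra in ${}_W^W\EuScript{YD}$ generated in $\mathbb{Z}_{\geq 0}$-degree one, with degree-one component $V_W$ having basis $(x_\alpha)_{\alpha\in R^+}$. For item~\eqref{item:parity} I would invoke Proposition~\ref{prop:Aint}\eqref{item:Aint1} to get a $\mathbb{Z}_{\geq 0}$-degree $m$, Lemma~\ref{lem:char} (with $\Gamma=W$) to get a $W$-degree $w$, and Lemma~\ref{lem:zdeg-length} to match their parities. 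Item~\eqref{item:Aint2_elab} is just Proposition~\ref{prop:Aint}\eqref{item:Aint2} specialized to $A=\EuScript{B}_W$: the degree-one subspace is $\operatorname{span}_{\mathbf{k}}\{x_\alpha\mid\alpha\in R^+\}$, so by bilinearity the condition that $zx'=0$ for all degree-one $z$ is exactly that $x_\alpha x'=0$ for all $\alpha\in R^+$, and symmetrically on the right. For item~\eqref{item:cor:M} I would take the lowest nonzero homogeneous component $x'^{(m')}$ of $x'$, apply Proposition~\ref{prop:Aint}\eqref{item:Aint3} to it to obtain a homogeneous $y$ with $y\,x'^{(m')}$ a nonzero integral, expand $y=\sum_j M_j$ into monomials, and note that some $M=M_{j_0}$ must satisfy $M\,x'^{(m')}\neq 0$; then $Mx'=M\,x'^{(m')}$ because the higher components of $x'$ are pushed above the top degree $m$ and vanish. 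The right-hand version gives $M'$, and $x'=1$ yields a monomial integral.

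For the sign assertions the decisive observation is that $\rho$, $\bar{\EuScript{S}}$ and $\EuScript{S}$ are all $\mathbb{Z}_{\geq 0}$-graded and hence send the one-dimensional top component $\EuScript{B}_W^m=\mathbf{k}x$ into itself. Since $\rho$ and $\bar{\EuScript{S}}$ are involutions by \cite[Proposition~3.7(6)]{skew}, I get $\rho(x)=\epsilon_\rho x$ and $\bar{\EuScript{S}}(x)=\epsilon_{\bar{\EuScript{S}}}x$ with $\epsilon_\rho,\epsilon_{\bar{\EuScript{S}}}\in\{-1,1\}$. Writing $\EuScript{S}(x)=\epsilon_{\EuScript{S}}x$ with a priori $\epsilon_{\EuScript{S}}\in\mathbf{k}^\times$ and using axiom (S3), i.e.\ $\bar{\EuScript{S}}|_{\EuScript{B}_W^m}=(-1)^m\rho\EuScript{S}$, I obtain the single relation $\epsilon_{\bar{\EuScript{S}}}=(-1)^m\epsilon_\rho\epsilon_{\EuScript{S}}$, which forces $\epsilon_{\EuScript{S}}\in\{-1,1\}$ too. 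Combined with Lemma~\ref{lem:char} — applicable because $W$ is generated by its reflections, that is by the support $T$ of $V_W$, all of which are involutions, so each $\lambda_g\in\{-1,1\}$ and $gx=\lambda_g x\sim x$ — this establishes item~\eqref{item:sign} in full. The same displayed relation shows that the two claims of item~\eqref{item:sign_elab}, namely $\epsilon_\rho=\epsilon_{\bar{\EuScript{S}}}$ and $\epsilon_{\EuScript{S}}=(-1)^m$, are \emph{equivalent}; so item~\eqref{item:sign_elab} reduces to proving $\epsilon_{\EuScript{S}}=(-1)^m$.

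Item~\eqref{item:sign_elab2} then comes for free: because $\epsilon_{\EuScript{S}}=\pm1$ we have $\EuScript{S}^2(x)=\epsilon_{\EuScript{S}}^2x=x$, while Corollary~\ref{cor:nichols-zoeller1} gives $\EuScript{S}^2(x)=(-1)^{\ell(w)}wx$; comparing the two yields $wx=(-1)^{\ell(w)}x$. Note this uses only $\epsilon_{\EuScript{S}}^2=1$, not the value of $\epsilon_{\EuScript{S}}$. For the remaining equality $\epsilon_{\EuScript{S}}=(-1)^m$ I would choose $x$ to be a monomial integral $x=x_{\alpha_1}\cdots x_{\alpha_m}$ (legitimate by item~\eqref{item:cor:M} and uniqueness of the integral up to scalar) and unwind the braided anti-homomorphism property of $\EuScript{S}$ starting from $\EuScript{S}(x_\alpha)=-x_\alpha$ and the coaction of $W$-degree $s_\alpha$ on $x_\alpha$. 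This produces $\EuScript{S}(x_{\alpha_1}\cdots x_{\alpha_m})=(-1)^m x_{\beta_m}\cdots x_{\beta_1}$ with $\beta_k=s_{\alpha_1}\cdots s_{\alpha_{k-1}}(\alpha_k)$; the residual monomial again lies in $\EuScript{B}_W^m=\mathbf{k}x$, so by item~\eqref{item:sign} it equals $\pm x$, giving $\epsilon_{\EuScript{S}}=\pm(-1)^m$.

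The main obstacle is pinning this last sign to $+$, i.e.\ proving $x_{\beta_m}\cdots x_{\beta_1}=x$ rather than $-x$, equivalently $\bar{\EuScript{S}}(x)=\rho(x)$. The signs $\epsilon_\rho,\epsilon_{\EuScript{S}},\epsilon_{\bar{\EuScript{S}}}$ satisfy only the one relation above, and neither Theorem~\ref{thm:nichols-zoeller} nor Corollary~\ref{cor:nichols-zoeller1} sees more than $\epsilon_{\EuScript{S}}^2$ and the value of $\lambda_w$, so this genuinely requires the \enquote{linear} action of the antipode on the top degree and cannot be extracted from $\EuScript{S}^2$ alone. I expect to resolve it using that $\bar{\EuScript{S}}$ is an involution which is the identity in $\mathbb{Z}_{\geq 0}$-degree one (axioms (S1)--(S3) force $\bar{\EuScript{S}}|_{\mathrm{deg}\,1}=\mathrm{id}$), together with the compatibilities of $\rho$ and $\bar{\EuScript{S}}$ with the braided partial derivatives recorded in \cite[Proposition~3.7, Proposition~4.2, Remark~4.3]{skew}; concretely, one evaluates a top-order braided derivative against $x$ and transports the normalization via Proposition~\ref{prop:Nicholsint} and the nondegenerate self-dual pairing. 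Once this single sign is settled, everything else above is routine.
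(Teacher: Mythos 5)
Your treatment of items \eqref{item:parity}, \eqref{item:sign}, \eqref{item:sign_elab2}, \eqref{item:Aint2_elab} and \eqref{item:cor:M} is correct and essentially identical to the paper's: Proposition~\ref{prop:Aint}\eqref{item:Aint1} plus Lemma~\ref{lem:char} plus Lemma~\ref{lem:zdeg-length} for homogeneity and parity, involutivity and $\mathbb{Z}_{\geq 0}$-gradedness of $\rho$ and $\bar{\EuScript{S}}$ for the equivalences in \eqref{item:sign}, the relation $\epsilon_{\bar{\EuScript{S}}}=(-1)^m\epsilon_\rho\epsilon_{\EuScript{S}}$ from axiom (S3), Corollary~\ref{cor:nichols-zoeller1} together with $\epsilon_{\EuScript{S}}^2=1$ for \eqref{item:sign_elab2}, and the expansion of the homogeneous element $y$ from Proposition~\ref{prop:Aint}\eqref{item:Aint3} into monomials for \eqref{item:cor:M}. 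Your reduction of item~\eqref{item:sign_elab} to the single equality $\epsilon_\rho=\epsilon_{\bar{\EuScript{S}}}$ (equivalently $\epsilon_{\EuScript{S}}=(-1)^m$) is also exactly the paper's reduction, and your observation that $\EuScript{S}^2$ alone cannot see this sign is accurate.

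However, that last sign is precisely where your proposal has a genuine gap: you state the obstacle and offer only a heuristic plan (\enquote{I expect to resolve it using \ldots one evaluates a top-order braided derivative against $x$ and transports the normalization}), without an actual argument. Unwinding $\EuScript{S}$ on a monomial integral, as you suggest, produces $(-1)^m$ times another top-degree monomial whose sign relative to $x$ is exactly the unknown you are trying to determine, so that route is circular as written. The paper closes the gap in one line using the compatibility of the Hopf duality pairing with $\rho$ and $\bar{\EuScript{S}}$, namely \cite[Proposition~3.10]{skew}, which gives $\left<x,x\right>=\langle\rho(x),\bar{\EuScript{S}}(x)\rangle=\epsilon_\rho\epsilon_{\bar{\EuScript{S}}}\left<x,x\right>$; since $\left<x,x\right>\neq 0$ by Corollary~\ref{cor:ABint}\eqref{item:pairing} (this is where nondegeneracy of the pairing enters) and both scalars are signs, one concludes $\epsilon_\rho=\epsilon_{\bar{\EuScript{S}}}$. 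Your sketch gestures at the right tools (the self-dual pairing and Proposition~\ref{prop:Nicholsint}), but the specific identity $\left<a,b\right>=\langle\rho(a),\bar{\EuScript{S}}(b)\rangle$ is the missing ingredient, and without it item~\eqref{item:sign_elab} is not proved.
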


\begin{proof}

Let the notation be as in the statement. Let us prove Item~\eqref{item:cor:M} first. By Proposition~\ref{prop:Aint}\eqref{item:Aint3}, there exist homogeneous elements $y,y'\in\EuScript{B}_W$ with respect to the $\mathbb{Z}_{\geq 0}$-grading such that $yx'$ and $x'y'$ are nonzero integrals in $\EuScript{B}_W$. Let us assume further that $y$ and $y'$ are chosen as in the proof of Proposition~\ref{prop:Aint}\eqref{item:Aint3}, i.e.\ with respect to the nonzero component of $x'$ of smallest $\mathbb{Z}_{\geq 0}$-degree. We can write $y$ respectively $y'$ as a sum $\sum_i M_i$ respectively $\sum_i M_i'$ of monomials $M_i,M_i'$ all of equal $\mathbb{Z}_{\geq 0}$-degree equal to the $\mathbb{Z}_{\geq 0}$-degree of $y$ and $y'$. Then, we have $\sum_i M_ix'\neq 0$ and $\sum_i x'M_i'\neq 0$ and consequently $M_ix'\neq 0$ and $x'M_j'\neq 0$ for some $i,j$. By Proposition~\ref{prop:Aint}\eqref{item:Aint1},\eqref{item:Aint3} and our choice of $y,y',M_i,M_j'$, we know that $M_ix'$ and $x'M_j'$ are nonzero integrals in $\EuScript{B}_W$. Thus, the monomials $M=M_i$ and $M'=M_j'$ are as required. The particular case follows by application to $x'=1$.

Let us prove Item~\eqref{item:parity}. By Proposition~\ref{prop:Aint}\eqref{item:Aint1} and Lemma~\ref{lem:char}, we can define $m$ and $w$ as in the statement. The rest of the item follows by application of Lemma~\ref{lem:zdeg-length} to $x$.

The first sentence of Item~\eqref{item:sign} follows from Lemma~\ref{lem:char}. The morphisms $\rho$ and $\bar{\EuScript{S}}$ are $\mathbb{Z}_{\geq 0}$-graded involutions by \cite[Proposition~3.7(1),(6)]{skew}. Hence, it follows from Proposition~\ref{prop:Aint}\eqref{item:Aint1} that $x\sim\rho(x)\sim\bar{\EuScript{S}}(x)$. Using these last relations, Proposition~\ref{prop:Aint}\eqref{item:Aint1} or Item~\eqref{item:parity} of this corollary, and the definition of $\bar{\EuScript{S}}$, we finally see that $\EuScript{S}(x)\sim\rho\bar{\EuScript{S}}(x)\sim\rho(x)\sim x$.

Let us prove Item~\eqref{item:sign_elab}. By definition of $\bar{\EuScript{S}}$, it is clear that $\epsilon_{\bar{\EuScript{S}}}=(-1)^m\epsilon_\rho\epsilon_{\EuScript{S}}$ where $m$ is defined as in Item~\eqref{item:parity}. Hence, it suffices to prove that $\epsilon_\rho=\epsilon_{\bar{\EuScript{S}}}$. By \cite[Proposition~3.7(6),~3.10]{skew}, we see that
\[
\left<x,x\right>=\langle\rho(x),\bar{\EuScript{S}}(x)\rangle=\epsilon_\rho\epsilon_{\bar{\EuScript{S}}}\left<x,x\right>\,.
\]
In view of this equation, Corollary~\ref{cor:ABint}\eqref{item:pairing} completes the proof of Item~\eqref{item:sign_elab}.

Let us prove Item~\eqref{item:sign_elab2}. In view of Item~\eqref{item:sign}, we know that $x=\EuScript{S}^2(x)$. And further, with $w$ defined as in Item~\eqref{item:parity}, that $\EuScript{S}^2(x)=(-1)^{\ell(w)}wx$ by Corollary~\ref{cor:nichols-zoeller1}. The result in Item~\eqref{item:sign_elab2} follows from these observations. Finally, Item~\eqref{item:Aint2_elab} is immediate from Proposition~\ref{prop:Aint}\eqref{item:Aint2} because $(x_\alpha)_{\alpha\in R^+}$ is a basis of $V_W$.
\end{proof}

\begin{cor}[Abstract commutativity]
\label{cor:abstr-comm}

Let $w\in W$. Let $y=wx_{w_o}$. Let $x_1,x_2,x_3\in\EuScript{B}_W$ be homogeneous elements with respect to the $\mathbb{Z}_{\geq 0}$-grading such that $x_1yx_2x_3$ is a nonzero integral in $\EuScript{B}_W$ and such that 
\[
(x_1)\constantoverleftarrow{D_\alpha}=(x_2)\constantoverleftarrow{D_\alpha}=(ww_ow^{-1}x_2)\constantoverleftarrow{D_\alpha}=(x_3)\constantoverleftarrow{D_\alpha}=0
\]
for all $\alpha\in T_w$. Then, we have $x_1yx_2x_3=x_1(ww_ow^{-1}x_2)yx_3$.

\end{cor}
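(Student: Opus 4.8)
The plan is to observe that all of the computational content is already contained in Corollary~\ref{cor:prep-abstr-comm}, so that the desired equality reduces to the uniqueness of integrals under differentiation recorded in Proposition~\ref{prop:Nicholsint}\eqref{item:Nicholsint3}.

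First I would note that the four vanishing hypotheses in the statement—namely $(x_1)\constantoverleftarrow{D_\alpha}=(x_2)\constantoverleftarrow{D_\alpha}=(ww_ow^{-1}x_2)\constantoverleftarrow{D_\alpha}=(x_3)\constantoverleftarrow{D_\alpha}=0$ for all $\alpha\in T_w$—are precisely those required by Corollary~\ref{cor:prep-abstr-comm}. Applying that corollary gives
\[
(x_1yx_2x_3)\constantoverleftarrow{D_y}=(x_1(ww_ow^{-1}x_2)yx_3)\constantoverleftarrow{D_y}\,,
\]
so the two elements $x_1yx_2x_3$ and $x_1(ww_ow^{-1}x_2)yx_3$ have the same image under the braided right partial derivative $\constantoverleftarrow{D_y}$ attached to $y=wx_{w_o}$.

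Next I would verify the homogeneity needed to invoke uniqueness. By Fact~\ref{fact:rho_amel}\eqref{item:deg_y}, the element $y=wx_{w_o}$ is nonzero and homogeneous of $\mathbb{Z}_{\geq 0}$-degree $\ell(w_o)$. Since $x_1,x_2,x_3$ are homogeneous by hypothesis and the $W$-action on $\EuScript{B}_W$ preserves the $\mathbb{Z}_{\geq 0}$-grading, the element $ww_ow^{-1}x_2$ is homogeneous of the same $\mathbb{Z}_{\geq 0}$-degree as $x_2$; hence $x_1(ww_ow^{-1}x_2)yx_3$ is a product of homogeneous elements and is therefore homogeneous, of the same total $\mathbb{Z}_{\geq 0}$-degree as $x_1yx_2x_3$.

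Finally, working under the identification of $\EuScript{B}(V_W^*)$ with $\EuScript{B}_W$ from Subsection~\ref{subsection:model-Schubert}, I would apply the second statement of Proposition~\ref{prop:Nicholsint}\eqref{item:Nicholsint3}. The hypothesis that $x_1yx_2x_3$ is a nonzero integral places us in the setting of that proposition (and in particular forces $\EuScript{B}_W$ to be finite dimensional, which is consistent since $V_W$ already is). I would take the nonzero integral to be $x_1yx_2x_3$, the homogeneous element to be $x_1(ww_ow^{-1}x_2)yx_3$, and the nonzero homogeneous element one differentiates against to be $y=wx_{w_o}$. Note the clash of notation: the $y$ of the present statement plays the role of the differentiation variable, not of the integral. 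The equality of derivatives established above is exactly the required hypothesis $(x^*)\constantoverleftarrow{D_y}=(x^{*\prime})\constantoverleftarrow{D_y}$, and the conclusion yields $x_1yx_2x_3=x_1(ww_ow^{-1}x_2)yx_3$, as desired. The only point requiring care—rather than a genuine obstacle—is this bookkeeping: confirming the homogeneity of $x_1(ww_ow^{-1}x_2)yx_3$ and correctly matching the roles in Proposition~\ref{prop:Nicholsint}\eqref{item:Nicholsint3}.
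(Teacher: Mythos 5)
Your proposal is correct and follows essentially the same route as the paper: apply Corollary~\ref{cor:prep-abstr-comm} to equate the images under $\constantoverleftarrow{D_y}$, observe that $x_1(ww_ow^{-1}x_2)yx_3$ is homogeneous and that $y$ is nonzero, and conclude by the uniqueness statement of Proposition~\ref{prop:Nicholsint}\eqref{item:Nicholsint3}. The extra bookkeeping you spell out (homogeneity of $ww_ow^{-1}x_2$ and the role-matching in the proposition) is exactly what the paper leaves implicit.
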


\begin{proof}

By assumption, the element $x_1(ww_ow^{-1}x_2)yx_3$ is homogeneous with respect to the $\mathbb{Z}_{\geq 0}$-grading. Further, the element $y$ is nonzero. Therefore, the result follows from Corollary~\ref{cor:prep-abstr-comm} and Proposition~\ref{prop:Nicholsint}\eqref{item:Nicholsint3}.
\end{proof}

\begin{cor}
\label{cor:abstr-comm2}

Let $w$ be an element in the centralizer of $w_o$. Let $y=wx_{w_o}$. Let $x_1,x_2,x_3\in\EuScript{B}_W$ be homogeneous elements with respect to the $\mathbb{Z}_{\geq 0}$-grading such that $x_1yx_2x_3$ is a nonzero integral in $\EuScript{B}_W$ and such that $(x_1)\constantoverleftarrow{D_\alpha}=(x_2)\constantoverleftarrow{D_\alpha}=(x_3)\constantoverleftarrow{D_\alpha}=0$ for all $\alpha\in T_w$. Then, we have $x_1yx_2x_3=x_1(w_ox_2)yx_3$.

\end{cor}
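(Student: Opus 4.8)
The plan is to imitate the proof of Corollary~\ref{cor:abstr-comm} almost verbatim, replacing its two inputs by their centralizer-specialized counterparts. The corollary asserts an equality of two elements of $\EuScript{B}_W$, and the strategy is the standard one used throughout this section: show that the two elements have the \emph{same} image under the braided right partial derivative $\constantoverleftarrow{D_y}$, and then invoke the injectivity statement of Proposition~\ref{prop:Nicholsint}\eqref{item:Nicholsint3} to upgrade this to equality of the elements themselves. The hypothesis that $x_1yx_2x_3$ is a nonzero integral is exactly what makes that injectivity statement applicable.

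Concretely, I would first check the homogeneity bookkeeping. By hypothesis $x_1yx_2x_3$ is a nonzero integral, hence homogeneous of some $\mathbb{Z}_{\geq 0}$-degree by Proposition~\ref{prop:Aint}\eqref{item:Aint1}; the elements $x_1,x_2,x_3$ are homogeneous by assumption, and $y=wx_{w_o}$ is homogeneous of $\mathbb{Z}_{\geq 0}$-degree $\ell(w_o)$ by Fact~\ref{fact:rho_amel}\eqref{item:deg_y}. Since the $W$-action preserves the $\mathbb{Z}_{\geq 0}$-grading, $w_ox_2$ has the same $\mathbb{Z}_{\geq 0}$-degree as $x_2$, so that $x_1(w_ox_2)yx_3$ is also homogeneous with respect to the $\mathbb{Z}_{\geq 0}$-grading. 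Next, because $w$ lies in the centralizer of $w_o$ and the derivative hypotheses $(x_1)\constantoverleftarrow{D_\alpha}=(x_2)\constantoverleftarrow{D_\alpha}=(x_3)\constantoverleftarrow{D_\alpha}=0$ for all $\alpha\in T_w$ are precisely those assumed here, Corollary~\ref{cor:prep-abstr-comm2} applies directly and yields the key identity $(x_1yx_2x_3)\constantoverleftarrow{D_y}=(x_1(w_ox_2)yx_3)\constantoverleftarrow{D_y}$. Finally I would feed this into Proposition~\ref{prop:Nicholsint}\eqref{item:Nicholsint3}, taking the nonzero integral to be $x_1yx_2x_3$, the homogeneous comparison element to be $x_1(w_ox_2)yx_3$, and the nonzero homogeneous element with respect to which one differentiates to be $y$; the conclusion $x_1yx_2x_3=x_1(w_ox_2)yx_3$ is exactly the claim.

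I do not expect any genuine obstacle, since the real work has already been absorbed into Corollary~\ref{cor:prep-abstr-comm2} (itself a consequence of Corollary~\ref{cor:prep-abstr-comm} via Fact~\ref{fact:T}) and into the injectivity in Proposition~\ref{prop:Nicholsint}\eqref{item:Nicholsint3}. The only points that require care are the homogeneity of $x_1(w_ox_2)yx_3$ (needed so that the injectivity statement is applicable) and correctly matching the roles of the various elements, since the symbol $y$ appearing in Proposition~\ref{prop:Nicholsint} plays here the role of our distinguished element $y=wx_{w_o}$. One may in fact present this as a one-line deduction, exactly parallel to the proof of Corollary~\ref{cor:abstr-comm}, simply substituting $w_ox_2$ for $ww_ow^{-1}x_2$ and citing Corollary~\ref{cor:prep-abstr-comm2} in place of Corollary~\ref{cor:prep-abstr-comm}.
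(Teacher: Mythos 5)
Your proposal is correct and follows exactly the paper's own argument: apply Corollary~\ref{cor:prep-abstr-comm2} to equate the images under $\constantoverleftarrow{D_y}$, then conclude by the injectivity statement of Proposition~\ref{prop:Nicholsint}\eqref{item:Nicholsint3}, using the homogeneity of $x_1(w_ox_2)yx_3$ and the nonvanishing of $y$. The extra homogeneity bookkeeping you supply is consistent with what the paper leaves implicit.
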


\begin{proof}

By assumption, the element $x_1(w_ox_2)yx_3$ is homogeneous with respect to the $\mathbb{Z}_{\geq 0}$-grading. Further, the element $y$ is nonzero. Therefore, the result follows from Corollary~\ref{cor:prep-abstr-comm2} and Proposition~\ref{prop:Nicholsint}\eqref{item:Nicholsint3}.
\end{proof}

\begin{rem}
\label{rem:suppl_intro}

Whenever we apply Corollary~\ref{cor:ABint}\eqref{item:pairing} to $\EuScript{B}_W$ or one of the results derived with its help, e.g.~Corollary~\ref{cor:abstr-comm},~\ref{cor:abstr-comm2}, we use a consequence of the fact that $\EuScript{B}_W$ admits a \emph{nondegenerate} Hopf duality pairing between $\EuScript{B}_W$ and itself.

\end{rem}

\begin{rem}

Abstract commutativity is actually only used once in this paper in the form of Corollary~\ref{cor:abstr-comm2} to prove Lemma~\ref{lem:invariance-prop}\eqref{item:inv5} but we formulate it here as an independent result as it may be of use elsewhere. In turn, Lemma~\ref{lem:invariance-prop}\eqref{item:inv5} is needed in the proof of Theorem~\ref{thm:inv-hypo} which is a necessary step towards the proof of Theorem~\ref{thm:mainS6}.
    
\end{rem}

\section{Invariance of integrals}
\label{sec:invariance}

Under invariance properties of integrals, we understand formulas which show that a nonzero integral in $\EuScript{B}_W$ is invariant under certain operators which lie in the image of the embeddings of the tensor square into endomorphisms as in Remark~\ref{rem:tensor-square} or are composites thereof. Such properties can be derived manifoldly using the results in Section~\ref{sec:braided-leibniz}. In this section, we present a selection of such.

\begin{lem}
\label{lem:prep-inv}

Let $x\in\EuScript{B}_W$ be an element such that $xx_\alpha=0$ for some $\alpha\in R$. Then, it follows that $(x)\constantoverleftarrow{D_\alpha}x_\alpha=x$.

\end{lem}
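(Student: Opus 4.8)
The plan is to extract the identity directly from the braided Leibniz rule applied to the vanishing product $x x_\alpha$. First I would note that the hypothesis $x x_\alpha = 0$ immediately gives $(x x_\alpha)\constantoverleftarrow{D_\alpha} = 0$, since $\constantoverleftarrow{D_\alpha}$ is linear. The entire proof then amounts to expanding this left-hand side by the braided Leibniz rule and reading off the claim after two elementary simplifications.

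Concretely, I would apply the braided Leibniz rule (the boxed formula in Subsection~\ref{subsec:nichols}), specialized to $\EuScript{B}_W$ with $v = x_\alpha$, whose $W$-degree is $s_\alpha$ and for which $\EuScript{S}^{-1}(s_\alpha) = s_\alpha$. With the operator-composition conventions of Remark~\ref{rem:ident-into-end} in force, this expands the derivative of the product as
\[
(x x_\alpha)\constantoverleftarrow{D_\alpha} = x\,\big[(x_\alpha)\constantoverleftarrow{D_\alpha}\big] + \big[(x)\constantoverleftarrow{D_\alpha}\big]\,(s_\alpha x_\alpha).
\]
Now $x_\alpha$ is primitive of $\mathbb{Z}_{\geq 0}$-degree one, so $(x_\alpha)\constantoverleftarrow{D_\alpha} = \langle x_\alpha, x_\alpha\rangle = 1$, because after the identifications of Subsection~\ref{subsection:model-Schubert} the Hopf duality pairing restricts on $V_W\otimes V_W$ to the evaluation pairing, i.e.\ $\langle x_\gamma, x_\delta\rangle = \delta_{\gamma\delta}$ on $R^+$ (and $x_{-\gamma} = -x_\gamma$ makes the diagonal value equal to $1$ for negative roots as well). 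Moreover the $W$-action gives $s_\alpha x_\alpha = x_{s_\alpha(\alpha)} = x_{-\alpha} = -x_\alpha$. Substituting, the right-hand side becomes $x - \big[(x)\constantoverleftarrow{D_\alpha}\big] x_\alpha$, while the left-hand side is $0$; rearranging yields $(x)\constantoverleftarrow{D_\alpha}\,x_\alpha = x$, which is exactly the assertion. I emphasize that the argument is insensitive to the sign of $\alpha$, so it covers every $\alpha\in R$, not just the positive roots.

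The only genuine work is bookkeeping rather than a conceptual obstacle: one must apply the boxed rule with the correct reading of which factor is differentiated and of the single braiding factor it produces, which here collapses to the scalar multiple $s_\alpha x_\alpha = -x_\alpha$. As a safeguard against sign errors in the pairing normalization and the braiding, I would first run the expansion on the trivial instance $x = x_\alpha$, where $x x_\alpha = x_\alpha^2 = 0$ and the same formula reads $0 = x_\alpha\cdot 1 + 1\cdot(-x_\alpha)$, confirming both the value $\langle x_\alpha, x_\alpha\rangle = 1$ and the sign from $s_\alpha(\alpha) = -\alpha$ before carrying out the general case.
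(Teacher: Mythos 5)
Your proposal is correct and is essentially the paper's own argument: the paper derives the operator identity $\constantoverleftarrow{D_\alpha}x_\alpha=1-x_\alpha\constantoverleftarrow{D_\alpha}$ from the braided Leibniz rule (using exactly the two facts you invoke, $\left<x_\alpha,x_\alpha\right>=1$ and $s_\alpha x_\alpha=-x_\alpha$) and then evaluates it on $x$, which is the same computation as your expansion of $(xx_\alpha)\constantoverleftarrow{D_\alpha}=0$. The bookkeeping of the operator conventions and the sign check are both accurate.
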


\begin{proof}

Let the notation be as in the statement. The braided Leibniz rule implies in particular that $\constantoverleftarrow{D_\alpha}x_\alpha=1-x_\alpha\constantoverleftarrow{D_\alpha}$. The result follows by application of the previous operator to $x$ because of the assumed vanishing.
\end{proof}

\begin{lem}[Invariance properties]
\label{lem:invariance-prop}

Let $x$ be a nonzero integral in $\EuScript{B}_W$.

\begin{enumerate}
    \item\label{item:inv1}
    Let $\alpha\in R$. Then, we have 
    \begin{align*}
    x&=\mathrlap{(x)\constantoverleftarrow{D_\alpha}x_\alpha\,.}\hphantom{(-1)^{\ell(w_o)}y_1y_2(x)\constantoverleftarrow{D_{y_1y_2}}\,. }
    \end{align*}
    \item\label{item:inv2}
    Let $\alpha\in R$. Let $\epsilon$ be the sign such that $s_\alpha x=\epsilon x_\alpha$ (which exists by Corollary~\ref{cor:BWintegral}\eqref{item:sign}). Then, we have 
    \begin{align*}
    x&=\mathrlap{-\epsilon x_\alpha(x)\constantoverleftarrow{D_\alpha}\,.}\hphantom{(-1)^{\ell(w_o)}y_1y_2(x)\constantoverleftarrow{D_{y_1y_2}}\,. }
    \end{align*}
    \item\label{item:inv3}
    Let $w\in W$. Let $y=wx_{w_o}$. Then, we have
    \begin{align*}
    x&=\mathrlap{(x)\constantoverleftarrow{D_y}y\,.}\hphantom{(-1)^{\ell(w_o)}y_1y_2(x)\constantoverleftarrow{D_{y_1y_2}}\,. }
    \end{align*}
    \item\label{item:inv4}
    Let $w$ be an element in the centralizer of $w_o$. Let $y=wx_{w_o}$. Let $\epsilon$ be the sign such that $w_ox=\epsilon x$ (which exists by Corollary~\ref{cor:BWintegral}\eqref{item:sign}). Then, we have
    \begin{align*}
    x&=\mathrlap{(-1)^{\ell(w_o)}\epsilon y(x)\constantoverleftarrow{D_y}\,.}\hphantom{(-1)^{\ell(w_o)}y_1y_2(x)\constantoverleftarrow{D_{y_1y_2}}\,. }
    \end{align*}
    \item\label{item:inv5}
    Let $D$ be a disjoint system of order two. Let $w_1,w_2$ be some ordering of the elements of $D$. Let $y_1=w_1x_{w_o}$ and let $y_2=w_2x_{w_o}$. Then, we have
    \begin{align*}
    x&=(-1)^{\ell(w_o)}y_1y_2(x)\constantoverleftarrow{D_{y_1y_2}}\,,\\
    x&=(x)\constantoverleftarrow{D_{y_1y_2}}y_2y_1\,.
    \end{align*}
\end{enumerate}

\end{lem}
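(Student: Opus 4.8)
The plan is to handle the five items as two base cases, Items~\eqref{item:inv1} and \eqref{item:inv2}, then their $y=wx_{w_o}$-analogues, Items~\eqref{item:inv3} and \eqref{item:inv4}, and finally the two-block version, Item~\eqref{item:inv5}. Throughout I will use that $x$ spans the top graded component, $\EuScript{B}_W^m=\mathbf{k}x$ by Proposition~\ref{prop:Aint}\eqref{item:Aint1}, and that $(x)\constantoverleftarrow{D_\eta}\neq 0$ for every nonzero direction $\eta$ by Proposition~\ref{prop:Nicholsint}\eqref{item:Nicholsint2}. Item~\eqref{item:inv1} is immediate: since $x$ is an integral, $xx_\alpha=0$, so Lemma~\ref{lem:prep-inv} gives $(x)\constantoverleftarrow{D_\alpha}x_\alpha=x$. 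For Item~\eqref{item:inv2} I would specialize the braided Leibniz rule to a degree-one direction, which reads $(x_\alpha z)\constantoverleftarrow{D_\alpha}=x_\alpha\big((z)\constantoverleftarrow{D_\alpha}\big)+s_\alpha z$ for all $z$ (the cross term collapses because $(x_\alpha)\constantoverleftarrow{D_\alpha}=1$). Evaluating at $z=x$ and using $x_\alpha x=0$ together with $s_\alpha x=\epsilon x$ yields $0=x_\alpha(x)\constantoverleftarrow{D_\alpha}+\epsilon x$, which is exactly $x=-\epsilon x_\alpha(x)\constantoverleftarrow{D_\alpha}$.

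For Item~\eqref{item:inv3}, set $c=(x)\constantoverleftarrow{D_y}$, which is nonzero by Proposition~\ref{prop:Nicholsint}\eqref{item:Nicholsint2}. Two preliminary facts drive the argument: first, $(c)\constantoverleftarrow{D_\gamma}=0$ for all $\gamma\in T_w$, because $\constantoverleftarrow{D_y}\constantoverleftarrow{D_\gamma}=\constantoverleftarrow{D_{yx_\gamma}}$ and $yx_\gamma=0$ by Corollary~\ref{cor:y_van_amel}; and second, $y$ only involves $T_w$ by Lemma~\ref{lem:y_van_amel}. Now $cy$ has $\mathbb{Z}_{\geq 0}$-degree $m$, so $cy=\lambda x$ for a scalar $\lambda$. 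Applying $\constantoverleftarrow{D_y}$ and using Lemma~\ref{lem:basic-rev}\eqref{item:basic4} with $\langle y,y\rangle=1$ (Fact~\ref{fact:rho_amel}\eqref{item:nil_pair}) gives $(cy)\constantoverleftarrow{D_y}=c\big((y)\constantoverleftarrow{D_y}\big)=c$, whereas $cy=\lambda x$ forces $(cy)\constantoverleftarrow{D_y}=\lambda c$; since $c\neq 0$ we get $\lambda=1$, i.e.\ $x=(x)\constantoverleftarrow{D_y}y$.

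Item~\eqref{item:inv4} runs along the same lines for the reversed product $yc$. Again $yc=\mu x$ for a scalar $\mu$, and I would now evaluate $(yc)\constantoverleftarrow{D_y}$ by Corollary~\ref{cor:bcommuteswithskew} applied with $b=c$ (whose hypothesis $(c)\constantoverleftarrow{D_\gamma}=0$ on $T_w$ was just verified) and $v=1$: this gives $(yc)\constantoverleftarrow{D_y}=ww_ow^{-1}c=w_oc$, using that $w$ centralizes $w_o$. Evaluating $w_oc$ by the $W$-equivariance of the braided partial derivatives, so that $w_oc=w_o\big((x)\constantoverleftarrow{D_y}\big)=(w_ox)\constantoverleftarrow{D_{w_oy}}$, together with $w_ox=\epsilon x$ and $w_oy=(-1)^{\ell(w_o)}y$ (Fact~\ref{fact:rho_amel}\eqref{item:mult_wo}), yields $(yc)\constantoverleftarrow{D_y}=(-1)^{\ell(w_o)}\epsilon c$. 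Hence $\mu c=(-1)^{\ell(w_o)}\epsilon c$ and, as $c\neq 0$, $\mu=(-1)^{\ell(w_o)}\epsilon$, which is the asserted identity $x=(-1)^{\ell(w_o)}\epsilon\,y(x)\constantoverleftarrow{D_y}$.

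Finally, Item~\eqref{item:inv5} follows the identical template with $y$ replaced by the two-block products, so that both $(x)\constantoverleftarrow{D_{y_1y_2}}y_2y_1$ and $y_1y_2(x)\constantoverleftarrow{D_{y_1y_2}}$ lie in $\mathbf{k}x$ and their scalars are pinned down by applying $\constantoverleftarrow{D_{y_1y_2}}=\constantoverleftarrow{D_{y_1}}\constantoverleftarrow{D_{y_2}}$ exactly as above. The new ingredient, and the step I expect to be the main obstacle, is the interaction of the two disjoint blocks (recall $w_1,w_2$ lie in the centralizer of $w_o$ by the definition of a disjoint system): one has to commute $\constantoverleftarrow{D_{y_2}}$ past multiplication by $y_1$, which is precisely Corollary~\ref{cor:ofbskew} and is the single source of the sign $(-1)^{\ell(w_o)}$. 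Concretely, for $c'=(x)\constantoverleftarrow{D_{y_1y_2}}$ the vanishing $(c')\constantoverleftarrow{D_\gamma}=0$ is immediate only for $\gamma\in T_{w_2}$, and the case $\gamma\in T_{w_1}$ must be reached by reorganizing the two blocks via Corollary~\ref{cor:ofbskew} and the disjointness encoded in Lemma~\ref{lem:basic-rev}\eqref{item:basic3} before Lemma~\ref{lem:basic-rev}\eqref{item:basic4} and Fact~\ref{fact:rho_amel} can be invoked as in Items~\eqref{item:inv3} and \eqref{item:inv4}.
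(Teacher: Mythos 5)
Your treatment of Items~\eqref{item:inv1}--\eqref{item:inv4} is correct and in substance the same as the paper's: Item~\eqref{item:inv1} via Lemma~\ref{lem:prep-inv}, Item~\eqref{item:inv2} via the degree-one braided Leibniz rule, and Items~\eqref{item:inv3},\eqref{item:inv4} via the vanishing $(c)\constantoverleftarrow{D_\gamma}=0$ for $\gamma\in T_w$, Lemma~\ref{lem:basic-rev}\eqref{item:basic4}, Corollary~\ref{cor:bcommuteswithskew} and Fact~\ref{fact:rho_amel}. The only methodological difference is that where the paper applies $\constantoverleftarrow{D_y}$ to both sides and concludes by the injectivity statement of Proposition~\ref{prop:Nicholsint}\eqref{item:Nicholsint3}, you observe that the right-hand side lies in the one-dimensional top component $\mathbf{k}x$ and pin down the scalar; these two devices are interchangeable here.

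Item~\eqref{item:inv5}, however, is not proved, and you in effect say so yourself. The obstruction you flag is real: for $c'=(x)\constantoverleftarrow{D_{y_1y_2}}$ the vanishing $(c')\constantoverleftarrow{D_\gamma}=0$ is immediate for $\gamma\in T_{w_2}$ (since $y_2x_\gamma=0$ by Corollary~\ref{cor:y_van_amel}), but for $\gamma\in T_{w_1}$ it would amount to $y_1y_2x_\gamma=0$, and the tool you propose for \enquote{reorganizing the two blocks}, Corollary~\ref{cor:ofbskew}, does not deliver this. That corollary is an operator identity commuting $\constantoverleftarrow{D_{y_2}}$ past \emph{right multiplication by} $y_1$; it says nothing about moving $x_\gamma$ past $y_2$ inside the subscript of a braided derivative. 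A commutation of the form $y_2x_\gamma\sim x_\gamma y_2$ is exactly the kind of statement which, by Lemma~\ref{lem:motiv}, is only available under the extra hypothesis that the relevant product of blocks is an integral, which is not assumed here. The paper's proof sidesteps the $T_{w_1}$-vanishing entirely: it first swaps the two blocks up to the sign $(-1)^{\ell(w_o)}$ using the abstract commutativity of Corollary~\ref{cor:abstr-comm2} (whose hypotheses only involve derivatives along $T_{w_2}$) together with Corollary~\ref{cor:ofbskew} and Fact~\ref{fact:rho_amel}\eqref{item:mult_wo}, and then peels off one block at a time using the already established Items~\eqref{item:inv3} and \eqref{item:inv4}. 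You need either to reproduce such a chain of identities or to find another way to handle the $T_{w_1}$-direction; as written, the last item is a plan whose central step is left open.
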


\begin{proof}[Proof of Item~\eqref{item:inv1}]

The first item follows from Lemma~\ref{lem:prep-inv} and Proposition~\ref{prop:Aint}\eqref{item:Aint2}.
\end{proof}

\begin{proof}[Proof of Item~\eqref{item:inv2},\eqref{item:inv3},\eqref{item:inv4}]

Let the notation be as in the statement. By Proposition~\ref{prop:Aint}\eqref{item:Aint1} and \cite[Remark~2.16]{skew}, we know that the terms on the right side of the displayed equations in the three times we want to proof here are all homogeneous with respect to the $\mathbb{Z}_{\geq 0}$-grading. We see that Proposition~\ref{prop:Nicholsint}\eqref{item:Nicholsint3} applies. Thus, it suffices to prove that
\begin{align*}
(x)\constantoverleftarrow{D_\alpha}&=-\epsilon\big(x_\alpha(x)\constantoverleftarrow{D_\alpha}\big)\constantoverleftarrow{D_\alpha}\,,\\
(x)\constantoverleftarrow{D_y}&=\big((x)\constantoverleftarrow{D_y}y\big)\constantoverleftarrow{D_y}\,.\\
(x)\constantoverleftarrow{D_y}&=(-1)^{\ell(w_o)}\epsilon\big(y(x)\constantoverleftarrow{D_y}\big)\constantoverleftarrow{D_y}\,,
\end{align*}
where the symbols appearing in the three equalities are supposed to be defined as in the three corresponding items. The first of these three equalities follows directly from the braided Leibniz rule and \cite[Remark~3.16]{skew}, the second from Fact~\ref{fact:rho_amel}\eqref{item:nil_pair} and Corollary~\ref{cor:tow_inv}, and the third from Fact~\ref{fact:rho_amel}\eqref{item:mult_wo},\eqref{item:nil_pair}, Corollary~\ref{cor:y_van_amel},~\ref{cor:bcommuteswithskew} and \cite[Remark~3.16]{skew}.
\end{proof}

\begin{proof}[Proof of Item~\eqref{item:inv5}]

Let the notation be as in the statement. With the help of Fact~\ref{fact:rho_amel}\eqref{item:mult_wo}, Lemma~\ref{lem:basic-rev}\eqref{item:basic3},\eqref{item:basic4}, Corollary~\ref{cor:ofbskew},~\ref{cor:abstr-comm2}, Item~\eqref{item:inv3},\eqref{item:inv4} and \cite[Remark~3.16]{skew}, we compute that
\begin{align*}
    (-1)^{\ell(w_o)}y_1y_2(x)\constantoverleftarrow{D_{y_1y_2}}&=(-1)^{\ell(w_o)}\epsilon y_1(x)\constantoverleftarrow{D_{y_1y_2}}y_2\\
    &=(-1)^{\ell(w_o)}\epsilon(y_1(x)\constantoverleftarrow{D_{y_1}})\constantoverleftarrow{D_{y_2}}y_2\\
    &=(x)\constantoverleftarrow{D_{y_2}}y_2\\
    &=x\,,\\
    (x)\constantoverleftarrow{D_{y_1y_2}}y_2y_1&=(-1)^{\ell(w_o)}(x)\constantoverleftarrow{D_{y_1y_2}}y_1y_2\\
    &=(x)\constantoverleftarrow{D_{y_1}}y_1\constantoverleftarrow{D_{y_2}}y_2\\
    &=(x)\constantoverleftarrow{D_{y_2}}y_2\\
    &=x\,,
\end{align*}
where $\epsilon$ is the sign such that $w_ox=\epsilon x$ (as in the statement of Item~\eqref{item:inv4} -- which exists by Corollary~\ref{cor:BWintegral}\eqref{item:sign}).
\end{proof}

\begin{cor}
\label{cor:hypo-bracket}

Let $x$ be a nonzero integral in $\EuScript{B}_W$. Let $w$ be an element in the centralizer of $w_o$. Let $y=wx_{w_o}$. Then, we have $\left<(x)\constantoverleftarrow{D_y},(x)\constantoverleftarrow{D_y}\right>\neq 0$.

\end{cor}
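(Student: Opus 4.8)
The plan is to reduce the desired nonvanishing of $\langle (x)\constantoverleftarrow{D_y},(x)\constantoverleftarrow{D_y}\rangle$ to the nonvanishing of $\langle x,x\rangle$, which is already guaranteed by the nondegeneracy of the Hopf duality pairing on $\EuScript{B}_W$. Throughout I abbreviate $b=(x)\constantoverleftarrow{D_y}$, so that the goal is $\langle b,b\rangle\neq 0$.

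First I would invoke Lemma~\ref{lem:invariance-prop}\eqref{item:inv4}, which applies precisely because $w$ lies in the centralizer of $w_o$. Writing $\epsilon$ for the sign with $w_ox=\epsilon x$ (which exists by Corollary~\ref{cor:BWintegral}\eqref{item:sign}), this gives $x=(-1)^{\ell(w_o)}\epsilon\, yb$. The key reason for using this form rather than the companion identity Lemma~\ref{lem:invariance-prop}\eqref{item:inv3} is that here the factor $y$ sits on the \emph{left} of the product $yb$, so that the recorded relation between the pairing and the braided partial derivative will regenerate exactly $b=(x)\constantoverleftarrow{D_y}$ rather than a derivative taken with respect to $b$.

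Next I would compute, using bilinearity together with the recorded formula $\langle x^*,yx\rangle=\langle(x^*)\constantoverleftarrow{D_y},x\rangle$ (with the first slot playing the role of $\EuScript{B}(V^*)$ under the identification of Subsection~\ref{subsection:model-Schubert}), that
\begin{align*}
\langle x,x\rangle&=(-1)^{\ell(w_o)}\epsilon\,\langle x,yb\rangle\\
&=(-1)^{\ell(w_o)}\epsilon\,\langle (x)\constantoverleftarrow{D_y},b\rangle\\
&=(-1)^{\ell(w_o)}\epsilon\,\langle b,b\rangle.
\end{align*}

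Finally, since $x$ is a nonzero integral in $\EuScript{B}_W$ and the Hopf duality pairing between $\EuScript{B}_W$ and itself is nondegenerate, Corollary~\ref{cor:ABint}\eqref{item:pairing} (applied with $A=B=\EuScript{B}_W$ and $x^*=x$) yields $\langle x,x\rangle\neq 0$. As $(-1)^{\ell(w_o)}\epsilon$ is a sign, it follows that $\langle b,b\rangle=(-1)^{\ell(w_o)}\epsilon\,\langle x,x\rangle\neq 0$, which is the claim. I do not expect a serious obstacle here: once the correct invariance property is selected, the argument is a single application of the defining relation between $\langle-,-\rangle$ and $\constantoverleftarrow{D_y}$. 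The only point requiring care is the direction of that relation, namely ensuring that the left factor $y$ in $yb$ passes to the right partial derivative $\constantoverleftarrow{D_y}$ acting on $x$, and not on $b$.
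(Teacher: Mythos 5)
Your proposal is correct and follows essentially the same route as the paper: the paper's proof likewise invokes Lemma~\ref{lem:invariance-prop}\eqref{item:inv4} to get $\left<(x)\constantoverleftarrow{D_y},(x)\constantoverleftarrow{D_y}\right>=(-1)^{\ell(w_o)}\epsilon\left<x,x\right>$ and then concludes with Corollary~\ref{cor:ABint}\eqref{item:pairing}. Your explicit unwinding of the adjunction $\left<x,yb\right>=\left<(x)\constantoverleftarrow{D_y},b\right>$ is exactly the step the paper leaves implicit.
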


\begin{proof}

Let the notation be as in the statement. By Lemma~\ref{lem:invariance-prop}\eqref{item:inv4}, we have $\left<(x)\constantoverleftarrow{D_y},(x)\constantoverleftarrow{D_y}\right>=(-1)^{\ell(w_o)}\epsilon\left<x,x\right>$ where $\epsilon$ is the sign such that $w_ox=\epsilon x$ (which exists by Corollary~\ref{cor:BWintegral}\eqref{item:sign}). The result follows from this and Corollary~\ref{cor:ABint}\eqref{item:pairing}.
\end{proof}

\section{Disjoint systems and integrals}
\label{sec:motiv}

In this section, we explain the relation between complete disjoint systems and integrals in $\EuScript{B}_W$. More specifically, we explain in Lemma~\ref{lem:motiv}:~$\eqref{item:equiv-1}\Rightarrow\eqref{item:equiv-3}$ how the existence of certain integrals in $\EuScript{B}_W$ implies commutativity relations up to scalar multiple.

\begin{lem}
\label{lem:prep-motiv}

Let $D$ be a disjoint system of order two. Let $w_1,w_2$ be some ordering of the elements of $D$. Let $y_1=w_1x_{w_o}$ and let $y_2=w_2x_{w_o}$. If $y_1y_2$ and $y_2y_1$ are linearly dependent, then we necessarily have $y_1y_2=(-1)^{\ell(w_o)}y_2y_1$.

\end{lem}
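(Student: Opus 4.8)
The plan is to read off the exact values of the symmetric Hopf duality pairing on the products $y_1y_2$ and $y_2y_1$ from Corollary~\ref{cor:bracket}, and then use the assumed proportionality to pin down the scalar of proportionality. First I would apply Corollary~\ref{cor:bracket} with $r=2$, which simultaneously guarantees (via its \enquote{In particular} clause) that both $y_1y_2$ and $y_2y_1$ are nonzero, and records the relevant brackets. Writing $\mathbb{S}_2=\{e,\tau\}$ with $\ell(e)=0$ and $\ell(\tau)=1$, and noting that $\frac{(r-1)r}{2}=1$ for $r=2$, the formula specializes to
\[
\left<y_1y_2,y_1y_2\right>=(-1)^{\ell(w_o)}\,,\qquad \left<y_1y_2,y_2y_1\right>=1\,.
\]
The middle bracket corresponds to $\pi=e$ and $\sigma=\tau$, so $\sigma\pi^{-1}=\tau$ and the exponent is $(1+1)\ell(w_o)=2\ell(w_o)$, which is even; this is why the cross-bracket equals $1$.

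Next, under the hypothesis that $y_1y_2$ and $y_2y_1$ are linearly dependent, the nonvanishing just established allows me to write $y_2y_1=\lambda y_1y_2$ for a unique nonzero scalar $\lambda\in\mathbf{k}^\times$. Applying the $\mathbf{k}$-bilinear pairing in the second argument then gives
\[
1=\left<y_1y_2,y_2y_1\right>=\lambda\left<y_1y_2,y_1y_2\right>=\lambda(-1)^{\ell(w_o)}\,,
\]
so that $\lambda=(-1)^{\ell(w_o)}$ (using that $(-1)^{\ell(w_o)}$ is its own inverse). Thus $y_2y_1=(-1)^{\ell(w_o)}y_1y_2$, and multiplying both sides by $(-1)^{\ell(w_o)}$ rearranges this into the asserted identity $y_1y_2=(-1)^{\ell(w_o)}y_2y_1$.

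There is essentially no hard step here: the entire content is the explicit bracket computation of Corollary~\ref{cor:bracket}, which already packages the nondegeneracy needed to conclude that the two products are nonzero and hence that a genuine nonzero proportionality constant exists. The only place demanding care is the sign bookkeeping when specializing the exponent $\bigl(\frac{(r-1)r}{2}+\ell(\sigma\pi^{-1})\bigr)\ell(w_o)$ to $r=2$: one must verify that the diagonal brackets $\left<y_1y_2,y_1y_2\right>$ are $(-1)^{\ell(w_o)}$ while the cross-bracket $\left<y_1y_2,y_2y_1\right>$ is $1$, since swapping these would produce the wrong value of $\lambda$.
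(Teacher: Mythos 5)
Your proof is correct and is essentially the paper's own argument: the paper writes $y_1y_2=\lambda y_2y_1$ and applies $\constantoverleftarrow{D_{y_1y_2}}$ to both sides, which (since both sides are homogeneous of the same $\mathbb{Z}_{\geq 0}$-degree as $y_1y_2$) amounts precisely to pairing against $y_1y_2$ and reading off the brackets from Corollary~\ref{cor:bracket}, exactly as you do. Your sign bookkeeping for the $r=2$ specialization is also correct.
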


\begin{proof}

Let the notation be as in the statement. Suppose that $y_1y_2$ and $y_2y_1$ are linearly dependent. By Corollary~\ref{cor:bracket}, we know that both $y_1y_2$ and $y_2y_1$ are nonzero, hence linear dependence implies the existence of a nonzero scalar $\lambda$ such that $y_1y_2=\lambda y_2y_1$. If we apply $\constantoverleftarrow{D_{y_1y_2}}$ to both sides of this equality, we find in view of Corollary~\ref{cor:bracket} that $\lambda=(-1)^{\ell(w_o)}$. This completes the proof.
\end{proof}

\begin{lem}
\label{lem:motiv}

Suppose that $\EuScript{B}_W$ is finite dimensional. Let $D$ be a complete disjoint system of order $r$. Let $w_1,\ldots,w_r$ be some ordering of the elements of $D$. Let $y_i=w_ix_{w_o}$ for all $1\leq i\leq r$. The following items are equivalent.

\begin{enumerate}
    \item\label{item:equiv-1}
    The element $y_{\sigma(1)}\cdots y_{\sigma(r)}$ is a nonzero integral in $\EuScript{B}_W$ for some $\sigma\in\mathbb{S}_r$.
    \item\label{item:equiv-2}
    The element $y_{\sigma(1)}\cdots y_{\sigma(r)}$ is a nonzero integral in $\EuScript{B}_W$ for all $\sigma\in\mathbb{S}_r$.
    \item\label{item:equiv-3}
    We have $y_iy_j=(-1)^{\ell(w_o)}y_jy_i$ for all $1\leq i,j\leq r$.
\end{enumerate}

\end{lem}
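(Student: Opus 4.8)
The plan is to establish the cycle $\eqref{item:equiv-2}\Rightarrow\eqref{item:equiv-1}$ (immediate), $\eqref{item:equiv-1}\Rightarrow\eqref{item:equiv-2}$, $\eqref{item:equiv-3}\Rightarrow\eqref{item:equiv-2}$, and finally the crux $\eqref{item:equiv-2}\Rightarrow\eqref{item:equiv-3}$. Throughout, write $P_\pi=y_{\pi(1)}\cdots y_{\pi(r)}$ and note that each $y_i$ is nonzero and homogeneous of $\mathbb{Z}_{\geq 0}$-degree $\ell(w_o)$ by Fact~\ref{fact:rho_amel}\eqref{item:deg_y}, so every $P_\pi$ is homogeneous of degree $r\ell(w_o)$, and all $P_\pi$ are nonzero by Corollary~\ref{cor:bracket}. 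For $\eqref{item:equiv-1}\Rightarrow\eqref{item:equiv-2}$ I would argue as follows: if some $P_\sigma$ is a nonzero integral, then by Proposition~\ref{prop:Aint}\eqref{item:Aint1} the component $\EuScript{B}_W^{r\ell(w_o)}$ is one-dimensional and is the top nonzero component of $\EuScript{B}_W$; since every $P_\pi$ is a nonzero element of this very component, each is a nonzero scalar multiple of $P_\sigma$, hence itself a nonzero integral, which is \eqref{item:equiv-2}.

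For $\eqref{item:equiv-3}\Rightarrow\eqref{item:equiv-2}$, I would use the commutation relations to reorder the factors of a fixed $P_\sigma$ up to a global sign, so that a prescribed factor $y_k$ appears last. Because $D$ is complete, $R^+=T$ is the disjoint union of the $T_{w_k}$, so any $\alpha\in R^+$ lies in a unique $T_{w_k}$; writing $P_\sigma=\pm(\cdots)y_k$ and recalling that $x_\alpha$ starts with $T_{w_k}$, Corollary~\ref{cor:y_van_amel} gives $P_\sigma x_\alpha=0$. As this holds for every $\alpha\in R^+$, Corollary~\ref{cor:BWintegral}\eqref{item:Aint2_elab} shows $P_\sigma$ is an integral, nonzero by Corollary~\ref{cor:bracket}, so \eqref{item:equiv-2} follows.

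The heart of the matter is $\eqref{item:equiv-2}\Rightarrow\eqref{item:equiv-3}$. Fix $i\neq j$ and choose an ordering with $y_i,y_j$ as the first two factors: set $C=y_{\tau(3)}\cdots y_{\tau(r)}$ so that $P_\tau=y_iy_jC$ and $P_{\tau'}=y_jy_iC$, where $\tau'$ swaps the first two positions. By \eqref{item:equiv-2} both are nonzero integrals, hence $P_{\tau'}=\lambda P_\tau$; pairing against $P_\tau$ and reading off Corollary~\ref{cor:bracket} (here $\sigma\pi^{-1}=\tau\tau'^{-1}=(i\,j)$ is a transposition, of odd length) forces $\lambda=(-1)^{\ell(w_o)}$, so $y_jy_iC=(-1)^{\ell(w_o)}y_iy_jC$. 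It remains to strip $C$. By Lemma~\ref{lem:y_van_amel}, $C$ only involves $\Theta^c:=R^+\setminus(T_{w_i}\cup T_{w_j})$ while $y_iy_j$ and $y_jy_i$ only involve $T_{w_i}\cup T_{w_j}$; thus $(y_iy_j)\constantoverleftarrow{D_\alpha}=0$ for all $\alpha\in\Theta^c$ by Lemma~\ref{lem:basic-rev}\eqref{item:basic3}, and Lemma~\ref{lem:basic-rev}\eqref{item:basic4} yields $(y_iy_jC)\constantoverleftarrow{D_C}=y_iy_j\,(C)\constantoverleftarrow{D_C}$, and likewise for $y_jy_i$. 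Since $C$ is a product of the $y$'s for the suborder $\{w_{\tau(3)},\dots,w_{\tau(r)}\}$, which is a disjoint system by Remark~\ref{rem:sub-disj}, the pairing–derivative adjunction with $x=1$ together with Corollary~\ref{cor:bracket} gives $(C)\constantoverleftarrow{D_C}=\left<C,C\right>\neq 0$. Applying $\constantoverleftarrow{D_C}$ to $y_jy_iC=(-1)^{\ell(w_o)}y_iy_jC$ and cancelling the nonzero scalar $\left<C,C\right>$ produces $y_jy_i=(-1)^{\ell(w_o)}y_iy_j$, i.e. \eqref{item:equiv-3}.

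The main obstacle is exactly this last stripping: one-dimensionality of the top component only constrains the full products $P_\pi$, whereas \eqref{item:equiv-3} is an identity between the length-two products $y_iy_j$ and $y_jy_i$, which sit in the lower, non-extremal degree $2\ell(w_o)$ where no such rigidity is available. The disjointness of the root sets involved by $C$ and by $y_iy_j$ is what lets $\constantoverleftarrow{D_C}$ act as a genuine "division by $C$" through Lemma~\ref{lem:basic-rev}\eqref{item:basic3},\eqref{item:basic4}, and the nonvanishing of $\left<C,C\right>$ from Corollary~\ref{cor:bracket} is what makes the cancellation legitimate. An alternative to the explicit cancellation would be to extract linear dependence of $y_iy_j$ and $y_jy_i$ and then quote Lemma~\ref{lem:prep-motiv}, but in either route the essential step is descending from the extremal degree down to degree $2\ell(w_o)$.
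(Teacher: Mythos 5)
Your argument is correct and in all essentials identical to the paper's: the same appeal to Corollary~\ref{cor:bracket} and Proposition~\ref{prop:Aint}\eqref{item:Aint1} for \eqref{item:equiv-1}$\Rightarrow$\eqref{item:equiv-2}, the same vanishing of the full product against every $x_\alpha$ via completeness for \eqref{item:equiv-3}$\Rightarrow$\eqref{item:equiv-2}, and the same stripping of the complementary factor (your $C$, the paper's $Q$) by $\constantoverleftarrow{D_C}$ through Lemma~\ref{lem:basic-rev}\eqref{item:basic3},\eqref{item:basic4} and Corollary~\ref{cor:bracket} for \eqref{item:equiv-2}$\Rightarrow$\eqref{item:equiv-3}. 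The only cosmetic deviations are that you determine the sign $(-1)^{\ell(w_o)}$ directly from Corollary~\ref{cor:bracket} on the full degree-$r\ell(w_o)$ products rather than via Lemma~\ref{lem:prep-motiv} after stripping, and that you leave implicit the trivial case $i=j$ of \eqref{item:equiv-3}, where both sides vanish because $y_i^2=0$.
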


\begin{proof}

The Implication~$\eqref{item:equiv-2}\Rightarrow\eqref{item:equiv-1}$ is obvious. The Implication~$\eqref{item:equiv-1}\Rightarrow\eqref{item:equiv-2}$ follows from Corollary~\ref{cor:bracket} and Proposition~\ref{prop:Aint}\eqref{item:Aint1}. 

Let us prove the Implication~$\eqref{item:equiv-2}\Rightarrow\eqref{item:equiv-3}$. The equality claimed in Item~\eqref{item:equiv-3} is obvious for all $1\leq i=j\leq r$. Because of Lemma~\ref{lem:y_van_amel} and Corollary~\ref{cor:y_van_amel} both sides are zero. Let $1\leq i\neq j\leq r$ be arbitrary but fixed. By Proposition~\ref{prop:Aint}\eqref{item:Aint1} and Item~\eqref{item:equiv-2}, there exists a nonzero scalar $\lambda$ such that
$y_iy_jQ=\lambda y_jy_iQ$ where
\[
Q=y_1\cdots\hat{y}_i\cdots\hat{y}_j\cdots y_r\,.
\]
If we now apply $\constantoverleftarrow{D_Q}$ to $y_iy_jQ=\lambda y_jy_iQ$, we find in view of Lemma~\ref{lem:basic-rev}\eqref{item:basic3},\eqref{item:basic4} and Corollary~\ref{cor:bracket} that $y_iy_j=\lambda y_jy_i$. In view of Lemma~\ref{lem:prep-motiv}, we find that $\lambda=(-1)^{\ell(w_o)}$. This proves Item~\eqref{item:equiv-3}.

Let us prove the Implication~$\eqref{item:equiv-3}\Rightarrow\eqref{item:equiv-1}$. Let $x=y_1\cdots y_r$. By Lemma~\ref{lem:y_van_amel}, Corollary~\ref{cor:bracket} and Item~\eqref{item:equiv-3}, the element $x$ is a nonzero monomial which starts with $\gamma$ for all $\gamma\in R^+$. By Lemma~\ref{lem:basic-rev}\eqref{item:basic0} and Corollary~\ref{cor:BWintegral}\eqref{item:Aint2_elab}, it follows that $x$ is a nonzero integral in $\EuScript{B}_W$.
\end{proof}

\begin{conj}
\label{conj:milinski}

Suppose that $\EuScript{B}_W$ is finite dimensional. Let $D$ be a complete disjoint system of order $r$. Let $w_1,\ldots,w_r$ be some ordering of the elements of $D$. Let $y_i=w_ix_{w_o}$ for all $1\leq i\leq r$. Then, the element $y_1\cdots y_r$ is a nonzero integral in $\EuScript{B}_W$.

\end{conj}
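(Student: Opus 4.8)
The plan is to deduce the statement from the equivalence already recorded in Lemma~\ref{lem:motiv}. Under the standing hypothesis that $\EuScript{B}_W$ is finite dimensional, that lemma shows that the assertion that $y_1\cdots y_r$ is a nonzero integral (item~\eqref{item:equiv-1} for $\sigma=\mathrm{id}$) is equivalent to the commutativity relation
\[
y_iy_j=(-1)^{\ell(w_o)}y_jy_i\qquad\text{for all }1\le i,j\le r
\]
of item~\eqref{item:equiv-3}. So it suffices to establish this relation. The diagonal terms $i=j$ are automatic: by Lemma~\ref{lem:y_van_amel} the monomial $y_i$ starts with every $\gamma\in T_{w_i}$, whence $y_iy_i=0=(-1)^{\ell(w_o)}y_iy_i$ by Corollary~\ref{cor:y_van_amel}. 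For $i\ne j$ the pair $\{w_i,w_j\}$ is itself a disjoint system of order two by Remark~\ref{rem:sub-disj}, so the whole problem reduces to a single assertion about order-two disjoint systems.

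That assertion is: for every disjoint system $\{w_1,w_2\}$ of order two, with $y_1=w_1x_{w_o}$ and $y_2=w_2x_{w_o}$, one has $y_1y_2=(-1)^{\ell(w_o)}y_2y_1$. By Corollary~\ref{cor:bracket} both products are nonzero, and by Lemma~\ref{lem:prep-motiv} the desired relation is equivalent to the mere linear dependence of $y_1y_2$ and $y_2y_1$. Thus the entire conjecture comes down to showing that $y_1y_2$ and $y_2y_1$ span a one-dimensional subspace of $\EuScript{B}_W$.

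First I would attack this through the nondegenerate pairing. Put $u=y_1y_2-(-1)^{\ell(w_o)}y_2y_1$; the bracket values of Corollary~\ref{cor:bracket} (for $r=2$) give $\langle u,y_1y_2\rangle=\langle u,y_2y_1\rangle=0$, and the operator identity of Corollary~\ref{cor:ofbskew} lets one commute $\constantoverleftarrow{D_{y_1}}$ and $\constantoverleftarrow{D_{y_2}}$ past left multiplications when computing further brackets. The goal would be to prove $\langle u,z\rangle=0$ for every $z$ in the relevant bidegree—namely $\mathbb{Z}_{\geq 0}$-degree $2\ell(w_o)$ and $W$-degree $w_o^2=1$—so that $u=0$ by nondegeneracy. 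For $W=\mathbb{S}_m$ a second route is available through Theorem~\ref{thm:main-intro}: after normalizing so that $1\in D$ (Corollary~\ref{cor:normal}), the nontrivial factors $y_1,\dots,y_{r-1}$ involve only roots in $R^+\setminus\Delta$ and hence lie in $\EuScript{B}_{\mathbb{S}_m}'$, while $y_r=x_{w_o}$. If the nonzero element $P=y_1\cdots y_{r-1}$ could be shown to lie in $\smash{\EuScript{B}_{\mathbb{S}_m}^{\prime\,\mathrm{top}}}$, then Theorem~\ref{thm:main-intro} and the relation $Px_{w_o}=\pm x_{w_o}P$ would identify $y_1\cdots y_r=Px_{w_o}$ as a nonzero integral in $\EuScript{B}_{\mathbb{S}_m}$.

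The hard part is common to both routes and is, I expect, the genuinely open point: controlling the dimension of the ambient homogeneous component. In the first route one must know that the bidegree component is spanned by $y_1y_2$ and $y_2y_1$ (or otherwise force $u$ to be orthogonal to a full spanning set); in the second one must know that $P$ lands in the \emph{top} degree of $\EuScript{B}_{\mathbb{S}_m}'$, not merely that it is a nonzero element of some lower degree. By the reductions above each of these is again equivalent to the commutativity relation itself, so neither the pairing nor the subalgebra structure breaks the circle without an independent input—for example an explicit determination of the top $\mathbb{Z}_{\geq 0}$-degree of $\EuScript{B}_W$ (respectively of $\EuScript{B}_{\mathbb{S}_m}'$), or a direct monomial-level argument that $y_1\cdots y_r$ starts with every $\gamma\in R^+$. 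This last reformulation, which by Lemma~\ref{lem:basic-rev}\eqref{item:basic0} and Corollary~\ref{cor:BWintegral}\eqref{item:Aint2_elab} would immediately yield the integral property, is the most promising concrete target, but carrying it out appears to need exactly the commutativity one is trying to prove.
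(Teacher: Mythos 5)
The statement you are asked to prove is stated in the paper as \emph{Conjecture}~\ref{conj:milinski}, not as a theorem: the paper offers no proof of it, and the surrounding text (Lemma~\ref{lem:motiv} and the motivational remark about types $\mathsf{A}_1$ and $\mathsf{A}_3$) makes clear that the author regards it as open. Your proposal does not close this gap either, and to your credit you say so explicitly. What you establish is precisely the chain of reductions that the paper itself already contains: by Lemma~\ref{lem:motiv}, Implication~$\eqref{item:equiv-3}\Rightarrow\eqref{item:equiv-1}$, the integral property of $y_1\cdots y_r$ would follow from the commutativity relation $y_iy_j=(-1)^{\ell(w_o)}y_jy_i$; by Remark~\ref{rem:sub-disj} this reduces to order-two disjoint systems; and by Corollary~\ref{cor:bracket} together with Lemma~\ref{lem:prep-motiv} it reduces further to the mere linear dependence of $y_1y_2$ and $y_2y_1$. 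All of this is correct and faithful to the paper, but it is a restatement of the problem, not a solution. The genuine gap is the final step: nothing in your proposal (nor in the paper) produces the linear dependence of $y_1y_2$ and $y_2y_1$, or equivalently identifies the homogeneous component of $\mathbb{Z}_{\geq 0}$-degree $2\ell(w_o)$ and $W$-degree $1$ in which $u=y_1y_2-(-1)^{\ell(w_o)}y_2y_1$ lives well enough to force $u=0$ by nondegeneracy of the pairing.

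Your two proposed routes both founder on the same circularity, as you correctly diagnose. In the first route, the bracket values of Corollary~\ref{cor:bracket} only show that $u$ is orthogonal to the span of $y_1y_2$ and $y_2y_1$, and without knowing that this span exhausts the relevant bidegree component (or exhibiting a larger spanning set against which to test $u$), nondegeneracy gives nothing. In the second route, placing $P=y_1\cdots y_{r-1}$ in $\EuScript{B}_{\mathbb{S}_m}^{\prime\,\mathrm{top}}$ amounts to showing $P$ is a nonzero hypothetical element in the sense of Definition~\ref{def:hypo}, i.e.\ that $Px_\alpha=0$ for all $\alpha\in R^+\setminus\Delta$ --- which, unwound through Lemma~\ref{lem:y_van_amel} and Corollary~\ref{cor:y_van_amel}, is again the assertion that $y_1\cdots y_{r-1}$ starts with every $\gamma\in R^+\setminus\Delta$, and this is exactly what the commutativity relation would deliver via the argument in the proof of Implication~$\eqref{item:equiv-3}\Rightarrow\eqref{item:equiv-1}$ of Lemma~\ref{lem:motiv}. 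In short: your analysis is a correct and useful map of the obstruction, but it is not a proof, and no proof of this statement exists in the paper to compare it against.
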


\begin{rem}[Motivational remark]
\label{rem:melinski}

In type $\mathsf{A}_1$ and $\mathsf{A}_3$, the assumptions and the conclusion of Conjecture~\ref{conj:milinski} are satisfied for trivial reasons and by \cite[Example~6.4]{milinski-schneider}. This simple observation makes part of our motivation for the notion of disjoint systems, for Conjecture~\ref{conj:milinski} and for the proceeding in this paper in general.

\end{rem}

\section{Coproducts of integrals}
\label{sec:coproduct}

In this section, we explain how finite dimensionality of $\EuScript{B}_W$ in general implies commutativity relations up to multiplication with a nonzero element in $\EuScript{B}_W$ (cf.~Theorem~\ref{thm:concrete}).

\begin{defn}

Let $V$ be a $\mathbb{Z}_{\geq 0}$-graded vector space. Let $Y$ be a $\mathbb{Z}_{\geq 0}$-graded vector subspace of $V$. We call $U$ a $\mathbb{Z}_{\geq 0}$-graded complement of $Y$ in $V$ if $U$ is a $\mathbb{Z}_{\geq 0}$-graded vector subspace of $V$ such that $V\cong Y\oplus U$. It is clear that a $\mathbb{Z}_{\geq 0}$-graded complement of $Y$ in $V$ exists for every $V$ and $Y$ as above.

\end{defn}

\begin{lem}[Key lemma on concrete commutativity]
\label{lem:prep-concrete}

Let $\Theta_1,\Theta_2\subseteq R^+$ be two subsets. Let $y_1$ be a monomial which does only involve $\Theta_1$, and let $y_2$ be a monomial which does only involve $\Theta_2$. Let $y,\bar{y}$ be monomials which do only involve $\Theta_1\cup\Theta_2$ such that $yy_1y_2$ and $\bar{y}y_2y_1$ are linearly independent. Let $b,\bar{b}\in\EuScript{B}_W$ be homogeneous elements with respect to the $\mathbb{Z}_{\geq 0}$-grading and the $W$-grading such that $byy_1y_2$ and $\bar{b}\bar{y}y_2y_1$ are nonzero integrals in $\EuScript{B}_W$. Then, there exist monomials $y',\bar{y}'$ which do only involve $\Theta_1\cup\Theta_2$ and homogeneous elements $b',\bar{b}'\in\EuScript{B}_W$ with respect to the $\mathbb{Z}_{\geq 0}$-grading and the $W$-grading such that
\[
\operatorname{deg}_{\mathbb{Z}_{\geq 0}}y+\operatorname{deg}_{\mathbb{Z}_{\geq 0}}\bar{y}<\operatorname{deg}_{\mathbb{Z}_{\geq 0}}y'+\operatorname{deg}_{\mathbb{Z}_{\geq 0}}\bar{y}'\,,
\]
where $\operatorname{deg}_{\mathbb{Z}_{\geq 0}}$ denotes the $\mathbb{Z}_{\geq 0}$-degree, and such that $b'y'y_1y_2$ and $\bar{b}'\bar{y}'y_2y_1$ are nonzero integrals in $\EuScript{B}_W$.

\end{lem}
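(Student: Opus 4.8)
The plan is to exploit that, by Proposition~\ref{prop:Aint}\eqref{item:Aint1}, the space of integrals in $\EuScript{B}_W$ is one-dimensional and concentrated in the top $\mathbb{Z}_{\geq 0}$-degree $m$. Hence the two given nonzero integrals $byy_1y_2$ and $\bar{b}\bar{y}y_2y_1$ are scalar multiples of one another, and after absorbing a scalar into $\bar{b}$ I may assume they coincide, writing $x=byy_1y_2=\bar{b}\bar{y}y_2y_1$ for the essentially unique integral. Since the total $\mathbb{Z}_{\geq 0}$-degree of $x$ is fixed and $y_1,y_2$ are not allowed to change, increasing $\operatorname{deg}_{\mathbb{Z}_{\geq 0}}y+\operatorname{deg}_{\mathbb{Z}_{\geq 0}}\bar{y}$ amounts to transferring $\mathbb{Z}_{\geq 0}$-degree out of the unconstrained factors $b,\bar{b}$ into the monomial factors $y,\bar{y}$, which only involve $\Theta_1\cup\Theta_2$. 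Note that linear independence of $yy_1y_2$ and $\bar{y}y_2y_1$ already guarantees that $b,\bar{b}$ are not both scalars, so there is genuinely room to perform such a transfer on at least one side; this is the reformulation I would aim at.

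The engine for the transfer is the coproduct, as the title of this section suggests. First I would apply $\Delta$ to $x$ and expand it by multiplicativity in the braided tensor product of $\EuScript{B}_W$ with itself, using each of the two factorizations. Because $y_1$ involves only $\Theta_1$ and $y_2$ involves only $\Theta_2$, every homogeneous piece occurring in the braided-shuffle expansions of $\Delta(y_1)$ and $\Delta(y_2)$ again involves only $\Theta_1$, respectively only $\Theta_2$, while the pieces of $\Delta(y)$ and $\Delta(\bar{y})$ involve only $\Theta_1\cup\Theta_2$; I would keep careful track of the $W$-degrees since these govern the braiding factors, which is precisely why the paper renounces sumless Sweedler notation in this section. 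Working in the fixed bidegree $(p,q)$ with $q=\operatorname{deg}_{\mathbb{Z}_{\geq 0}}(y_1y_2)$ and $p=m-q$, the contiguous suffix split (take $b\otimes 1$ from $\Delta(b)$, $y\otimes 1$ from $\Delta(y)$, and $1\otimes y_1$, $1\otimes y_2$ from $\Delta(y_1),\Delta(y_2)$) contributes a distinguished term $by\otimes y_1y_2$, and likewise $\bar{b}\bar{y}\otimes y_2y_1$, each with coefficient one and no crossing.

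Next I would equate the two expansions of the single element $\Delta_{p,q}(x)$ and isolate the coefficients of the various right-slot monomials by pairing the right tensor factor against test elements; this is legitimate by the nondegeneracy of the Hopf duality pairing (Corollary~\ref{cor:ABint}\eqref{item:pairing}) and amounts concretely to applying braided partial derivatives $\constantoverleftarrow{D_\xi}$. The linear independence of $yy_1y_2$ and $\bar{y}y_2y_1$ is exactly what prevents the two distinguished suffix-split contributions from matching trivially, forcing additional nonzero terms in which a letter of $b$ (resp.\ $\bar{b}$) has been redistributed into the $\Theta_1\cup\Theta_2$-part. Reading these off yields monomials $y',\bar{y}'$ that only involve $\Theta_1\cup\Theta_2$, of strictly larger combined $\mathbb{Z}_{\geq 0}$-degree, together with cofactors $b',\bar{b}'$; their homogeneity with respect to both gradings is automatic since $x$, $y_1y_2$ and $y_2y_1$ are homogeneous. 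Finally $b'y'y_1y_2$ and $\bar{b}'\bar{y}'y_2y_1$ are again scalar multiples of $x$, hence nonzero integrals by Proposition~\ref{prop:Aint}\eqref{item:Aint1}, which is the assertion.

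The hard part, I expect, is twofold. The routine-but-delicate half is the explicit bookkeeping of braiding coefficients in the braided-shuffle expansion of $\Delta(x)$, where the intermediate $W$-degrees enter through the braiding $\Psi$ and where several distinct splits can a priori produce the same right-slot monomial, so the distinguished term must be separated from its competitors. The genuinely serious half is guaranteeing the \emph{strict} inequality $\operatorname{deg}_{\mathbb{Z}_{\geq 0}}y+\operatorname{deg}_{\mathbb{Z}_{\geq 0}}\bar{y}<\operatorname{deg}_{\mathbb{Z}_{\geq 0}}y'+\operatorname{deg}_{\mathbb{Z}_{\geq 0}}\bar{y}'$: one has to pin down the correct bidegree component so that the reorganization cannot merely reshuffle letters at constant prefix degree, and this is the step that consumes the linear independence hypothesis. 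To handle the two factorizations symmetrically I would likely invoke the anti-algebra involution $\rho$, which reverses monomials, exchanges the prefix and suffix roles of $\Theta_1$ and $\Theta_2$, and sends the integral to $\pm x$ by Corollary~\ref{cor:BWintegral}\eqref{item:sign}, thereby converting statements about the $\bar{y}y_2y_1$-factorization into statements about the $yy_1y_2$-factorization.
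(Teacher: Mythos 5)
Your overall strategy --- take the coproduct of the (essentially unique) integral, compare the expansions coming from the two factorizations, and use linear independence to force a nontrivial redistribution of degree from $b,\bar b$ into the $\Theta_1\cup\Theta_2$-part --- is the same strategy as the paper's, but the two steps you yourself flag as ``the hard part'' are exactly where your setup breaks down, and they are not mere bookkeeping. First, you work in the bidegree whose right slot is $\deg_{\mathbb{Z}_{\geq 0}}(y_1y_2)$, so your distinguished terms are $by\otimes y_1y_2$ and $\bar b\bar y\otimes y_2y_1$, and you then want to separate them by pairing the right tensor factor against test elements. But the hypothesis is linear independence of $yy_1y_2$ and $\bar y y_2y_1$; it says nothing about $y_1y_2$ versus $y_2y_1$ (which may well be proportional) nor about $by$ versus $\bar b\bar y$, so in your bidegree the hypothesis cannot do the work you assign to it. The paper instead works in the bidegree $\left(\deg_{\mathbb{Z}_{\geq 0}}(yy_1y_2),\deg_{\mathbb{Z}_{\geq 0}}b\right)$ and applies to the \emph{left} tensor factor the linear projection $\mathbb{P}$ onto $\mathbf{k}\,yy_1y_2$ along $\mathbf{k}\,\bar y y_2y_1\oplus U$ for a $\mathbb{Z}_{\geq 0}$-graded complement $U$; this is a plain linear-algebra projection, not a duality pairing, and its very existence is where linear independence is consumed: the distinguished term of $\Delta(byy_1y_2)$ survives as $b$, while the distinguished term of $\Delta(\bar b\bar y y_2y_1)$ is annihilated because $\mathbb{P}(\bar y y_2y_1)=0$.

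Second, the strict inequality, which you leave open, is not obtained by improving both sides symmetrically (and $\rho$ plays no role here); it comes for free from the asymmetry just described. After applying $\mathbb{P}\otimes 1$ one gets an identity of the form
\[
b+\sum_{i,j}\lambda_{i,j}\,b_{(2)}^i\,(yy_1y_2)_j=\lambda\sum_j\bar\lambda_j\,\bar b\,(\bar y y_2y_1)_j+\lambda\sum_{i,j}\bar\lambda_{i,j}\,\bar b_{(2)}^i\,(\bar y y_2y_1)_j
\]
in which every submonomial $(yy_1y_2)_j$, $(\bar y y_2y_1)_j$ occurring with nonzero coefficient is nonconstant (for the $\bar\lambda_j$-terms this again uses $\mathbb{P}(\bar y y_2y_1)=0$) and the cofactors $b_{(2)}^i,\bar b_{(2)}^i$ have strictly smaller $\mathbb{Z}_{\geq 0}$-degree than $b,\bar b$. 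Multiplying on the right by $yy_1y_2$ puts everything back in the top degree; since $byy_1y_2\neq 0$, some other term $b'\,y'\,y_1y_2$ with $y'=(yy_1y_2)_j\,y$ or $y'=(\bar y y_2y_1)_j\,y$ must be nonzero, hence a nonzero integral by Proposition~\ref{prop:Aint}\eqref{item:Aint1}, and $\deg_{\mathbb{Z}_{\geq 0}}y'>\deg_{\mathbb{Z}_{\geq 0}}y$ because the absorbed submonomial is nonconstant. One then simply keeps $\bar y'=\bar y$ and $\bar b'=\bar b$. Without this asymmetric mechanism your argument has no way to rule out a reshuffle at constant prefix degree, so as written the proposal does not yet prove the lemma.
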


\begin{proof}

Let the notation be as in the statement. Without loss of generality, we may assume that the $\mathbb{Z}_{\geq 0}$-degree of $b$ is larger or equal than the $\mathbb{Z}_{\geq 0}$-degree of $\bar{b}$. By assumption and Proposition~\ref{prop:Aint}\eqref{item:Aint1}, there exists a nonzero scalar $\lambda$ such that $byy_1y_2=\lambda\bar{b}\bar{y}y_2y_1$. Let
\begin{align*}
\Delta(b)&=1\otimes b+\sum_i b_{(1)}^i\otimes b_{(2)}^i\,,\\
\Delta(\bar{b})&=1\otimes\bar{b}+\sum_i\bar{b}_{(1)}^i\otimes\bar{b}_{(2)}^i
\end{align*}
be decompositions such that $b_{(2)}^i$ is homogeneous with respect to the $W$-grading and homogeneous of $\mathbb{Z}_{\geq 0}$-degree less than the $\mathbb{Z}_{\geq 0}$-degree of $b$ and such that $\bar{b}_{(2)}^i$ is homogeneous with respect to the $W$-grading and homogeneous of $\mathbb{Z}_{\geq 0}$-degree less than the $\mathbb{Z}_{\geq 0}$-degree of $\bar{b}$. Let $U$ be a $\mathbb{Z}_{\geq 0}$-graded complement of $\mathbf{k}yy_1y_2\oplus\mathbf{k}\bar{y}y_2y_1$ in $\EuScript{B}_W$. Let $\mathbb{P}$ be the natural projection
\[
\mathbb{P}\colon\EuScript{B}_W\cong\mathbf{k}yy_1y_2\oplus\mathbf{k}\bar{y}y_2y_1\oplus U\to\mathbf{k}yy_1y_2\cong\mathbf{k}
\]
onto $\mathbf{k}yy_1y_2\cong\mathbf{k}$. If we apply now the coproduct to both sides of the equation $byy_1y_2=\lambda\bar{b}\bar{y}y_2y_1$, apply further $\mathbb{P}\otimes 1$ to the result and identify it along the isomorphism $\mathbf{k}\otimes\EuScript{B}_W\cong\EuScript{B}_W$, we find that
\[
b+\sum_{i,j}\lambda_{i,j}b_{(2)}^i(yy_1y_2)_j=\lambda\sum_j\bar{\lambda}_j\bar{b}(\bar{y}y_2y_1)_j+\lambda\sum_{i,j}\bar{\lambda}_{i,j}\bar{b}_{(2)}^i(\bar{y}y_2y_1)_j
\]
where the $\lambda_{i,j},\bar{\lambda}_j,\bar{\lambda}_{i,j}$ are scalars which result from the application of $\mathbb{P}$ to the first tensor factors of the decompositions of $\Delta(byy_1y_2)$ and $\Delta(\bar{b}\bar{y}y_2y_1)$ and where the $(yy_1y_2)_j$ and $(\bar{y}y_2y_1)_j$ are sub-monomials of $yy_1y_2$ and $\bar{y}y_2y_1$ which are nonconstant whenever the corresponding scalar is nonzero. If we multiply the previous equation with $yy_1y_2$ from the right, we can find a monomial $y'$ which does only involve $\Theta_1\cup\Theta_2$ such that $\smash{\operatorname{deg}_{\mathbb{Z}_{\geq 0}}y<\operatorname{deg}_{\mathbb{Z}_{\geq 0}}y'}$ and such that $b'y'y_1y_2$ is a nonzero integral in $\EuScript{B}_W$ where $b'$ is either $b_{(2)}^i$, $\smash{\bar{b}}$ or $\smash{\bar{b}_{(2)}^i}$. If we set $\bar{y}'=\bar{y}$ and $\smash{\bar{b}'=\bar{b}}$, we have found $\smash{y',\bar{y}',b',\bar{b}'}$ with all the properties required in the statement. This completes the proof.
\end{proof}

\begin{thm}[Concrete commutativity]
\label{thm:concrete}

Let $\Theta_1,\Theta_2\subseteq R^+$. Let $y_1$ be a monomial which does only involve $\Theta_1$, and let $y_2$ be a monomial which does only involve $\Theta_2$ such that $y_1y_2$ and $y_2y_1$ are both nonzero. If $\EuScript{B}_W$ is finite dimensional, then there exist monomials $y,\bar{y}$ which do only involve $\Theta_1\cup\Theta_2$ such that
\[
\mathbf{k}yy_1y_2=\mathbf{k}\bar{y}y_2y_1\neq 0\,.
\]

\end{thm}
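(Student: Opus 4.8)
The plan is to realize the statement as the terminal outcome of the degree-increasing mechanism encoded in the key Lemma~\ref{lem:prep-concrete}. The guiding idea is that finite dimensionality bounds how far the left factors $y,\bar y$ can grow, so any inductive application of the key lemma must halt, and the only way it can halt is precisely when the two products become linearly dependent, which is exactly the desired conclusion.

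First I would produce a valid initial configuration to feed into Lemma~\ref{lem:prep-concrete}. Since $y_1y_2$ and $y_2y_1$ are both nonzero by hypothesis, Corollary~\ref{cor:BWintegral}\eqref{item:cor:M} supplies monomials $b$ and $\bar b$ such that $by_1y_2$ and $\bar b y_2y_1$ are nonzero integrals in $\EuScript{B}_W$. Taking $y=\bar y=1$, the empty monomial, which does only involve $\Theta_1\cup\Theta_2$, and noting that every monomial is homogeneous with respect to both the $\mathbb{Z}_{\geq 0}$-grading and the $W$-grading, the quadruple $(y,\bar y,b,\bar b)$ meets all the standing hypotheses of the key lemma except possibly the linear independence of $yy_1y_2$ and $\bar y y_2y_1$.

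Next I would run a maximality argument. Consider the nonempty set of all quadruples $(y,\bar y,b,\bar b)$ with $y,\bar y$ monomials involving only $\Theta_1\cup\Theta_2$, with $b,\bar b$ homogeneous, and with $byy_1y_2$ and $\bar b\bar y y_2y_1$ nonzero integrals in $\EuScript{B}_W$. For every such quadruple, $byy_1y_2$ lies in the top nonzero $\mathbb{Z}_{\geq 0}$-degree component of $\EuScript{B}_W$ by Proposition~\ref{prop:Aint}\eqref{item:Aint1}; since all factors have nonnegative $\mathbb{Z}_{\geq 0}$-degree, the quantity $\operatorname{deg}_{\mathbb{Z}_{\geq 0}}y+\operatorname{deg}_{\mathbb{Z}_{\geq 0}}\bar y$ is bounded above by twice that top degree, hence bounded independently of the quadruple. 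I would therefore choose a quadruple maximizing this sum. If $yy_1y_2$ and $\bar y y_2y_1$ were linearly independent for this maximizer, then Lemma~\ref{lem:prep-concrete} would produce a quadruple with strictly larger value of the sum, contradicting maximality. Thus $yy_1y_2$ and $\bar y y_2y_1$ are linearly dependent; both being nonzero, since $byy_1y_2$ and $\bar b\bar y y_2y_1$ are nonzero, they span the same line, which yields $\mathbf{k}yy_1y_2=\mathbf{k}\bar y y_2y_1\neq 0$ as required.

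The conceptual weight of the argument sits entirely inside Lemma~\ref{lem:prep-concrete}, which I am assuming here; the theorem itself is only the termination wrapper around it. The points that need genuine care are the verification that the initial quadruple satisfies the key lemma's structural hypotheses, namely homogeneity in both gradings and the condition of involving only $\Theta_1\cup\Theta_2$, using that a scalar counts as such a monomial, and the extraction of the uniform degree bound from finite dimensionality via Proposition~\ref{prop:Aint}\eqref{item:Aint1}. I expect no further obstacle once these are in place.
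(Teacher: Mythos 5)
Your argument is correct and follows essentially the same route as the paper: both rest entirely on Lemma~\ref{lem:prep-concrete} combined with a maximality/termination argument, using Proposition~\ref{prop:Aint}\eqref{item:Aint1} to bound $\operatorname{deg}_{\mathbb{Z}_{\geq 0}}y+\operatorname{deg}_{\mathbb{Z}_{\geq 0}}\bar{y}$ by the degree of a nonzero integral. The only cosmetic difference is that the paper maximizes over pairs with $yy_1y_2$, $\bar{y}y_2y_1$ linearly independent and applies the key lemma one last time to jump past that maximum, whereas you maximize over quadruples satisfying the integral condition and derive a contradiction from independence at the maximizer; these are the same argument.
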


\begin{proof}

Let the notation be as in the statement. Suppose that $\EuScript{B}_W$ is finite dimensional. Suppose that $y_1y_2$ and $y_2y_1$ are linearly independent. Otherwise, we can set $y=\bar{y}=1$ and are done by assumption. Let $y,\bar{y}$ be monomials which do only involve $\Theta_1\cup\Theta_2$ such that $yy_1y_2$ and $\bar{y}y_2y_1$ are linearly independent and such that $\operatorname{deg}_{\mathbb{Z}_{\geq 0}}y+\operatorname{deg}_{\mathbb{Z}_{\geq 0}}\bar{y}$ is maximal. Such a choice of monomials clearly exists by assumption and because the $\mathbb{Z}_{\geq 0}$-degree of any nonzero homogeneous element is bounded above by the $\mathbb{Z}_{\geq 0}$-degree of any nonzero integral in $\EuScript{B}_W$ by Proposition~\ref{prop:Aint}\eqref{item:Aint1}. If we now apply Lemma~\ref{lem:prep-concrete} to this situation, we can find monomials $y',\bar{y}'$ which do only involve $\Theta_1\cup\Theta_2$ such that
\[
\operatorname{deg}_{\mathbb{Z}_{\geq 0}}y+\operatorname{deg}_{\mathbb{Z}_{\geq 0}}\bar{y}<\operatorname{deg}_{\mathbb{Z}_{\geq 0}}y'+\operatorname{deg}_{\mathbb{Z}_{\geq 0}}\bar{y}'\,,
\]
and such that $y'y_1y_2$ and $\bar{y}'y_2y_1$ are linearly dependent and nonzero -- as claimed.
\end{proof}

\begin{rem}

Note that Lemma~\ref{lem:prep-concrete} and Theorem~\ref{thm:concrete} have right analogues which follow by application of $\rho$ and Remark~\ref{rem:rho}.

\end{rem}

\section{Reduction of monomials}
\label{sec:reduction}

Under the title of \textsc{\enquote{Reduction of monomials}}, we discuss in this section the reduction of arbitrary monomials modulo a suitable ideal (or what amounts more or less to the same modulo multiplication with a hypothetical element -- a notion which will be introduced shortly after in Section~\ref{sec:hypo}) to a monomial which does only involve $\Delta$, i.e.\ to a basis element of the standard basis of $\EuScript{N}_W$. For the moment, we achieve this reduction procedure only in type $\mathsf{A}$ using the known relations of $\EuScript{B}_{\mathbb{S}_m}$ (i.e.\ the Fomin-Kirillov relation, the nilpotent relation and the commutation relation as in \cite[Example~4.4]{skew}). The corresponding result in type $\mathsf{A}$ is stated in Corollary~\ref{cor:reduction} based on Lemma~\ref{lem:reduction}.

\begin{defn}

We denote by $\EuScript{J}_W^{\mathcalsmall{l}}$ and $\EuScript{J}_W^{\mathcalsmall{r}}$ the $\mathbb{Z}_{\geq 0}$-graded and $W$-graded left ideal and right ideal in $\EuScript{B}_W$ defined by the equalities
\[
\EuScript{J}_W^{\mathcalsmall{l}}=\textstyle{\sum_{\alpha\in R^+\setminus\Delta}}\EuScript{B}_Wx_\alpha\quad\text{and}\quad\EuScript{J}_W^{\mathcalsmall{r}}=\textstyle{\sum_{\alpha\in R^+\setminus\Delta}}x_\alpha\EuScript{B}_W\,.
\]

\end{defn}

\begin{rem}
\label{rem:ideal}

Note that we have the trivial relations $\EuScript{J}_W^{\mathcalsmall{l}}=\rho(\EuScript{J}_W^{\mathcalsmall{r}})$ and $\EuScript{J}_W^{\mathcalsmall{r}}=\rho(\EuScript{J}_W^{\mathcalsmall{l}})$ by definition of $\rho$.

\end{rem}

\begin{lem}
\label{lem:intersection}

We have $\EuScript{N}_W\cap\EuScript{J}_W^{\mathcalsmall{l}}=\EuScript{N}_W\cap\EuScript{J}_W^{\mathcalsmall{r}}=0$.

\end{lem}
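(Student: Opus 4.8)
The plan is to show that any element lying in both $\EuScript{N}_W$ and $\EuScript{J}_W^{\mathcalsmall{l}}$ must vanish, and then deduce the statement for $\EuScript{J}_W^{\mathcalsmall{r}}$ by applying $\rho$. The key tool is the braided partial derivative together with its interaction with the standard basis of the nilCoxeter algebra. First I would recall from Fact~\ref{fact:rho_amel}\eqref{item:nil_pair} that the restriction of the Hopf duality pairing to $\EuScript{N}_W$ is nondegenerate, since $\left<x_u,x_v\right>=\delta_{u,v^{-1}}$; equivalently, the family of operators $\constantoverleftarrow{D_{x_w}}=\constantoverleftarrow{D_w}$ separates elements of $\EuScript{N}_W$. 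This means that to prove an element of $\EuScript{N}_W$ is zero, it suffices to show that all its braided derivatives against the $x_w$ vanish, or more economically that the element is annihilated by an appropriate family of derivatives.

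The heart of the argument is to exploit the defining feature of $\EuScript{J}_W^{\mathcalsmall{l}}$, namely that every element is a sum of terms of the form $z\,x_\alpha$ with $\alpha\in R^+\setminus\Delta$ and $z\in\EuScript{B}_W$. The crucial observation is that such an element \emph{ends with} $R^+\setminus\Delta$ in the sense of Definition~\ref{def:mon} (cf.~Remark~\ref{rem:starts-ends}). On the other hand, every standard basis element $x_w$ is a product of the generators $x_\beta$ with $\beta\in\Delta$, so every element of $\EuScript{N}_W$ \emph{does only involve} $\Delta$. I would therefore take an arbitrary nonzero $\eta\in\EuScript{N}_W\cap\EuScript{J}_W^{\mathcalsmall{l}}$ and aim to derive a contradiction from the tension between these two descriptions: $\eta$ involves only simple roots, yet simultaneously ends with non-simple roots.

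To make this precise, I would apply braided right partial derivatives $\constantoverleftarrow{D_\alpha}$ for $\alpha\in R^+\setminus\Delta$. Since $\eta\in\EuScript{N}_W$ does only involve $\Delta$, Lemma~\ref{lem:basic-rev}\eqref{item:basic3} (taking $\Theta=R^+\setminus\Delta$, so that $\eta$ does only involve $R^+\setminus\Theta=\Delta$) shows that $(\eta)\constantoverleftarrow{D_\xi}=0$ for every $\xi$ which starts with $R^+\setminus\Delta$. Now I would use the membership $\eta\in\EuScript{J}_W^{\mathcalsmall{l}}$: writing $\eta=\sum_i z_i x_{\alpha_i}$ with $\alpha_i\in R^+\setminus\Delta$ and pairing against suitable elements, or alternatively applying $\rho$ to pass to the right-ideal picture where $\rho(\eta)$ ends with $R^+\setminus\Delta$ (Remark~\ref{rem:rho}), one gets vanishing of all pairings $\left<\rho(\eta),\cdot\right>$ detecting the non-simple content, forcing $\eta=0$ by the nondegeneracy on $\EuScript{N}_W$. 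The second equality $\EuScript{N}_W\cap\EuScript{J}_W^{\mathcalsmall{r}}=0$ then follows immediately by applying $\rho$, since $\rho$ fixes $\EuScript{N}_W$ setwise (it sends $x_v$ to $x_{v^{-1}}$ by Fact~\ref{fact:rho_amel}\eqref{item:rho_invert}) and interchanges $\EuScript{J}_W^{\mathcalsmall{l}}$ and $\EuScript{J}_W^{\mathcalsmall{r}}$ by Remark~\ref{rem:ideal}.

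The main obstacle I anticipate is cleanly bridging the two notions — \emph{belonging to the ideal} $\EuScript{J}_W^{\mathcalsmall{l}}$ versus the combinatorial property of \emph{ending with} $R^+\setminus\Delta$. A priori an element of $\EuScript{J}_W^{\mathcalsmall{l}}$ is $\sum_i z_i x_{\alpha_i}$, but after multiplying out and collecting terms in a monomial basis of $\EuScript{B}_W$ one must ensure that the resulting expression really is a sum of monomials each ending with some $\alpha\in R^+\setminus\Delta$; this is where Remark~\ref{rem:starts-ends} is essential, guaranteeing that $z_ix_{\alpha_i}$ ends with $\alpha_i$. The delicate point is that the relations in $\EuScript{B}_W$ could in principle rewrite such a product into monomials ending with simple roots, so the argument must be phrased at the level of the pairing or the derivatives rather than naive monomial bookkeeping. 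I expect that routing everything through the braided derivatives and the nondegenerate pairing on $\EuScript{N}_W$ circumvents this difficulty, since those operators are intrinsically defined and insensitive to the choice of monomial representative.
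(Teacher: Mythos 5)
Your overall strategy --- reduce everything to the nondegeneracy of the pairing on $\EuScript{N}_W$ coming from $\left<x_u,x_v\right>=\delta_{u,v^{-1}}$ --- is viable, but the decisive step is missing. You assert that \enquote{one gets vanishing of all pairings $\left<\rho(\eta),\cdot\right>$ detecting the non-simple content} without saying which pairings, against which elements, or why they vanish; and the one concrete deduction you do spell out, namely $(\eta)\constantoverleftarrow{D_\xi}=0$ for $\xi$ starting with $R^+\setminus\Delta$ via Lemma~\ref{lem:basic-rev}\eqref{item:basic3}, holds for \emph{every} element of $\EuScript{N}_W$ and never interacts with the hypothesis $\eta\in\EuScript{J}_W^{\mathcalsmall{l}}$; it is applied to the wrong object. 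The lemma has to be applied to the \emph{test elements} $x_u$, not to $\eta$. Concretely: pass to $\rho(\eta)=\sum_i x_{\alpha_i}\rho(z_i)\in\EuScript{N}_W\cap\EuScript{J}_W^{\mathcalsmall{r}}$ (which \emph{starts}, not ends, with $R^+\setminus\Delta$); then for every $u\in W$ the adjunction $\left<x_u,x_{\alpha_i}\rho(z_i)\right>=\big<(x_u)\constantoverleftarrow{D_{\alpha_i}},\rho(z_i)\big>$ together with $(x_u)\constantoverleftarrow{D_{\alpha_i}}=0$ (Lemma~\ref{lem:basic-rev}\eqref{item:basic3}, since $x_u$ does only involve $\Delta$ and $\alpha_i\notin\Delta$) gives $\left<x_u,\rho(\eta)\right>=0$; writing $\rho(\eta)=\sum_v c_vx_v$ you then read off $c_v=\left<x_{v^{-1}},\rho(\eta)\right>=0$. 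Note that the passage through $\rho$ (equivalently, treating the right ideal directly) is not optional here: pairing $\eta=\sum_iz_ix_{\alpha_i}$ itself against $x_u$ gives $\left<x_u,z_ix_{\alpha_i}\right>=\big<(x_u)\constantoverleftarrow{D_{z_i}},x_{\alpha_i}\big>$, and the degree-one components of $(x_u)\constantoverleftarrow{D_{z_i}}$ are of the form $x_{u/v}$ with $\ell(v)=\ell(u)-1$, which involve non-simple roots in general, so this does not vanish term by term. Without some such computation, the \enquote{tension} you describe between \enquote{involves only $\Delta$} and \enquote{ends with $R^+\setminus\Delta$} produces nothing.

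For comparison, the paper proceeds differently: since $\EuScript{J}_W^{\mathcalsmall{r}}$ is $W$-graded, it suffices to show $x_w\notin\EuScript{J}_W^{\mathcalsmall{r}}$; assuming $x_w=\sum_\alpha x_\alpha M_\alpha$, it applies $\constantoverleftarrow{D_{w^{-1}}}$, commutes the derivative past each $x_\alpha$ using Lemma~\ref{lem:basic-rev}\eqref{item:basic3},\eqref{item:basic4}, and obtains $1=\sum_\alpha x_\alpha(M_\alpha)\constantoverleftarrow{D_{w^{-1}}}$, whose right-hand side vanishes for $\mathbb{Z}_{\geq 0}$-degree reasons. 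Both routes rest on the same inputs (Fact~\ref{fact:rho_amel}\eqref{item:nil_pair}, Lemma~\ref{lem:basic-rev}\eqref{item:basic3} and $\rho$), so once you supply the pairing computation above your argument becomes a legitimate alternative; as written, however, the crux is asserted rather than proved.
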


\begin{proof}

The first claimed equality follows by application of $\rho$ and Remark~\ref{rem:ideal} and Fact~\ref{fact:rho_amel}\eqref{item:rho_invert}. We prove the second claimed equality now. To this end, it suffices to prove that $x_w\notin\EuScript{J}_W^{\mathcalsmall{r}}$ for all $w\in W$ because the ideal $\EuScript{J}_W^{\mathcalsmall{r}}$ is $W$-graded by definition. Suppose for a contradiction that 
\[
x_w=\textstyle{\sum_{\alpha\in R^+\setminus\Delta}}x_\alpha M_\alpha
\]
for some $w\in W$ and some monomials $M_\alpha$ of $\mathbb{Z}_{\geq 0}$-degree $\ell(w)-1$. If we apply $\constantoverleftarrow{D_{w^{-1}}}$ to the previous displayed equation, we find in view of Fact~\ref{fact:rho_amel}\eqref{item:nil_pair} and Lemma~\ref{lem:basic-rev}\eqref{item:basic3},\eqref{item:basic4} that 
\[
1=\textstyle{\sum_{\alpha\in R^+\setminus\Delta}}x_\alpha(M_\alpha)\constantoverleftarrow{D_{w^{-1}}}\,.
\]
But the right side of the last equation must be zero for $\mathbb{Z}_{\geq 0}$-degree reasons as stipulated in \cite[Remark~2.16]{skew} which is clearly a contradiction.
\end{proof}

\subsection*{From now on and for the rest of this section, we assume that $R$ is of type $\mathsf{A}$}

\begin{lem}[Key lemma on reduction of monomials]
\label{lem:reduction}

Let $\lambda\in\mathbf{k}$ and $\alpha_1,\ldots,\alpha_m\in R^+$ be such that not all $\alpha_1,\ldots,\alpha_m$ are simple roots and such that the monomial $M=\lambda x_{\alpha_1}\ldots x_{\alpha_m}$ satisfies $M+\EuScript{J}_W^{\mathcalsmall{r}}\neq 0$. Let $1\leq j<m$ be the maximal index such that $\alpha_1,\ldots,\alpha_j\in\Delta$ and such that $\alpha_{j+1}\notin\Delta$ (which exists by the assumptions in the previous sentence). Then, there exist $\lambda_i\in\mathbf{k}$ and $\alpha_1^i,\ldots,\alpha_{j+1}^i\in R^+$ such that
\begin{gather*}
M+\EuScript{J}_W^{\mathcalsmall{r}}=\textstyle{\sum_i}\lambda_ix_{\alpha_1^i}\cdots x_{\alpha_{j+1}^i}x_{\alpha_{j+2}}\cdots x_{\alpha_m}+\EuScript{J}_W^{\mathcalsmall{r}}\,,\\
\alpha_1^i,\ldots,\alpha_j^i\in\Delta\text{ and }\operatorname{ht}(\alpha_{j+1}^i)<\operatorname{ht}(\alpha_{j+1})\,.
\end{gather*}

\end{lem}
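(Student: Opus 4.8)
The plan is to exploit that $\EuScript{J}_W^{\mathcalsmall{r}}$ is a \emph{right} ideal, so that right multiplication by the fixed tail $x_{\alpha_{j+2}}\cdots x_{\alpha_m}$ preserves congruence modulo $\EuScript{J}_W^{\mathcalsmall{r}}$. Hence everything reduces to rewriting the head $x_{\alpha_1}\cdots x_{\alpha_{j+1}}=\big(x_{\alpha_1}\cdots x_{\alpha_j}\big)x_\delta$ (where $\delta=\alpha_{j+1}\notin\Delta$ and $x_{\alpha_1}\cdots x_{\alpha_j}$ is a product of $j$ simple generators, which we may take reduced since otherwise $M=0$) modulo $\EuScript{J}_W^{\mathcalsmall{r}}$. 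I would first note that the hypothesis $M+\EuScript{J}_W^{\mathcalsmall{r}}\neq 0$ already forces $\alpha_1\in\Delta$, because otherwise $M$ begins with a non-simple generator and lies in $\EuScript{J}_W^{\mathcalsmall{r}}$; this is what makes $j\geq 1$ well-defined, and the same observation is used repeatedly to discard branches.

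The computational heart is a single local identity. For a simple root $\beta$ and a non-simple root $\delta$, the three defining relations of $\EuScript{B}_{\mathbb{S}_m}$ — commutation, nilpotency, and the Fomin--Kirillov relation — give, after a short case analysis on $(\beta,\delta)\in\{-1,0,1\}$ (the only values for non-proportional roots in simply laced type $\mathsf{A}$), the \emph{exact} identity
\[
x_\beta x_\delta = x_{s_\beta(\delta)}x_\beta + c\,x_\delta x_{s_\beta(\delta)}, \qquad c\in\{-1,0,1\},
\]
whose error term begins with the non-simple generator $x_\delta$; equivalently $x_\beta x_\delta\equiv x_{s_\beta(\delta)}x_\beta \pmod{\EuScript{J}_W^{\mathcalsmall{r}}}$. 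The three cases are: orthogonality, giving $c=0$ and $s_\beta(\delta)=\delta$; the case $(\beta,\delta)=1$, giving $c=-1$ and $s_\beta(\delta)=\delta-\beta$ of height $\operatorname{ht}(\delta)-1$; and the case $(\beta,\delta)=-1$, giving $c=1$ and $s_\beta(\delta)=\delta+\beta$ of height $\operatorname{ht}(\delta)+1$.

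Applying this identity to the rightmost prefix letter $\alpha_j$, I would transport the non-simple letter $x_\delta$ leftward through the simple prefix one generator at a time, each passage reflecting the travelling root by the relevant $s_\beta$ and spawning an error monomial whose leftmost non-simple letter sits one slot further left. Two outcomes are possible: if the travelling root becomes \emph{simple}, it merges into the prefix, the simple prefix grows to length $j$, and its simple companion (of height $1<\operatorname{ht}(\delta)$) is left in slot $j+1$ — exactly the stated normal form; if instead the root reaches the very front while still non-simple, the monomial begins with a non-simple generator, hence lies in $\EuScript{J}_W^{\mathcalsmall{r}}$ and is dropped. The induction is organized on the position of the leftmost non-simple letter, with the error terms (leftmost non-simple letter strictly further left) treated by the inductive hypothesis after folding the newly created root into the right-riding tail.

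The hard part will be the case $(\beta,\delta)=-1$, where $s_\beta$ \emph{raises} the height, depositing a root $\delta+\beta$ of height $\operatorname{ht}(\delta)+1$ in slot $j+1$, so a naive induction on height fails. My remedy is to use that heights in type $\mathsf{A}_{m-1}$ are bounded by $m-1$ and that this raising case cannot occur at the maximal height (for $\delta=e_1-e_m$ no simple root pairs to $-1$): one inducts on height \emph{downward} from the top, reducing each momentarily raised slot by the already-established higher-height instance of the statement while holding the tail fixed. Controlling the proliferation of error terms and proving termination — that the combined measure (leftmost non-simple position, height) is well-founded and that every surviving branch lands in the asserted shape — is the main obstacle, and it is here that the hypothesis $M+\EuScript{J}_W^{\mathcalsmall{r}}\neq 0$ is indispensable for excluding the branches that collapse into $\EuScript{J}_W^{\mathcalsmall{r}}$.
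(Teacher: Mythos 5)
Your local identity $x_\beta x_\delta=x_{s_\beta(\delta)}x_\beta+c\,x_\delta x_{s_\beta(\delta)}$ is correct in all three cases and is exactly how the paper deploys the commutation, nilpotency and Fomin--Kirillov relations; the observation that $\EuScript{J}_W^{\mathcalsmall{r}}$ is a right ideal, so that only the head of the monomial needs rewriting, is also the right starting point. But the argument is incomplete precisely at the point you flag as the hard part, and your proposed remedy does not work. A small but real imprecision first: the congruence $x_\beta x_\delta\equiv x_{s_\beta(\delta)}x_\beta\pmod{\EuScript{J}_W^{\mathcalsmall{r}}}$ is only valid when this pair sits at the very front of the word; inside $x_{\alpha_1}\cdots x_{\alpha_{j-1}}(x_\beta x_\delta)\cdots$ the error term is not in the right ideal and must be carried along, as you do acknowledge. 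The genuine gap is the case $(\alpha_j,\delta)=-1$: the first error term deposits $s_{\alpha_j}(\delta)=\delta+\alpha_j$, of height $\operatorname{ht}(\delta)+1$, in slot $j+1$, and the downward induction on height cannot repair this. The already-established instance of the lemma for height $h+1$ only guarantees replacement roots of height $<h+1$, i.e.\ $\leq h$; the terms that come back with height exactly $h=\operatorname{ht}(\delta)$ are instances of the very statement you are trying to prove, and reprocessing them spawns new height-$(h+1)$ terms, so there is no decreasing measure and the recursion can cycle. The observation that no simple root pairs to $-1$ with the highest root only controls the first reflection of the travelling root, not the later ones, so it does not furnish a usable base case either. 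You concede that termination and landing in the asserted shape are the main obstacle; they are, and they are not overcome here.

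The paper avoids this trap by two devices you do not use. First, it does not insist on reflecting by the last letter $\alpha_j$ of the given word: it chooses any $\beta\in\Delta$ with $w(\beta)<0$ (where $w=s_{\alpha_1}\cdots s_{\alpha_j}$), preferring one with $(\alpha_{j+1},\beta)>0$ or $(\alpha_{j+1},\beta)=0$, so that the height-raising case occurs only when every such $\beta$ pairs negatively with $\alpha_{j+1}$. Second, in that remaining case it does not induct on height at all: writing $\alpha=\alpha_{j+1}$ and $\gamma$ for the chosen simple root, it picks the unique $\beta\in\Delta$ with $(\alpha,\beta)>0$ and $(\gamma,\beta)<0$, checks that $\beta\perp\alpha+\gamma$ and that $ws_\gamma s_\alpha(\beta)<0$, peels $x_\beta$ off the right end of the length-$j$ simple prefix, commutes it past $x_{\alpha+\gamma}$, and then applies the induction hypothesis on $j$ to the length-$(j-1)$ prefix followed by $x_{\alpha+\gamma}$, which leaves the simple root $\beta$ --- of height $1<\operatorname{ht}(\alpha)$ --- in slot $j+1$. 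Some such type-$\mathsf{A}$-specific argument is needed; without it your sketch does not close.
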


\begin{proof}

Let the notation be as in the statement. We prove the lemma by induction on $j$. Assume first that $j=1$. It is clear that $(\alpha_1,\alpha_2)>0$ since otherwise $M+\EuScript{J}_W^{\mathcalsmall{r}}=0$ by \cite[Example~4.4]{skew}. It follows that $\gamma=\alpha_2-\alpha_1$ is a positive root. If we set $\alpha=\alpha_1$, we find that $M+\EuScript{J}_W^{\mathcalsmall{r}}=x_\gamma x_\alpha x_{\alpha_3}\cdots x_{\alpha_m}+\EuScript{J}_W^{\mathcalsmall{r}}\neq 0$ by \cite[Example~4.4]{skew}. Consequently, the root $\gamma$ is simple, and we are done because $\alpha$ is simple by assumption.

We prove the induction step. Suppose that $j>1$ and that the assertion is true for all monomials and all integers $<j$. Let $w=s_{\alpha_1}\cdots s_{\alpha_j}$. We distinguish three cases now. In the first case, we assume that there exists $\beta\in\Delta$ such that $w(\beta)<0$ and such that $(\alpha_{j+1},\beta)>0$. Suppose that a $\beta$ as in the previous sentence is given. Then, we know that $\gamma=\alpha_{j+1}-\beta$ is a positive root. By repeated application of the induction hypothesis and by \cite[Example~4.4]{skew}, we compute that
\begin{align*}
M+\EuScript{J}_W^{\mathcalsmall{r}}&={}\mathrlap{x_{ws_\beta}x_\gamma x_\beta x_{\alpha_{j+2}}\cdots x_{\alpha_m}}\hphantom{\lambda x_{ws_{\beta}s_\gamma}x_\beta x_{\alpha_{j+2}}\cdots x_{\alpha_m}}-{}\mathrlap{x_{ws_{\beta}}x_{\beta+\gamma}x_\gamma x_{\alpha_{j+2}}\cdots x_{\alpha_m}}\hphantom{\mu x_{ws_{\beta}s_{\alpha_{j+1}}}x_\gamma x_{\alpha_{j+2}}\cdots x_{\alpha_m}}+\EuScript{J}_W^{\mathcalsmall{r}}\\
&=\lambda x_{ws_{\beta}s_\gamma}x_\beta x_{\alpha_{j+2}}\cdots x_{\alpha_m}+\mu x_{ws_{\beta}s_{\alpha_{j+1}}}x_\gamma x_{\alpha_{j+2}}\cdots x_{\alpha_m}+\EuScript{J}_W^{\mathcalsmall{r}}
\end{align*}
for some $\lambda,\mu\in\mathbf{k}$ (which might possibly be zero). This completes the proof of the first case because the height of $\gamma$ is strictly less than the height of $\alpha_{j+1}$.

We consider the second case. In the second case, we assume that there exists $\beta\in\Delta$ such that $w(\beta)<0$ and such that $(\alpha_{j+1},\beta)=0$. Suppose that a $\beta$ as in the previous sentence is given. By repeated application of the induction hypothesis and by \cite[Example~4.4]{skew}, we compute that
\[
M+\EuScript{J}_W^{\mathcalsmall{r}}=x_{ws_\beta}x_{\alpha_{j+1}}x_\beta x_{\alpha_{j+2}}\cdots x_{\alpha_{m}}+\EuScript{J}_W^{\mathcalsmall{r}}=\lambda x_{ws_\beta s_{\alpha_{j+1}}}x_\beta x_{\alpha_{j+2}}\cdots x_{\alpha_{m}}+\EuScript{J}_W^{\mathcalsmall{r}}
\]
for some $\lambda\in\mathbf{k}$ (which is unique and necessarily nonzero). This clearly completes the proof of the second case.

We consider the third case. In the third case, we assume that for all $\beta\in\Delta$ such that $w(\beta)<0$ we necessarily have $(\alpha_{j+1},\beta)<0$. Let us fix some arbitrary $\gamma\in\Delta$ such that $w(\gamma)<0$. Let $\alpha=\alpha_{j+1}$ for short. By repeated application of the induction hypothesis and by \cite[Example~4.4]{skew}, we compute that
\begin{align*}
M+\EuScript{J}_W^{\mathcalsmall{r}}&={}\mathrlap{x_{ws_\gamma}x_{\alpha}x_{\alpha+\gamma}x_{\alpha_{j+2}}\cdots x_{\alpha_{m}}}\hphantom{\lambda x_{ws_\gamma s_\alpha}x_{\alpha+\gamma}x_{\alpha_{j+2}}\cdots x_{\alpha_{m}}}+{}\mathrlap{x_{ws_\gamma}x_{\alpha+\gamma}x_{\gamma}x_{\alpha_{j+2}}\cdots x_{\alpha_{m}}}\hphantom{\mu x_{ws_\gamma s_{\alpha+\gamma}}x_{\gamma}x_{\alpha_{j+2}}\cdots x_{\alpha_{m}}}+\EuScript{J}_W^{\mathcalsmall{r}}\\
&=\lambda x_{ws_\gamma s_\alpha}x_{\alpha+\gamma}x_{\alpha_{j+2}}\cdots x_{\alpha_{m}}+\mu x_{ws_\gamma s_{\alpha+\gamma}}x_{\gamma}x_{\alpha_{j+2}}\cdots x_{\alpha_{m}}+\EuScript{J}_W^{\mathcalsmall{r}}
\end{align*}
for some $\lambda,\mu\in\mathbf{k}$ (which might possibly be zero). To complete the proof of the third case, we only have to analyse further the first summand in the previous equation. Let $\beta\in\Delta$ be the unique simple root such that $(\alpha,\beta)>0$ and $(\gamma,\beta)<0$. It follows that $\alpha'=\alpha-\beta$ is a positive root. (The definition of the roots $\alpha,\alpha',\beta,\gamma$ is illustrated in Figure~\ref{fig:dynkin}.) With these definitions we compute that $ws_\gamma s_\alpha(\beta)=-w(\alpha')<0$ where the first equality follows because $\alpha'$ is orthogonal to $\gamma$ and where the second inequality follows because all simple roots $\beta'$ in the support of $\alpha$ (and hence of $\alpha'$) satisfy $(\alpha,\beta')\geq 0$ and consequently $w(\beta')>0$ by the assumption in the third case under consideration. Using this insight, the orthogonality of $\beta$ and $\alpha+\gamma$, repeated application of the induction hypothesis and \cite[Example~4.4]{skew}, we compute that
\begin{align*}
x_{ws_\gamma s_\alpha}x_{\alpha+\gamma}x_{\alpha_{j+2}}\cdots x_{\alpha_{m}}+\EuScript{J}_W^{\mathcalsmall{r}}&={}\mathrlap{x_{ws_\gamma s_\alpha s_\beta}x_{\alpha+\gamma}x_\beta x_{\alpha_{j+2}}\cdots x_{\alpha_{m}}}\hphantom{\lambda x_{ws_\gamma s_\alpha s_\beta s_{\alpha+\gamma}}x_\beta x_{\alpha_{j+2}}\cdots x_{\alpha_{m}}}+\EuScript{J}_W^{\mathcalsmall{r}}\\
&=\lambda x_{ws_\gamma s_\alpha s_\beta s_{\alpha+\gamma}}x_\beta x_{\alpha_{j+2}}\cdots x_{\alpha_{m}}+\EuScript{J}_W^{\mathcalsmall{r}}
\end{align*}
for some $\lambda\in\mathbf{k}$ (which might possibly be zero). This completes the proof of the third case, of the induction step and hence of the lemma.
\end{proof}

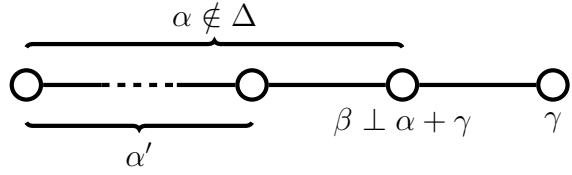
\begin{figure}
\centering
\begin{tikzpicture}[ultra thick]

\def\a{1}
\tikzset{dynkin/.style={circle,draw,minimum size=2mm}}
\path
(0,0)      node[dynkin] (N1) {} 
++(0:\a)   coordinate (A) ++ (0:\a) coordinate (B)
++(0:\a)   node[dynkin] (N2) {} 
++(0:2*\a) node[dynkin] (N3) {} +(-90:.5) node{$\beta\perp\alpha+\gamma$}
++(0:2*\a) node[dynkin] (N4) {} +(-90:.5) node{$\gamma$};

\draw[dashed] (A)--(B);
\draw (N1)--(A) (B)--(N2)--(N3)--(N4);
\draw[decorate,decoration={brace,raise=5mm},ultra thick]
(N2.center)--(N1.center) node[midway,below=6mm]{$\alpha'$};
\draw[decorate,decoration={brace,raise=5mm},ultra thick]
(N1.center)--(N3.center) node[midway,above=6mm]{$\alpha\notin\Delta$};

\end{tikzpicture}
\begin{comment}
\begin{tikzpicture}[ultra thick]

\def\a{1}
\tikzset{dynkin/.style={circle,draw,minimum size=2mm}}
\path
(0,0)      node[dynkin] (N1) {} +(-90:.5) node{$\gamma$}
++(0:2*\a) node[dynkin] (N2) {} +(-90:.5) node{$\beta\perp\alpha+\gamma$}
++(0:2*\a) node[dynkin] (N3) {}
++(0:\a)   coordinate (A) ++ (0:\a) coordinate (B)
++(0:\a)   node[dynkin] (N4) {} ;

\draw[dashed] (A)--(B);
\draw (N1)--(N2)--(N3)--(A) (B)--(N4);
\draw[decorate,decoration={brace,raise=5mm},ultra thick]
(N4.center)--(N3.center) node[midway,below=6mm]{$\alpha'$};
\draw[decorate,decoration={brace,raise=5mm},ultra thick]
(N2.center)--(N4.center) node[midway,above=6mm]{$\alpha\notin\Delta$};

\end{tikzpicture}
\end{comment}
\caption{Illustration of the situation in the third case of the induction step in the proof of Lemma~\ref{lem:reduction}. We want to thank Black Mild for providing the code for this figure in an answer to our question on \TeX~StackExchange \cite{tex}.}
\label{fig:dynkin}
\end{figure}

\begin{cor}
\label{cor:reduction0}

Let $\lambda\in\mathbf{k}$ and $\alpha_1,\ldots,\alpha_m\in R^+$ be such that the monomial 
\[
M=\lambda x_{\alpha_1}\ldots x_{\alpha_m}
\]
satisfies $M+\EuScript{J}_W^{\mathcalsmall{r}}\neq 0$. Let $0\leq j\leq m$ be an arbitrary index. Then, there exists a unique $\mu\in\mathbf{k}$ (which is necessarily nonzero) such that
\[
M+\EuScript{J}_W^{\mathcalsmall{r}}=\mu x_{s_{\alpha_1}\cdots s_{\alpha_j}}x_{\alpha_{j+1}}\cdots x_{\alpha_m}+\EuScript{J}_W^{\mathcalsmall{r}}\,.
\]

\end{cor}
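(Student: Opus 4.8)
The plan is to reduce the initial segment $x_{\alpha_1}\cdots x_{\alpha_j}$ modulo $\EuScript{J}_W^{\mathcalsmall{r}}$ until its first $j$ factors are all simple roots, and then to collapse that segment to a single standard basis element of $\EuScript{N}_W$ by exploiting the $W$-grading. First I would record the basic observation that $M+\EuScript{J}_W^{\mathcalsmall{r}}\neq 0$ forces $\alpha_1\in\Delta$: otherwise $M=\lambda x_{\alpha_1}(x_{\alpha_2}\cdots x_{\alpha_m})$ would lie in $x_{\alpha_1}\EuScript{B}_W\subseteq\EuScript{J}_W^{\mathcalsmall{r}}$. Since the same applies to every monomial with nonzero class, the first factor of any surviving monomial is simple.

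The core is an induction on $\sum_{l=1}^{j}\operatorname{ht}(\alpha_l)$. If $\alpha_1,\ldots,\alpha_j$ are all simple, the height sum is minimal ($=j$) and nothing is to be done. Otherwise let $i\leq j$ be the smallest index with $\alpha_i\notin\Delta$; by the opening observation $i\geq 2$, and positions $1,\ldots,i-1$ are simple, so $i$ is in fact the first non-simple position of the whole word. Hence Lemma~\ref{lem:reduction} applies with its internal index equal to $i-1\geq 1$, rewriting $M+\EuScript{J}_W^{\mathcalsmall{r}}$ as a sum of monomials whose first $i-1$ factors are simple, whose $i$-th factor has strictly smaller height, and whose factors from position $i+1$ onward are untouched; in particular the whole suffix $x_{\alpha_{j+1}}\cdots x_{\alpha_m}$ is preserved because $i\leq j$. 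Each resulting monomial thus has strictly smaller height sum over its first $j$ positions, and I apply the induction hypothesis to those of nonzero class (discarding the rest). This yields a congruence
\[
M+\EuScript{J}_W^{\mathcalsmall{r}}=\textstyle{\sum_l} c_l\, x_{\beta_1^l}\cdots x_{\beta_j^l}\, x_{\alpha_{j+1}}\cdots x_{\alpha_m}+\EuScript{J}_W^{\mathcalsmall{r}}\,,\qquad \beta_1^l,\ldots,\beta_j^l\in\Delta\,.
\]

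Next I would collapse the simple prefixes. As $\EuScript{J}_W^{\mathcalsmall{r}}$ is $W$-graded and $M$ is homogeneous of $W$-degree $w_M=s_{\alpha_1}\cdots s_{\alpha_m}$, I may keep only the terms of $W$-degree $w_M$. Each prefix $x_{\beta_1^l}\cdots x_{\beta_j^l}$ is a product of simple generators, hence by the nilCoxeter structure of $\EuScript{N}_W$ (Section~\ref{sec:nilcoxeter}) is either $0$ or the standard basis element $x_{w_l}$ with $w_l=s_{\beta_1^l}\cdots s_{\beta_j^l}$ and $\ell(w_l)=j$. Using Fact~\ref{fact:rho_amel}\eqref{item:deg_y}, a surviving term of $W$-degree $w_M$ has prefix of $W$-degree $w_M(s_{\alpha_{j+1}}\cdots s_{\alpha_m})^{-1}=s_{\alpha_1}\cdots s_{\alpha_j}=:w$, forcing $w_l=w$ and the prefix to equal $x_w$. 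Therefore every surviving term equals $x_w\, x_{\alpha_{j+1}}\cdots x_{\alpha_m}$, and summing the scalars gives $M+\EuScript{J}_W^{\mathcalsmall{r}}=\mu\, x_{s_{\alpha_1}\cdots s_{\alpha_j}}\, x_{\alpha_{j+1}}\cdots x_{\alpha_m}+\EuScript{J}_W^{\mathcalsmall{r}}$ with $\mu=\sum_l c_l$. Since $M+\EuScript{J}_W^{\mathcalsmall{r}}\neq 0$, a surviving term must exist (so $\ell(w)=j$) and $\mu\neq 0$. Uniqueness is then immediate: if $\mu,\mu'$ both work, then $(\mu-\mu')x_w x_{\alpha_{j+1}}\cdots x_{\alpha_m}\in\EuScript{J}_W^{\mathcalsmall{r}}$, and as $\mu x_w x_{\alpha_{j+1}}\cdots x_{\alpha_m}+\EuScript{J}_W^{\mathcalsmall{r}}=M+\EuScript{J}_W^{\mathcalsmall{r}}\neq 0$ with $\mu\neq 0$, the class of $x_w x_{\alpha_{j+1}}\cdots x_{\alpha_m}$ is nonzero, whence $\mu=\mu'$.

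The main obstacle I anticipate is the bookkeeping in this collapse step: one must be certain that all prefixes surviving the $W$-degree projection genuinely produce the \emph{same} element $w=s_{\alpha_1}\cdots s_{\alpha_j}$ and that $\ell(w)=j$, so that the degree-$j$ products of simple generators coincide with the standard basis element $x_w$ named in the statement. This is exactly what the $W$-grading together with the nilCoxeter multiplication rule delivers, but it is the place where the $W$-homogeneity of $\EuScript{J}_W^{\mathcalsmall{r}}$ and the embedded structure $\EuScript{N}_W\subseteq\EuScript{B}_W$ are essential. A secondary care-point is that Lemma~\ref{lem:reduction} must remain applicable throughout, i.e.\ the first non-simple position must never slip to position one; this is guaranteed by the opening observation applied at every stage.
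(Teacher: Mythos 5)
Your proof is correct and follows essentially the same route as the paper's: iterate Lemma~\ref{lem:reduction} until the first $j$ factors are simple, then use the $W$-homogeneity of $\EuScript{J}_W^{\mathcalsmall{r}}$ together with the nilCoxeter multiplication in $\EuScript{N}_W$ to identify every surviving prefix with $x_{s_{\alpha_1}\cdots s_{\alpha_j}}$ and sum the scalars. Your explicit induction on $\sum_{l\leq j}\operatorname{ht}(\alpha_l)$ and the observation that $\alpha_1$ must be simple merely make precise what the paper compresses into \enquote{repeated application} and \enquote{possibly discarding summands}.
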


\begin{proof}

Let the notation be as in the statement. By repeated application of Lemma~\ref{lem:reduction}, there exist $\lambda_i\in\mathbf{k}$ and $\alpha_1^i,\ldots,\alpha_j^i\in\Delta$ such that
\[
M+\EuScript{J}_W^{\mathcalsmall{r}}=\textstyle{\sum_i}\lambda_ix_{\alpha_1^i}\cdots x_{\alpha_j^i}x_{\alpha_{j+1}}\cdots x_{\alpha_m}+\EuScript{J}_W^{\mathcalsmall{r}}\,.
\]
Because $\EuScript{J}_W^{\mathcalsmall{r}}$ is $W$-graded by definition, and by possibly discarding summands in the above sum, we may assume that each summand in the above sum has the same $W$-degree equal to the $W$-degree of $M$, and that $\smash{x_{s_{\alpha_1}\cdots s_{\alpha_j}}=x_{\alpha_1^i}\cdots x_{\alpha_j^i}}$ for all $i$. If we set $\mu=\sum_i\lambda_i$, we have found the desired expression for $M+\EuScript{J}_W^{\mathcalsmall{r}}$ with a scalar $\mu$ which is unique (and necessarily nonzero) because $M+\EuScript{J}_W^{\mathcalsmall{r}}$ is nonzero by assumption.
\end{proof}

\begin{rem}
\label{rem:analogues}

Note that Lemma~\ref{lem:reduction} and Corollary~\ref{cor:reduction0} have obvious left analogues (i.e.\ analogues for the left ideal $\EuScript{J}_W^{\mathcalsmall{l}}$ instead of the right ideal $\EuScript{J}_W^{\mathcalsmall{r}}$) which follow by application of $\rho$ and Remark~\ref{rem:ideal}.

\end{rem}

\begin{comment}

\begin{cor}[Reduction of monomials]
\label{cor:reduction}

Let $M\in\EuScript{B}_W$ be a monomial of $W$-degree $w$. Then, there exist a unique $\lambda\in\mathbf{k}$ such that
\[
M+\EuScript{J}_W^{\mathcalsmall{l}}=\lambda x_w+\EuScript{J}_W^{\mathcalsmall{l}}\quad\text{and}\quad
M+\EuScript{J}_W^{\mathcalsmall{r}}=\lambda x_w+\EuScript{J}_W^{\mathcalsmall{r}}\,.
\]
In particular, it follows that $M+\EuScript{J}_W^{\mathcalsmall{l}}\neq 0$ if and only if $M+\EuScript{J}_W^{\mathcalsmall{r}}\neq 0$.

\end{cor}

\begin{proof}

The \enquote{In particular} is clear from the second sentence in the statement of the lemma. The first claimed equality follows from the second by application of $\rho$ and Remark~\ref{rem:ideal}. We prove the second equality now. The uniqueness of $\lambda$ is clear because $x_w+\EuScript{J}_W^{\mathcalsmall{r}}$ is nonzero by Lemma~\ref{lem:intersection}. The existence of $\lambda$ follows directly from Corollary~\ref{cor:reduction0} applied to $j=m$ in case $M+\EuScript{J}_W^{\mathcalsmall{r}}\neq 0$, and is trivial in case $M+\EuScript{J}_W^{\mathcalsmall{r}}=0$ (we simply set $\lambda=0$ and this is the only $\lambda$ which fits).
\end{proof}

\end{comment}

\begin{cor}[Reduction of monomials]
\label{cor:reduction}

Let $M\in\EuScript{B}_W$ be a monomial of $W$-degree $w$. Then, there exist unique $\mu,\lambda\in\mathbf{k}$ such that
\[
M+\EuScript{J}_W^{\mathcalsmall{l}}=\mu x_w+\EuScript{J}_W^{\mathcalsmall{l}}\quad\text{and}\quad
M+\EuScript{J}_W^{\mathcalsmall{r}}=\lambda x_w+\EuScript{J}_W^{\mathcalsmall{r}}\,.
\]

\end{cor}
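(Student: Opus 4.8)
The plan is to derive the statement from the one-sided reduction that is already in hand. Writing $M=\lambda_0 x_{\alpha_1}\cdots x_{\alpha_m}$ with $W$-degree $w=s_{\alpha_1}\cdots s_{\alpha_m}$, I would first run the right-hand reduction by applying Corollary~\ref{cor:reduction0} with $j=m$. Since every simple word it produces has $W$-degree $w$, the element $x_{s_{\alpha_1}\cdots s_{\alpha_m}}$ is exactly the standard basis vector $x_w$, so either $M+\EuScript{J}_W^{\mathcalsmall{r}}=0$ (and I set $\lambda=0$) or $M+\EuScript{J}_W^{\mathcalsmall{r}}=\lambda x_w+\EuScript{J}_W^{\mathcalsmall{r}}$ with $\lambda\neq0$. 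In both cases the scalar is unique, because $x_w\in\EuScript{N}_W$ is nonzero and $\EuScript{N}_W\cap\EuScript{J}_W^{\mathcalsmall{r}}=0$ by Lemma~\ref{lem:intersection}, so $x_w\notin\EuScript{J}_W^{\mathcalsmall{r}}$. The left analogue of Corollary~\ref{cor:reduction0}, obtained by applying $\rho$ and Remark~\ref{rem:ideal}, yields in the same fashion a unique scalar $\lambda'$ with $M+\EuScript{J}_W^{\mathcalsmall{l}}=\lambda'x_w+\EuScript{J}_W^{\mathcalsmall{l}}$. The whole content of the corollary is the coincidence $\lambda=\lambda'$: granted this, uniqueness of the common scalar is clear and the final equivalence $M+\EuScript{J}_W^{\mathcalsmall{l}}\neq0\Leftrightarrow M+\EuScript{J}_W^{\mathcalsmall{r}}\neq0$ is immediate, since each side is equivalent to $\lambda\neq0$.

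To compare the two scalars I would realize each as a value of the nondegenerate Hopf duality pairing. The functional $z\mapsto\langle z,x_{w^{-1}}\rangle$ sends $x_w$ to $1$ by Fact~\ref{fact:rho_amel}\eqref{item:nil_pair}, and it annihilates $\EuScript{J}_W^{\mathcalsmall{r}}$: expanding $\langle x_\alpha z,x_{w^{-1}}\rangle=\langle x_\alpha,(x_{w^{-1}})_{(2)}\rangle\langle z,(x_{w^{-1}})_{(1)}\rangle$ with the pairing axiom, the only possibly nonzero terms force $(x_{w^{-1}})_{(2)}$ into $\mathbb{Z}_{\geq0}$-degree one, hence equal to some $x_\beta$ with $\beta\in\Delta$, and then $\langle x_\alpha,x_\beta\rangle=\delta_{\alpha\beta}=0$ because $\alpha\in R^+\setminus\Delta$. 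Applying this functional to $M-\lambda x_w\in\EuScript{J}_W^{\mathcalsmall{r}}$ gives $\lambda=\langle M,x_{w^{-1}}\rangle$. Symmetrically, pushing $\rho$ through a product and using that $\rho$ is an anti-algebra map fixing $\mathbb{Z}_{\geq0}$-degree one together with $\rho(x_w)=x_{w^{-1}}$, the functional $z\mapsto\langle\rho(z),x_w\rangle$ sends $x_w$ to $1$ and annihilates $\EuScript{J}_W^{\mathcalsmall{l}}$ by the identical degree count; hence $\lambda'=\langle\rho(M),x_w\rangle$.

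It then remains to prove $\langle M,x_{w^{-1}}\rangle=\langle\rho(M),x_w\rangle$, that is $\langle M,\rho(x_w)\rangle=\langle\rho(M),x_w\rangle$. This is where I expect the real difficulty to lie, and it is the step I would treat most carefully: it amounts precisely to a compatibility of $\rho$ with the symmetric Hopf duality pairing on $\EuScript{B}_W$, of the same nature as the identity $\langle x,x\rangle=\langle\rho(x),\bar{\EuScript{S}}(x)\rangle$ invoked in the proof of Corollary~\ref{cor:BWintegral}\eqref{item:sign_elab}. My plan would be to extract the exact $\rho$-equivariance of the pairing from the defining properties of $\rho$ recorded in Subsection~\ref{subsec:YDGamma} and the corresponding statements of \cite{skew}, and then to reduce the desired identity, by $\rho$-anti-multiplicativity and induction on the $\mathbb{Z}_{\geq0}$-degree, to the degree-one case, where it is forced by $\rho$ acting as the identity and by the symmetry of the pairing. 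Once this compatibility is pinned down, the equality $\lambda=\lambda'$ follows at once and the proof concludes; I would flag this matching of the two one-sided scalars, rather than the reductions themselves, as the genuine heart of the argument.
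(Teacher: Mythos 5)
You have correctly located the crux, which the paper's own proof passes over in silence: Corollary~\ref{cor:reduction0} and its $\rho$-translate produce two a priori different scalars, $\lambda=\langle M,x_{w^{-1}}\rangle$ for the right ideal and $\lambda'=\langle\rho(M),x_{w}\rangle$ for the left ideal, and the whole content of the corollary is their coincidence. Your derivation of these two formulas is sound: the degree count showing $\langle x_\alpha z,x_v\rangle=0$ for $\alpha\in R^+\setminus\Delta$ is correct, because the second tensor factors of $\Delta(x_v)$ in $\mathbb{Z}_{\geq 0}$-degree one are exactly the $x_\beta$ with $\beta\in\Delta$. But the step you defer, the identity $\langle M,\rho(x_w)\rangle=\langle\rho(M),x_w\rangle$, is not proved in your proposal, and it is in fact false, so no amount of care in extracting a ``$\rho$-equivariance of the pairing'' will close the gap. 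Take $W=\mathbb{S}_3$ with simple roots $\beta_1,\beta_2$, let $\theta=\beta_1+\beta_2$, and let $M=x_{\beta_1}x_\theta$, a monomial of $W$-degree $w=s_{\beta_1}s_\theta=s_{\beta_2}s_{\beta_1}$. On the one hand $M\in\EuScript{B}_Wx_\theta=\EuScript{J}_W^{\mathcalsmall{l}}$, so the left scalar is $0$. On the other hand the Fomin--Kirillov relation gives $M=x_{\beta_2}x_{\beta_1}-x_\theta x_{\beta_2}\equiv x_w\pmod{\EuScript{J}_W^{\mathcalsmall{r}}}$ (this is precisely the case $j=1$ of Lemma~\ref{lem:reduction}), and $x_w\notin\EuScript{J}_W^{\mathcalsmall{r}}$ by Lemma~\ref{lem:intersection}, so the right scalar is $1$. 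Evaluating your two functionals confirms this: $\langle x_{\beta_1}x_\theta,x_{\beta_1}x_{\beta_2}\rangle=1$ while $\langle x_\theta x_{\beta_1},x_{\beta_2}x_{\beta_1}\rangle=0$.

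The obstacle you flagged as the genuine heart of the argument is therefore an actual obstruction: on this example the statement of the corollary fails, including its ``in particular'' clause, since $M+\EuScript{J}_W^{\mathcalsmall{l}}=0$ while $M+\EuScript{J}_W^{\mathcalsmall{r}}\neq 0$. The paper's own one-line justification (``the first equality follows from the second by application of $\rho$'') has the same defect: applying $\rho$ to the right-hand reduction identifies the left scalar of $M$ with the right scalar of $\rho(M)$, which is exactly what your formula $\lambda'=\langle\rho(M),x_w\rangle$ makes explicit, and nothing identifies this with the right scalar of $M$ itself. What does survive are the two one-sided reductions with their own (possibly distinct) scalars, i.e.\ exactly what your first paragraph establishes; this is also all that is actually used downstream (only the right-ideal version enters the proof of Theorem~\ref{thm:main-hypo}, and the two isomorphisms $\EuScript{B}_W/\EuScript{J}_W^{\mathcalsmall{l}}\cong\EuScript{N}_W\cong\EuScript{B}_W/\EuScript{J}_W^{\mathcalsmall{r}}$ need each version only separately). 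So your proposal proves as much as can be proved, and its merit is that it isolates, rather than hides, the step at which both it and the paper break down.
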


\begin{proof}

We prove the second equality. The first follows analogously in view of Remark~\ref{rem:analogues}. The uniqueness of $\lambda$ is clear because $x_w+\EuScript{J}_W^{\mathcalsmall{r}}$ is nonzero by Lemma~\ref{lem:intersection}. The existence of $\lambda$ follows directly from Corollary~\ref{cor:reduction0} applied to $j=m$ in case $M+\EuScript{J}_W^{\mathcalsmall{r}}\neq 0$, and is trivial in case $M+\EuScript{J}_W^{\mathcalsmall{r}}=0$ (we simply set $\lambda=0$ and this is the only $\lambda$ which fits).
\end{proof}

\begin{cor}[Isomorphism theorems]
\label{cor:iso}

We have natural isomorphisms of $\mathbb{Z}_{\geq 0}$-graded and $W$-graded vector spaces
\[
\hphantom{M+\EuScript{J}_W^{\mathcalsmall{l}}=\lambda x_w+\EuScript{J}_W^{\mathcalsmall{l}}}\mathllap{\EuScript{B}_W/\EuScript{J}_W^{\mathcalsmall{l}}\cong\EuScript{N}_W\quad}\text{and}\quad\mathrlap{\EuScript{B}_W/\EuScript{J}_W^{\mathcalsmall{r}}\cong\EuScript{N}_W\,.}\hphantom{M+\EuScript{J}_W^{\mathcalsmall{r}}=\lambda x_w+\EuScript{J}_W^{\mathcalsmall{r}}\,.}
\]

\end{cor}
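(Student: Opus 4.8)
The plan is to realize each isomorphism as the map induced by the inclusion $\EuScript{N}_W\hookrightarrow\EuScript{B}_W$ followed by the canonical projection onto the quotient. For the right ideal, I would consider the linear map
\[
\phi\colon\EuScript{N}_W\hookrightarrow\EuScript{B}_W\twoheadrightarrow\EuScript{B}_W/\EuScript{J}_W^{\mathcalsmall{r}}
\]
and show it is bijective. It is automatically a map of $\mathbb{Z}_{\geq 0}$-graded and $W$-graded vector spaces: the ideal $\EuScript{J}_W^{\mathcalsmall{r}}$ is $\mathbb{Z}_{\geq 0}$-graded and $W$-graded by definition, and each standard basis element $x_w$ of $\EuScript{N}_W$ is homogeneous of $\mathbb{Z}_{\geq 0}$-degree $\ell(w)$ and $W$-degree $w$, so the inclusion respects both gradings.

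Injectivity is then immediate, since the kernel of $\phi$ is exactly $\EuScript{N}_W\cap\EuScript{J}_W^{\mathcalsmall{r}}$, which vanishes by Lemma~\ref{lem:intersection}. For surjectivity I would invoke axiom (N2): because $\EuScript{B}_W$ is generated as an algebra in $\mathbb{Z}_{\geq 0}$-degree one, it is linearly spanned by monomials, and since $\phi$ and the projection are linear it suffices to reach each coset $M+\EuScript{J}_W^{\mathcalsmall{r}}$ with $M$ a monomial. This is precisely the content of Corollary~\ref{cor:reduction}: a monomial $M$ of $W$-degree $w$ satisfies $M+\EuScript{J}_W^{\mathcalsmall{r}}=\lambda x_w+\EuScript{J}_W^{\mathcalsmall{r}}$ for a unique scalar $\lambda$, and $x_w$ lies in the image of $\EuScript{N}_W$. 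Hence every coset is hit and $\phi$ is an isomorphism.

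For the left ideal I would either rerun the identical argument with $\EuScript{J}_W^{\mathcalsmall{l}}$ in place of $\EuScript{J}_W^{\mathcalsmall{r}}$ — both halves of Lemma~\ref{lem:intersection} and both halves of Corollary~\ref{cor:reduction} are available in the required form — or, more economically, transport the right-hand result by the anti-algebra involution $\rho$, using $\EuScript{J}_W^{\mathcalsmall{l}}=\rho(\EuScript{J}_W^{\mathcalsmall{r}})$ from Remark~\ref{rem:ideal} together with $\rho(x_w)=x_{w^{-1}}$ from Fact~\ref{fact:rho_amel}\eqref{item:rho_invert}, which shows that $\rho$ maps $\EuScript{N}_W$ bijectively to itself. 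I do not expect any genuine obstacle here, as the substance has been absorbed into Lemma~\ref{lem:intersection} and Corollary~\ref{cor:reduction}; the only point meriting a moment's care is the surjectivity step, where one must remember that reducing a single monomial suffices only because $\EuScript{B}_W$ is monomial-spanned and the maps involved are linear, so that a general element reduces term by term.
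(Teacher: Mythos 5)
Your proposal is correct and is essentially the paper's own argument: the paper proves this corollary in one line by declaring it immediate from Lemma~\ref{lem:intersection} (injectivity) and Corollary~\ref{cor:reduction} (surjectivity), which is exactly the map $\EuScript{N}_W\hookrightarrow\EuScript{B}_W\twoheadrightarrow\EuScript{B}_W/\EuScript{J}_W^{\mathcalsmall{r}}$ you construct, with the same two inputs playing the same two roles. Your added care about gradings and about reducing term by term over a monomial spanning set is a faithful expansion of what the paper leaves implicit.
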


\begin{proof}

This corollary is immediate from Lemma~\ref{lem:intersection} and Corollary~\ref{cor:reduction}.
\end{proof}

\begin{cor}
\label{cor:directsum}

We have two split short exact sequences of $\mathbb{Z}_{\geq 0}$-graded and $W$-graded vector spaces
\[
\hphantom{M+\EuScript{J}_W^{\mathcalsmall{l}}=\lambda x_w+\EuScript{J}_W^{\mathcalsmall{l}}}\mathllap{\EuScript{J}_W^{\mathcalsmall{l}}\to\EuScript{B}_W\to\EuScript{N}_W\quad}\text{and}\quad\mathrlap{\EuScript{J}_W^{\mathcalsmall{r}}\to\EuScript{B}_W\to\EuScript{N}_W\,.}\hphantom{M+\EuScript{J}_W^{\mathcalsmall{r}}=\lambda x_w+\EuScript{J}_W^{\mathcalsmall{r}}\,,}
\]
and consequently two direct sum decompositions in the same sense as above:
\[
\hphantom{M+\EuScript{J}_W^{\mathcalsmall{l}}=\lambda x_w+\EuScript{J}_W^{\mathcalsmall{l}}}\mathllap{\EuScript{B}_W\cong\EuScript{N}_W\oplus\EuScript{J}_W^{\mathcalsmall{l}}\quad}\text{and}\quad\mathrlap{\EuScript{B}_W\cong\EuScript{N}_W\oplus\EuScript{J}_W^{\mathcalsmall{r}}}\hphantom{M+\EuScript{J}_W^{\mathcalsmall{r}}=\lambda x_w+\EuScript{J}_W^{\mathcalsmall{r}}\,}
\]
    
\end{cor}

\begin{proof}
This is immediate form Corollary~\ref{cor:iso}. Alternatively, one can see at least the directness of the sums in the statement directly from Lemma~\ref{lem:intersection}  
\end{proof}

\begin{rem}
\label{rem:generalize-iso}

We expect the results in this section -- and the general results in the sections to follow which are only restricted to type $\mathsf{A}$ in as much as they depend on reduction of monomials in this type -- not only to hold in type $\mathsf{A}$ but for arbitrary $W$. The main obstacle in proving a general analogue of reduction of monomials is that (some of) the relations of $\EuScript{B}_W$ are not explicitly known unless $R$ is simply laced. However, in \cite[Definition~2.1(iv)]{kirillov-maeno-noncommutative}, there are $4$-term relations for non simply laced types proposed which are left to be verified in $\EuScript{B}_W$ and which might be useful in realizing generalized isomorphism theorems in the sense of Corollary~\ref{cor:iso}.

\begin{comment}
The general theorems which are proved in the following sections (especially the (tensor) product decomposition (cf.\ Theorem~\ref{thm:productdecomposition}, Corollary~\ref{cor:tensorproduct}) and the key facts for the proof of Theorem~\ref{thm:mainS6}, i.e.\ the invariance of hypothetical elements under the antipode in the finite dimensional case (Theorem~\ref{thm:hypo-antipode}) as well as the nondegeneracy of the Hopf duality pairing restricted to a certain subalgebra (Corollary~\ref{cor:nondegenerate_sub})) rely on general type independent arguments and are restricted to type $\mathsf{A}$ only in as much as they depend on reduction of monomials. We basically think that they are valid for arbitrary $W$ as we think reduction of monomial can be achieved (with a different proof) for all types.
\end{comment}

\end{rem}

\begin{rem}
\label{rem:generalize-nilcoxeter}

Corollary~\ref{cor:iso} is also important as a guideline for how to possibly define a nilCoxeter analogue for Nichols algebras over arbitrary groups, not necessarily Coxeter groups. 
    
\end{rem}

\section{Hypothetical elements}
\label{sec:hypo}

In this section, we use the results in Section~\ref{sec:reduction}, to show in Theorem~\ref{thm:main-hypo} how nonzero left or right hypothetical elements (cf.~Definition~\ref{def:hypo}) in type $\mathsf{A}$ can be lifted to nonzero integrals in $\EuScript{B}_W$ under the assumption that $\EuScript{B}_W$ is finite dimensional. As a consequence of this, we can derive Theorem~\ref{thm:subalgebra} and Theorem~\ref{thm:hypo-antipode} where the latter treats the invariance of hypothetical elements under the antipode. 

\begin{defn}
\label{def:hypo}

Let $P\in\EuScript{B}_W$ be such that $(P)\constantoverleftarrow{D_\beta}=0$ for all $\beta\in\Delta$.

\begin{itemize}
    \item 
    We say that $P$ is a left hypothetical element if $x_\alpha P=0$ for all $\alpha\in R^+\setminus\Delta$. We say that $P$ is a nonzero left hypothetical element if $P$ is nonzero and if $P$ is a left hypothetical element.
    \item
    We say that $P$ is a right hypothetical element if $Px_\alpha=0$ for all $\alpha\in R^+\setminus\Delta$. We say that $P$ is a nonzero right hypothetical element if $P$ is nonzero and if $P$ is a right hypothetical element.
    \item
    We say that $P$ is a hypothetical element if $P$ is a left and right hypothetical element. We say that $P$ is a nonzero hypothetical element if $P$ is nonzero and if $P$ is a hypothetical element.
\end{itemize}

\end{defn}

\begin{rem}
\label{rem:existence}

If $\EuScript{B}_W$ is finite dimensional, it is clear that there exist monomials which do only involve $R^+\setminus\Delta$ and which are nonzero hypothetical elements. Indeed, similar as in the proof of Corollary~\ref{cor:BWintegral}\eqref{item:cor:M}, one considers nonzero monomials in the subalgebra of $\EuScript{B}_W$ generated by $x_\alpha$ where $\alpha\in R^+\setminus\Delta$ of maximal $\mathbb{Z}_{\geq 0}$-degree (cf.~Lemma~\ref{lem:basic-rev}\eqref{item:basic3}).

\end{rem}

\begin{lem}
\label{lem:left-to-right-hypo}

An element $P\in\EuScript{B}_W$ is a left hypothetical element if and only if $\rho(P)$ is a right hypothetical element. An element $P\in\EuScript{B}_W$ is a right hypothetical element if and only if $\rho(P)$ is a left hypothetical element.

\end{lem}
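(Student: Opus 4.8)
The plan is to check that each of the two defining conditions of a left hypothetical element is turned into the corresponding condition for a right hypothetical element under the map $\rho$, exploiting that $\rho$ is an anti-algebra homomorphism which is the identity in $\mathbb{Z}_{\geq 0}$-degree one (properties (S1), (S2)) and an involution (\cite[Proposition~3.7(6)]{skew}). Recall that, by Definition~\ref{def:hypo}, an element $P$ is a left hypothetical element exactly when $(P)\constantoverleftarrow{D_\beta}=0$ for all $\beta\in\Delta$ and $x_\alpha P=0$ for all $\alpha\in R^+\setminus\Delta$, while $P$ is a right hypothetical element exactly when $(P)\constantoverleftarrow{D_\beta}=0$ for all $\beta\in\Delta$ and $Px_\alpha=0$ for all $\alpha\in R^+\setminus\Delta$. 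I would treat the derivative conditions and the product conditions separately.

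For the derivative conditions, I would simply invoke Corollary~\ref{cor:vanish_rho}. Applied to $b=P$ and to $\alpha=\beta$ for each $\beta\in\Delta\subseteq R^+$, it yields the equivalence $(P)\constantoverleftarrow{D_\beta}=0\iff(\rho(P))\constantoverleftarrow{D_\beta}=0$. Quantifying over all $\beta\in\Delta$, this shows that $(P)\constantoverleftarrow{D_\beta}=0$ for all $\beta\in\Delta$ holds if and only if $(\rho(P))\constantoverleftarrow{D_\beta}=0$ for all $\beta\in\Delta$. Thus the first defining condition is preserved under $\rho$.

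For the product conditions, I would apply $\rho$ to the equation $x_\alpha P=0$. Since $\rho$ is an anti-algebra homomorphism and the identity in $\mathbb{Z}_{\geq 0}$-degree one, we have $\rho(x_\alpha P)=\rho(P)\rho(x_\alpha)=\rho(P)x_\alpha$, and since $\rho$ is an involution we obtain the equivalence $x_\alpha P=0\iff\rho(P)x_\alpha=0$. Quantifying over all $\alpha\in R^+\setminus\Delta$, the left condition $x_\alpha P=0$ is thereby equivalent to the right condition $\rho(P)x_\alpha=0$. Combining this with the previous paragraph gives that $P$ is a left hypothetical element if and only if $\rho(P)$ is a right hypothetical element, which is the first assertion of the lemma.

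The second assertion follows formally: applying the first assertion to $\rho(P)$ in place of $P$ shows that $\rho(P)$ is a left hypothetical element if and only if $\rho(\rho(P))$ is a right hypothetical element, and since $\rho$ is an involution we have $\rho(\rho(P))=P$, giving that $\rho(P)$ is a left hypothetical element if and only if $P$ is a right hypothetical element, as claimed. I do not expect any genuine obstacle here; the only nonroutine ingredient is Corollary~\ref{cor:vanish_rho} for the derivative condition, which in turn rests on the inversion formula $\rho\constantoverleftarrow{D_\alpha}=\constantoverleftarrow{D_\alpha}\rho s_\alpha$ established in Proposition~\ref{prop:rhoD}, and this is already available to us.
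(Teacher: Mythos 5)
Your proof is correct and follows exactly the route the paper takes: the derivative condition is handled by Corollary~\ref{cor:vanish_rho}, the product condition by the fact that $\rho$ is an anti-algebra involution fixing degree one, and the second assertion by involutivity. The paper compresses all of this into a one-line citation, so your write-up is simply a fleshed-out version of the same argument.
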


\begin{proof}

This result follows immediately from Corollary~\ref{cor:vanish_rho}, \cite[Proposition~3.7(6)]{skew} and the definition of $\rho$.
\end{proof}

\begin{cor}
\label{cor:rhohypo}

Let $P$ be a nonzero hypothetical element. Then, $\rho(P)$ and $w_oP$ are in each case again nonzero hypothetical elements. 

\end{cor}

\begin{proof}

The assertion for $\rho(P)$ follows directly from Lemma~\ref{lem:left-to-right-hypo}. The assertion for $w_oP$ is obvious since $w_oSw_o=S$ by \cite[Section~5.6, Exercise~2]{humphreys-coxeter}.
    
\end{proof}

\begin{lem}
\label{lem:lift-with-xwo}

Let $P\in\EuScript{B}_W$ be a nonzero element such that $(P)\constantoverleftarrow{D_\beta}=0$ for all $\beta\in\Delta$. Then, we have $x_{w_o}P\neq 0$ and $Px_{w_o}\neq 0$.

\end{lem}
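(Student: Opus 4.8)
The plan is to prove the second nonvanishing $Px_{w_o}\neq 0$ directly, and then to deduce $x_{w_o}P\neq 0$ from it by applying the anti-automorphism $\rho$. The guiding observation is that differentiating the product $Px_{w_o}$ by $x_{w_o}$ recovers $P$ unchanged, so that $P\neq 0$ immediately forces $Px_{w_o}\neq 0$.

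First I would invoke Lemma~\ref{lem:basic-rev}\eqref{item:basic4} with $\Theta=\Delta$, $z_1=P$ and $z_2=x_{w_o}$. The hypothesis $(P)\constantoverleftarrow{D_\beta}=0$ for all $\beta\in\Delta$ is exactly the condition required of $z_1$, and $x_{w_o}\in\EuScript{N}_W$ is a monomial which does only involve $\Delta$, so it is a legitimate choice of the differentiating element. This yields
\[
(Px_{w_o})\constantoverleftarrow{D_{w_o}}=P\,(x_{w_o})\constantoverleftarrow{D_{w_o}}\,.
\]
It then remains to compute $(x_{w_o})\constantoverleftarrow{D_{w_o}}$. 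Writing $\Delta(x_{w_o})=\sum_v x_{w_o/v}\otimes x_v$ and using the defining formula $(z^*)\constantoverleftarrow{D_y}=z_{(1)}^*\langle z_{(2)}^*,y\rangle$ together with the orthogonality $\langle x_v,x_{w_o}\rangle=\delta_{v,w_o^{-1}}=\delta_{v,w_o}$ from Fact~\ref{fact:rho_amel}\eqref{item:nil_pair} (recall that $w_o$ is an involution), only the term $v=w_o$ survives and contributes $x_{w_o/w_o}=1$. Hence $(Px_{w_o})\constantoverleftarrow{D_{w_o}}=P\neq 0$, which forces $Px_{w_o}\neq 0$.

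Second, to obtain $x_{w_o}P\neq 0$, I would pass through $\rho$. Since $\rho$ is an anti-algebra homomorphism and $\rho(x_{w_o})=x_{w_o^{-1}}=x_{w_o}$ by Fact~\ref{fact:rho_amel}\eqref{item:rho_invert}, we get $\rho(x_{w_o}P)=\rho(P)x_{w_o}$. By Corollary~\ref{cor:vanish_rho} the element $\rho(P)$ again satisfies $(\rho(P))\constantoverleftarrow{D_\beta}=0$ for all $\beta\in\Delta$, and $\rho(P)\neq 0$ because $\rho$ is invertible; so the first part applied to $\rho(P)$ gives $\rho(P)x_{w_o}\neq 0$, whence $x_{w_o}P\neq 0$ since $\rho$ is injective.

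The argument is short, and there is no serious obstacle once the correct manipulation is identified: the only nontrivial input is recognizing that, on elements annihilated by all simple derivatives, $\constantoverleftarrow{D_{w_o}}$ acts as a right inverse to right multiplication by $x_{w_o}$, which is precisely what Lemma~\ref{lem:basic-rev}\eqref{item:basic4} combined with the identity $(x_{w_o})\constantoverleftarrow{D_{w_o}}=1$ expresses. The points that require care are merely verifying that $x_{w_o}$ genuinely only involves $\Delta$ (so that the lemma applies) and checking that the coproduct computation isolates the single surviving term.
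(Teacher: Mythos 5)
Your proof is correct, and for the half concerning $Px_{w_o}$ it is exactly the paper's argument: apply Lemma~\ref{lem:basic-rev}\eqref{item:basic4} with $\Theta=\Delta$ and the evaluation $(x_{w_o})\constantoverleftarrow{D_{w_o}}=1$ coming from Fact~\ref{fact:rho_amel}\eqref{item:nil_pair}, so that $(Px_{w_o})\constantoverleftarrow{D_{w_o}}=P\neq 0$. The only divergence is in the other half. The paper also treats $x_{w_o}P$ by a direct derivative computation, invoking Corollary~\ref{cor:bcommuteswithskew} with $w=v=1$ (so that $T_w=\Delta$) to get $(x_{w_o}P)\constantoverleftarrow{D_{w_o}}=(x_{w_o})\constantoverleftarrow{D_{w_o}}(w_oP)=w_oP\neq 0$; you instead reduce the left case to the right case via the anti-automorphism $\rho$, using $\rho(x_{w_o}P)=\rho(P)x_{w_o}$, Fact~\ref{fact:rho_amel}\eqref{item:rho_invert}, and Corollary~\ref{cor:vanish_rho} to verify that $\rho(P)$ inherits the hypothesis $(\rho(P))\constantoverleftarrow{D_\beta}=0$ for all $\beta\in\Delta$. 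Both routes are sound and of comparable length; yours trades the generalized-Leibniz-rule machinery behind Corollary~\ref{cor:bcommuteswithskew} for the inversion formula behind Corollary~\ref{cor:vanish_rho}, and has the mild aesthetic advantage of making the left statement a formal consequence of the right one, while the paper's version additionally records the precise value $w_oP$ of the derivative, which is of the same flavour as what is reused later in Theorem~\ref{thm:hypo-II}.
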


\begin{proof}

Let the notation be as in the statement. In order to proof the lemma, it suffices to proof that $(x_{w_o}P)\constantoverleftarrow{D_{w_o}}\neq 0$ and $(Px_{w_o})\constantoverleftarrow{D_{w_o}}\neq 0$. But, by Fact~\ref{fact:rho_amel}\eqref{item:nil_pair}, Lemma~\ref{lem:basic-rev}\eqref{item:basic4} and Corollary~\ref{cor:bcommuteswithskew}, those expressions evaluate as $w_oP$ and $P$, respectively, which are both nonzero by assumption.
\end{proof}

\begin{cor}
\label{cor:lift-with-xwo}

Let $P\in\EuScript{B}_W$ be a nonzero element which does only involve $R^+\setminus\Delta$. Then, we have $x_{w_o}P\neq 0$ and $Px_{w_o}\neq 0$.
    
\end{cor}

\begin{proof}

This follows immediately from Lemma~\ref{lem:basic-rev}\eqref{item:basic3} and Lemma~\ref{lem:lift-with-xwo}.
\end{proof}

\subsection*{From now on and for the rest of this section, we assume that $R$ is of type $\mathsf{A}$}

\begin{thm}[Main theorem on hypothetical elements~(I)]
\label{thm:main-hypo}

\leavevmode
\begin{enumerate}
    \item\label{item:hypo-l}
    Let $P\in\EuScript{B}_W$ be a nonzero left hypothetical element. If $\EuScript{B}_W$ is finite dimensional, then $x_{w_o}P$ is a nonzero integral in $\EuScript{B}_W$
    \item\label{item:hypo-r}
    Let $P\in\EuScript{B}_W$ be a nonzero right hypothetical element. If $\EuScript{B}_W$ is finite dimensional, then $Px_{w_o}$ is a nonzero integral in $\EuScript{B}_W$
\end{enumerate}

\end{thm}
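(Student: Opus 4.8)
The plan is to verify directly that $x_{w_o}P$ satisfies the two defining properties of a nonzero integral, relying on the type-$\mathsf{A}$ reduction of monomials from Section~\ref{sec:reduction}. I would prove Item~\eqref{item:hypo-l} in full and then obtain Item~\eqref{item:hypo-r} from it by applying the anti-algebra involution $\rho$.

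First, for a nonzero left hypothetical element $P$, nonvanishing of $x_{w_o}P$ is immediate: a left hypothetical element satisfies $(P)\constantoverleftarrow{D_\beta}=0$ for all $\beta\in\Delta$, so Lemma~\ref{lem:lift-with-xwo} applies and gives $x_{w_o}P\neq 0$. It then remains to show that $x_{w_o}P$ is an integral, and by Corollary~\ref{cor:BWintegral}\eqref{item:Aint2_elab} this reduces to checking $(x_\alpha x_{w_o})P=0$ for every $\alpha\in R^+$.

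The key step — and the one I expect to carry the real weight — is the claim that $x_\alpha x_{w_o}\in\EuScript{J}_W^{\mathcalsmall{l}}$ for all $\alpha\in R^+$. To see this, note that $x_\alpha x_{w_o}$ is a monomial of $\mathbb{Z}_{\geq 0}$-degree $\ell(w_o)+1$ and of $W$-degree $s_\alpha w_o$, so Corollary~\ref{cor:reduction} produces a scalar $\lambda$ with $x_\alpha x_{w_o}+\EuScript{J}_W^{\mathcalsmall{l}}=\lambda x_{s_\alpha w_o}+\EuScript{J}_W^{\mathcalsmall{l}}$. Since $w_o$ is the unique longest element and $s_\alpha w_o\neq w_o$, we have $\ell(s_\alpha w_o)<\ell(w_o)<\ell(w_o)+1$, so $x_{s_\alpha w_o}$ lives in a strictly lower $\mathbb{Z}_{\geq 0}$-degree than $x_\alpha x_{w_o}$. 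As $\EuScript{J}_W^{\mathcalsmall{l}}$ is $\mathbb{Z}_{\geq 0}$-graded, extracting the degree-$(\ell(w_o)+1)$ part of the congruence forces $x_\alpha x_{w_o}\in\EuScript{J}_W^{\mathcalsmall{l}}$ (and, as a byproduct, $\lambda=0$ since $x_{s_\alpha w_o}\notin\EuScript{J}_W^{\mathcalsmall{l}}$ by Lemma~\ref{lem:intersection}). Writing $x_\alpha x_{w_o}=\sum_{\gamma\in R^+\setminus\Delta}b_\gamma x_\gamma$ and invoking the left hypothetical condition $x_\gamma P=0$ then yields $(x_\alpha x_{w_o})P=\sum_\gamma b_\gamma(x_\gamma P)=0$, completing Item~\eqref{item:hypo-l}.

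For Item~\eqref{item:hypo-r}, I would pass to $\rho(P)$, which is a nonzero left hypothetical element by Lemma~\ref{lem:left-to-right-hypo}; Item~\eqref{item:hypo-l} then makes $x_{w_o}\rho(P)$ a nonzero integral. Applying $\rho$ and using that it is an anti-algebra involution with $\rho(x_{w_o})=x_{w_o}$ (Fact~\ref{fact:rho_amel}\eqref{item:rho_invert}) gives $\rho(x_{w_o}\rho(P))=Px_{w_o}$, and since the $\mathbb{Z}_{\geq 0}$-graded involution $\rho$ preserves the one-dimensional top degree of integrals (cf.~Corollary~\ref{cor:BWintegral}\eqref{item:sign}), the element $Px_{w_o}$ is again a nonzero integral. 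The only substantive input is thus the membership $x_\alpha x_{w_o}\in\EuScript{J}_W^{\mathcalsmall{l}}$, which is where the type-$\mathsf{A}$ reduction of Lemma~\ref{lem:reduction} is genuinely used; I note that the identity $(x_\alpha x_{w_o})P=0$ does not itself require finite dimensionality, which enters only to justify the terminology of a nonzero integral per Remark~\ref{rem:int-conv} and to apply Corollary~\ref{cor:BWintegral}.
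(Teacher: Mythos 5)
Your proposal is correct, and it reaches the conclusion by a genuinely different deployment of the same two key inputs (the type-$\mathsf{A}$ reduction of Corollary~\ref{cor:reduction} and the nonvanishing statement Lemma~\ref{lem:lift-with-xwo}). The paper argues in the opposite direction: it first manufactures an integral abstractly, choosing a monomial $M$ as in Proposition~\ref{prop:Aint}\eqref{item:Aint3} and Corollary~\ref{cor:BWintegral}\eqref{item:cor:M} so that $PM$ is a nonzero integral, then reduces $M$ modulo $\EuScript{J}_W^{\mathcalsmall{r}}$ to $\lambda x_w$, uses that a right hypothetical $P$ annihilates $\EuScript{J}_W^{\mathcalsmall{r}}$ to get $PM=\lambda Px_w$, and pins down $w=w_o$ by comparing $\mathbb{Z}_{\geq 0}$-degrees against Lemma~\ref{lem:lift-with-xwo}. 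You instead start from the explicit candidate $x_{w_o}P$ and verify integrality directly via Corollary~\ref{cor:BWintegral}\eqref{item:Aint2_elab}, the substantive point being the membership $x_\alpha x_{w_o}\in\EuScript{J}_W^{\mathcalsmall{l}}$, which your graded comparison of the congruence $x_\alpha x_{w_o}\equiv\lambda x_{s_\alpha w_o}$ (with $\ell(s_\alpha w_o)<\ell(w_o)+1$ and Lemma~\ref{lem:intersection} forcing $\lambda=0$) establishes correctly; the left hypothetical condition then kills $(x_\alpha x_{w_o})P$ term by term. Your route buys a somewhat cleaner argument -- it avoids the indirect choice of $M$ ``with respect to the nonzero component of $P$ of smallest $\mathbb{Z}_{\geq 0}$-degree'' and isolates exactly where finite dimensionality enters (only through the characterization of integrals), at the cost of invoking the reduction for every $\alpha\in R^+$ rather than for a single monomial. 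The two items are also swapped: you prove \eqref{item:hypo-l} first and deduce \eqref{item:hypo-r} by $\rho$, whereas the paper does the reverse; both directions are legitimate since Lemma~\ref{lem:left-to-right-hypo} and Fact~\ref{fact:rho_amel}\eqref{item:rho_invert} are symmetric and $\rho$ preserves the one-dimensional top component of integrals by Proposition~\ref{prop:Aint}\eqref{item:Aint1} and Corollary~\ref{cor:BWintegral}\eqref{item:sign}.
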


\begin{proof}

Item~\eqref{item:hypo-l} follows from Item~\eqref{item:hypo-r}, Fact~\ref{fact:rho_amel}\eqref{item:rho_invert}, Lemma~\ref{lem:left-to-right-hypo} and \cite[Proposition~3.7(6)]{skew}. We prove Item~\eqref{item:hypo-r} now. Let $P\in\EuScript{B}_W$ be a nonzero right hypothetical element, and suppose that $\EuScript{B}_W$ is finite dimensional. Let $M$ be a monomial chosen as in the proof of Proposition~\ref{prop:Aint}\eqref{item:Aint3} and Corollary~\ref{cor:BWintegral}\eqref{item:cor:M} with respect to the nonzero component of $P$ of smallest $\mathbb{Z}_{\geq 0}$-degree such that $PM$ is a nonzero integral in $\EuScript{B}_W$. Let $m$ be the $\mathbb{Z}_{\geq 0}$-degree of $M$, and let $w$ be the $W$-degree of $M$. Then, we necessarily have $\ell(w_o)\leq m$ by Lemma~\ref{lem:lift-with-xwo}, and further $PM=\lambda Px_w$ for some unique $\lambda\in\mathbf{k}$ (which is necessarily nonzero) by Corollary~\ref{cor:reduction}. It follows that $w=w_o$ and hence that $Px_{w_o}$ is a nonzero integral in $\EuScript{B}_W$ -- as desired.
\end{proof}

\begin{thm}[Main theorem on hypothetical elements~(II)]
\label{thm:hypo-II}

Suppose that $\EuScript{B}_W$ is finite dimensional.

\begin{enumerate}
    \item\label{item:II-one}
    Every left or right hypothetical element is a hypothetical element. Following and consistent with the convention in Remark~\ref{rem:bona-fide}, we speak from now on, except in the proof of this theorem, only about bona fide hypothetical elements.
    \item\label{item:II-two}
    Every hypothetical element $P$ satisfies $P=(x)\constantoverleftarrow{D_{w_o}}$ for some uniquely determined integral $x$, namely $x=Px_{w_o}$, in $\EuScript{B}_W$. In particular, the vector space of all hypothetical elements is one-dimensional.
    \item
    We have $\left<P^*,P\right>\neq 0$ for any nonzero hypothetical elements $P$ and $P^*$.
    \item\label{item:II-three}
    Every hypothetical element $P$ is homogeneous of $W$-degree $w_o$.
\end{enumerate}

\end{thm}

\begin{proof}

We start to prove the first sentence of Item~\eqref{item:II-two} for right hypothetical elements. The rest of the content of Item~\eqref{item:II-two} will follow from Remark~\ref{rem:existence} and Proposition~\ref{prop:Aint}\eqref{item:Aint1} once Item~\eqref{item:II-one} is established. Note first that the uniqueness of $x$, once its existence is established, is clear from Proposition~\ref{prop:Nicholsint}\eqref{item:Nicholsint2}. Note further that we may assume that $P\neq 0$ since otherwise we have to set $x=0$ and the claimed assertion is obvious. In this sense, let $P$ be a nonzero right hypothetical element. We know from Theorem~\ref{thm:main-hypo}\eqref{item:hypo-r} that $x=Px_{w_o}$ is a nonzero integral in $\EuScript{B}_W$ which satisfies $P=(x)\constantoverleftarrow{D_{w_o}}$ by Fact~\ref{fact:rho_amel}\eqref{item:nil_pair} and Lemma~\ref{lem:basic-rev}\eqref{item:basic4}. In order to prove Item~\eqref{item:II-one}, we only have to note, in view of Lemma~\ref{lem:left-to-right-hypo} and \cite[Proposition~3.7(6)]{skew}, that for the $P$ under consideration, we have $P\sim\rho(P)$. But this follows from Fact~\ref{fact:rho_amel}\eqref{item:mult_wo}, Proposition~\ref{prop:rhoD}, Corollary~\ref{cor:BWintegral}\eqref{item:sign}, the portion of Item~\eqref{item:II-two} we already proved and \cite[Remark~3.16, Proposition~6.5]{skew}. To understand the last item, we just have to note that for two nonzero hypothetical elements $P$ and $P^*$ the bracket $\left<P^*,P\right>$ is a nonzero multiple of $\left<(x)\constantoverleftarrow{D_{w_o}},(x)\constantoverleftarrow{D_{w_o}}\right>$ for some nonzero integral $x$ in $\EuScript{B}_W$ -- by Proposition~\ref{prop:Aint}\eqref{item:Aint1} and Item~\eqref{item:II-two} -- which is indeed nonzero by Corollary~\ref{cor:hypo-bracket}. Item~\eqref{item:II-three} follows directly from Item~\eqref{item:II-two}, Lemma~\ref{lem:char} and Remark~\ref{rem:center}.
\end{proof}

\begin{defn}
\label{def:subalgebra}

Let $\EuScript{B}_W'$ be the subalgebra of $\EuScript{B}_W$ generated by $x_\alpha$ where $\alpha\in R^+\setminus\Delta$. The algebra $\EuScript{B}_W'$ is clearly $\mathbb{Z}_{\geq 0}$-graded because its generators are homogeneous of $\mathbb{Z}_{\geq 0}$-degree one. If $\EuScript{B}_W$ is finite dimensional, then so is $\EuScript{B}_W'$, and one can look at the nonzero component of $\EuScript{B}_W'$ of largest $\mathbb{Z}_{\geq 0}$-degree, which we denote, in this situation, by $\EuScript{B}_W^{\prime\,\mathrm{top}}$ from now on.

\end{defn}

\begin{rem}
\label{rem:action}

We make the important remark that $\EuScript{B}_W'$ carries a left action of $\EuScript{B}_W$ defined by $y\mapsto\constantoverrightarrow{D_y}$. Indeed, this follows directly by definition of the coproduct which was explained in detail in \cite[Section~5]{skew}.

\end{rem}

\begin{thm}
\label{thm:subalgebra}

Suppose that $\EuScript{B}_W$ is finite dimensional.

\begin{enumerate}
    \item 
    Every one-dimensional left or right ideal in $\EuScript{B}_W'$ equals $\EuScript{B}_W^{\prime\,\mathrm{top}}$. In particular, we know that $\EuScript{B}_W^{\prime\,\mathrm{top}}$ is a one-dimensional $\mathbb{Z}_{\geq 0}$-graded two-sided ideal in $\EuScript{B}_W'$.
    \item
    The vector space of all hypothetical elements is given by $\EuScript{B}_W^{\prime\,\mathrm{top}}$ and all elements of this space are monomials which do only involve $R^+\setminus\Delta$.
\end{enumerate}

\end{thm}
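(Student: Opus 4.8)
The plan is to prove the second statement first and then deduce the first from it, since the substantial analytic work has already been carried out in Theorem~\ref{thm:hypo-II}. The key observation is that the top component $\EuScript{B}_W^{\prime\,\mathrm{top}}$ consists entirely of hypothetical elements. Indeed, any $P\in\EuScript{B}_W^{\prime\,\mathrm{top}}\subseteq\EuScript{B}_W'$ does only involve $R^+\setminus\Delta$, being a sum of products of the generators $x_\alpha$ with $\alpha\in R^+\setminus\Delta$; hence $(P)\constantoverleftarrow{D_\beta}=0$ for all $\beta\in\Delta$ by Lemma~\ref{lem:basic-rev}\eqref{item:basic3} applied with $\Theta=\Delta$. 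Moreover, since $x_\alpha\in\EuScript{B}_W'$ for every $\alpha\in R^+\setminus\Delta$, both $x_\alpha P$ and $Px_\alpha$ lie in $\EuScript{B}_W'$ in the $\mathbb{Z}_{\geq 0}$-degree one above the top degree, and so vanish. Thus $P$ is a hypothetical element. As $\EuScript{B}_W^{\prime\,\mathrm{top}}$ is nonzero by definition and the space of all hypothetical elements is one-dimensional by Theorem~\ref{thm:hypo-II}\eqref{item:II-two}, I conclude that $\EuScript{B}_W^{\prime\,\mathrm{top}}$ is precisely the space of all hypothetical elements, all of whose elements do only involve $R^+\setminus\Delta$. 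This settles the second statement.

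For the first statement, I would start from a one-dimensional left ideal $L=\mathbf{k}z$ with $z\neq 0$ and run a grading argument. Writing $z=\sum_m z^m$ and letting $m_1$ be the smallest $\mathbb{Z}_{\geq 0}$-degree with $z^{m_1}\neq 0$, the element $x_\alpha z$ has all of its homogeneous components in degrees $\geq m_1+1$ for every $\alpha\in R^+\setminus\Delta$; since $x_\alpha z$ must lie in $\mathbf{k}z$ and $z$ has a nonzero component in degree $m_1$, this forces $x_\alpha z=0$. As $z\in\EuScript{B}_W'$ only involves $R^+\setminus\Delta$, Lemma~\ref{lem:basic-rev}\eqref{item:basic3} again yields $(z)\constantoverleftarrow{D_\beta}=0$ for all $\beta\in\Delta$, so $z$ is a nonzero left hypothetical element. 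By Theorem~\ref{thm:hypo-II}\eqref{item:II-one} it is in fact a hypothetical element, whence $z\in\EuScript{B}_W^{\prime\,\mathrm{top}}$ by the second statement, and therefore $L=\mathbf{k}z=\EuScript{B}_W^{\prime\,\mathrm{top}}$ by one-dimensionality. The case of a one-dimensional right ideal is handled identically: the same comparison of \emph{lowest} degrees gives $zx_\alpha=0$ for all $\alpha\in R^+\setminus\Delta$, so $z$ is a right hypothetical element, and the argument concludes as before.

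Finally, the \enquote{In particular} clause is immediate once the two parts are in place: $\EuScript{B}_W^{\prime\,\mathrm{top}}$ is a single graded component, hence $\mathbb{Z}_{\geq 0}$-graded and, by the above, one-dimensional; and since each of its elements is a hypothetical element, it is annihilated on both sides by every generator $x_\alpha$ with $\alpha\in R^+\setminus\Delta$, so $\EuScript{B}_W^{\prime\,\mathrm{top}}$ is a two-sided ideal in $\EuScript{B}_W'$. I do not anticipate a serious obstacle: the heavy lifting is the one-dimensionality of the hypothetical elements, already established in Theorem~\ref{thm:hypo-II}. The only point requiring genuine care is the grading argument, and specifically the observation that one should compare lowest degrees rather than highest ones, so that a single uniform estimate covers both the left- and right-ideal cases. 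Everything else reduces to bookkeeping against Lemma~\ref{lem:basic-rev}\eqref{item:basic3} and the definition of $\EuScript{B}_W'$.
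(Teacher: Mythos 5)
Your proposal is correct and follows essentially the same route as the paper: the paper's (very terse) proof likewise observes that $\EuScript{B}_W^{\prime\,\mathrm{top}}$ consists of hypothetical elements, that the generator of any one-dimensional left or right ideal in $\EuScript{B}_W'$ is a nonzero left or right hypothetical element (via Lemma~\ref{lem:basic-rev}\eqref{item:basic3}), and then concludes from Theorem~\ref{thm:hypo-II}. You merely supply the details the paper declares \enquote{clear}, including the lowest-degree comparison showing $x_\alpha z=0$, which is a correct way to justify that step.
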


\begin{proof}

It is clear that $\EuScript{B}_W^{\prime\,\mathrm{top}}$ consists of hypothetical elements, and further, that any one-dimensional left or right ideal is generated by a nonzero hypothetical element, cf.~Lemma \ref{lem:basic-rev}\eqref{item:basic3}. The result follows from this and Theorem~\ref{thm:hypo-II}.
\end{proof}

\begin{thm}
\label{thm:rightvanish}

Suppose that $\EuScript{B}_W$ is finite dimensional. Let $P$ be a hypothetical element. Then, $\constantoverrightarrow{D_\beta}(P)=0$ for all $\beta\in\Delta$.
    
\end{thm}

\begin{proof}
Suppose for a contradiction that $\constantoverrightarrow{D_\beta}(P)\neq 0$ for some $\beta\in\Delta$. Let $\alpha\in R^+\setminus\Delta$. By the braided Leibniz rule and the fact that $P$ is a hypothetical element, we find that 
\[
0=\constantoverrightarrow{D_\beta}(Px_\alpha)=\constantoverrightarrow{D_\beta}(P)x_\alpha+P\constantoverrightarrow{D_{w_o(\beta)}}(x_\alpha)=\constantoverrightarrow{D_\beta}(P)x_\alpha
\]
where $w_o$ is the $W$-degree of $P$ after Theorem~\ref{thm:hypo-II}\eqref{item:II-three}. Taking into account Remark~\ref{rem:action}, it follows that $\constantoverrightarrow{D_\beta}(P)\EuScript{B}_W'=\mathbf{k}\constantoverrightarrow{D_\beta}(P)$ is a one-dimensional right ideal in $\EuScript{B}_W'$ which is clearly a contradiction to Theorem~\ref{thm:subalgebra} as $P$ and $\constantoverrightarrow{D_\beta}(P)$ are linearly independent. 
\end{proof}

\begin{thm}[Invariance of hypothetical elements under the antipode]
\label{thm:hypo-antipode}

Suppose that $\EuScript{B}_W$ is finite dimensional. Let $P$ be a nonzero hypothetical element. Then, $P\sim\EuScript{S}(P)\sim\bar{\EuScript{S}}(P)$. 

\end{thm}

\begin{proof}

By Corollary~\ref{cor:rhohypo}, it suffices in view of Theorem~\ref{thm:subalgebra} to prove that $\EuScript{S}(P)$ is a nonzero hypothetical element. Everything else follows from this. It is obvious that $\EuScript{S}(P)$ satisfies the left decent condition since $\bar{\EuScript{S}}(P)x_{\alpha}=\bar{\EuScript{S}}(P(w_ox_{\alpha}))=0$ where $w_o$ is the $W$-degree of $P$ after Theorem~\ref{thm:hypo-II}\eqref{item:II-three} and where $\alpha\in R^+\setminus\Delta$. Since every left hypothetical is a hypothetical element per se, it suffices to prove the vanishing $({\EuScript{S}}(P))\constantoverleftarrow{D_\beta}=-s_{\beta}\EuScript{S}\constantoverrightarrow{D_\beta}(P)=0$ for all $\beta\in\Delta$ where the last equality follows from Proposition~\ref{prop:rhoD}. But this vanishing is immediate from Theorem~\ref{thm:rightvanish}. 
\end{proof}

\section{Product decomposition after Liu et al.}

If we compare our results of the two previous sections with the results of \cite{liusubalgebras}, specifically \cite[Theorem~3.14, Theorem~4.1]{liusubalgebras}, we see that they are in some sense an exemplification of theirs, however an exemplification which is due to its explicitness useful and needed for our purpose. In a similar vein and inspired by \cite{liusubalgebras}, we want to derive in this section one product decomposition, namely Theorem~\ref{thm:productdecomposition}, see also Corollary~\ref{cor:tensorproduct}, in the sense of \cite[Theorem~4.1]{liusubalgebras} of $\mathcal{B}_W$ if $R$ is of type $\mathsf{A}$ which is of particular importance -- and further derive consequences.

\subsection*{From now on and for the rest of this section, we assume that $R$ is of type $\mathsf{A}$}

\begin{thm}[{\cite[Theorem~4.1]{liusubalgebras}}]
\label{thm:productdecomposition}

Let $\mathcal{B}_W'^+$ be the positive part of the subalgebra $\mathcal{B}_W'$ defined in Definition~\ref{def:subalgebra}, i.e. the direct sum of all graded components of $\mathbb{Z}_{\geq 0}$-degree greater or equal than one. Then, we have $\EuScript{J}_W^{\mathcalsmall{l}}=\EuScript{N}_W\EuScript{B}_W'^+=\EuScript{B}_W\EuScript{B}_W'^+$ and $\EuScript{J}_W^{\mathcalsmall{r}}=\EuScript{B}_W'^+\EuScript{N}_W=\EuScript{B}_W'^+\EuScript{B}_W$, and consequently $\EuScript{B}_W=\EuScript{N}_W\EuScript{B}_W'=\EuScript{B}_W'\EuScript{N}_W$.

\end{thm}

\begin{proof}
It is clear that it suffices to prove the assertions for $\EuScript{J}_W^{\mathcalsmall{r}}$ as the left handed version follows by application of $\rho$. To this end, we plug in the direct sum decomposition from Corollary~\ref{cor:directsum} repeatedly into the definition of $\EuScript{J}_W^{\mathcalsmall{r}}$ and find
\[
\EuScript{J}_W^{\mathcalsmall{r}}=\sum_{\alpha\in R^+\setminus\Delta}x_\alpha\EuScript{N}_W+\sum_{\alpha\in R^+\setminus\Delta}x_\alpha\EuScript{J}_W^{\mathcalsmall{r}}=\sum_{\alpha\in R^+\setminus\Delta}x_\alpha\EuScript{N}_W+\sum_{\alpha_1,\alpha_2\in R^+\setminus\Delta}x_{\alpha_1}x_{\alpha_2}\EuScript{N}_W+\cdots
\]
which means precisely that $\EuScript{J}_W^{\mathcalsmall{r}}=\EuScript{B}_W'^+\EuScript{N}_W$. Since $\EuScript{J}_W^{\mathcalsmall{r}}$ is a right ideal by definition, the other claimed equality for $\EuScript{J}_W^{\mathcalsmall{r}}$ follows. Finally, the product decomposition of $\EuScript{B}_W$ is clear from this and Corollary~\ref{cor:directsum}. 
\end{proof}

\begin{cor}[{\cite[Theorem~4.1]{liusubalgebras}}]
\label{cor:tensorproduct}

The product decomposition of $\EuScript{B}_W$ in the previous theorem is even a tensor product decomposition, i.e.
\[
\EuScript{B}_W=\EuScript{N}_W\otimes\EuScript{B}_W'=\EuScript{B}_W'\otimes\EuScript{N}_W\,.
\] 

\end{cor}

\begin{proof}

To prove the claimed tensor product decompositions of $\EuScript{B}_W$, it clearly suffices to prove that
\[
\EuScript{B}_W=\bigoplus_{w\in W}\EuScript{B}_W'\otimes\mathbf{k}x_w\,.
\]
To this end, let $\sum_{w\in W}a_w\otimes x_w=0$, where $a_w\in\EuScript{B}_W'$. Suppose for a contradiction that there exists $w\in W$ with $a_w\neq 0$. Then, choose and fix an element $v\in W$ with maximal length such that $a_v\neq 0$. If we apply $\constantoverleftarrow{D_{v^{-1}}}$ to the expression $\sum_{w\in W}a_w\otimes x_w=0$, we find, in view of Lemma~\ref{lem:basic-rev}\eqref{item:basic3},\eqref{item:basic4}, that $\sum_{w\in W}a_w\otimes(x_w)\constantoverleftarrow{D_{v^{-1}}}=0$. For degree reasons and the choice of $v$ and by Fact~\ref{fact:rho_amel}\eqref{item:nil_pair}, all terms in the previous sum vanish except the one for $w=v$. Hence, we find that $a_v=0$ -- a contradiction.    
\end{proof}

\begin{cor}

Let $w\in W$. Let $y=wx_{w_o}$ and let $x$ be an arbitrary element in $\EuScript{B}_W$. Then, the element $(x)\constantoverleftarrow{D_y}$ does only involve $R^+\setminus T_w$.
    
\end{cor}

\begin{proof}
Let the notation be as in the statement. By Theorem~\ref{thm:productdecomposition} applied to $\EuScript{B}_W=w\EuScript{B}_W$, we can write $x=\sum_{u\in W}a_u(wx_u)$ where $a_u\in w\EuScript{B}_W'$, i.e.\ $a_u$ does only involve $R^+\setminus T_w$ but is not necessarily a monomial.  If we apply $\constantoverleftarrow{D_y}$ to this expression of $x$, we find in view of Lemma~\ref{lem:basic-rev}\eqref{item:basic4} that $(x)\constantoverleftarrow{D_y}=a_{w_o}$ which does only involve roots in $R^+\setminus T_w$. This completes the proof.    
\end{proof}
\begin{comment}
\begin{proof}
Let the notation be as in the statement. By Theorem~\ref{lem:productdecomposition} applied to $\EuScript{B}_W=w\EuScript{B}_W$, we can write $x=\sum_{u\in W}(wx_u)a_u$ where $a_u\in w\EuScript{B}_W'$, i.e.\ $a_u$ does only involve $R^+\setminus T_w$ but is not necessarily a monomial.  If we apply $\constantoverleftarrow{D_y}$ to this expression of $x$, we find in view of Corollary~\ref{cor:bcommuteswithskew} that $(x)\constantoverleftarrow{D_y}=w_oa_{w_o}$ which does only involve roots in $R^+\setminus T_w$. This completes the proof.
\end{proof}
\end{comment}

\begin{cor}
Let $D$ be a complete disjoint system. Then, we have
\[
\bigcap_{w\in D}w\EuScript{B}_W'=\mathbf{k}1\,.
\]
\end{cor}

\begin{proof}
Let $x$ be an element in the intersection mentioned in the statement. We have to show that $x$ is constant. Since $x\in w\EuScript{B}_W'$, we know that $(x)\constantoverleftarrow{D_\alpha}=0$ for all $\alpha\in T_w$. Since $T=\bigcup_{w\in D}wSw^{-1}$ by assumption, we infer that even $(x)\constantoverleftarrow{D_\alpha}=0$ for all $\alpha\in R^+$. By the nondegeneracy of the Hopf duality pairing (cf.~\cite[Criterion~3.6]{bazlov1}), we know that $x$ is constant.
\end{proof}

\begin{cor}
\label{cor:nondegenerate_sub}

The Hopf duality pairing restricted to $\EuScript{B}_W'\otimes\EuScript{B}_W'$ is nondegenerate.
    
\end{cor}

\begin{proof}
Let $a\in\EuScript{B}_W'$. Suppose that $\left<a,b\right>=0$ for all $b\in\EuScript{B}_W'$. We have to show that $a=0$. To this end, it suffices in view of Theorem~\ref{thm:productdecomposition} to prove that $\left<a,x_ub\right>=0$ for all $u\in W$ and all $b\in\EuScript{B}_W'$. But this is obvious by assumption because the bracket under consideration equals $\left<(a)\constantoverleftarrow{D_u},b\right>$ and $(a)\constantoverleftarrow{D_u}=0$ whenever $u\neq 1$.  
\end{proof}

\begin{cor}
\label{cor:nondegenerate_cor}

Let $a\in\EuScript{B}_W'$ be such that $\constantoverrightarrow{D_\alpha}(a)=0$ for all $\alpha\in R^+\setminus\Delta$ or that $(a)\constantoverleftarrow{D_\alpha}=0$ for all $\alpha\in R^+\setminus\Delta$. Then, $a$ is constant.
    
\end{cor}

\begin{proof}

Let the notation be as in the statement and suppose that $a$ is nonconstant. Then, we can find a nonzero graded piece $a^m\in\EuScript{B}_W'$ of $a$ of positive $\mathbb{Z}_{\geq 0}$-degree $m$ such that $\constantoverrightarrow{D_\alpha}(a^m)=0$ for all $\alpha\in R^+\setminus\Delta$ or that $(a^m)\constantoverleftarrow{D_\alpha}=0$ for all $\alpha\in R^+\setminus\Delta$ -- which in each case entails $\left<a^m,b\right>=0$ for all $b\in\EuScript{B}_W'$. By Corollary~\ref{cor:nondegenerate_sub}, it follows that $a^m=0$ -- a contradiction.
\end{proof}

\begin{cor}
\label{cor:innilcoxeter}

Let $a\in\EuScript{B}_W$ be such that $(a)\constantoverleftarrow{D_\alpha}=0$ for all $\alpha\in R^+\setminus\Delta$. Then, $a\in\EuScript{N}_W$.

\end{cor}

\begin{proof}
Indeed, let us write $a$ as $a=\sum_{w\in W}x_wa_w$ where $a_w\in\EuScript{B}_W'$ in view of Theorem~\ref{thm:productdecomposition}. Without loss of generality, we may assume that $a_w$ is homogeneous with respect to the $\mathbb{Z}_{\geq 0}$-grading for all $w\in W$. If for all $w\in W$ the element $a_w$ is constant, then we are done. Suppose for a contradiction that there exists $w$ such that $a_w$ is nonconstant. Then, choose and fix an element $v\in W$ with maximal (and thus positive) $\mathbb{Z}_{\geq 0}$-degree such that $a_v$ is nonconstant. Since $a_v\in\EuScript{B}_W'$ is in particular nonzero, we can find a dual $a_v^*\in\EuScript{B}_W'$ which is homogeneous of positive $\mathbb{Z}_{\geq 0}$-degree such that $\left<a_v,a_v^*\right>\neq 0$ by Corollary~\ref{cor:nondegenerate_sub}. If we apply $\constantoverleftarrow{D_{a_v*}}$ to $a$, we find in view of the vanishing assumption on $a$ and Lemma~\ref{lem:basic-rev}\eqref{item:basic3},\eqref{item:basic4}, that $\sum_wx_w\left<a_w,a_v^*\right>=0$ where the sum ranges over all $w\in W$ such that the $\mathbb{Z}_{\geq 0}$-degree of $a_w$ equals the $\mathbb{Z}_{\geq 0}$-degree of $a_v^*$. Since the standard basis of the nilCoxeter algebra is linear independent, it follows in particular that $\left<a_v,a_v^*\right>=0$ which is a contradiction.
\end{proof}

\begin{cor}

Let $a$ be an arbitrary element in $\EuScript{B}_W$. Suppose that $(a)\constantoverleftarrow{D_\beta}=0$ for all $\beta\in\Delta$. Then, the element $a$ does only involve $R^+\setminus\Delta$.    
\end{cor}

\begin{proof}

Indeed, let us write $a$ as $\sum_{w\in W}a_wx_w$ where $a_w\in\EuScript{B}_W'$ in view of Theorem~\ref{thm:productdecomposition}. If $a=0$ or $a_w=0$ for all $w\in W\setminus\{1\}$, we are done. So, let us assume for a contradiction that there exists $w$ of positive length such that $a_w\neq 0$. Then, choose and fix an element $v\in W$ with maximal (and thus positive) length such that $a_v\neq 0$. If we apply $\constantoverleftarrow{D_{v^{-1}}}$ to $a$, we find, in view of Lemma~\ref{lem:basic-rev}\eqref{item:basic3},\eqref{item:basic4}, that $\sum_{w\in W}a_w(x_w)\constantoverleftarrow{D_{v^{-1}}}=0$. For degree reasons and the choice of $v$ and by Fact~\ref{fact:rho_amel}\eqref{item:nil_pair}, all terms in the previous sum vanish except the one for $w=v$ which evaluates as $a_v$. Hence, we find that $a_v=0$ -- a contradiction.   
\end{proof}

\section{Invariance of hypothetical elements}
\label{sec:invhypo}

We understand by invariance of hypothetical elements the analogue properties as in Section~\ref{sec:invariance} but with a nonzero integral in $\EuScript{B}_W$ replaced by a nonzero hypothetical element, i.e.\ roughly speaking by an integral in $\EuScript{B}_W'$ although the later is not necessarily a braided Hopf algebra. We derive one such invariance property of hypothetical elements in type $\mathsf{A}$, namely Theorem~\ref{thm:inv-hypo}, which is key and which is the analogue of Lemma~\ref{lem:invariance-prop}\eqref{item:inv3} in type $\mathsf{A}$ for $\EuScript{B}_{\mathbb{S}_m}$ replaced by $\EuScript{B}_{\mathbb{S}_m}'$.

\subsection*{From now on and for the rest of this section, we assume that $R$ is of type $\mathsf{A}$}

\begin{thm}
\label{thm:inv-hypo}

Suppose that $\EuScript{B}_W$ is finite dimensional. Let $P$ be a nonzero hypothetical element in $\EuScript{B}_W'$.  Let $D$ be a normalized disjoint system. Let $w\in D\setminus\{1\}$. Let $y=wx_{w_o}$. Then, $(P)\constantoverleftarrow{D_y}y$ is again a nonzero hypothetical element. 

\end{thm}

\begin{proof}

Let the notation be as in the statement. Let us first prove the claim that $(P)\constantoverleftarrow{D_y}y$ is nonzero. Indeed, let $x$ be a nonzero integral in $\EuScript{B}_W$ such that $P=(x)\constantoverleftarrow{D_{w_o}}$ -- which exists by Theorem~\ref{thm:hypo-II}\eqref{item:II-two}. By Lemma~\ref{lem:invariance-prop}\eqref{item:inv5}, we know further that $x=(x)\constantoverleftarrow{D_{x_{w_o}y}}yx_{w_o}=(P)\constantoverleftarrow{D_y}yx_{w_o}$ is nonzero -- and consequently that the portion $(P)\constantoverleftarrow{D_y}y$ of this expression is also nonzero. This proves the first assertion.  

By Theorem~\ref{thm:hypo-antipode}, we know that $(P)\constantoverleftarrow{D_y}y$ is a nonzero hypothetical element if and only if $\bar{\EuScript{S}}((P)\constantoverleftarrow{D_y}y)$ is a nonzero hypothetical element. Therefore, for $\mathbb{Z}_{\geq 0}$-degree reasons and the previous proved claim, it suffices to prove that 
\[
\bar{\EuScript{S}}((P)\constantoverleftarrow{D_y}y)=\bar{\EuScript{S}}((P)\constantoverleftarrow{D_y})y=\constantoverrightarrow{D_y}(\bar{\EuScript{S}}(P))y\sim\constantoverrightarrow{D_y}(P)y
\]
lies in $\EuScript{B}_W'$, where we used in the previous reformulation \cite[Proposition~3.7(4), Proposition~4.2, Proposition~6.5]{skew}, Theorem~\ref{thm:hypo-II}\eqref{item:II-three} and Theorem~\ref{thm:hypo-antipode}. But this is indeed obvious by Remark~\ref{rem:action}.
\end{proof}

\section{Conclusion}

In this section, we prove the main theorem of this investigation, namely Theorem~\ref{thm:mainS6}, basically based on all the results derived so far.

\subsection*{From now on and for the rest of this section, we assume that $R$ is of type $\mathsf{A}$}

\begin{lem}[Key lemma for the conclusion]
\label{lem:keyconclusion}

Let $\alpha\in R^+\setminus\Delta$. Let $a\in\EuScript{B}_W$ be such that $(a)\constantoverleftarrow{D_\alpha}=0$. By Theorem~\ref{thm:productdecomposition}, we can write $a=\sum_{w\in W}x_wa_w$ where $a_w\in\EuScript{B}_W'$. Then, we have $(a_w)\constantoverleftarrow{D_\alpha}=0$ for all $w\in W$. 
    
\end{lem}

\begin{proof}

Let the notation be as in the statement. Without loss of generality, we can assume that $a$ and $a_w$ are homogeneous with respect to the $\mathbb{Z}_{\geq 0}$-grading for all $w\in W$. Suppose for a contradiction that there exists $w\in W$ such that $(a_w)\constantoverleftarrow{D_\alpha}\neq 0$. Then, choose and fix an element $v\in W$ such that $a_v$ is of maximal $\mathbb{Z}_{\geq 0}$-degree such that $(a_v)\constantoverleftarrow{D_\alpha}\neq 0$. By Corollary~\ref{cor:nondegenerate_sub}, we can find a dual $a_v^*\in\EuScript{B}_W'$ which starts with $\alpha$ and which is homogeneous with respect to the $\mathbb{Z}_{\geq 0}$-degree such that $\left<a_v.a_v^*\right>\neq 0$. If we apply $\constantoverleftarrow{D_{a_v^*}}$ to $a$, we find by the vanishing assumption on $a$ and by Lemma~\ref{lem:basic-rev}\eqref{item:basic3},\eqref{item:basic4} that $\sum_w x_w\left<a_w,a_v^*\right>=0$ where the sum ranges over all $w\in W$ such that the $\mathbb{Z}_{\geq 0}$-degree of $a_w$ equals the $\mathbb{Z}_{\geq 0}$-degree of $a_v^*$. Since the standard basis of the nilCoxeter algebra is linear independent, it follows in particular that $\left<a_v,a_v^*\right>=0$ which is a contradiction. 
\end{proof}

\subsection*{From now on and for the rest of this section, we assume that $R$ is of type $\mathsf{A}_5$}

\begin{thm}
\label{thm:mainS6}

The algebra $\EuScript{B}_{\mathbb{S}_6}$ is infinite dimensional.
    
\end{thm}

\begin{proof}

Assume for a contradiction that $\EuScript{B}_W$ is finite dimensional. Let $D$ be a normalized complete disjoint system in $W$, which exists by Example~\ref{ex:S6}. Let $w_0,w_1,w_2$ be some ordering of the elements of $D$ such that $w_0=1$. Let $y_0=w_0x_{w_o}$, $y_1=w_1x_{w_o}$, $y_2=w_2x_{w_o}$. Let $P$ be a nonzero hypothetical element in $\EuScript{B}_W'$, which exists by our assumption about the finite dimensionality of $\EuScript{B}_W$ and Theorem~\ref{thm:hypo-II}\eqref{item:II-two}.  

Let $a=(P)\constantoverleftarrow{D_{y_1y_2}}$. By application of invariance of hypothetical elements twice, i.e.\ Theorem~\ref{thm:inv-hypo}, and by Corollary~\ref{cor:ofbskew}
\[
P=(P)\constantoverleftarrow{D_{y_1}}y_1\constantoverleftarrow{D_{y_2}}y_2=(-1)^{\ell(w_o)}ay_1y_2\,,
\]
in particular, we know that $ay_1y_2$ is a nonzero hypothetical element.
Let $x=x_{w_o}P$. Then, we know by Theorem~\ref{thm:main-hypo}\eqref{item:hypo-l} that $x$ is a nonzero integral in $\EuScript{B}_W$. With this definition, we evaluate $y_0ay_1$ as 
\[
(x)\constantoverleftarrow{D_{y_1y_2}}y_1=(-1)^{\ell(w_o)}(x)\constantoverleftarrow{D_{y_1}}y_1\constantoverleftarrow{D_{y_2}}=(-1)^{\ell(w_o)}(x)\constantoverleftarrow{D_{y_2}}
\]
by Lemma~\ref{lem:basic-rev}\eqref{item:basic3},\eqref{item:basic4}, by Corollary~\ref{cor:ofbskew} and by Lemma~\ref{lem:invariance-prop}\eqref{item:inv3} on the one hand; and as $y_0a_1y_1$ where $a_1\in\EuScript{B}_W'$ appears in the product decomposition $a=\sum_{w\in W}x_wa_w$ with $a_w\in\EuScript{B}_W'$ according to Theorem~\ref{thm:productdecomposition} on the other hand. In total, we thus have $y_0a_1y_1=(-1)^{\ell(w_o)}(x)\constantoverleftarrow{D_{y_2}}$.
If we know apply $\constantoverleftarrow{D_{y_0}}$ to the last equality and divide by the irrelevant sign, we find in view of Corollary~\ref{cor:bcommuteswithskew} that $w_o(a_1)y_1=(x)\constantoverleftarrow{D_{y_2y_0}}$.
Multiplication of the last equality form the right with $y_2$ yields 
\[
w_o(a_1)y_1y_2=(x)\constantoverleftarrow{D_{y_2y_0}}y_2=(-1)^{\ell(w_o)}(x)\constantoverleftarrow{D_{y_2}}y_2\constantoverleftarrow{D_{y_0}}=(-1)^{\ell(w_o)}(x)\constantoverleftarrow{D_{y_0}}
\]
by Corollary~\ref{cor:ofbskew} and Lemma~\ref{lem:invariance-prop}\eqref{item:inv3}. In particular, we see from this and Theorem~\ref{thm:hypo-II}\eqref{item:II-two} that $w_o(a_1)y_1y_2$ and thus, by Lemma~\ref{cor:rhohypo}, $a_1y_1y_2$ are nonzero hypothetical elements. 
We observe that $(a_1)\constantoverleftarrow{D_\alpha}=0$ for all $\alpha\in T_{w_2}$ since $(a)\constantoverleftarrow{D_\alpha}=0$ for all $\alpha\in T_{w_2}$ as a conclusion of Lemma~\ref{lem:keyconclusion}.
This means that we can apply $\constantoverleftarrow{D_{y_2}}$ to the nonzero hypothetical element $a_1y_1y_2$ which becomes a nonzero quantity by Theorem~\ref{thm:inv-hypo} and which evaluates as $a_1y_1$ in view of the previous observation and Lemma~\ref{lem:basic-rev}\eqref{item:basic2-3}\eqref{item:basic3},\eqref{item:basic4}. 
By definition of $a_1y_1$ as derivative $\constantoverleftarrow{D_{y_2}}$ of something, it is clear that $(a_1y_1)\constantoverleftarrow{D_\alpha}=0$ for all $\alpha\in T_{w_2}$.
On the other hand, by construction, $a_1y_1$ does only involve $R^+\setminus\Delta=T_{w_1}\cup T_{w_2}$, and it follows from Lemma~\ref{lem:basic-rev}\eqref{item:basic3} that $(a_1y_1)\constantoverleftarrow{D_\beta}=0$ for all $\beta\in\Delta$.
In total, we then have $(a_1y_1)\constantoverleftarrow{D_{\alpha}}=0$ for all $\alpha\in T_{w_0}\cup T_{w_2}$. By Corollary~\ref{cor:innilcoxeter}, it finally follows that $a_1y_1$ is a nonzero element in $w_1\EuScript{N}_W$. For $\mathbb{Z}_{\geq 0}$-degree reasons, this means that $a_1y_1=\lambda y_1$ for some nonzero scalar $\lambda$. It follows that $y_1y_2$ is a nonzero hypothetical element of $W$-degree $1$ which is a contradiction to Theorem~\ref{thm:hypo-II}\eqref{item:II-three}. This completes the proof.
\end{proof}

\section{Consequences}
\label{sec:consequences}

Here, we draw some immediate consequences from Theorem~\ref{thm:mainS6}.

\begin{thm}
\label{thm:main}

The algebras $\EuScript{B}_{S_m}$ are infinite dimensional for all $m\geq 6$.
    
\end{thm}

\begin{proof}

For $m\geq 6$, the algebra $\EuScript{B}_{\mathbb{S}_6}$ is a subalgebra of $\EuScript{B}_{\mathbb{S}_m}$. Hence, since $\EuScript{B}_{\mathbb{S}_6}$ is infinite dimensional by Theorem~\ref{thm:mainS6}, so is $\EuScript{B}_{\mathbb{S}_m}$.
\end{proof}

\begin{cor}
\label{cor:main}

The algebras $\EuScript{E}_m$ are infinite dimensional for all $m\geq 6$. 
    
\end{cor}

\begin{proof}
The algebra $\EuScript{B}_{\mathbb{S}_m}$ is a quotient of $\EuScript{E}_{m}$ for all $m$. Since $\EuScript{B}_{\mathbb{S}_m}$ is infinite dimensional for all $m\geq 6$ by Theorem~\ref{thm:main}, so is $\EuScript{E}_m$ for all $m\geq 6$.  
\end{proof}

\begin{thm}
\label{thm:milinski-conj}

Conjecture~\ref{conj:milinski} is satisfied in type $\mathsf{A}$.
    
\end{thm}

\begin{proof}

By Theorem~\ref{thm:main} and Remark~\ref{rem:coxeter}, the assumptions of Conjecture~\ref{conj:milinski} are only possibly satisfied if $R$ is of type $\mathsf{A}_1$ or $\mathsf{A}_3$. In these two cases, they are indeed satisfied together with the conclusion of Conjecture~\ref{conj:milinski} for trivial reasons (cf.~Remark~\ref{rem:melinski} and \cite[Example~6.4]{milinski-schneider}).
\end{proof}

\bibliographystyle{aomplain}
\bibliography{lib}

\end{document}